\documentclass[12pt,reqno]{amsart}
\usepackage{amssymb}
\usepackage{graphicx}
\usepackage{xcolor}
\usepackage{bm}

\usepackage[all]{xy}
\DeclareFontFamily{U}{mathb}{\hyphenchar\font45}
\DeclareFontShape{U}{mathb}{m}{n}{
      <5> <6> <7> <8> <9> <10> gen * mathb
      <10.95> mathb10 <12> <14.4> <17.28> <20.74> <24.88> mathb12
      }{}
\DeclareSymbolFont{mathb}{U}{mathb}{m}{n}
\DeclareMathSymbol{\righttoleftarrow}{3}{mathb}{"FD}
\DeclareFontFamily{U}{wncy}{}
\DeclareFontShape{U}{wncy}{m}{n}{<->wncyr10}{}
\DeclareSymbolFont{mcy}{U}{wncy}{m}{n}
\DeclareMathSymbol{\Sh}{\mathord}{mcy}{"58}

\oddsidemargin 15mm
\evensidemargin 15mm
\textwidth 130mm

\theoremstyle{plain}
\newtheorem{prop}{Proposition}[section]
\newtheorem{theo}[prop]{Theorem}
\newtheorem{coro}[prop]{Corollary}
\newtheorem{lemm}[prop]{Lemma}
\theoremstyle{remark}
\newtheorem{rema}[prop]{Remark}

\theoremstyle{definition}
\newtheorem{defi}[prop]{Definition}

\newtheorem{exam}[prop]{Example}

\numberwithin{equation}{section}
\newcommand{\A}{{\mathbb A}}

\newcommand{\PP}{{\mathbb P}}
\newcommand{\bP}{{\mathbb P}}
\newcommand{\Q}{{\mathbb Q}}

\newcommand{\G}{{\mathbb G}}

\newcommand{\R}{{\mathbb R}}
\newcommand{\Z}{{\mathbb Z}}

\newcommand{\cD}{{\mathcal D}}
\newcommand{\cE}{{\mathcal E}}
\newcommand{\cF}{{\mathcal F}}
\newcommand{\cG}{{\mathcal G}}
\newcommand{\cO}{{\mathcal O}}
\newcommand{\cI}{{\mathcal I}}
\newcommand{\cJ}{{\mathcal J}}
\newcommand{\cK}{{\mathcal K}}
\newcommand{\cL}{{\mathcal L}}

\newcommand{\cN}{{\mathcal N}}
\newcommand{\cS}{{\mathcal S}}
\newcommand{\cT}{{\mathcal T}}
\newcommand{\cX}{{\mathcal X}}
\newcommand{\cY}{{\mathcal Y}}
\newcommand{\cZ}{{\mathcal Z}}
\newcommand{\cU}{{\mathcal U}}
\newcommand{\cV}{{\mathcal V}}
\newcommand{\cW}{{\mathcal W}}

\newcommand{\rH}{{\mathrm H}}
\newcommand{\rK}{{\mathrm K}}

\newcommand{\lra}{\longrightarrow}

\newcommand{\bQ}{{\mathbb Q}}

\newcommand{\bZ}{{\mathbb Z}}

\newcommand{\fS}{{\mathfrak S}}

\newcommand{\eqto}{\stackrel{\lower1.5pt\hbox{$\scriptstyle\sim\,$}}\to}
\newcommand{\eqdashto}{\stackrel{\lower1.5pt\hbox{$\scriptstyle\sim\,$}}\dashrightarrow}
\newcommand{\actsfromleft}{\mathrel{\reflectbox{$\righttoleftarrow$}}}
\newcommand{\actsfromright}{\righttoleftarrow}

\DeclareMathOperator{\Spec}{Spec}

\DeclareMathOperator{\Br}{Br}
\DeclareMathOperator{\Aut}{Aut}

\DeclareMathOperator{\Var}{Var}
\DeclareMathOperator{\Burn}{Burn}
\DeclareMathOperator{\cBurn}{\mathcal{B}\mathit{urn}}

\DeclareMathOperator{\HOM}{HOM}
\DeclareMathOperator{\MAP}{MAP}
\DeclareMathOperator{\BIR}{BIR}
\DeclareMathOperator{\BIRAUT}{BIR\,AUT}
\DeclareMathOperator{\Proj}{Proj}
\DeclareMathOperator{\Sym}{Sym}

\DeclareMathOperator{\colim}{colim}

\DeclareMathOperator{\Wedge}{{\textstyle\bigwedge}}

\newcommand{\oBurn}{{\overline\Burn}}
\newcommand{\ocBurn}{{\overline\cBurn}}

\DeclareFontFamily{U}{mathx}{\hyphenchar\font45}
\DeclareFontShape{U}{mathx}{m}{n}{
      <5> <6> <7> <8> <9> <10>
      <10.95> <12> <14.4> <17.28> <20.74> <24.88>
      mathx10
      }{}
\DeclareSymbolFont{mathx}{U}{mathx}{m}{n}
\DeclareMathSymbol{\bigtimes}{1}{mathx}{"91}

\begin{document}
\title[Birational geometry of stacks]{Birational geometry of Deligne-Mumford stacks}

\author{Andrew Kresch}
\address{
  Institut f\"ur Mathematik,
  Universit\"at Z\"urich,
  Winterthurerstrasse 190,
  CH-8057 Z\"urich, Switzerland
}
\email{andrew.kresch@math.uzh.ch}
\author{Yuri Tschinkel}
\address{
  Courant Institute,
  251 Mercer Street,
  New York, NY 10012, USA
}

\email{tschinkel@cims.nyu.edu}

\address{Simons Foundation\\
160 Fifth Avenue\\
New York, NY 10010\\
USA}

\date{December 21, 2023}

\begin{abstract}
We investigate the birational geometry of Deligne-Mumford stacks and define new birational invariants in this context.
\end{abstract}

\maketitle

\section{Introduction}
\label{sec.intro}

Birational geometry of algebraic varieties is a classical subject in algebraic geometry. 
Closely related to this is $G$-equivariant birational geometry for a finite group $G$, also a well-studied area.
There are meaningful parallels 
between these theories: the study of birationality over an algebraically nonclosed field $k$ is analogous, in many aspects, to equivariant birationality over an algebraically closed field, where the role of $G$ is played by the absolute Galois group of $k$.
This point of view has been brought into focus by Manin~\cite{manin2}. 

In this paper, we explore the birational geometry of Deligne-Mumford stacks over fields of characteristic zero.
In the setting of a projective $G$-variety $X$ the associated Deligne-Mumford stack $[X/G]$ carries features such as stabilizer groups, but does not carry the full information of $X$ with $G$-action.
The passage
$$
X\actsfromright G \quad \rightsquigarrow \quad [X/G] \quad \rightsquigarrow \quad k(X)^G=k(X/G)
$$
from equivariant geometry to geometry of Deligne-Mumford stacks and subsequently to classical birational geometry
involves a loss information at each step.
There is extensive literature on 
rationality of fields of invariants and relevant obstructions, such as unramified 
cohomology
(see, e.g., \cite{Salt}, \cite{B-Brauer}, \cite{CT-S}, \cite{BT-noether}), 
as well as a growing literature on equivariant birationality (see, e.g., \cite{Pro-ICM} and references therein, \cite{BnG}, \cite{HKTsmall}).

The Burnside ring, introduced in \cite{KT}, encodes classical birational types:
\[ \Burn=\bigoplus_{n=0}^\infty  \Burn_n; \]
the grading is by dimension.
Refinements to Deligne-Mumford stacks and $G$-varieties, respectively, appear in \cite{Bbar} and \cite{BnG},
with corresponding Burnside groups and compatibility homomorphisms:
$$
\Burn_n(G) \quad \longrightarrow \quad \oBurn_n \quad \longrightarrow \quad \Burn_n.
$$

Important to the birational geometry of Deligne-Mumford stacks is the appropriate notion of birational equivalence.
In
\cite[Thm.\ 4.1]{Bbar},
the class in $\oBurn_n$ of an $n$-dimensional Deligne-Mumford stack (when this is defined) is shown to satisfy $[\cX]=[\cY]$ in $\oBurn_n$ whenever $\cX$ and $\cY$ are \emph{birationally equivalent}, in the sense of fitting into a diagram
\[
\xymatrix@C=12pt{
& \cX' \ar[dl] \ar[dr] \\
\cX && \cY
}
\]
where the morphisms are representable, proper, and birational.

A subtle phenomenon is the existence of $\cX$ and $\cY$, not birationally equivalent, with $[\cX]=[\cY]$ in $\oBurn_n$.
For instance, over an algebraically closed field we consider the unique (nontrivial) action of $G=\Z/2\Z$ on elliptic curves.
If $E$ and $F$ are non-isomorphic elliptic curves, then the quotient stacks $\cX=[E/G]$ and $\cY=[F/G]$ constitute such a pair ($n=1$).
By way of contrast, the class in $\Burn_n$, respectively in $\Burn_n(G)$, fully captures the birational type of a projective variety, respectively the equivariant birational type of a projective $G$-variety of dimension $n$.

In this paper, we introduce a refinement of $\oBurn_n$ which fully captures the relevant birational type.
This is an abelian group
\[
\cBurn=\bigoplus_{n=0}^\infty \cBurn_n
\]
defined by explicit generators and relations.
This is a refinement, in that a quotient by further explicit relations
\[
\ocBurn=\bigoplus_{n=0}^\infty \ocBurn_n,
\]
can be identified with the group from \cite{Bbar}:
\[ \ocBurn_n\cong \oBurn_n. \]

Our principal constructions and results are:
\begin{itemize}
\item The groups $\cBurn$ and $\ocBurn$, in Section~\ref{sect:g};
\item Associated birational invariants of Deligne-Mumford stacks, in Section~\ref{sect:burninv};
\item Specialization homomorphisms, in Section~\ref{sect:spec};
\item Comparisons among various Burnside and Grothendieck groups, in Section~\ref{sec.comparisons}.
\end{itemize}
A substantial part of the paper, Sections~\ref{sect:background} to \ref{sect:weak}, is devoted to foundations of birational geometry of stacks. Throughout, we provide examples illustrating various aspects of the theory.

\medskip
\noindent
{\bf Acknowledgments:} 
The second author was partially supported by NSF grant 2301983.

\section{Orbifolds and stabilizer groups}
\label{sect:background}
We work over a field $k$ of characteristic zero. 
We are interested in the birational geometry of \emph{orbifolds} over $k$.

Algebraic orbifolds are smooth separated irreducible Deligne-Mumford (DM) stacks of finite type over $k$, with trivial generic stabilizer.
All algebraic stacks in this section are of finite type over $k$, unless indicated otherwise.
All morphisms are morphisms over $k$.

\begin{exam}
\label{exam:bg}
A standard example of a DM stack is 
$$
\mathcal X=[X/G],
$$
where $X$ is an algebraic variety over $k$ with regular action of a finite group $G$. 
This is the category, where an object is a $G$-torsor
$E\to T$ (over an arbitrary $k$-scheme $T$) with a $G$-equivariant morphism $E\to X$, and a morphism is a $G$-equivariant isomorphism
of torsors (over a morphisms of $k$-schemes), compatible with the morphisms to $X$.
Informally, the stack $\mathcal X$ encodes the $G$-equivariant geometry of $X$, e.g., when $k$ is algebraically closed,
the category $\mathcal{X}(k)$ of $k$-points of $\mathcal{X}$ has isomorphism classes of objects in bijective correspondence with the $G$-orbits of $X(k)$.
The case $X=\Spec(k)$ gives
\[ BG, \]
the category of $G$-torsors (over $k$-schemes) and $G$-equivariant morphisms
(over morphisms of $k$-schemes).
\end{exam}

A separated DM stack, or more generally an algebraic stack satisfying the weaker hypothesis of having finite inertia, has a coarse moduli space.
For instance, $\mathcal X=[X/G]$ has coarse moduli space $X/G$, the quotient variety when this exists (e.g., when $X$ is quasi-projective), generally the algebraic space quotient.
When $X$ is nonsingular and the $G$-action is faithful, $\mathcal X=[X/G]$ is an example of an orbifold.

For an orbifold,
when the coarse moduli space is a projective scheme, we speak of a \emph{projective orbifold},
and when quasi-projective, a
\emph{quasi-projective orbifold}.
The same definitions apply to smooth separated DM stacks; cf.\ \cite{K-Seattle}.
If $\mathcal{X}$ is quasi-projective, then
$\mathcal X \cong [X/G]$ for some linear algebraic group $G$ acting on a smooth quasi-projective scheme $X$;
in fact, we may take $G$ to be $GL_N$ for some $N$.
But it is a subtle question,
whether this can be achieved with a \emph{finite} group $G$. Even in dimension one, there exist orbifolds not of the form $[X/G]$, for any variety $X$ and finite group $G$.
E.g., for any positive integers $a$ and $b$, not both equal to $1$, the stack
\[ \PP(a,b)=[\A^2\setminus\{0\}/\G_m],\]
with action $t\cdot(x,y)=(t^ax,t^by)$,
cannot be expressed as stack quotient of a scheme by a finite group (cf.\ \cite{behrendnoohi}).
This is an orbifold when $\gcd(a,b)=1$.

Algebraic stacks form a $2$-category, where besides morphisms (over $k$) there are $2$-morphisms given by natural transformations of functors (over identity morphisms in the base category of $k$-schemes).
The $2$-morphisms are all invertible.

We often perform local constructions, such as the projective fibration $\Proj \Sym \cE\to \cX$ associated with a coherent sheaf $\cE$ on a given algebraic stack $\cX$.
This, along with a more general \emph{projective} morphism to $\cX$, which by definition is one that admits a factorization up to $2$-isomorphism as a closed immersion to a projective fibration, provides an example of \emph{representable} morphism.
A morphism $\cY\to \cX$ is representable if, for any scheme $T$ with morphism to $\cX$, the fiber product $T\times_{\cX}\cY$ is an algebraic space.

\begin{lemm}
\label{lem.projective}
Let $f\colon \cY\to \cX$ be a projective morphism of separated DM stacks.
Then the induced morphism of coarse moduli spaces $e:Y\to X$ is projective.
\end{lemm}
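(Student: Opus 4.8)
The plan is to reduce to the case of a projective fibration followed by a closed immersion, and then to treat each case separately by examining what these morphisms do on coarse moduli spaces. First I would invoke the definition: a projective morphism $f\colon \cY\to \cX$ admits a factorization, up to $2$-isomorphism, as a closed immersion $\cY\hookrightarrow \cP$ followed by a projective fibration $\pi\colon \cP=\Proj\Sym\cE\to\cX$ for some coherent sheaf $\cE$ on $\cX$. Since the coarse moduli space construction is functorial and since a closed immersion of DM stacks induces a closed immersion of coarse moduli spaces (the coarse space of a closed substack is the scheme-theoretic image, a closed subspace of the target coarse space), it suffices to handle the fibration $\pi$. So the crux is: the coarse moduli space map $P\to X$ associated to $\pi\colon \Proj\Sym\cE\to\cX$ is projective.

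For the fibration case I would work locally on $X$. Because $\cX$ is a separated DM stack, it admits, \'etale-locally on its coarse space $X$, a presentation as a quotient $[U/\Gamma]$ with $U$ a scheme and $\Gamma$ a finite group (the \'etale-local structure of separated DM stacks). Over such a chart, $\cE$ pulls back to a $\Gamma$-equivariant coherent sheaf on $U$, and $\cP=\Proj\Sym\cE$ becomes $[\Proj\Sym\cE_U/\Gamma]$, whose coarse space is $(\Proj\Sym\cE_U)/\Gamma$. The map from this to $U/\Gamma$ is the map on quotients induced by the $\Gamma$-equivariant projective morphism $\Proj\Sym\cE_U\to U$; a $\Gamma$-equivariant projective morphism of schemes descends to a projective morphism of quotient algebraic spaces (invariant relatively ample line bundle descends after passing to a suitable power), so locally over $X$ the map $P\to X$ is projective. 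Since projectivity of a morphism of algebraic spaces (or schemes) is \'etale-local on the target, and the charts cover $X$, we conclude $P\to X$ is projective.

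The main obstacle I expect is the bookkeeping around algebraic spaces versus schemes and the descent of relative ampleness: one must be careful that a $\Gamma$-relatively ample line bundle need not be $\Gamma$-linearized, but a suitable power is (average over $\Gamma$, using that $|\Gamma|$ is invertible in characteristic zero), and that the resulting line bundle on the quotient is genuinely relatively ample. A secondary subtlety is patching the local conclusions: one should phrase projectivity via the valuative/"admits a relatively ample invertible sheaf after \'etale base change" criterion, or simply use that "projective" for separated morphisms of Noetherian algebraic spaces is \'etale-local on the base, so the local-to-global step is automatic. The closed-immersion half of the argument is routine once one recalls that coarse moduli commutes with closed immersions, so essentially all the content lies in the fibration case treated above.
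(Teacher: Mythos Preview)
Your argument has a genuine gap at the local-to-global step. The assertion that projectivity of a morphism of algebraic spaces is \'etale-local on the target is false: there are proper morphisms that are projective Zariski-locally on the base yet admit no global relatively ample line bundle (Hironaka-type examples). What \emph{is} \'etale-local on the base is the relative ampleness of a \emph{given} line bundle. Your local charts produce, on each $(\Proj\Sym\cE_U)/\Gamma$, a relatively ample line bundle obtained by descending a power of $\cO(1)$; but you give no mechanism to glue these into a single line bundle on $P$, and there is no reason they should glue.

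The paper's proof supplies exactly the missing global input. It takes the line bundle $L=\cO_{\Proj\Sym\cE}(1)|_{\cY}$, which lives globally on the stack $\cY$, and invokes \cite[Lemma 2]{KV} to descend a power of $L$ to a line bundle $M$ on the coarse space $Y$. With $M$ in hand, relative ampleness is then checked after pullback along a finite surjective morphism $Z\to\cX$ from a scheme (via \cite[(16.6)]{LMB}); one gets that $s^*M$ is ample relative to the composite to $X$, and concludes by Chevalley's theorem \cite[(2.6.2)--(2.6.3)]{EGAIII}, whose proof goes through for algebraic spaces. The properness of $e$ is established separately at the outset. Your approach can be repaired by inserting the global descent of a power of $\cO(1)$ from $\cP$ to $P$ before the local ampleness check; once you have a global candidate line bundle, your \'etale-local verification of ampleness becomes legitimate.
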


\begin{proof}
The morphism $e$ is proper, since the composite $\cY\to X$ is proper and $\cY\to Y$ is proper surjective.
We will exhibit a line bundle $M$ on $Y$, whose pullback to a finite scheme cover of $Y$ is ample, relative to the composite morphism to $X$.
When $X$ and $Y$ are schemes, \cite[(2.6.2)]{EGAIII} tells us that $e$ is projective, with relatively ample line bundle $M$.
The proof given in \cite[(2.6.3)]{EGAIII} uses Chevalley's theorem.
This is available for algebraic spaces, and we reach the same conclusion, supposing just $X$ to be a scheme.
The general case reduces to this one, by base change to an \'etale atlas of $X$, since the condition of obtaining a closed immersion $Y\to \Proj\Sym e_*(M^n)$, for $n\gg 0$, may be checked \'etale locally.

By \cite[(16.6)]{LMB}, there exists a scheme $Z$ with a finite surjective morphism $Z\to \cX$.
We let $W=\cY\times_{\cX}Z$, with projection morphisms $p$ to $\cY$ and $g$ to $Z$, so $p$ is finite and surjective, and $g$ is projective.
The composite morphisms $r\colon Z\to X$ and $s\colon W\to Y$ are proper and quasi-finite, hence finite.
As well, they are surjective.

Factoring $f$ up to $2$-isomorphism as $\cY\to \Proj\Sym \cE\to \cX$ we let $L$ denote the restriction to $\cY$ of the line bundle $\cO_{\Proj\Sym \cE}(1)$.
By \cite[Lemma 2]{KV}, after replacing $L$ by a tensor power, we may suppose that $L$ is isomorphic to the pullback of a line bundle $M$ on $Y$.
So $s^*M\cong p^*L$ is ample relative to $g$, hence also relative to $r\circ g$, and $e$ is projective.
\end{proof}

Stacks with representable morphism to a given stack $\cY$ form, in a natural way, an
\emph{ordinary} category.
An object is a stack $\mathsf{X}$ with representable morphism
$\mathsf{f}\colon \mathsf{X}\to \cY$.
A morphism to another object $\mathsf{f}'\colon\mathsf{X}'\to \cY$ is an equivalence class of pairs $(g,\alpha)$,
consisting of a morphism $g\colon \mathsf{X}\to \mathsf{X}'$ and $2$-isomorphism $\alpha\colon \mathsf{f}\Rightarrow \mathsf{f}'\circ g$.
The equivalence relation is defined by $(g,\alpha)\sim (g',\alpha')$ when there exists a
$2$-isomorphism $\beta\colon g\Rightarrow g'$, such that
\[ \alpha'(\mathsf{x})=\mathsf{f}'(\beta(\mathsf{x}))\circ \alpha(\mathsf{x}) \]
for all objects $\mathsf{x}$ of $\mathsf{X}$.

A useful variant, given a stack $\cX$ with morphism $f\colon \cX\to \cY$, is the
category of factorizations of $f$ through a stack with representable morphism to $\cY$.
An object is a stack $\mathsf{X}$ with representable morphism
$\mathsf{f}\colon \mathsf{X}\to \cY$, morphism $\pi\colon \cX\to \mathsf{X}$,
and $2$-isomorphism $\iota\colon f\Rightarrow \mathsf{f}\circ \pi$.
A morphism to $\mathsf{f}'\colon \mathsf{X}'\to \cY$, with
$\pi'\colon \cX\to \mathsf{X}'$ and $\iota'\colon f\Rightarrow \mathsf{f}'\circ \pi'$, is
an equivalence class of pairs $(g,\alpha)$ as above,
admitting a compatible $2$-isomorphism $\pi'\Rightarrow g\circ \pi$.

Suppose that $\cX$ has a coarse moduli space
and $\cY$ has separated diagonal.
The construction of the coarse moduli space of $\cX$, carried out (smooth) locally over $\cY$, yields the
\emph{relative coarse moduli space} \cite[\S 3]{AOV}.
This is characterized as the
initial object in the category of factorizations of $f$ through a stack with representable morphism to $\cY$.
When this is $1_{\cY}$, with $f$ and $1_f$, we call $f$ a
\emph{coarsening morphism} \cite[\S 2.3]{toroidalorbifolds}.
Given a morphism $\cY\to \cZ$, the morphism $\pi\colon \cX\to \mathsf{X}$ induces an isomorphism of relative coarse moduli spaces over $\cZ$.
In particular, $\pi$ induces an isomorphism of coarse moduli spaces \cite[Lemma 2.3.4(iii)]{toroidalorbifolds}.

\begin{exam}
\label{ex.stabilizer}
The diagonal
\[ \Delta\colon \cX\to \cX\times \cX \]
is representable.
For a point of an algebraic stack $\cX$, represented by a morphism $x\colon \Spec(\ell)\to \cX$ for a field $\ell/k$, we obtain a stabilizer group scheme $G_x$, locally of finite type over $\ell$, fitting into a fiber diagram
\[
\xymatrix{
G_x \ar[r] \ar[d] & \mathcal{X} \ar[d]^{\Delta} \\
\Spec(\ell) \ar[r]^{\Delta\circ x} & \mathcal{X}\times \mathcal{X}
}
\]
with the diagonal $\Delta$ of $\mathcal{X}$.
For the stacks that we typically consider, DM stacks with finite inertia,
$G_x$ will be reduced and finite.
Passing to an algebraic closure of $\ell$ leads to a finite group,
the \emph{geometric stabilizer group} at $x$.
Fixing $\ell$, an isomorphism in $\mathcal{X}(\ell)$ is a $2$-isomorphism
\begin{equation}
\begin{split}
\label{eqn.2iso}
\xymatrix@C=36pt{
\Spec(\ell) \ar@/^1.0pc/[r]^x\ar@/_1.0pc/[r]_{x'}\ar@{}[r]|{\textstyle\Downarrow}& \mathcal{X}
}
\end{split}
\end{equation}
and this determines an isomorphism
\begin{equation}
\label{eqn.Gx}
G_x\to G_{x'}.
\end{equation}
A different choice of $2$-isomorphism $\Downarrow$ in \eqref{eqn.2iso} determines an isomorphism that differs from
\eqref{eqn.Gx} by an inner automorphism of $G_{x'}$.
In the sequel, DM stacks with \emph{abelian} geometric stabilizer groups, at least over a dense open substack $\mathcal{U}$ of $\cX$ will play the biggest role.
Then the geometric stabilizer groups of points of $\mathcal{U}$ are defined up to \emph{canonical} isomorphism.
\end{exam}

\begin{rema}
\label{rem.IX}
All of the stabilizers are packaged together in the \emph{inertia stack},
which is the stack $\cI_{\cX}$ in the fiber diagram
\[
\xymatrix{
\cI_{\cX} \ar[r] \ar[d] & \mathcal{X} \ar[d]^{\Delta} \\
\cX \ar[r]^(0.4){\Delta} & \mathcal{X}\times \mathcal{X}
}
\]
If $\cX$ is a smooth separated DM stack, then so is $\cI_{\cX}$, and so are the locally closed substacks of $\cX$, appearing in the stratification by isomorphism type of geometric stabilizer.
\end{rema}

\section{Normal crossing divisor and root construction}
\label{sect:root}
We continue to work with stacks of finite type over a field $k$ of characteristic $0$.

A \emph{normal crossing divisor} on a smooth separated DM stack $\cX$ is a divisor $\cD\subset \cX$, whose pre-image in one or equivalently every \'etale altas $U$ of $\cX$ is a normal crossing divisor on $U$.
If, furthermore, we can write
$\cD=\cD_1\cup\dots\cup \cD_{\ell}$, where each $\cD_i$ is smooth, then we call $\cD$ a \emph{simple normal crossing (snc) divisor}.
Equivalently, an snc divisor on $\cX$ is a normal crossing divisor whose irreducible components are smooth.

\begin{rema}
\label{rema:sing-res}
Desingularization of a reduced Noetherian quasi-excellent algebraic stack in characteristic $0$ is established in \cite{temkin-nonembedded}.
This is applicable to reduced stacks, of finite type over a field of characteristic zero. The case of reduced stacks of finite type over a DVR with residue field of characteristic zero, needed in Section \ref{sect:spec}, is also covered.
Embedded resolution of singularities is also available in this generality \cite{temkin-embedded}.
\end{rema}

Let $\cD$ be a arbitrary divisor on a smooth separated DM stack $\cX$, and let $r$ be a positive integer.
We recall the \emph{root stack}
\[ \sqrt[r]{(\cX,\cD)} 
\]
of \cite[\S 2]{cadman}, \cite[App.\ B]{AGV}.
The divisor $\cD$ determines a line bundle $\cO_{\cX}(\cD)$ with section vanishing along $\cD$, hence a morphism
\begin{equation}
\label{eqn.toGm}
\cX\to [\A^1/\G_m].
\end{equation}
The root stack is the fiber product
\[ \sqrt[r]{(\cX,\cD)}=\cX\times_{[\A^1/\G_m]}[\A^1/\G_m], \]
where $[\A^1/\G_m]\to [\A^1/\G_m]$ is the morphism induced by the $r$th power maps on $\A^1$ and on $\G_m$.
The substack
$$
\sqrt[r]{(\cX,\cD)}\times_{[\A^1/\G_m]}[\{0\}/\G_m]
$$ 
is the \emph{gerbe of the root stack} $\cG_{\cD}$ \cite[Def.\ 2.4.4]{cadman}; we recall that a gerbe is a morphism of stacks (in this case, $\cG_{\cD}\to \cD$) which admits (\'etale) local sections, such that any pair of local sections are locally isomorphic.
For $r>1$, the root stack is smooth if and only if $\cD$ is smooth.

\begin{lemm}
\label{lem.normalbundlerot}
Let $\cX$ be a smooth separated irreducible DM stack and $\cD$ a divisor on $\cX$ with normal bundle
$\cN=\cN_{\cD/\cX}$.
We denote by $\cZ$ the zero-section of $\cN$.
Then we have a canonical isomorphism
\[
\cN_{\cG_{\cD}/\sqrt[r]{(\cX,\cD)}}\cong
\sqrt[r]{(\cN,\cZ)}.
\]
In particular, the complement of the zero-section of $\cN_{\cG_{\cD}/\sqrt[r]{(\cX,\cD)}}$ is isomorphic to the complement of the zero-section of $\cN_{\cD/\cX}$.
\end{lemm}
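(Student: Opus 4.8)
The plan is to compute the normal bundle of the root-stack gerbe directly from the fiber-product description of the root stack and the functoriality of root constructions. First I would work \'etale-locally on $\cX$: choosing an \'etale atlas $U$ of $\cX$ over which $\cN=\cN_{\cD/\cX}$ is trivialized and $\cD$ is cut out by a single coordinate $x$, the root stack $\sqrt[r]{(\cX,\cD)}$ is described by adjoining an $r$th root $u$ of $x$. The gerbe $\cG_{\cD}$ is then the locus $u=0$, and the conormal sheaf of $\cG_{\cD}$ in the root stack is generated by $du$. I would compare this with the root stack $\sqrt[r]{(\cN,\cZ)}$, whose construction adjoins an $r$th root of the fiber coordinate of $\cN$; the zero-section of $\cN$ pulls back to the gerbe, and the key point is that the fiber coordinate on $\cN$ is (up to the trivialization) the same datum $x$, so extracting its $r$th root produces the same $\mu_r$-gerbe structure. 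The content of the lemma is that these local identifications glue, which follows because both sides are built functorially from the morphism \eqref{eqn.toGm} to $[\A^1/\G_m]$ (respectively its analogue for $\cN$), and the normal bundle $\cN$ itself patches the local trivializations of $\cO_{\cX}(\cD)|_{\cD}$.

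More precisely, I would exploit the deformation-to-the-normal-cone picture, or simply the following observation: the square defining the root stack restricts, over $\cD\subset\cX$, to the square defining $\cG_{\cD}\to\cD$ as a $\mu_r$-gerbe banded by $\cO_{\cX}(\cD)|_{\cD}$, and the first-order neighborhood of $\cG_{\cD}$ in $\sqrt[r]{(\cX,\cD)}$ is governed by $\cN_{\cG_{\cD}/\sqrt[r]{(\cX,\cD)}}$, which is a line bundle on $\cG_{\cD}$. Pulling back the defining morphism \eqref{eqn.toGm} along the inclusion of this first-order neighborhood and comparing with the corresponding construction on the total space of $\cN$ (where $\cN\to[\A^1/\G_m]$ classifies the tautological section) gives the isomorphism. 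The normal bundle $\cN_{\cG_{\cD}/\sqrt[r]{(\cX,\cD)}}$ is identified with the $r$th root of the pullback of $\cO_{\cX}(\cD)$ to $\cG_{\cD}$; on the other side, $\cN_{\cG_{\cD}/\sqrt[r]{(\cN,\cZ)}}$ is the analogous $r$th root of the tautological line bundle on $\cN$ restricted to $\cZ$, which is again $\cO_{\cX}(\cD)|_{\cD}$ pulled back. So both sides are canonically $\sqrt[r]{(\cN,\cZ)}$ as stacks over $\cG_{\cD}$.

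The final assertion is then formal: removing the zero-section of the root stack $\sqrt[r]{(\cN,\cZ)}$ is the same as removing the gerbe, i.e. removing the preimage of $\{0\}$ under $\cN\to[\A^1/\G_m]$; over the complement of $\cZ$, the $r$th-power map $[\G_m/\G_m]\to[\G_m/\G_m]$ is an isomorphism (equivalently, $\mu_r$ acts freely away from the zero-section of a line bundle only in the stacky sense, but the root stack over the open part is just the pullback along an \'etale, indeed isomorphism, map of quotient stacks), so $\sqrt[r]{(\cN,\cZ)}\setminus\cG_{\cD}\cong \cN\setminus\cZ=\cN_{\cD/\cX}\setminus\{\text{zero-section}\}$.

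I expect the main obstacle to be bookkeeping the canonicity: one must check that the local identification of $\cN_{\cG_{\cD}/\sqrt[r]{(\cX,\cD)}}$ with an $r$th root of $\cO_{\cX}(\cD)|_{\cD}$ is independent of the chosen atlas and local equation, and that it matches the identification coming from $\sqrt[r]{(\cN,\cZ)}$ on the nose rather than up to an $r$th root of unity. This is handled by phrasing everything in terms of the universal morphism to $[\A^1/\G_m]$ and the fact that $\cN_{\cD/\cX}$ is \emph{defined} as the line bundle gluing the local conormal data, so no ambiguity of roots of unity enters; but writing this cleanly, rather than via \'etale-local trivializations, is where the care is needed.
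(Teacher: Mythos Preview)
Your approach is correct but more roundabout than the paper's. You identify both sides as the total space of the tautological $r$th-root line bundle on the gerbe $\cG_{\cD}\cong\cG_{\cZ}$; for this to work you need the (true but not argued) fact that the root stack $\sqrt[r]{(\cN,\cZ)}$ of a line bundle along its zero section is itself a line bundle over its gerbe, and you need the identification $\cG_{\cZ}\cong\cG_{\cD}$, which you use implicitly. These steps can be filled in, but they are exactly the bookkeeping you flag as the main obstacle.

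The paper bypasses all of this with a single diagram chase. It writes down a $2$-commutative (emphatically \emph{not} Cartesian) square with $[\A^1/\G_m]$ at every corner, the $r$th-power maps on the verticals and the ``send everything to zero'' maps on the horizontals, then base-changes along the classifying morphism $\cX\to[\A^1/\G_m]$. The key point is the identification $[\A^1/\G_m]\times_{0,[\A^1/\G_m]}\cX\cong\cN_{\cD/\cX}$, which says that normal-bundle formation commutes with the flat classifying map (since $[\A^1/\G_m]$ is the normal bundle of $B\G_m$ in $[\A^1/\G_m]$). The analogous identification one level up gives $\cN_{\cG_{\cD}/\sqrt[r]{(\cX,\cD)}}$, and the resulting honest fiber square over $[\A^1/\G_m]$ exhibits this normal bundle directly as the $r$th root of $\cN_{\cD/\cX}$ along its zero section. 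No local coordinates, no separate line-bundle-over-gerbe step, and no canonicity worries: your last paragraph already points in this direction when you say the issue is ``handled by phrasing everything in terms of the universal morphism to $[\A^1/\G_m]$,'' which is precisely what the paper does.
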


\begin{proof}
There is a $2$-commutative diagram of stacks (\emph{not} a fiber diagram!)
\[
\xymatrix{
[\A^1/\G_m] \ar[r]^0\ar[d]_{({-})^r}& [\A^1/\G_m]\ar[d]^{({-})^r} \\
[\A^1/\G_m] \ar[r]^0 & [\A^1/\G_m]
}
\]
where the horizontal arrows are given by the $\G_m$-equivariant morphism sending $\A^1$ to $\{0\}\subset \A^1$ and
the vertical arrows are induced by the $r$th power maps on $\A^1$ and on $\G_m$.
When we perform base change by the morphism \eqref{eqn.toGm} we obtain
\[
\xymatrix{
[\A^1/\G_m]\times_{0\circ({-})^r,[\A^1/\G_m]}\cX \ar[r]\ar[d] & \sqrt[r]{(\cX,\cD)}\ar[d] \\
[\A^1/\G_m]\times_{0,[\A^1/\G_m]}\cX \ar[r] & \cX
}
\]

We analyze the terms on the left.
Identifying $[\A^1/\G_m]$ with the normal bundle to $[\{0\}/\G_m]$ in $[\A^1/\G_m]$, flatness of \eqref{eqn.toGm} leads to an identification
\[ [\A^1/\G_m]\times_{0,[\A^1/\G_m]}\cX\cong \cN_{\cD/\cX}. \]
Analogously,
\[ [\A^1/\G_m]\times_{0\circ({-})^r,[\A^1/\G_m]}\cX\cong \cN_{\cG_{\cD}/\sqrt[r]{(\cX,\cD)}}. \]
Now the fiber square
\[
\xymatrix{
\cN_{\cG_{\cD}/\sqrt[r]{(\cX,\cD)}}\ar[r] \ar[d] & [\A^1/\G_m]\ar[d]^{({-})^r} \\
\cN_{\cD/\cX} \ar[r] & [\A^1/\G_m]
}
\]
gives rise to an identification of $\cN_{\cG_{\cD}/\sqrt[r]{(\cX,\cD)}}$ with the $r$th root stack of $\cN_{\cD/\cX}$ along the zero-section.
\end{proof}

We return to the setting of an snc divisor $\cD=\cD_1\cup\dots\cup\cD_{\ell}$ on a smooth separated DM stack $\cX$.
As mentioned above, to obtain a smooth root stack $\sqrt[r]{(\cX,\cD)}$ (with $r>1$) we require $\cD$ to be smooth, equivalently, $\cD_i\cap \cD_j=\emptyset$ for $i\ne j$.
A different construction, the \emph{iterated root stack} \cite[Defn.\ 2.2.4]{cadman}, preserves smoothness when applied to an snc divisor.
Each $\cD_i$ individually determines a morphism to $[\A^1/\G_m]$, so together we have a morphism
\[ \cX\to [\A^{\ell}/\G_m^\ell]. \]
Let an $\ell$-tuple of positive integers $\mathbf{r}=(r_1,\dots,r_{\ell})$ be given.
The product of the morphisms $({-})^{r_i}$ is a morphism
\[ ({-})^{\mathbf{r}}\colon [\A^{\ell}/\G_m^\ell]\to [\A^{\ell}/\G_m^\ell]. \]
The iterated root stack is
\[ \sqrt[\mathbf{r}]{(\cX,\cD)}=\cX\times_{[\A^{\ell}/\G_m^\ell],({-})^{\mathbf{r}}}[\A^{\ell}/\G_m^\ell]. \]

Another sort of root stack, given in \cite[\S 2]{cadman}, \cite[App.\ B]{AGV},
is the $r$th root $\sqrt[r]{L/\cX}$ of a line bundle $L$ on $\cX$.
This is defined as
\begin{equation}
\label{eqn.rootlb}
\mathcal{X}\times_{B\G_m}B\G_m,
\end{equation}
the fiber product of $\mathcal{X}\to B\G_m$ associated with $L$, and the $r$th power map
$B\G_m\to B\G_m$.

\begin{lemm}
\label{lem.rootofroot}
Let $\cX$ be a smooth separated DM stack, $\cD$ a divisor on $\cX$, and $r$ and $s$ positive integers.
Let $\cX'=\sqrt[r]{(\cX,\cD)}$, with gerbe of the root stack $\cG_{\cD}$ and morphism $\pi\colon \cX'\to \cX$.
Then:
\begin{itemize}
\item[$\mathrm{(i)}$]
One has
\[ \sqrt[s]{(\cX',\cG_{\cD})}\cong \sqrt[rs]{(\cX,\cD)}. \]
\item[$\mathrm{(ii)}$]
Let $L$ be a line bundle on $\cX$ and
$L'=\pi^*L$.
Then
\[ \sqrt[s]{L'/\cX'}\cong \sqrt[r]{\big(\sqrt[s]{L/\cX},\cD\times_{\cX}\sqrt[s]{L/\cX}\big)} \]
\end{itemize}
\end{lemm}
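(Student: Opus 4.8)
The plan is to treat both parts by the same mechanism as in Lemma \ref{lem.normalbundlerot}: express each root stack as a fiber product over a classifying stack (or over $[\A^1/\G_m]$), and then reorganize the resulting iterated fiber product using associativity of fiber products together with the fact that the relevant power maps on $\G_m$ (or on $[\A^1/\G_m]$) commute and compose in the obvious way. Concretely, for (i), recall that $\cX'=\sqrt[r]{(\cX,\cD)}=\cX\times_{[\A^1/\G_m],({-})^r}[\A^1/\G_m]$. The first step is to identify the line bundle $\cO_{\cX'}(\cG_{\cD})$ with section vanishing on $\cG_{\cD}$: by the construction of the root stack, the second projection $\cX'\to[\A^1/\G_m]$ is precisely the morphism classifying $(\cO_{\cX'}(\cG_{\cD}),s_{\cG_{\cD}})$, since $\cG_{\cD}$ is by definition the preimage of $[\{0\}/\G_m]$. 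Hence
\[
\sqrt[s]{(\cX',\cG_{\cD})}
=\cX'\times_{[\A^1/\G_m],({-})^s}[\A^1/\G_m]
=\cX\times_{[\A^1/\G_m],({-})^r}[\A^1/\G_m]\times_{[\A^1/\G_m],({-})^s}[\A^1/\G_m].
\]
The second step is to collapse the last two factors: the composite $[\A^1/\G_m]\xrightarrow{({-})^s}[\A^1/\G_m]\xrightarrow{({-})^r}[\A^1/\G_m]$ equals $({-})^{rs}$, so the iterated fiber product is canonically $\cX\times_{[\A^1/\G_m],({-})^{rs}}[\A^1/\G_m]=\sqrt[rs]{(\cX,\cD)}$. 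This is just the transitivity of base change, so the only thing to check carefully is that the morphism $\cX'\to[\A^1/\G_m]$ used in forming $\sqrt[s]{(\cX',\cG_{\cD})}$ really is the second projection and not something twisted; this follows from \cite[\S 2]{cadman}, where the tautological character of the construction is spelled out.

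For (ii), the strategy is the same but now one must juggle a $B\G_m$-factor (for the line-bundle root) and an $[\A^1/\G_m]$-factor (for the divisor root) simultaneously. Write $\cY=\sqrt[s]{L/\cX}=\cX\times_{B\G_m,({-})^s}B\G_m$ using \eqref{eqn.rootlb}, and recall $\cX'=\sqrt[r]{(\cX,\cD)}=\cX\times_{[\A^1/\G_m],({-})^r}[\A^1/\G_m]$ with $L'=\pi^*L$. The key observation is that the two structure morphisms $\cX\to B\G_m$ (classifying $L$) and $\cX\to[\A^1/\G_m]$ (classifying $(\cO_\cX(\cD),s_\cD)$) are independent, i.e.\ they assemble to a single morphism $\cX\to B\G_m\times[\A^1/\G_m]$, and the two power maps $({-})^s$ on $B\G_m$ and $({-})^r$ on $[\A^1/\G_m]$ act on disjoint factors. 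Therefore both $\sqrt[s]{L'/\cX'}$ and $\sqrt[r]{(\sqrt[s]{L/\cX},\,\cD\times_\cX\sqrt[s]{L/\cX})}$ are canonically identified with the single fiber product
\[
\cX\times_{B\G_m\times[\A^1/\G_m],\ ({-})^s\times({-})^r}\big(B\G_m\times[\A^1/\G_m]\big),
\]
the first by forming the divisor-root first and the line-bundle-root second, the second by the opposite order. To make this precise one checks that pullback is compatible with the line-bundle classifying map (so that $L'=\pi^*L$ corresponds to the composite $\cX'\to\cX\to B\G_m$) and that the divisor $\cG_{\cD}$, respectively $\cD\times_\cX\cY$, is cut out by the pullback of $(\cO(\cD),s_\cD)$ in each case; both are immediate from the fiber-product definitions.

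The main obstacle I expect is purely bookkeeping rather than conceptual: keeping the $2$-categorical structure straight, i.e.\ exhibiting the $2$-isomorphisms witnessing ``$({-})^r\circ({-})^s\simeq({-})^{rs}$'' and the commutativity of the two classifying maps, and verifying that the canonical isomorphisms of the two sides are compatible with the structure morphisms to $\cX$ (so that the statement is an isomorphism of stacks \emph{over} $\cX$, as implicitly intended). One should also note at the outset that when $\cD$ is empty or $r=1$ the statements degenerate to tautologies, and that smoothness plays no role here (it is only invoked to ensure $\cG_\cD$ is again a nice substack, but the root-construction identities hold for arbitrary effective Cartier divisors). I would organize the write-up so that part (i) is proved first and then reused: part (ii) with $L$ trivial recovers a special case of (i), which is a useful consistency check. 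No delicate estimates or deformation arguments are needed; the entire proof is a diagram chase in the $2$-category of DM stacks using universal properties of fiber products.
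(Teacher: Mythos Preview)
Your proposal is correct and follows essentially the same approach as the paper: for (i), identify the composite $({-})^s$ then $({-})^r$ on $[\A^1/\G_m]$ with $({-})^{rs}$ and use transitivity of base change; for (ii), use that the two power maps $({-})^r\times 1$ and $1\times({-})^s$ on $[\A^1/\G_m]\times B\G_m$ commute. The paper's proof is simply a terser version of what you wrote, and your added care about which morphism $\cX'\to[\A^1/\G_m]$ classifies $\cG_{\cD}$ is exactly the point that makes the argument go through.
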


\begin{proof}
Statement (i) follows from the definition, since the composite
\[ [\A^1/\G_m]\stackrel{({-})^s}\longrightarrow [\A^1/\G_m]\stackrel{({-})^r}\longrightarrow [\A^1/\G_m] \]
is equal (up to $2$-isomorphism) to
$$
({-})^{rs}\colon [\A^1/\G_m]\to [\A^1/\G_m].
$$
For (ii), we argue similarly, using the two composite morphisms 
$$
({-})^r\times 1\circ 1\times ({-})^s\quad \text{ and } \quad 1\times ({-})^s\circ ({-})^r\times 1
$$ 
from 
$[\A^1/\G_m]\times B\G_m$ to $[\A^1/\G_m]\times B\G_m$.
\end{proof}

\begin{prop}
\label{prop.rootcoarse}
Let $\cD$ be a divisor on a smooth separated DM stack $\cX$, and $r$ a positive integer.
Then
\[ \sqrt[r]{(\cX,\cD)}\to \cX \]
is a coarsening morphism.
The same holds when $\cD=\cD_1\cup\dots\cup \cD_{\ell}$ is an snc divisor for the iterated root stack
\[ \sqrt[\mathbf{r}]{(\cX,\cD)}\to \cX, \]
for any $\mathbf{r}$, and for the $r$th root
\[ \sqrt[r]{L/X}\to \cX \]
of any line bundle $L$.
\end{prop}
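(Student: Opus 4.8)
The plan is to reduce the statement --- using that the relative coarse moduli space is formed locally over the base --- to the case of an affine scheme base over which the relevant line bundles are trivial, where each of the three stacks becomes an explicit quotient of an affine scheme by a finite diagonalizable group, so that the assertion becomes a one-line invariant-theory computation.

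First I would record that a morphism of smooth separated DM stacks is a coarsening morphism precisely when the structural morphism from its relative coarse moduli space to the target is an isomorphism, and that this may be checked after an \'etale base change $U\to\cX$ with $U$ an affine scheme: formation of the relative coarse moduli space commutes with smooth base change on the target (\cite[\S 3]{AOV}), and all three root constructions commute with base change on $\cX$, being fiber products over $[\A^1/\G_m]$, over $[\A^{\ell}/\G_m^{\ell}]$, and over $B\G_m$ respectively, so for instance $\sqrt[r]{(\cX,\cD)}\times_{\cX}U\cong\sqrt[r]{(U,\cD|_U)}$. Over the scheme $U$ a stack with representable morphism to $U$ is simply an algebraic space over $U$, and since the coarse moduli space of each of our stacks already carries a morphism to $U$, its relative coarse moduli space over $U$ coincides with the absolute one. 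Thus it remains to show that $\sqrt[r]{(U,\cD|_U)}$, $\sqrt[\mathbf{r}]{(U,\cD|_U)}$, and $\sqrt[r]{L|_U/U}$ have coarse moduli space $U$, compatibly with the projection.

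Shrinking $U$, I may assume $\cO_U(\cD)$ (respectively each $\cO_U(\cD_i)$, respectively $L|_U$) is trivial; as $U$ is smooth, hence reduced, $\cD$ (resp.\ $\cD_i$) is then the zero locus $V(f)$ (resp.\ $V(f_i)$) of a nonzerodivisor, and in the snc case the $f_i$ may be taken to be part of an \'etale coordinate system. The presentations of \cite[\S 2]{cadman} (see also \cite[App.\ B]{AGV}) then identify $\sqrt[r]{(U,V(f))}$ with $[\Spec_U(\cO_U[z]/(z^r-f))/\mu_r]$, where $\mu_r$ scales $z$; identify $\sqrt[\mathbf{r}]{(U,\cD|_U)}$ with $[\Spec_U(\cO_U[z_1,\dots,z_{\ell}]/(z_1^{r_1}-f_1,\dots,z_{\ell}^{r_{\ell}}-f_{\ell}))/\prod_i\mu_{r_i}]$, where $\mu_{r_i}$ scales $z_i$; and identify $\sqrt[r]{L|_U/U}$ with $U\times B\mu_r$. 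In each case the quotient is of an affine scheme finite over $U$ by a finite diagonalizable group, and since each $z_i$ carries a nontrivial character of that group while $z_i^{r_i}=f_i$ already lies in $\cO_U$, the ring of invariants is exactly $\cO_U$. Hence the coarse moduli space is $U$ and the projection exhibits it as such, as required.

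The point needing care is the opening reduction --- that the coarsening property is \'etale-local on the target, and that over a scheme base relative and absolute coarse moduli spaces agree --- rather than the local computation, which is elementary. As an alternative, more in the spirit of Lemmas~\ref{lem.normalbundlerot}--\ref{lem.rootofroot}, one can bypass the presentations: the three morphisms are flat base changes of the universal morphisms $({-})^r\colon[\A^1/\G_m]\to[\A^1/\G_m]$, $({-})^{\mathbf{r}}\colon[\A^{\ell}/\G_m^{\ell}]\to[\A^{\ell}/\G_m^{\ell}]$, and $({-})^r\colon B\G_m\to B\G_m$ --- the morphism \eqref{eqn.toGm} and its iterated analogue being flat since the divisor components are cut out by regular sections, and $\cX\to B\G_m$ being smooth --- and coarsening morphisms are stable under flat base change because formation of the relative coarse moduli space commutes with it (\cite[\S 3]{AOV}); one then checks that each universal morphism is coarsening by the same atlas-level computation, e.g.\ pulling $({-})^r\colon[\A^1/\G_m]\to[\A^1/\G_m]$ back along its target atlas yields $\sqrt[r]{(\A^1,\{0\})}\cong[\A^1/\mu_r]$, which has coarse space $\A^1$.
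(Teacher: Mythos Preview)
Your proof is correct, but it takes a genuinely different route from the paper's. The paper's proof is essentially a citation: it invokes \cite[Cor.\ 2.3.7]{cadman} for the assertion that a root stack (of either flavor) is a coarsening morphism, and then observes via \cite[Rmk.\ 2.2.5]{cadman} that an iterated root stack is obtained as a sequence of individual root stacks, so the iterated case follows by composition. No local computation is performed in the paper itself.

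Your approach, by contrast, is self-contained: you reduce to an affine atlas using the \'etale-local nature of the relative coarse moduli space and its coincidence with the absolute one over a scheme base, and then compute invariants directly from Cadman's local presentations. This is in effect a reproof of the cited result from \cite{cadman}, carried out uniformly for all three constructions. The gain is transparency---one sees exactly why the coarse moduli space does not change---at the cost of length. Your alternative paragraph, reducing via flat base change to the universal morphisms on $[\A^1/\G_m]$, $[\A^\ell/\G_m^\ell]$, and $B\G_m$, is closer in spirit to the paper's treatment of the surrounding lemmas and is also correct; the flatness of \eqref{eqn.toGm} that you invoke is indeed available since $\cD$ is an effective Cartier divisor on the smooth stack $\cX$.
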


\begin{proof}
A root stack is a coarsening morphism, by \cite[Cor.\ 2.3.7]{cadman}.
The case of an iterated root stack is taken care of by the observation \cite[Rmk.\ 2.2.5]{cadman} that an iterated root stack may be obtained as a sequence of individual root stacks.
\end{proof}

\section{Birationality}
\label{sect:birationality}
Interest in birational geometry of stacks was stimulated by progress in the
Minimal Model Program
for moduli stacks, see e.g.~\cite{HH},
from which we have taken the notions in Definition \ref{def:birs}, below. All stacks in this section are DM stacks, separated and of finite type over $k$.
(In several places, clearly indicated, there are DM stacks which are separated and of finite type over a different field.) 

\begin{defi}
\label{def:birs}
Let $\cX$ and $\cY$ be stacks.
\begin{itemize}
\item[(i)]
A morphism $f\colon \cX\to \cY$ is \emph{birational} if $f$ restricts to an isomorphism $\cU\to \cV$ of dense open substacks of $\cX$, respectively $\cY$.
\item[(ii)]
A \emph{rational map} $\cX\dashrightarrow \cY$ is an object of the category
\[ \MAP(\cX,\cY)=\colim_{\cU\subset \cX} \HOM(\cU,\cY), \]
where the colimit is taken over dense open substacks $\cU\subset \cX$.
So, a rational map is represented by
$f\colon \cU\to \cY$ for some such $\cU$, with $(\cU,f)\cong (\cU',f')$
if and only if $f$ and $f'$ become $2$-isomorphic after restriction to common dense open.
\item[(iii)]
A rational map $\cX\dashrightarrow \cY$ is \emph{dominant} if, for some or equivalently every representative $(\cU,f)$, $f(\cU)$ is dense in $\cY$.
\item[(iv)]
A rational map $\cX\dashrightarrow \cY$ is \emph{birational} if, for some representative $(\cU,f)$,
$f$ maps $\cU$ isomorphically to a dense open substack of $\cY$.
\item[(v)]
A rational map $\cX\dashrightarrow \cY$ is \emph{proper} if there exist a representative $(\cU,f)$, a
proper birational morphism
$\cX'\to \cX$, and a proper morphism $\cX'\to \cY$ such that $f$ factors up to $2$-isomorphism as
\begin{equation}
\label{eqn.ffactors}
\cU\cong \cX'\times_{\cX}\cU\to \cY.
\end{equation}
\end{itemize}
\end{defi}

\begin{prop}
\label{prop.reprmaxopen}
Let $f\colon \cX'\to \cX$ be a representable proper birational morphism of stacks, with $\cX$ normal and $\cX'$ reduced.
Then the maximal open substack $\cU\subset \cX$, over which $f$ restricts to an isomorphism
$f^{-1}(\cU)\to \cU$,
has complement everywhere of codimension $\ge 2$.
\end{prop}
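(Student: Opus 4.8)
The plan is to reduce to the known statement for schemes (or algebraic spaces) by working on an étale atlas, and to use the valuative criterion together with normality to control the locus where $f$ fails to be an isomorphism. First I would observe that, since the assertion is about an open substack of $\cX$ and its complement, and since both the property of being an isomorphism and the codimension of a closed substack may be checked étale-locally, it suffices to treat the case where $\cX$ is a normal scheme. Indeed, pick an étale atlas $V\to\cX$ with $V$ a normal scheme; then $\cX'\times_{\cX}V\to V$ is again representable, proper, and birational, with $V$ normal and $\cX'\times_{\cX}V$ reduced (reducedness and normality are étale-local), and the maximal open $\cU\subset\cX$ over which $f$ is an isomorphism pulls back to the corresponding maximal open of $V$; codimension of the complement is preserved under the étale map $V\to\cX$. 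So I may and do assume $\cX$ is a normal scheme and $\cX'$ a reduced algebraic space, with $f\colon\cX'\to\cX$ proper and birational.

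Next I would establish existence of the maximal such open $\cU$. The locus of points of $\cX$ over which $f$ is an isomorphism is open: properness plus Zariski's main theorem type arguments show that the set of $x\in\cX$ such that $f$ is an isomorphism over a neighborhood of $x$ is open, and one checks it is the largest open with this property; call it $\cU$. It contains the dense open over which $f$ is already known to be an isomorphism (from birationality), so $\cU$ is dense and $Z:=\cX\setminus\cU$ is a proper closed subset. Now suppose for contradiction that some irreducible component $Z_0$ of $Z$ has codimension $1$ in $\cX$. Let $\eta$ be the generic point of $Z_0$; since $\cX$ is normal and $Z_0$ has codimension $1$, the local ring $\cO_{\cX,\eta}$ is a discrete valuation ring.

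The heart of the argument is then the valuative criterion. The morphism $\Spec\cO_{\cX,\eta}\to\cX$ sends the generic point into the open set where $f$ is an isomorphism, so the rational section of $f$ defined there extends, by the valuative criterion of properness applied to the proper morphism $f$ (applicable since $\cO_{\cX,\eta}$ is a DVR and $\cX'$ is separated over $\cX$), to a morphism $s\colon\Spec\cO_{\cX,\eta}\to\cX'$ with $f\circ s=\mathrm{id}$. Because $\cX'$ is reduced and $s$ agrees with the inverse isomorphism on the dense generic point, $s$ is an immersion onto a reduced closed subspace of $\cX'\times_{\cX}\Spec\cO_{\cX,\eta}$ through which the projection is an isomorphism; one then checks, using that $f$ is proper with finite fibers over the generic point and that $\cO_{\cX,\eta}$ is a DVR, that $\cX'\times_{\cX}\Spec\cO_{\cX,\eta}\to\Spec\cO_{\cX,\eta}$ is in fact an isomorphism (the fiber over the closed point is a proper reduced scheme birational to a point, hence a point, and the total space is flat by the local criterion since it is torsion-free over a DVR and surjects with finite generic fiber). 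By the openness established above and standard spreading-out, $f$ is then an isomorphism over an open neighborhood of $\eta$ in $\cX$, contradicting $\eta\in Z_0\subset Z$. Hence every component of $Z$ has codimension $\ge 2$, as claimed.

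The step I expect to be the main obstacle is the passage from "the valuative lift $s$ exists" to "$f$ is an isomorphism over a neighborhood of $\eta$": one must argue carefully that the closed subspace $s(\Spec\cO_{\cX,\eta})\subset\cX'\times_\cX\Spec\cO_{\cX,\eta}$ actually exhausts the fiber product, i.e.\ that there is no extra component or embedded behavior in the special fiber, which is where reducedness of $\cX'$ and properness (to rule out components mapping to a point with positive-dimensional image, impossible here since the fiber over a codimension-one point inside a birational map is finite) both get used. The remaining points—openness of the isomorphism locus, spreading out from the generic point of $Z_0$, and the étale-local reduction—are routine.
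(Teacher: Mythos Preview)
Your approach is correct and essentially the same as the paper's: reduce via an \'etale atlas to the case of a normal scheme target and an algebraic space source, then use the valuative criterion of properness at the generic point of a hypothetical codimension-one component of the non-isomorphism locus. The one point the paper makes more explicit is the compatibility of the maximal isomorphism locus $\cU$ with \'etale base change, which you assert directly; the paper secures this by first characterizing $\cU$ as the complement of $f(\cX'\setminus\cU')$, where $\cU'\subset\cX'$ is the flat locus of $f$ (flatness being \'etale-local and images under proper maps being compatible with flat base change), after noting that a representable proper birational morphism which is moreover flat must be an isomorphism.
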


\begin{proof}
For any finite-type morphism of locally Noetherian schemes, the set of points of the source, where the morphism is flat, is open by \cite[(11.1.1)]{EGAIV}.
By the local nature of flatness, we have the same for any morphism of stacks.
Letting $\cU'$ denote the locus where $f$ is open, we claim that $\cU$ is the complement of $f(\cX'\setminus\cU')$.
A representable proper birational morphism, if additionally flat, must be an isomorphism.
So
\[ \cX\setminus f(\cX'\setminus\cU')\subset \cU, \]
and we have equality, since
$\cU'$ contains $f^{-1}(\cU)$.

This observation shows that the formation of $\cU$ from $f\colon \cX'\to \cX$ commutes with \'etale base change.
Thus we are reduced to proving the statement when $\cX$ is a scheme and $\cX'$ an algebraic space.
Then we obtain the statement by a straightforward application of the valuative criterion for properness.
\end{proof}

\begin{prop}
\label{prop:birs}
Let $\cX$ and $\cY$ be stacks, with $\cX$ reduced, and $\cX\dashrightarrow \cY$ a rational map.
\begin{itemize}
\item[$\mathrm{(i)}$]
The closure $\Gamma(\cX\dashrightarrow \cY)$ in $\cX\times \cY$ of the image of the graph
\[ \cU\stackrel{(j,f)}\to \cX\times \cY, \]
where
$(\cU,f)$ is a representative of the rational map and $j\colon \cU\to \cX$ denotes the inclusion, is independent of the choice of representative.
The rational map is proper if and only if $\Gamma(\cX\dashrightarrow \cY)\to \cX$ and $\Gamma(\cX\dashrightarrow \cY)\to \cY$ are proper.
\item[$\mathrm{(ii)}$]
Suppose $(\cU,f)$ is a representative of the rational map $\cX\dashrightarrow \cY$.
Then there exists a reduced stack $\cX'$ with birational morphism to $\cX$ and morphism to $\cY$, such that
$f$ factors up to $2$-isomorphism as \eqref{eqn.ffactors} and the induced morphism $\cX'\to \Gamma(\cX\dashrightarrow \cY)$ is projective.
\end{itemize}
\end{prop}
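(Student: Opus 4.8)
The plan is to construct $\Gamma(\cX\dashrightarrow\cY)$ explicitly and deduce both parts from its elementary properties, the only substantive point being the behaviour of properness. First I would check that the graph morphism $(j,f)\colon\cU\to\cX\times\cY$ is an immersion: up to $2$-isomorphism it factors as the graph of $f$, $\cU\to\cU\times\cY$, followed by the open immersion $\cU\times\cY\hookrightarrow\cX\times\cY$, and the graph of $f$ is a closed immersion since it is a section of the separated projection $\cU\times\cY\to\cU$ ($\cY$ being separated). So the image of $(j,f)$, which I write $\Gamma_f(\cU)$, is a locally closed substack of $\cX\times\cY$, mapped isomorphically to the dense open $\cU$ by the first projection, and reduced because $\cX$, hence $\cU$, is reduced; let $\Gamma=\Gamma(\cX\dashrightarrow\cY)$ be its scheme-theoretic closure. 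For independence of the representative: two representatives of the rational map agree, up to a $2$-isomorphism $\alpha$, after restriction to a common dense open $\cU_0$; then $(\mathrm{id}_{\cU_0},\alpha)$ is a $2$-isomorphism of the two graph morphisms $\cU_0\to\cU_0\times\cY$, so they have the same image, which is dense in each of the two graphs (as $\cU_0$ is dense in $\cX$) and hence has the same closure $\Gamma$ in $\cX\times\cY$.

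Next I would record the basic properties of $\Gamma$. The projections $p\colon\Gamma\to\cX$ and $q\colon\Gamma\to\cY$ are separated and of finite type; $p$ is birational, since it restricts to the isomorphism $\Gamma_f(\cU)\to\cU$ onto the dense open $\cU$, using $\Gamma\cap(\cU\times\cY)=\Gamma_f(\cU)$ (the latter being already closed in $\cU\times\cY$); and $\Gamma\times_\cX\cU=p^{-1}(\cU)=\Gamma_f(\cU)\cong\cU$, with the composite $\cU\cong\Gamma\times_\cX\cU\xrightarrow{q}\cY$ equal to $f$ up to $2$-isomorphism. This proves (ii) with $\cX'=\Gamma$: it is reduced, $p\colon\cX'\to\cX$ is birational, $q\colon\cX'\to\cY$ is a morphism, $f$ factors as in \eqref{eqn.ffactors}, and the induced morphism $\cX'\to\Gamma$ is $\mathrm{id}_\Gamma$, which is projective — a closed immersion into the projective fibration $\Proj\Sym\cO_\Gamma=\Gamma$. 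It also yields one direction of (i): if $p$ and $q$ are proper, then $\Gamma$ together with $p$ and $q$ witnesses properness of the rational map in the sense of Definition~\ref{def:birs}(v).

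For the converse, assume the rational map is proper; by the independence just shown I may compute $\Gamma$ from the representative $(\cU,f)$ occurring in Definition~\ref{def:birs}(v), so there are a proper birational $g\colon\cX'\to\cX$ and a proper $h\colon\cX'\to\cY$ with $f$ factoring through $\cX'\times_\cX\cU\cong\cU$. Then $(g,h)\colon\cX'\to\cX\times\cY$ is proper — cancel the separated projection $\cX\times\cY\to\cX$ against the proper $g$ — so its image is a closed substack, and it contains $\Gamma_f(\cU)$, the image of $\cX'\times_\cX\cU\cong\cU$; hence it contains $\Gamma$. Setting $\cW=(g,h)^{-1}(\Gamma)$, a closed substack of $\cX'$, the morphism $\cW\to\Gamma$ is proper (a base change of $(g,h)$) and surjective, while $\cW\to\cX$ and $\cW\to\cY$ factor through the closed immersion $\cW\hookrightarrow\cX'$ followed by $g$, respectively $h$, so they are proper. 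Since $p$ and $q$ are separated and of finite type and become proper after the proper surjective base change $\cW\to\Gamma$, descent of universal closedness forces $p$ and $q$ to be proper, which finishes (i).

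I expect the last step to be the main obstacle: it relies on two standard facts about morphisms of DM stacks — if $v\circ u$ is proper and $v$ separated then $u$ is proper, and if $\cW\to\Gamma$ is proper and surjective with $v\circ(\cW\to\Gamma)$ universally closed then $v$ is universally closed — together with the identification of the image of $(g,h)$. These reduce, via \'etale atlases, to the corresponding assertions for schemes and algebraic spaces (cf.\ \cite{LMB}); the remaining content is formal manipulation of graphs, closures, and fibre products.
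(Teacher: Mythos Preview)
Your proof is correct. For part (i) your argument is essentially the paper's: both establish that some proper surjective map onto $\Gamma$ exists (the paper assumes $\cX'$ reduced so that $(g,h)$ factors through $\Gamma$ directly, while you take the fibre product $\cW=(g,h)^{-1}(\Gamma)$), then descend properness along it. This is a cosmetic difference.

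For part (ii), however, your approach is genuinely simpler than the paper's. You observe that $\cX'=\Gamma$ itself already satisfies every requirement: it is reduced, $p$ is birational, $q$ is a morphism to $\cY$, $p^{-1}(\cU)=\Gamma_f(\cU)\cong\cU$ gives the factorisation~\eqref{eqn.ffactors}, and $\mathrm{id}_\Gamma$ is trivially projective. The paper instead constructs a separate stack $\cX''$ by noting that the graph $\cU\to\cU\times\cY$ is a finite (hence projective) morphism, then invoking \cite[(15.5)]{LMB} to extend the coherent sheaf defining this projective morphism from $\cU\times\cY$ to $\cX\times\cY$, and taking the closure of $\cU$ in the resulting projective fibration. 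This compactification step is unnecessary for the statement as written; your one-line argument suffices. What the paper's construction buys is a $\cX''$ that is manifestly projective over all of $\cX\times\cY$, not merely over $\Gamma$, but this is not asserted in (ii) and is not used in the later applications (Propositions~\ref{prop.reprproperU} and Theorem~\ref{thm.weakfact}), where $\cX'=\Gamma$ would serve equally well.
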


\begin{proof}
The independence of choice of representative follows from the fact that $\cX$ is reduced.
Suppose $\cX\dashrightarrow \cY$ is proper, i.e., there exist $\cX'\to \cX$ and $\cX'\to \cY$ as in Definition \ref{def:birs}(v);
we may suppose $\cX'$ reduced.
The corresponding morphism
$\cX'\to \cX\times \cY$
factors through $\cZ=\Gamma(\cX\dashrightarrow \cY)$.
Since $\cX'\to \cX$ is proper
and $\cZ\to \cX$ is separated, $\cX'\to \cZ$ is proper.
Since, also, $\cX'\to \cZ$ is surjective, properness of
$\cX'\to \cX$ implies that $\cZ\to \cX$ is proper, and the same holds with $\cY$ in place of $\cX$.

The morphism $(j,f)$ factors as the composition of $j\times 1_{\cY}$ with the graph of $f$.
The latter is obtained by base change from the diagonal of $\cY$, hence is representable and finite and thus is projective.
So we have a projective morphism to the open substack $\cU\times \cY\subset \cX\times \cY$, and this may be compactified, by applying \cite[(15.5)]{LMB} to realize the coherent sheaf on the open substack in the definition of projective morphism 
as restriction of a coherent sheaf on the ambient stack, and forming the closure of $\cU$ in the projective fibration over the ambient stack.
This yields a stack $\cX''$, with projective morphism to $\cZ$ and $\cU\cong \cX''\times_{\cX}\cU$.
Now if $\cZ\to \cX$ and $\cZ\to \cY$ are proper, then also $\cX''\to \cX$ and $\cX''\to \cY$ are proper, and the conditions in Definition \ref{def:birs}(v) are met.
We have established (i) and, by the construction of $\cX''$, also (ii).
\end{proof}

\begin{lemm}
\label{lem.openproper}
Let $\cX\dashrightarrow \cY$ be a proper birational map of stacks, and let $\cS\subset \cX$ and $\cT\subset \cY$ be dense open substacks.
Then the corresponding birational map $\cS\dashrightarrow \cT$ is proper if and only if for one or equivalently every factorization \eqref{eqn.ffactors}, the pre-images of $\cS$ and $\cT$ in $\cX'$ are equal.
\end{lemm}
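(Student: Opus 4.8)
The plan is to reduce every factorization to the universal one given by the graph. Write $\cZ=\Gamma(\cX\dashrightarrow \cY)\subset \cX\times \cY$, with projections $p\colon \cZ\to \cX$ and $q\colon \cZ\to \cY$. Since $\cX\dashrightarrow \cY$ is proper, Proposition~\ref{prop:birs}(i) tells us $p$ and $q$ are proper (and birational). We may argue one irreducible component at a time, so assume $\cX$, $\cY$, hence $\cZ$, irreducible. Given any factorization as in~\eqref{eqn.ffactors}, say $g\colon \cX'\to \cX$ proper birational and $h\colon \cX'\to \cY$ proper, we may take $\cX'$ reduced, and then the argument in the proof of Proposition~\ref{prop:birs} gives a \emph{surjective} morphism $\phi\colon \cX'\to \cZ$ with $g=p\circ\phi$ and $h=q\circ\phi$ up to $2$-isomorphism. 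Thus the pre-images of $\cS$ and $\cT$ in $\cX'$ are $\phi^{-1}(p^{-1}(\cS))$ and $\phi^{-1}(q^{-1}(\cT))$, and since $\phi$ is surjective these coincide if and only if $p^{-1}(\cS)=q^{-1}(\cT)$ inside $\cZ$. In particular the pre-image condition is independent of the factorization, and it remains to prove: $\cS\dashrightarrow \cT$ is proper if and only if $p^{-1}(\cS)=q^{-1}(\cT)$.

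Next I identify the graph of the restricted map. Fix a dense open $\cW\subset \cX$ on which $\cX\dashrightarrow \cY$ is represented by an isomorphism onto a dense open of $\cY$; then $\cS\dashrightarrow \cT$ is a well-defined birational map, represented on the dense open $\cW\cap \cS\cap f^{-1}(\cT)$, and its graph there is a dense open of $\cZ$ lying in $\cS\times \cT$. Since $\cZ\cap(\cS\times \cT)=p^{-1}(\cS)\cap q^{-1}(\cT)$ is a dense open of the irreducible $\cZ$ and is closed in $\cS\times \cT$, it coincides with the closure of that graph:
\[ \Gamma(\cS\dashrightarrow \cT)=p^{-1}(\cS)\cap q^{-1}(\cT). \]
By Proposition~\ref{prop:birs}(i), $\cS\dashrightarrow \cT$ is proper exactly when both $p^{-1}(\cS)\cap q^{-1}(\cT)\to \cS$ and $p^{-1}(\cS)\cap q^{-1}(\cT)\to \cT$ are proper.

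If $p^{-1}(\cS)=q^{-1}(\cT)$, then $\Gamma(\cS\dashrightarrow \cT)=p^{-1}(\cS)$ is proper over $\cS$ as a base change of $p$, and, being also $q^{-1}(\cT)$, is proper over $\cT$ as a base change of $q$; so $\cS\dashrightarrow \cT$ is proper. Conversely, suppose $\cS\dashrightarrow \cT$ is proper. Then $p^{-1}(\cS)\cap q^{-1}(\cT)\to \cS$ is proper, so the open immersion $p^{-1}(\cS)\cap q^{-1}(\cT)\hookrightarrow p^{-1}(\cS)$, being a morphism over the separated $p^{-1}(\cS)\to \cS$ whose composite with it is proper, is itself proper; its image is therefore a closed substack of $p^{-1}(\cS)$, and being also open and nonempty in the connected stack $p^{-1}(\cS)$ it is all of it, so the open immersion is an isomorphism, i.e.\ $p^{-1}(\cS)=p^{-1}(\cS)\cap q^{-1}(\cT)\subseteq q^{-1}(\cT)$. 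The symmetric argument using $p^{-1}(\cS)\cap q^{-1}(\cT)\to \cT$ gives the reverse inclusion, hence $p^{-1}(\cS)=q^{-1}(\cT)$.

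The main obstacle is the converse direction --- deducing the equality $p^{-1}(\cS)=q^{-1}(\cT)$ from properness of the two projections of $\Gamma(\cS\dashrightarrow \cT)$ --- and the crux there is that a proper open immersion into a connected stack is an isomorphism. The other ingredients, namely the reduction to the graph via the surjection $\phi$ and the identification $\Gamma(\cS\dashrightarrow \cT)=p^{-1}(\cS)\cap q^{-1}(\cT)$, are routine once one tracks irreducibility and connectedness carefully.
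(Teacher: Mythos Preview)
Your proof is correct and follows essentially the same approach as the paper: both identify $\Gamma(\cS\dashrightarrow \cT)=p^{-1}(\cS)\cap q^{-1}(\cT)$ and use that a proper (dense) open immersion is an isomorphism to deduce $p^{-1}(\cS)=q^{-1}(\cT)$. The only organizational difference is that you front-load the ``independence of factorization'' via the surjection $\phi\colon \cX'\to \cZ$, whereas the paper handles the forward implication directly with an arbitrary $\cX'$ and appeals to the graph only for the converse, noting the independence as a consequence at the end.
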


\begin{proof}
First we show that equality of pre-images in a factorization \eqref{eqn.ffactors} implies properness of $\cS\dashrightarrow \cT$.
Let $\cS'\subset \cX'$ denote the common pre-image.
Then the proper birational morphisms $\cS'\to \cS$ and $\cS'\to \cT$ exhibit $\cS\dashrightarrow \cT$ as proper.

It remains to deduce equality of pre-images from the properness of $\cS\dashrightarrow \cT$.
For this we may suppose without loss of generality that $\cX$ is reduced.
Then we have $\Gamma(\cX\dashrightarrow \cY)$ as in Proposition \ref{prop:birs}, and
\[ \Gamma(\cS\dashrightarrow \cT)=p^{-1}(\cS)\cap q^{-1}(\cT), \]
where
$p\colon \Gamma(\cX\dashrightarrow \cY)\to \cX$ and
$q\colon \Gamma(\cX\dashrightarrow \cY)\to \cY$ denote the projections.
If $\cS\dashrightarrow \cT$ is proper, then by Proposition \ref{prop:birs}(i), the composite morphisms
\[
\Gamma(\cS\dashrightarrow \cT)\to p^{-1}(\cS)\to \cS
\]
and
\[
\Gamma(\cS\dashrightarrow \cT)\to q^{-1}(\cT)\to \cT
\]
are proper.
So in each composition the first map, a dense open immersion, is proper, hence an isomorphism.
Thus $p^{-1}(\cS)=q^{-1}(\cT)$, and from this follows the equality of pre-images in $\cX'$ for any factorization \eqref{eqn.ffactors}.
\end{proof}

For schemes, the canonical representative on the maximal domain of definition of a rational map requires the source to be reduced.
For stacks, even more is required: with ``ghost automorphisms'' (see, e.g., \cite[\S 1.2.1]{AF}), we have examples of pairs of isomorphisms $f$, $f'\colon \cX\to \cY$ of reduced stacks, such that $f$ and $f'$ are not $2$-isomorphic, but become $2$-isomorphic upon restriction to a dense open substack of $\cX$.

The role of the residue field at a point of a scheme is, for a stack, played by the \emph{residual gerbe}
\cite[Thm.\ B.2]{rydhdevissage}: to a point $x$ of a stack $\cX$ there is a monomorphism $\cG_x\to \cX$ and a gerbe $\cG_x\to \Spec(\kappa(x))$.
There is a finite surjective morphism to $\cG_x$ from $\Spec(\lambda)$ for some finite extension field $\lambda/\kappa(x)$, so by Chevalley's theorem the morphism $\cG_x\to \cX$ is affine.

\begin{prop}
\label{prop.bireasy}
Let $\cX$ and $\cY$ be stacks, with $\cX$ normal.
Every rational map $\cX\dashrightarrow \cY$ has a well-defined maximal domain of definition $\cU$, with the property that up to $2$-isomorphism there is a representative $(\cU,f)$, and for any $(\cU',f')$ with
dense open $\cU''\subset \cU\cap \cU'$ and
$2$-isomorphism $\alpha\colon f|_{\cU''}\Rightarrow f'|_{\cU''}$, we have
$\cU'\subset \cU$, and $\alpha$ extends uniquely to $f|_{\cU'}\Rightarrow f'$.
\end{prop}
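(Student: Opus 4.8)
The plan is to reduce the whole statement to a single \emph{extension principle for $2$-isomorphisms}: if $\cX$ is normal, $\cW\subseteq\cX$ is a nonempty open substack, $g,g'\colon\cW\to\cY$ are morphisms, and $\alpha\colon g|_{\cV}\Rightarrow g'|_{\cV}$ is a $2$-isomorphism over a dense open $\cV\subseteq\cW$, then $\alpha$ extends uniquely to a $2$-isomorphism $g\Rightarrow g'$ over all of $\cW$. Since a normal stack is the disjoint union of its connected components, each of which is normal and integral, it suffices to treat one component at a time, so I may assume $\cX$ (hence $\cW$ and $\cV$) irreducible. To prove the principle, let $\cI\to\cW$ be the stack of $2$-isomorphisms from $g$ to $g'$, that is, the fiber product of $(g,g')\colon\cW\to\cY\times\cY$ with the diagonal $\Delta_{\cY}\colon\cY\to\cY\times\cY$. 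Since $\cY$ is a separated DM stack, $\Delta_{\cY}$ is representable, unramified, and proper, hence finite; therefore $\cI\to\cW$ is representable, finite, and in particular separated. Now $\alpha$ is a section of $\cI\to\cW$ over $\cV$, hence a locally closed immersion $\cV\hookrightarrow\cI$ whose image is closed in $\cI\times_{\cW}\cV$; let $\cJ\subseteq\cI$ be the reduced closure of this image, with projection $q\colon\cJ\to\cW$. Then $q$ is finite, it is an isomorphism over $\cV$, and $\cJ$ is integral with the same function field as $\cW$; since $\cW$ is normal, the standard argument (the finite sheaf of $\cO_{\cW}$-algebras $q_*\cO_{\cJ}$ embeds in the function field of $\cW$ and is integral over $\cO_{\cW}$, hence equals $\cO_{\cW}$) shows that $q$ is an isomorphism. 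Composing $q^{-1}$ with $\cJ\hookrightarrow\cI$ gives a section of $\cI\to\cW$ over $\cW$ extending $\alpha$, which is the required $2$-isomorphism $g\Rightarrow g'$. For uniqueness, any two such extensions agree over the dense open $\cV$, and their locus of agreement is closed in $\cW$ (because $\cI\to\cW$ is separated) while $\cW$ is reduced, so they coincide.

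The first consequence I would record is a gluing statement. Suppose $(\cU_1,f_1)$ and $(\cU_2,f_2)$ represent the same rational map; then $f_1$ and $f_2$ become $2$-isomorphic over some dense open $\cV\subseteq\cU_1\cap\cU_2$, and by the extension principle this $2$-isomorphism extends over $\cU_1\cap\cU_2$. By descent of morphisms into the stack $\cY$ along the Zariski cover $\{\cU_1,\cU_2\}$ of $\cU_1\cup\cU_2$ — the cocycle condition being vacuous for a two-element cover — these data glue to a morphism $f\colon\cU_1\cup\cU_2\to\cY$ whose restrictions to $\cU_1$ and $\cU_2$ are $2$-isomorphic to $f_1$ and $f_2$ respectively; in particular $(\cU_1\cup\cU_2,f)$ again represents the same rational map.

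Now define $\cU$ to be the union of the domains of all representatives of the given rational map $\cX\dashrightarrow\cY$. This is an open substack of $\cX$, hence quasi-compact (as $\cX$ is Noetherian), so $\cU=\cU_1\cup\dots\cup\cU_m$ for finitely many representatives $(\cU_i,f_i)$; applying the gluing step $m-1$ times produces a representative $(\cU,f)$ defined on all of $\cU$, still representing the rational map. Finally, let $(\cU',f')$ be any pair with a dense open $\cU''\subseteq\cU\cap\cU'$ and a $2$-isomorphism $\alpha\colon f|_{\cU''}\Rightarrow f'|_{\cU''}$. Then $(\cU',f')$ represents the same rational map as $(\cU,f)$, so $\cU'$ occurs among the domains whose union is $\cU$, i.e.\ $\cU'\subseteq\cU$; and the extension principle applied with $\cW=\cU'$, $g=f|_{\cU'}$, $g'=f'$, $\cV=\cU''$ shows that $\alpha$ extends uniquely to a $2$-isomorphism $f|_{\cU'}\Rightarrow f'$, as required.

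The crux is the extension principle, and within it the passage from schemes to stacks: one must verify that $\cI\to\cW$ is genuinely finite (this is precisely where separatedness and the DM hypothesis on $\cY$ enter) and that the formation of $\cJ$ and the ``finite birational over a normal base is an isomorphism'' argument remain valid for representable morphisms over a normal DM stack — most cleanly checked after base change to an \'etale atlas of $\cX$, which is again normal. The remaining ingredients — gluing via descent, the reduction to a finite union by quasi-compactness, and the deduction of the maximality property — are formal once the extension principle is available.
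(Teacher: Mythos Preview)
Your proof is correct and follows essentially the same approach as the paper. Both arguments hinge on the identical extension principle for $2$-isomorphisms, proved by viewing a $2$-isomorphism as a section of the representable finite morphism $\cW\times_{\cY\times\cY}\cY\to\cW$ and invoking ``finite birational over normal is an isomorphism'' on the closure; the only difference is organizational --- you construct $\cU$ explicitly as the union of all domains and glue via quasi-compactness, whereas the paper takes a maximal domain (via Noetherianness) and then shows it dominates every other, with the gluing step left implicit.
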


\begin{proof}
Let $\cU$ be a dense open substack of $\cX$.
We start by showing that, for $f$, $f'\colon \cX\to \cY$, any $2$-isomorphism $f|_{\cU}\Rightarrow f'|_{\cU}$ extends uniquely to a $2$-isomorphism $f\Rightarrow f'$.
A given $2$-isomorphism $\alpha\colon f|_{\cU}\Rightarrow f'|_{\cU}$ determines a section over $\cU$ of the representable finite morphism
\[
\xymatrix{
\cX\times_{\cY\times \cY}\cY \ar[d]^{\mathrm{pr}_1} \\ \cX
}
\]
where the morphisms to $\cY\times \cY$ are $(f,f')$ and the diagonal $\Delta_{\cY}$.
Since $\cX$ is normal, the closure of the section is isomorphic to $\cX$.
Thus $\alpha$ extends uniquely.

Since $\cX$ is Noetherian, there is a maximal domain of definition $\cU$ of $\cX\dashrightarrow \cY$.
Let $(\cU,f)$ be a representative,
and let $(\cU',f')$ be another representative, with
dense open $\cU''\subset \cU\cap \cU'$ and $2$-isomorphism $\alpha\colon f|_{\cU''}\Rightarrow f'|_{\cU''}$.
By what we have shown above, $\alpha$ extends uniquely to $\alpha'\colon f|_{\cU\cap \cU'}\Rightarrow f'|_{\cU\cap \cU'}$.
With $(\cU,f)$, $(\cU',f')$, and $\alpha'$ we see that there is a representative with domain of definition $\cU\cup \cU'$, so by maximality we must have $\cU'\subset \cU$, and we have unique $\alpha'\colon f|_{\cU'}\Rightarrow f'$ extending $\alpha$.
\end{proof}

\begin{rema}
\label{rem.ismorphism}
Let the notation be as in Proposition \ref{prop.bireasy}.
Then $\cX\dashrightarrow \cY$ has maximal domain of definition equal to $\cX$ if and only if the rational map lies in the essential image of the fully faithful functor
\[ \HOM(\cX,\cY)\to \MAP(\cX,\cY). \]
To indicate this situation we permit ourselves to employ the descriptive phrase \emph{is a morphism}, and also to stipulate a further property, e.g., \emph{is a proper morphism}.
\end{rema}

\begin{coro}
\label{cor.bircompatible}
Let $\cX$ and $\cY$ be stacks, with $\cX$ normal,
$\cX\dashrightarrow \cY$ a rational map, $\cU$ the maximal domain of definition, and $g\colon \cZ\to \cX$ an \'etale morphism of stacks.
Then $g^{-1}(\cU)$ is the maximal domain of definition of the composite rational map $\cZ\dashrightarrow \cY$.
\end{coro}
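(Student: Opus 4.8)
The plan is as follows. Since $g$ is \'etale and $\cX$ is normal and Noetherian, $\cZ$ is normal and Noetherian as well, so Proposition \ref{prop.bireasy} applies to $\cZ\dashrightarrow \cY$ and furnishes a maximal domain of definition, which we call $\cV$. Fix a representative $(\cU,f)$ of $\cX\dashrightarrow \cY$ on its maximal domain of definition. As $g$ is \'etale, hence open, the pre-image $g^{-1}(\cU)$ is a dense open substack of $\cZ$ (every nonempty open of $\cZ$ has open, hence $\cU$-meeting, image in $\cX$). Thus $(g^{-1}(\cU),f\circ g|_{g^{-1}(\cU)})$ is a representative of the composite rational map $\cZ\dashrightarrow \cY$, and therefore $g^{-1}(\cU)\subseteq \cV$. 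It remains to prove the reverse inclusion.

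Let $(\cV,h)$ be a representative of $\cZ\dashrightarrow \cY$. Since both $(\cV,h)$ and $(g^{-1}(\cU),f\circ g)$ represent the composite, they become $2$-isomorphic on a common dense open, and then, by the uniqueness of extensions of $2$-isomorphisms over a normal stack (the first part of the proof of Proposition \ref{prop.bireasy}), we obtain a $2$-isomorphism $h|_{g^{-1}(\cU)}\Rightarrow f\circ g|_{g^{-1}(\cU)}$. Put $\cW=g(\cV)$, an open substack of $\cX$, and let $q\colon \cV\to \cW$ be the induced morphism, which is \'etale and surjective. The idea is to descend $h$ along $q$ to a morphism $\bar h\colon \cW\to \cY$, and then to glue $\bar h$ with $f$ to contradict maximality of $\cU$ unless $\cW\subseteq \cU$.

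To build the descent datum, consider the projections $p_1,p_2\colon \cV\times_{\cW}\cV\to \cV$; the source is again normal and Noetherian, \'etale over $\cV$, and $q\circ p_1$, $q\circ p_2$ are canonically $2$-isomorphic as morphisms to $\cW\subset \cX$. Over the dense open $p_1^{-1}(g^{-1}(\cU))\cap p_2^{-1}(g^{-1}(\cU))$, combining the $2$-isomorphism $h\cong f\circ g$ on $g^{-1}(\cU)$ with the tautological $2$-isomorphism $g\circ p_1\cong g\circ p_2$ gives a $2$-isomorphism $h\circ p_1\Rightarrow h\circ p_2$; by Proposition \ref{prop.bireasy} it extends uniquely to a $2$-isomorphism $\phi\colon h\circ p_1\Rightarrow h\circ p_2$ on all of $\cV\times_{\cW}\cV$. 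The cocycle identity for $\phi$ on the triple fibered product $\cV\times_{\cW}\cV\times_{\cW}\cV$ (again normal) holds on the dense open built by the same construction, where both sides are assembled from pullbacks of the single $2$-isomorphism $h\cong f\circ g$, hence holds everywhere by the uniqueness of extensions. Since morphisms to the algebraic stack $\cY$ satisfy \'etale descent, $(h,\phi)$ descends to $\bar h\colon \cW\to \cY$ with $\bar h\circ q\cong h$. A further application of the same uniqueness principle identifies $\bar h|_{\cU\cap \cW}$ with $f|_{\cU\cap \cW}$: they agree after the \'etale, surjective base change $q^{-1}(\cU\cap \cW)=g^{-1}(\cU)\to \cU\cap \cW$, and the resulting $2$-isomorphism is compatible with the descent data because compatibility can be checked over a dense open, where it is automatic.

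Finally, gluing $f\colon \cU\to \cY$ and $\bar h\colon \cW\to \cY$ along the $2$-isomorphism on $\cU\cap \cW$ (\'etale descent for the Zariski cover $\{\cU,\cW\}$) yields a morphism $\cU\cup \cW\to \cY$ restricting to $f$ on $\cU$; hence this is a representative of $\cX\dashrightarrow \cY$, and maximality of $\cU$ forces $\cU\cup \cW\subseteq \cU$, that is, $\cW=g(\cV)\subseteq \cU$, which is precisely $\cV\subseteq g^{-1}(\cU)$. Together with the first inclusion we conclude $\cV=g^{-1}(\cU)$. The main obstacle is the $2$-categorical descent in the third paragraph --- assembling the descent datum for $h$ and checking the cocycle condition; what makes it routine is that every required identity of $2$-morphisms holds tautologically over a dense open, where all maps are pulled back from $f$, so the uniqueness of extensions over normal stacks in Proposition \ref{prop.bireasy} propagates it to the whole space.
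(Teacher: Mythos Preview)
Your proof is correct and follows essentially the same approach as the paper's: both establish the reverse inclusion by descending the morphism along the \'etale surjection, manufacturing the descent datum and verifying the cocycle condition via the uniqueness-of-extension clause of Proposition~\ref{prop.bireasy}. The only difference is organizational: the paper first reduces (without loss of generality) to $g$ surjective and then to the case where $\cZ$ is a scheme, so that the descent is phrased in terms of the groupoid presentation $[R\rightrightarrows Z]$ of $\cX$, whereas you work directly with the \'etale surjection $\cV\to\cW$ of stacks---but the underlying mechanism is identical.
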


\begin{proof}
There is no loss of generality in supposing $g$ surjective.
The maximal domain of definition of $\cZ\dashrightarrow \cY$ contains
$g^{-1}(\cU)$.
So it suffices to show that if $\cZ\dashrightarrow \cY$ is a morphism, then so is $\cX\dashrightarrow \cY$.

First, we suppose $\cZ=Z$, a scheme,
so, an \'etale atlas for $\cX$.
Then $R=Z\times_{\cX}Z$ is also a scheme, and with its projection morphisms we have the structure of groupoid in schemes $R\rightrightarrows Z$ that recovers $\cX$ in the sense that a naturally defined prestack $[R\rightrightarrows Z]^{\mathrm{pre}}$ has $\cX$ as stackification
\cite{vistoli}.
The two composite morphisms $R\rightrightarrows Z\to \cY$,
restricted to $\cU\times_{\cX}R$,
are canonically identified by the identification of the latter with $g^{-1}(\cU)\times_{\cU}g^{-1}(\cU)$, hence by Proposition \ref{prop.bireasy} are themselves canonically $2$-isomorphic.
The object of $\cY$ over $Z$, corresponding to $Z\to \cY$, and the isomorphism of the two pullbacks to $R$, given by the $2$-isomorphism, satisfy the compatibility condition (commutative diagram of isomorphisms in $\cY$ over a fiber product $R\times_ZR$) to determine a morphism $[R\rightrightarrows Z]^{\mathrm{pre}}\to \cY$
(again by the uniqueness assertion of Proposition \ref{prop.bireasy}).
We obtain $\cX\to \cY$ by the stackification property.

The general case follows by taking an \'etale atlas $V\to \cZ$ and applying the previous case to the composite $V\to \cX$.
\end{proof}

\begin{coro}
\label{cor.bireasy}
Let $\cX$ and $\cY$ be stacks, with $\cX$ normal.
Every birational map
$\cX\dashrightarrow \cY$
has a well-defined maximal open $\cU\subset \cX$, mapped isomorphically by a representative morphism to a dense open substack of $\cY$.
If $\cU'\subset \cX$ is open and a representive morphism maps $\cU'$ isomorphically to a dense open substack of $\cY$, then $\cU'\subset \cU$.
\end{coro}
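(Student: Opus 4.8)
The plan is to bootstrap from Proposition~\ref{prop.bireasy} and a gluing argument; the real content is to transport a comparison that naturally lives on the (possibly non-normal) stack $\cY$ onto the normal stack $\cX$, where Proposition~\ref{prop.bireasy} applies. First, since $\cX$ is normal we may assume it irreducible (otherwise argue on each connected component). The underlying rational map $\cX\dashrightarrow\cY$ then has a maximal domain of definition $\cU_0$ with representative $f\colon\cU_0\to\cY$, by Proposition~\ref{prop.bireasy}. Call an open $\cW\subset\cX$ \emph{good} if some representative of the birational map restricts on $\cW$ to an isomorphism onto a dense open of $\cY$; by Proposition~\ref{prop.bireasy} every good $\cW$ lies in $\cU_0$ and its representative is $2$-isomorphic to $f|_{\cW}$, so I may take $f|_{\cW}$ itself, and ``good'' becomes: $f|_{\cW}$ is an open immersion with dense image. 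The set of good opens is non-empty and $\cX$ is Noetherian, so it is enough to prove this set is closed under pairwise union; then the union $\cU$ of all good opens is a finite union of good opens, hence itself good by induction, is clearly the maximal one (with representative obtained by the gluing below), and contains every $\cU'$ as in the statement (which is good by definition).

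So let $\cU_1,\cU_2$ be good, $f|_{\cU_i}\colon\cU_i\xrightarrow{\sim}\cW_i$ with $\cW_i\subset\cY$ dense open. The hard part is to show that $f|_{\cU_1}$ (and symmetrically $f|_{\cU_2}$) maps $\cU_1\cap\cU_2$ isomorphically onto $\cW_1\cap\cW_2$ --- on the $\cY$-side this asks that two partial ``inverses'' $\cW_1\cap\cW_2\to\cX$ coincide, which cannot be deduced from normality of $\cY$. The device: put $\cB_i=(f|_{\cU_i})^{-1}(\cW_1\cap\cW_2)$, so $f|_{\cB_i}\colon\cB_i\xrightarrow{\sim}\cW_1\cap\cW_2$ and $\cU_1\cap\cU_2\subset\cB_1\cap\cB_2$ (a point of $\cU_1\cap\cU_2$ maps into both $\cW_i$), the inclusion being dense since $\cX$ is irreducible. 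Consider $\varphi=(f|_{\cB_2})^{-1}\circ f|_{\cB_1}\colon\cB_1\xrightarrow{\sim}\cW_1\cap\cW_2\xrightarrow{\sim}\cB_2\subset\cX$. Because $\cU_1,\cU_2\subset\cU_0$, the two morphisms $f|_{\cU_i}$ have the common restriction $f|_{\cU_1\cap\cU_2}=f|_{\cB_2}|_{\cU_1\cap\cU_2}$, so $\varphi|_{\cU_1\cap\cU_2}$ is $2$-isomorphic to the inclusion $\cU_1\cap\cU_2\hookrightarrow\cX$. Now $\varphi$ and the inclusion $\cB_1\hookrightarrow\cX$ are two morphisms from the normal stack $\cB_1$ to the separated stack $\cX$ which are $2$-isomorphic on the dense open $\cU_1\cap\cU_2$; by the argument in the proof of Proposition~\ref{prop.bireasy} the $2$-isomorphism extends over all of $\cB_1$, so $\varphi$ has image $\cB_1$, i.e. $\cB_1=\cB_2$, and then $\cB_1=\cB_2=\cU_1\cap\cU_2$ since $\cB_1\subset\cU_1$ and $\cB_2\subset\cU_2$. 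In particular $f|_{\cU_i}$ maps $\cU_1\cap\cU_2$ isomorphically onto $\cW_1\cap\cW_2$, via the same isomorphism (compare after the monomorphism $\cW_1\cap\cW_2\hookrightarrow\cY$).

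Granting this, the isomorphisms $\cU_i\xrightarrow{\sim}\cW_i$ agree on $\cU_1\cap\cU_2\xrightarrow{\sim}\cW_1\cap\cW_2$, so I glue them --- morphisms to the stack $\cY$ satisfy descent along the open cover $\{\cU_1,\cU_2\}$ of $\cU_1\cup\cU_2$, and the overlap $2$-isomorphism is the canonical one since everything is a restriction of $f$ --- to an isomorphism $\cU_1\cup\cU_2\xrightarrow{\sim}\cW_1\cup\cW_2$. Composing with $\cW_1\cup\cW_2\hookrightarrow\cY$ yields a morphism on $\cU_1\cup\cU_2$ that agrees with $f|_{\cU_1}$ on the dense open $\cU_1$, hence represents $\cX\dashrightarrow\cY$, and is an open immersion with dense image; thus $\cU_1\cup\cU_2$ is good, completing the argument. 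I expect the main obstacle to be precisely the middle paragraph: the naive approach founders because $\cY$ is not assumed normal, and the fix is the transport of the comparison to the normal stack $\cX$ through the isomorphisms $f|_{\cB_i}$, which at the same time disposes of the ``ghost automorphism'' subtleties in the $2$-isomorphisms that would otherwise obstruct the gluing.
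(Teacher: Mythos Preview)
Your argument is correct, but the paper proceeds by a different and shorter route. The paper's first sentence is the whole trick: \emph{nothing changes if we replace $\cY$ by its normal locus}, so one may assume $\cY$ is normal as well. Then Proposition~\ref{prop.bireasy} applies symmetrically to both the forward rational map $\cX\dashrightarrow\cY$ and the inverse $\cY\dashrightarrow\cX$; gluing the representatives on $\cU\cup\cU'$ and on $\cV\cup\cV'$ and extending the $2$-isomorphisms (composite with identity) via the uniqueness clause of Proposition~\ref{prop.bireasy} on each side yields an isomorphism $\cU\cup\cU'\cong\cV\cup\cV'$, hence $\cU'\subset\cU$ by maximality.

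Your approach is to leave $\cY$ untouched and instead transport the comparison of the two partial inverses back to the normal stack $\cX$ via the isomorphisms $f|_{\cB_i}$, so that the extension-of-$2$-isomorphisms step only ever takes place with normal source. This is a bit longer but has the merit of isolating exactly where normality of $\cX$ enters; the paper's reduction to normal $\cY$ is slick but hides that the hypothesis on $\cY$ is not really needed. Either way, the substance is the same application of the separatedness/normality argument from the proof of Proposition~\ref{prop.bireasy}.
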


\begin{proof}
Nothing changes if we replace $\cY$ by its normal locus, so we may suppose $\cY$ normal as well.
Since $\cX$ is Noetherian, a maximal $\cU$ exists.
Let $(\cU,f)$ be a representative, where $f$ maps $\cU$ isomorphically to $\cV\subset \cY$, and let $(\cU',f')$ be another representative, such that $f'$ maps $\cU'$ isomorphically to $\cV'\subset \cY$.
We apply Proposition \ref{prop.bireasy} to obtain a representative with domain of definition $\cU\cup \cU'$ extending $f$ and $f'$, for the inverse rational map a representative with domain of definition $\cV\cup \cV'$ extending $f^{-1}$ and $f'^{-1}$, and $2$-isomorphisms from the composite in either order to the identity.
So, by maximality we have $\cU'\subset \cU$.
\end{proof}

\begin{lemm}
\label{lem.extendtoY}
Let $\cX'\to \cX$ be a proper birational morphism of stacks that is a coarsening morphism, with $\cX$ smooth, and let $\cX'\to \cY$ be a morphism of stacks.
Suppose that the corresponding rational map $\cX\dashrightarrow \cY$ admits a representative $(\cU,f)$, such that the complement of $\cU$ has everywhere codimension $\ge 2$.
Then $\cX\dashrightarrow \cY$ is a morphism.
\end{lemm}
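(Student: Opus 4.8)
The plan is to show that $\cX \dashrightarrow \cY$ extends over all of $\cX$, by combining the codimension-$\ge 2$ hypothesis on $\cU$ with the coarsening structure of $\cX' \to \cX$ and with the valuative-criterion arguments already developed in Proposition~\ref{prop.bireasy} and its corollaries. First I would reduce to a local situation: by Corollary~\ref{cor.bircompatible}, the maximal domain of definition of $\cX \dashrightarrow \cY$ pulls back along an \'etale atlas, so it suffices to treat the case where $\cX$ is a smooth scheme $X$ (still with $\cX' \to \cX$ proper birational and a coarsening morphism, and $\cX' \to \cY$ a morphism). The complement $X \setminus U$ then has codimension $\ge 2$ in the smooth scheme $X$.

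Next I would exploit the coarsening morphism hypothesis. Since $\cX' \to \cX$ is a coarsening morphism, it induces an isomorphism on coarse moduli spaces; as $\cX = X$ is a scheme, the coarse moduli space of $\cX'$ is $X$ itself, and the structure morphism $\cX' \to X$ is the coarse space map. The key point I want to extract is that the morphism $\cX' \to \cY$ is constant on the fibers of $\cX' \to X$ in a strong sense over $U$: over $U$ the composite $\cX' \times_X U \to \cY$ factors through $U \cong f^{-1}(U)$ (here using that $\cX \dashrightarrow \cY$ restricted to $U$ is the morphism $f$, and that $\cX' \to \cX$ is an isomorphism over a dense open, compatibly). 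So the data we must extend is really a morphism from (an open of) $X$, not merely from $\cX'$, once we are over $U$; what is genuinely new is the codimension-$2$ gap $X \setminus U$.

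Then I would run a valuative-criterion / purity argument to fill in the gap. Let $x \in X \setminus U$ be a codimension-$\ge 2$ point. Because $\cX' \to X$ is proper and $X$ is smooth (hence normal), and because $X \setminus U$ has codimension $\ge 2$, every DVR $\Spec(R) \to X$ with generic point landing in $U$ and closed point mapping to a codimension-$1$ point still lands entirely in $U$; this already gives an extension of the rational map over all codimension-$1$ points. Over such a codimension-$1$ point the argument of Proposition~\ref{prop.bireasy} (using that $\cX$ is normal and the relevant morphism $\cX \times_{\cY \times \cY}\cY \to \cX$ is representable and finite, hence that sections extend by taking closures) shows $f$ extends. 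Having extended over all codimension-$1$ points, the domain of definition $\cU$ of $\cX \dashrightarrow \cY$ is an open whose complement has codimension $\ge 2$; but now I invoke the following purity-type input: a morphism to $\cY$ that is defined on the complement of a codimension-$\ge 2$ closed subset of a smooth scheme, and which is known to come from a proper representable $\cX' \to \cX$ (so the graph closure $\Gamma(\cX \dashrightarrow \cY) \to \cX$ is proper, by Proposition~\ref{prop:birs}(i), since $\cX' \to \cY$ is a morphism and $\cX' \to \cX$ is proper birational), extends across that subset. Concretely: $p \colon \Gamma(\cX \dashrightarrow \cY) \to \cX$ is proper and an isomorphism over $\cU$, with $\cX \setminus \cU$ of codimension $\ge 2$; by Proposition~\ref{prop.reprmaxopen} applied to $p$ (valid since $\Gamma$ is reduced and $\cX$ is normal), and since $p$ is already an isomorphism away from codimension $\ge 2$, I want to conclude $p$ is an isomorphism, so the inverse gives the desired morphism $\cX \to \cY$. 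The main obstacle here is justifying that the properness of $p$ together with its being an isomorphism in codimension $\le 1$ forces it to be an isomorphism: this is where I expect to need the coarsening hypothesis in an essential way (to rule out the elliptic-curve-type examples from the introduction, where a representable $\cX' \to \cX$ is an isomorphism in codimension $\le 1$ but the map does not extend as a representable morphism — though here the target is a morphism to a possibly non-representable $\cY$, so the relevant statement is about the ordinary scheme $\Gamma \to X$, which has no stacky obstruction).

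Concretely, the cleanest route for that last step is: $\Gamma \to X$ is proper, birational, with $X$ smooth and the exceptional locus of codimension $\ge 2$ in $X$; I would like to say $\Gamma \to X$ is finite by the argument in Proposition~\ref{prop:birs} (proper $+$ quasi-finite, with quasi-finiteness over the codimension-$\ge 2$ locus being the content to check), hence finite birational onto the normal scheme $X$, hence an isomorphism. Establishing quasi-finiteness of $\Gamma \to X$ over $X \setminus \cU$ is the step I expect to be the real crux, and it is precisely here that I would feed in: (a) the coarsening morphism structure of $\cX' \to \cX$, which constrains the fibers; and (b) the codimension-$\ge 2$ hypothesis, which via the valuative criterion over the generic points of the fibers of $\Gamma \to X$ pins down those fibers to be finite. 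Once $\Gamma \to \cX$ is an isomorphism, composing its inverse with $\Gamma \to \cY$ exhibits $\cX \dashrightarrow \cY$ as a morphism, as claimed.
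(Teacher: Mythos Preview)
Your reduction to the case $\cX=X$ a smooth scheme via Corollary~\ref{cor.bircompatible} is correct and matches the paper.  After that, however, your argument has a genuine gap: you correctly identify that showing quasi-finiteness of $\Gamma(\cX\dashrightarrow\cY)\to X$ is ``the real crux,'' but you never establish it.  Proper birational with exceptional locus of codimension $\ge 2$ in the target does \emph{not} force finiteness (think of a blow-up of a point in a surface), and nothing you have written pins down the fibers over $X\setminus\cU$.  Your paragraph about DVRs hitting codimension-$1$ points is vacuous, since by hypothesis $X\setminus\cU$ already has codimension $\ge 2$, so there is nothing new to extend there.

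The paper's proof avoids the graph entirely.  The coarsening hypothesis is used, not to constrain fibers of $\Gamma\to X$, but to produce a morphism $X\to Y$ to the coarse moduli space of $\cY$ (since $X$ is the coarse space of $\cX'$, the composite $\cX'\to\cY\to Y$ factors through $X$).  This allows \'etale localization on $Y$, reducing to the case $\cY=[W/G]$ with $W$ affine and $G$ finite.  In that case the morphism $f\colon \cU\to[W/G]$ is the data of a $G$-torsor $E\to\cU$ together with a $G$-equivariant map $E\to W$.  Zariski--Nagata purity extends the torsor across the codimension-$\ge 2$ complement, and normality of $X$ (Hartogs) extends the regular functions defining $E\to W$.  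This step, incidentally, does not even use $\cX'$.  The missing idea in your proposal is precisely this appeal to purity of the branch locus for finite \'etale covers; without it, there is no mechanism to rule out positive-dimensional fibers of $\Gamma\to X$.
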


\begin{proof}
First we treat the case that $\cX=X$ is a scheme, and
$\cY=[W/G]$ for some affine $W$ and finite group $G$.
The morphism $f$ determines a $G$-torsor $E\to \cU$ and $G$-equivariant morphism $E\to W$.
By Zariski-Nagata purity, $E$ extends to a $G$-torsor on $X$, and the regular functions defining the morphism to $W$ extend as well, so in this case the result is established without even requiring to have a morphism $\cX'\to \cY$.

Next, we treat the case that $\cX=X$ is a scheme and $\cY$ is general.
Without loss of generality we may suppose that $\cU$ is the
maximal domain of definition and argue by contradiction, taking $x$ to be a point of the complement of $\cU$, above $x$ the point $x'$ of $\cX'$, mapping to the point $y$ of $\cY$.
Letting $Y$ denote the coarse moduli space of $\cY$, there is
an \'etale neighborhood $Z\to Y$ of the image in $Y$ of $y$, such that $\cY\times_YZ\cong [W/G]$ for some $W$ and $G$ as above (by construction of $Y$ \cite{keelmori}).
Since $X$ is the coarse moduli space of $\cX'$ the morphism $\cX'\to \cY$ determines $X\to Y$.
By the previous case, $X\times_YZ\dashrightarrow \cY\times_YZ$ is a morphism, hence as well $X\times_YZ\dashrightarrow \cY$ is a morphism.
By Corollary \ref{cor.bircompatible}, the maximal domain of definition of $X\dashrightarrow \cY$ contains $x$, and we have a contradiction.

Finally, we treat the general case.
Let $V\to \cX$ be an \'etale atlas, so
$V$ is the coarse moduli space of $V\times_{\cX}\cX'$.
By the previous case, $V\dashrightarrow \cY$ is a morphism.
We conclude by Corollary \ref{cor.bircompatible}.
\end{proof}

The composition of a pair of rational maps
\[ \cX\dashrightarrow \cY\qquad\text{and}\qquad \cY\dashrightarrow \cZ \]
is defined, provided $\cX\dashrightarrow \cY$ is dominant.

\begin{prop}
\label{prop.biratcompose}
Let $\cX$, $\cY$, and $\cZ$ be stacks, $\cX\dashrightarrow \cY$ a dominant rational map, $\cY\dashrightarrow \cZ$ a rational map, and $\cX\dashrightarrow \cZ$ the composite rational map.
If any two of the rational maps
\[
\cX\dashrightarrow \cY,\qquad \cY\dashrightarrow \cZ,\qquad \cX\dashrightarrow \cZ \]
are proper, then so is the third.
\end{prop}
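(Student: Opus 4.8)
The plan is to prove, separately, the three implications:
(a) if $\cX\dashrightarrow\cY$ and $\cY\dashrightarrow\cZ$ are proper then so is $\cX\dashrightarrow\cZ$;
(b) if $\cX\dashrightarrow\cY$ and $\cX\dashrightarrow\cZ$ are proper then so is $\cY\dashrightarrow\cZ$;
(c) if $\cY\dashrightarrow\cZ$ and $\cX\dashrightarrow\cZ$ are proper then so is $\cX\dashrightarrow\cY$.
Two preparations are common to all three. First, since in Definition~\ref{def:birs}(v) one may take $\cX'$ reduced, so that $\cX'\to\cX$ factors through $\cX_{\mathrm{red}}$, properness of a rational map depends only on the reduction of its source; thus we may and do assume $\cX$, $\cY$, $\cZ$ reduced. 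Second, we fix \emph{compatible} representatives: picking $(\cV,f_2)$ for $\cY\dashrightarrow\cZ$ and $(\cU_1,f_1)$ for $\cX\dashrightarrow\cY$, dominance of $\cX\dashrightarrow\cY$ makes $\cU:=f_1^{-1}(\cV)$ dense in $\cX$, and with $f:=f_1|_\cU\colon\cU\to\cV$ and $h:=f_2\circ f\colon\cU\to\cZ$ we obtain a representative $(\cU,f)$ of $\cX\dashrightarrow\cY$ and a representative $(\cU,h)$ of $\cX\dashrightarrow\cZ$ satisfying $h=f_2\circ f$ identically; this equality is the concrete meaning of ``$\cX\dashrightarrow\cZ$ is the composite'' and is used in every step. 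Finally, in all of the constructions below the relevant modification of $\cX$ (or of $\cY$) is obtained as the reduced closure of the image of a canonical copy of $\cU$ (or of $f(\cU)$) inside a fiber product; its properness over the base is then inherited from a closed immersion into a proper morphism, and it is birational because $\cU$ lies in it as a dense open, which after shrinking $\cU$ we may assume mapped isomorphically.

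For (a), choose proper birational $\cX'\to\cX$ and proper $b\colon\cX'\to\cY$ representing $\cX\dashrightarrow\cY$, and proper birational $c\colon\cY'\to\cY$ and proper $d\colon\cY'\to\cZ$ representing $\cY\dashrightarrow\cZ$; the compatibility $h=f_2\circ f$ yields a morphism from $\cU$ to $\cX'\times_\cY\cY'$, and we let $\cX''$ be the reduced closure of its image. Then $\cX''\to\cX$ is proper and birational, while $\cX''\to\cY'\to\cZ$ is proper and restricts over $\cU$ to $f_2\circ f=h$; this exhibits $\cX\dashrightarrow\cZ$ as proper. For (c), take $c$, $d$ as just described together with proper birational $\cX_0\to\cX$ and proper $g\colon\cX_0\to\cZ$ representing $\cX\dashrightarrow\cZ$, form $\cX_0\times_\cZ\cY'$ (via $g$ and $d$), and let $\cX'$ be the reduced closure of the copy of $\cU$ supplied by $h=f_2\circ f$; then $\cX'\to\cX$ is proper and birational, while $\cX'\to\cY'\to\cY$ is proper and represents $\cX\dashrightarrow\cY$ over $\cU$, exhibiting $\cX\dashrightarrow\cY$ as proper.

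Only (b) uses the graph criterion of Proposition~\ref{prop:birs}(i). Given the two hypotheses, replace the two resulting modifications of $\cX$ by a common one --- the reduced closure, inside their fiber product over $\cX$, of the diagonal copy of $\cU$ --- to obtain a single proper birational $\cX_0\to\cX$ carrying proper morphisms $b\colon\cX_0\to\cY$ and $g\colon\cX_0\to\cZ$ that represent $\cX\dashrightarrow\cY$ and $\cX\dashrightarrow\cZ$. Then $(b,g)\colon\cX_0\to\cY\times\cZ$ is proper (as $\cX_0$ is separated), and over the dense open of $\cX_0$ identified with a dense open of $\cX$ inside $\cU$ it takes values in the graph of $f_2$; as that open is dense in $\cX_0$, the image of $(b,g)$ lies in its closure $\Gamma(\cY\dashrightarrow\cZ)\subset\cY\times\cZ$, so $(b,g)$ is the composite of a proper morphism $\phi\colon\cX_0\to\Gamma(\cY\dashrightarrow\cZ)$ with the closed immersion, and $\phi$ is surjective since its image is closed and contains the dense graph of $f_2|_{f(\cU)}$. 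The projections $p,q$ of $\Gamma(\cY\dashrightarrow\cZ)$ to $\cY$ and $\cZ$ are separated, and $p\circ\phi=b$ and $q\circ\phi=g$ are proper, so descent of properness along the proper surjection $\phi$ (exactly as in the proof of Lemma~\ref{lem.projective}) shows $p$ and $q$ are proper; Proposition~\ref{prop:birs}(i) then gives that $\cY\dashrightarrow\cZ$ is proper. The main obstacle is the bookkeeping hidden in ``representing'' and in ``after shrinking $\cU$'': one must keep track of the $2$-isomorphisms in the factorizations~\eqref{eqn.ffactors}, verify compatibilities so that the sections into the fiber products are well defined, and ensure the fiber products become genuinely isomorphic to $\cU$ over a dense open --- all routine given Proposition~\ref{prop.bireasy} and Lemma~\ref{lem.openproper}. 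The only ingredient that is not purely formal is the descent of properness along a proper surjective morphism used in (b).
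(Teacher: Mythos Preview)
Your proof is correct and follows essentially the same strategy as the paper: closure in a fiber product for (a), the graph $\Gamma(\cY\dashrightarrow\cZ)$ together with descent of properness along a proper surjection for (b), and a fiber product over $\cZ$ for (c). The only notable difference is in (c): the paper first reduces to the case of honest proper morphisms $\cX\to\cZ$ and $\cY\to\cZ$ and then uses $\cX\times_\cZ\cY$ and the graph $\Gamma(\cX\dashrightarrow\cY)$, whereas you work directly with the modifications $\cX_0$ and $\cY'$ and take the closure of $\cU$ in $\cX_0\times_\cZ\cY'$; your route is slightly more direct and avoids the reduction step, but the underlying idea is the same.
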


\begin{proof}
Suppose, first, $\cX\dashrightarrow \cY$ and $\cY\dashrightarrow \cZ$ are proper.
Let $(\cU,f)$ and $\cX'$ be as in Definition \ref{def:birs}(v),
and let $(\cV,g)$ and $\cY'$ play the analogous role for $\cY\dashrightarrow \cZ$; without loss of generality we have
$f(\cU)\subset \cV$.
Then, with the stack-theoretic closure of $\cU$ in
$\cX'\times_{\cY}\cY'$
(closed substack, defined in an altas by the
scheme-theoretic closure),
we see that $\cX\dashrightarrow \cZ$ is proper.

For the remainder of the proof, we impose the additional hypothesis that $\cX$ and $\cY$ are reduced, which we may do without loss of generality.

Suppose that $\cX\dashrightarrow \cY$ and $\cX\dashrightarrow \cZ$ are proper.
We take $\cX'$ as above, let $\cX''$ play the analogous role for $\cX\dashrightarrow \cZ$,
and suppose that respective representatives have a common
domain of definition $\cU$.
Now we consider the closure of $\cU$ in
$\cX'\times_{\cX}\cX''$,
which we denote by $\cX'''$.
The composite morphisms from $\cX'''$ to $\cY$ and $\cZ$ induce a morphism to $\cY\times \cZ$, which factors through $\Gamma(\cY\dashrightarrow \cZ)$.
We see as in the proof of Proposition \ref{prop:birs}(i) that $\Gamma(\cY\dashrightarrow \cZ)\to \cY$ and $\Gamma(\cY\dashrightarrow \cZ)\to \cZ$ are proper. So $\cY\dashrightarrow \cZ$ is proper by Proposition \ref{prop:birs}(i).

It remains to show that if
$\cX\dashrightarrow \cZ$ and
$\cY\dashrightarrow \cZ$ are proper,
then $\cX\dashrightarrow \cY$ is proper.
It suffices to treat the case that we have proper morphisms $h\colon \cX\to \cZ$ and $g\colon \cY\to \cZ$.
Let $(\cU,f)$ be a representative for
$\cX\dashrightarrow \cY$, for which we have a $2$-isomorphism from $g\circ f$ to
$h|_{\cU}\colon \cU\to \cZ$.
The latter determines
\[ \cU\to \cX\times_{\cZ}\cY\cong (\cX\times \cY)\times_{\cZ\times \cZ}\cZ. \]
The diagonal of $\cZ$ is finite, hence so is
$\cX\times_{\cZ}\cY\to \cX\times \cY$,
and the closure $\cV$ of the image of $\cU$ in $\cX\times_{\cZ}\cY$
has a finite surjective morphism to $\Gamma(\cX\dashrightarrow \cY)\subset \cX\times \cY$.
The morphisms $\cV\to \cX$ and $\cV\to \cY$ are proper, hence so are $\Gamma(\cX\dashrightarrow \cY)\to \cX$ and $\Gamma(\cX\dashrightarrow \cY)\to \cY$, and $\cX\dashrightarrow \cY$ is proper by Proposition \ref{prop:birs}(i).
\end{proof}

\begin{lemm}
\label{lem.birational}
Let $f\colon \cX'\to \cX$ be a representable proper birational morphism of smooth stacks, restricting to an isomorphism $\cU'\cong \cU$ of dense open substacks.
Then there exist smooth stacks $\mathcal{X}_0$, $\dots$, $\mathcal{X}_m$, for some $m$, such that $\cX_0=\cX'$, $\cX_m=\cX$,
and for every $i$ there is a morphism
$\mathcal{X}_{i+1}\to \mathcal{X}_i$ or
$\mathcal{X}_i\to \mathcal{X}_{i+1}$ given by blowing up a smooth substack:
\[
\cX_{i+1}\cong B\ell_{\cZ_i}\cX_i\to \cX_i,\quad \text{resp.} \quad \cX_i\cong B\ell_{\cW_{i+1}}\cX_{i+1}\to \cX_{i+1}.
\]
We require compatibility with the isomorphism $f$: starting with $\cU_0=\cU'$ and continuing to $\cU_m=\cU$, we have
\[
\cU_i\cap \cZ_i=\emptyset, \cU_{i+1}=\cX_{i+1}\times_{\cX_i}\cU_i,\, \text{resp.} \, \cU_{i+1}\cap \cW_{i+1}=\emptyset, \cU_i=\cX_i\times_{\cX_{i+1}}\cU_{i+1},\]
for all $i$, and the composite isomorphism 
\[
\cU'=\cU_0\cong\dots\cong \cU_m=\cU
\] 
is $2$-isomorphic to the restriction of $f$.
Furthermore, for every $i$ the rational map
$\cX_i\dashrightarrow\cX$represented by $(\cU_i,\cU_i\cong\dots\cong \cU)$ is a representable proper morphism.
\end{lemm}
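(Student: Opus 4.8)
\medskip

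The plan is to deduce this from the weak factorization theorem, applied \emph{relatively over the base} $\cX$, so that the resulting chain consists of stacks that remain proper and representable over $\cX$. Since $f$ is representable, for an \'etale atlas $V\to\cX$ by a scheme the base change $\cX'_V:=\cX'\times_\cX V$ is a smooth algebraic space, proper and birational over $V$. I would apply the weak factorization theorem in its relative form over $V$, in a version functorial for \'etale base change (Abramovich--Temkin--W{\l}odarczyk for quasi-excellent schemes, or Bergh--Rydh working directly with stacks), to the birational morphism $\cX'_V\to V$, with distinguished open the isomorphism locus of $f$. This produces a chain $\cX'_V=\cY_0,\dots,\cY_m=V$ of smooth algebraic spaces proper over $V$ in which, for each $i$, one of $\cY_i,\cY_{i+1}$ is the blow-up of the other along a smooth closed subspace disjoint from the preimage of $\cU$, with all $\cY_i$ canonically isomorphic over that preimage. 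Functoriality for \'etale base change $V'\to V$ makes the chain compatible with such base changes and satisfy the cocycle condition on triple overlaps, so the $\cY_i$ descend to smooth separated DM stacks $\cX_0=\cX',\dots,\cX_m=\cX$ over $\cX$; as blowing up commutes with flat base change, the blow-up descriptions descend as well. (One could instead first reduce to $f$ projective, via Chow's lemma and embedded resolution, Remark~\ref{rema:sing-res}, then concatenate factorizations of the projective model over $\cX'$ and over $\cX$; but it is the relative statement that supplies the representability below.)

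Next I would set $\cU_i\subset\cX_i$ to be the open substack corresponding to $\cU'\subset\cX_0$ under the canonical isomorphisms over the preimage of $\cU$. Then $\cU_0=\cU'$ and $\cU_m=f(\cU')=\cU$, and disjointness of the blow-up centers from the preimage of $\cU$ yields $\cU_i\cap\cZ_i=\emptyset$, resp.\ $\cU_{i+1}\cap\cW_{i+1}=\emptyset$, together with $\cU_{i+1}=\cX_{i+1}\times_{\cX_i}\cU_i$, resp.\ $\cU_i=\cX_i\times_{\cX_{i+1}}\cU_{i+1}$. The composite isomorphism $\cU'=\cU_0\cong\dots\cong\cU_m=\cU$ and $f|_{\cU'}$ are isomorphisms onto $\cU$ that agree over a common dense open and are compatible with the structure morphisms to $\cX$, hence are $2$-isomorphic by Proposition~\ref{prop.bireasy}.

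Each $\cX_i\to\cX$ is representable and proper: \'etale-locally over $\cX$ it is the morphism $\cY_i\to V$, a proper morphism to the atlas $V$ from an algebraic space over $V$, and "being an algebraic space over the atlas" is precisely representability over $\cX$, while properness is \'etale-local on the target. Since $\cX_i\to\cX$ restricts over $\cU_i$ to the isomorphism onto $\cU$, it represents the rational map given by $(\cU_i,\cU_i\cong\dots\cong\cU)$, which is thus a representable proper morphism in the sense of Remark~\ref{rem.ismorphism}. I expect the main obstacle to be exactly the availability of the weak factorization theorem in this combined shape --- stack-theoretic, relative over $\cX$, and functorial for \'etale base change; relativity over $\cX$ is what is genuinely needed for the last assertion, since one cannot merely start from $\cX_0$ and $\cX_m$ representable over $\cX$ and propagate along the chain: along an edge, representability passes only from the base of a blow-up to the blow-up, and a general weak-factorization chain need not be unimodal --- to assume otherwise would amount to assuming strong factorization.
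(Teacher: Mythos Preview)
Your approach---apply weak factorization \'etale-locally over an atlas and descend---is different from the paper's, and while plausible in outline, it leans on a package (relative weak factorization, functorial for smooth base change, with descent of the resulting chain of algebraic spaces along a groupoid presentation) that you yourself flag as the main obstacle. The paper sidesteps this entirely by working directly at the stack level with the Abramovich--Temkin weak factorization theorem \cite[Thm.~6.1.3]{AT}, which already applies to stacks. The ingredient you are missing is that \cite{AT} comes with a built-in ``projectivity over the endpoints'' clause (condition~(5) of \cite[\S1.2]{AT}): there is an index $c$ such that $\cX_i\to\cX'$ is projective for $i\le c$ and $\cX_i\to\cX$ is projective for $i\ge c$. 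Composing with the given representable proper $f\colon\cX'\to\cX$ then gives that every $\cX_i\to\cX$ is representable and proper, without any descent.

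There is one further point worth noting: the paper does not apply \cite[Thm.~6.1.3]{AT} to $f$ directly, because that theorem takes as input a blow-up (of a coherent ideal sheaf), not an arbitrary representable proper birational morphism. The paper first uses flattening \cite[\S1.6]{AT}, resolution of singularities, and \cite[(5.1.4)]{RG} to produce a smooth $\cX'''$ that is simultaneously a single blow-up of $\cX$ and of $\cX'$; then weak factorization is applied separately to $\cX'''\to\cX'$ and $\cX'''\to\cX$, and the two chains are concatenated at $\cX_c=\cX'''$. Your parenthetical about ``first reducing to $f$ projective via Chow's lemma'' is in the right spirit but imprecise: the reduction is to $f$ (and an auxiliary morphism) being a blow-up, and it is this that makes \cite{AT} applicable and its projectivity condition available. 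Your final observation---that representability does not propagate along a non-unimodal chain of blow-ups, so something like a relative or two-sided-projectivity statement is genuinely needed---is exactly right, and it is condition~(5) of \cite{AT} that supplies it in the paper's argument.
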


\begin{proof}
If $f$ is an isomorphism, then we are done with $m=1$ and $\cZ_0=\emptyset$, so we suppose the contrary.
We get the result by applying
\emph{weak factorization} of Abramovich and Temkin \cite{AT} to $f$.
By \cite[\S 1.6]{AT} there is a coherent sheaf of ideals on $\cX$, defining a closed substack disjoint from $\cU$, such that the associated blow-up $\cX''\to \cX$ factors up to $2$-isomorphism through $f$, and $\cX''$ is also the blow-up of a
coherent sheaf of ideals on $\cX'$.
We use resolution of singularities to obtain $\cX'''\to \cX''$, with $\cX'''$ smooth.
The morphism $\cX'''\to \cX$ is a composite of blow-ups, so by
\cite[(5.1.4)]{RG} (really, its proof, carried over to the setting of stacks using \cite[(15.5)]{LMB} for the needed existence of coherent sheaves), there is a coherent sheaf of ideals on $\cX$ whose blow-up yields $\cX'''$, and
the same holds with $\cX'$ in place of $\cX$.
We apply
\cite[Thm.\ 6.1.3]{AT} to obtain weak factorizations $\cX'=\mathcal{X}_0$, $\dots$, $\mathcal{X}_c=\cX'''$ and $\mathcal{X}_c$, $\dots$, $\mathcal{X}_m=\cX$.
By condition (5) of \cite[\S 1.2]{AT}, the $\mathcal{X}_i\dashrightarrow \cX'$ for $i\le c$ and the $\mathcal{X}_i\dashrightarrow \cX$ for $i\ge c$ are projective morphisms.
\end{proof}

\begin{lemm}
\label{lem.representable}
Let $\cX$ and $\cY$ be stacks, with $\cX$ smooth, and let
\[ \cX'\stackrel{f}\longrightarrow
\cX\stackrel{g}\longrightarrow \cY \]
be morphisms, such that $f$ is representable, proper, and birational.
Then $g$ is representable if and only if $g\circ f$ is representable.
\end{lemm}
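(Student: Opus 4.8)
The plan is to use the standard characterization: a morphism of Deligne--Mumford stacks is representable if and only if it is faithful, equivalently, if and only if for every geometric point the induced homomorphism of stabilizer groups is injective. Granting this, if $g$ is representable then $g\circ f$ is a composite of representable morphisms, hence representable; so the substance is the converse. Assume, then, that $g\circ f$ is representable; we must show that for every geometric point $x$ of $\cX$ the homomorphism $\phi_x\colon G_x\to G_{g(x)}$ induced by $g$ is injective.

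First I would reduce to the case that $\cX'$ is smooth. The morphism $\cX'_{\mathrm{red}}\to \cX'$ is representable, and a resolution of singularities $\cX''\to \cX'_{\mathrm{red}}$ (Remark~\ref{rema:sing-res}), which may be taken to be a composition of blow-ups of smooth centers and hence representable; so replacing $\cX'$ by $\cX''$ keeps $f$ representable, proper, and birational and keeps $g\circ f$ representable, while making $\cX'$ smooth. Now I would apply Lemma~\ref{lem.birational} to factor $f$ through a chain $\cX'=\cX_0,\dots,\cX_m=\cX$ of smooth stacks, consecutive terms related by the blow-up of a smooth center, with each $\cX_i\dashrightarrow \cX$ a representable proper morphism $p_i$. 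Put $g_i=g\circ p_i$, so $g_0=g\circ f$ is representable and $g_m=g$. By Proposition~\ref{prop.bireasy} the maps $p_i$, $p_{i+1}$ are compatible with the blow-down morphism relating $\cX_i$ and $\cX_{i+1}$, so one of $g_i$, $g_{i+1}$ equals the other precomposed with that blow-down. Hence it suffices to prove: if $b\colon B\ell_{\cZ}\cA\to \cA$ is the blow-up of a smooth closed substack $\cZ$ of a smooth stack $\cA$, and $h\colon \cA\to \cY$ is a morphism, then $h$ is representable if and only if $h\circ b$ is; the lemma then follows by induction along the chain.

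For this special case, one direction is again composition. For the other, suppose $h\circ b$ is representable and let $x$ be a geometric point of $\cA$. If $x\notin \cZ$, then $b$ is an isomorphism near $x$, so $x$ lifts to $\widetilde{x}$ with $G_{\widetilde{x}}\cong G_x$, and representability of $h\circ b$ forces $\phi_x$ injective. If $x\in \cZ$, write $\cN_x$ for the fiber at $x$ of the normal bundle $\cN_{\cZ/\cA}$, a representation of $G_x$; the fiber of $b$ over $x$ is $\PP(\cN_x)$, or, over the residual gerbe of $x$, the quotient $[\PP(\cN_x)/G_x]$. Given $h_0\in G_x$ with $h_0\ne e$, the finite-order linear automorphism of $\cN_x$ given by $h_0$ has a nonzero eigenvector and so fixes a line $\ell\subset \cN_x$; the associated point $\widetilde{x}$ of $B\ell_{\cZ}\cA$ satisfies $b(\widetilde{x})=x$ and $G_{\widetilde{x}}=\Stab_{G_x}(\ell)\ni h_0$. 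Since $h\circ b$ is representable, the image of $h_0$ under $G_{\widetilde{x}}\to G_{h(x)}$, namely $\phi_x(h_0)$, is nontrivial. As $h_0$ was arbitrary, $\phi_x$ is injective and $h$ is representable.

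The heart of the argument --- and the step I expect to require the most care --- is the reduction to blow-ups (Lemma~\ref{lem.birational}) combined with the eigenvector observation: for a blow-up of a smooth center, the fiber over a stacky point is the projectivization of the normal representation, so each nontrivial stabilizer element, having an eigenline, is detected by a point of the blow-up. For a general $f$ the fibers over stacky points need not have such transparent structure, which is why factoring into blow-ups is convenient.
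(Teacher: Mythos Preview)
Your proof is correct and follows essentially the same route as the paper's: reduce to smooth $\cX'$ by resolution, invoke Lemma~\ref{lem.birational} to factor $f$ as a chain of blow-ups, and then settle the blow-up case by the eigenvector observation in the normal representation. The paper phrases the blow-up step by contradiction and decomposes the normal space as a sum of characters for the cyclic group $\langle\gamma\rangle$, while you argue directly with a single eigenvector of $h_0$; these are equivalent. Your explicit appeal to Proposition~\ref{prop.bireasy} to align $p_{i+1}$ with $p_i$ composed with the blow-down is a clean way to justify a compatibility the paper leaves implicit in the statement of Lemma~\ref{lem.birational}.
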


\begin{proof}
Since a composite of representable morphisms is representable, the only nontrivial assertion is that representability of $g\circ f$ implies representability of $g$.
By resolution of singularities, we may suppose that $\cX'$ is smooth.
We apply Lemma \ref{lem.birational} to $f$ and, with the factorization via $\cX_i$, $i=0$, $\dots$, $m$, we see that it suffices to deduce representability of $\cX_{i+1}\to \cY$ from representability of $\cX_i\to \cY$, for every $i$.
This is an instance of the result we are trying to prove, specifically when $f$ is the blowing up of a smooth substack of $\cX$.
So it suffices to treat the case $\mathcal{X}'\cong B\ell_{\mathcal{Z}}\mathcal{X}$, which we do by contradiction.
Suppose $g$ is not representable.
Then, for some $k$-point $x$ of $\mathcal{X}$, mapping to $k$-point $y$ of $\mathcal{Y}$, there is $1\ne \gamma\in G_x$, such that $\gamma$ is sent by $g$ to the identity element of $G_y$.
If $x\notin \mathcal{Z}$, then $G_x=G_{x'}$, where $f(x')=x$, and $g\circ f$ is not representable.
If $x\in \mathcal{Z}$, then the normal bundle $\mathcal{N}_{\mathcal{Z}/\mathcal{X}}$ gives rise to a representation of the cyclic group $\langle \gamma\rangle$, which splits as a sum of one-dimensional representations.
The fiber under $f$ is the projectivization, and to each one-dimensional representation there corresponds a point whose stabilizer group contains $\gamma$.
So $g\circ f$ is not representable.
\end{proof}

\begin{defi}
\label{def.reprat}
Let $\cX$ and $\cY$ be stacks, with $\cX$ smooth.
We say that a rational map $\cX\dashrightarrow \cY$ is \emph{representable}
if there exist a representative $(\cU,f)$, a representable birational morphism $\cX'\to \cX$, and a representable morphism $\cX'\to \cY$, such that $f$ factors up to $2$-isomorphism as \eqref{eqn.ffactors}, and
the induced morphism $\cX'\to \Gamma(\cX\dashrightarrow \cY)$ is proper.
\end{defi}

\begin{exam}
\label{exa.reproverZ}
Let $\cX$, $\cY$, and $\cZ$ be stacks, and let
\[ g\colon \cX\to \cZ \qquad\text{and}\qquad h\colon \cY\to \cZ \]
be representable morphisms.
Suppose that $\cX$ is smooth.
Then any rational map $\cX\dashrightarrow \cY$, such that the composite with $h$ is $2$-isomorphic to $g$, is representable.
(For a representative $(\cU,f)$, the closure of the graph in $\cX\times_{\cZ}\cY$ supplies $\cX'$ as in Definition \ref{def.reprat}.)
The particular case $\cZ=BG$ tells us that $G$-equivariant rational maps of smooth varieties determine representable maps of quotient stacks.
\end{exam}

\begin{prop}
\label{prop:birr}
Let $\cX\dashrightarrow \cY$ be a rational map of stacks, with $\cX$ smooth.
Then $\cX\dashrightarrow \cY$ is representable if and only if the following two conditions are satisfied:
\begin{itemize}
\item[$\mathrm{(i)}$] There exist a representative $(\cU,f)$, representable birational morphism $\cX'\to \cX$, and morphism $\cX'\to \cY$, such that $\cX'$ is smooth, $f$ factors up to $2$-isomorphism as \eqref{eqn.ffactors}, and the induced morphism $\cX'\to \Gamma(\cX\dashrightarrow \cY)$ is representable and proper.
\item[$\mathrm{(ii)}$] For one or, equivalently, every representative and pair of morphisms in $\mathrm{(i)}$, the morphism $\cX'\to \cY$ is representable.
\end{itemize}
\end{prop}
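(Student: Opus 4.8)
The plan is to prove the two implications separately, the substance being the passage from representability to (i) and (ii), since the reverse implication only unwinds Definition \ref{def.reprat}. Indeed, if (i) and (ii) hold, take a representative $(\cU,f)$, a smooth $\cX'$ with representable birational morphism $\cX'\to\cX$ and a morphism $\cX'\to\cY$ as in (i), the last of which is representable by (ii); the morphism $\cX'\to\Gamma(\cX\dashrightarrow\cY)$ furnished by (i) is in particular proper, and composing it with the two projections of $\Gamma(\cX\dashrightarrow\cY)\subset\cX\times\cY$ returns $\cX'\to\cX$ and $\cX'\to\cY$, so the same data satisfies Definition \ref{def.reprat}. (This direction uses neither the smoothness of $\cX'$ nor the representability of $\cX'\to\Gamma(\cX\dashrightarrow\cY)$.)

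For the forward implication I would start from data $(\cU,f)$, $\cX'\to\cX$, $\cX'\to\cY$ as in Definition \ref{def.reprat}, replace $\cX'$ by its reduction (harmless, as the reduction morphism is a representable closed immersion and $\cU$ is already reduced), and resolve the singularities of $\cX'$ by Remark \ref{rema:sing-res}, choosing the resolution $\cX''\to\cX'$ to be a composition of blow-ups with smooth centers. Such a morphism is representable (blow-up commutes with \'etale base change), proper, birational, and an isomorphism over the smooth copy $\cX'\times_{\cX}\cU$ of $\cU$ inside $\cX'$. Consequently $\cX''\to\cX$ and $\cX''\to\cY$ are representable as composites of representable morphisms, $f$ factors as \eqref{eqn.ffactors} through $\cX''$, the composite $\cX''\to\cX'\to\Gamma(\cX\dashrightarrow\cY)$ is proper, and $\cX''\to\Gamma(\cX\dashrightarrow\cY)$ is representable: it is the lift of the representable morphism $\cX''\to\cX\times\cY$ (which is representable since both of its components are) along the closed immersion, hence monomorphism, $\Gamma(\cX\dashrightarrow\cY)\hookrightarrow\cX\times\cY$. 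Thus $\cX''$ realizes (i), and since $\cX''\to\cY$ is representable, (ii) holds for this choice.

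It remains to check that the representability of the morphism to $\cY$ in (i) is independent of all choices. Given two choices $\cX'_1$, $\cX'_2$ as in (i), I would pick a common dense open $\cU$ on which the two representatives are $2$-isomorphic, form $\cX'_1\times_{\Gamma(\cX\dashrightarrow\cY)}\cX'_2$, take the reduced closure $\cW$ of the image of $\cU$ in it, and resolve $\cW$ to a smooth $\cX'_0$. Each projection $\cX'_0\to\cW\to\cX'_1\times_{\Gamma(\cX\dashrightarrow\cY)}\cX'_2\to\cX'_i$ is then representable (base change of the representable $\cX'_j\to\Gamma(\cX\dashrightarrow\cY)$ from (i), followed by a closed immersion and a resolution), proper (base change of the proper $\cX'_j\to\Gamma(\cX\dashrightarrow\cY)$, followed by proper morphisms), and birational. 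The two composites $\cX'_0\to\cX'_i\to\cY$ agree on $\cU$, hence are $2$-isomorphic because $\cX'_0$ is normal (Proposition \ref{prop.bireasy}), and so define a single morphism $\cX'_0\to\cY$. By Lemma \ref{lem.representable} applied to $\cX'_0\to\cX'_i\to\cY$ — with $\cX'_i$ smooth by (i) and $\cX'_0\to\cX'_i$ representable, proper, and birational — the morphism $\cX'_i\to\cY$ is representable if and only if $\cX'_0\to\cY$ is, whence $\cX'_1\to\cY$ is representable if and only if $\cX'_2\to\cY$ is.

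The genuinely delicate point is to keep representable all the auxiliary morphisms produced by resolution of singularities and by fiber products over $\Gamma(\cX\dashrightarrow\cY)$, so that they may be fed into Lemma \ref{lem.representable} and into Definition \ref{def.reprat}. This rests on two facts worth isolating: a resolution of singularities can be realized as a composition of blow-ups with smooth centers, each of which is representable; and the lift of a representable morphism along a monomorphism is again representable. Everything else is formal manipulation with properness, birationality, and the colimit definition of a rational map.
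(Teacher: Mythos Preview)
Your proof is correct and follows essentially the same strategy as the paper's: resolve the $\cX'$ from Definition~\ref{def.reprat} to obtain (i) and (ii), and for the ``one versus every'' part of (ii), dominate two choices $\cX'_1$, $\cX'_2$ by a common smooth $\cX'_0$ and appeal to Lemma~\ref{lem.representable}. The only visible difference is that you form the fiber product $\cX'_1\times_{\Gamma(\cX\dashrightarrow\cY)}\cX'_2$, whereas the paper uses $\cX'\times_{\cX}\cX''$; since $\cX'_1\times_\Gamma\cX'_2$ is a closed substack of $\cX'_1\times_\cX\cX'_2$ containing the image of $\cU$, the closures of $\cU$ coincide, so the two constructions yield the same $\cX'_0$. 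Your choice has the advantage that properness of the projections $\cX'_0\to\cX'_i$ is immediate from the properness of $\cX'_j\to\Gamma$ in (i), whereas in the paper's formulation this requires the observation just made. Your explicit justifications that $\cX''\to\Gamma$ is representable (lift along a monomorphism) and that resolution can be taken as a composite of representable blow-ups are points the paper leaves implicit.
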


\begin{proof}
Suppose that the rational map $\cX\dashrightarrow \cY$ is representable.
In Definition \ref{def.reprat}, there is no loss of generality in supposing $\cX'$ to be reduced, and by resolution of singularities, we may further suppose that $\cX'$ is smooth.
Conditions (i) and (ii) are satisfied.

If we have $(\cU,f)$, $\cX'\to \cX$ and $\cX'\to \cY$ satisfying (i) and (ii), then Definition \ref{def.reprat} is satisfied, so $\cX\dashrightarrow \cY$ is representable.
It remains only to verify that if statement (ii) holds for one representative and pair of morphisms in (i), then it holds for every representative and pair of morphisms.
Given a representative $(\cU,f)$ and pair of morphisms $\cX'\to \cX$ and $\cX'\to \cY$ as in (i), with $\cX'\to \cY$ representable, let $\cX''\to \cX$ and $\cX''\to \cY$ be another pair of morphisms as in (i), where the representative may be taken as well to be $(\cU,f)$, after possibly shrinking $\cU$.
We define $\cX'''$ by applying resolution of singularities to the closure of $\cU$ in $\cX'\times_{\cX}\cX''$.
Now $\cX'''\to \cY$ is representable, since this factors up to $2$-isomorphism through $\cX'$.
Applying Lemma \ref{lem.representable} to $\cX'''\to \cX''$ and $\cX''\to \cY$, we deduce that the latter is representable.
\end{proof}

\begin{rema}
\label{rem.normalizegraph}
We give another criterion for representability of $\cX\dashrightarrow \cY$.
Let $(\cU,f)$ be a representative, where $\cU$ is chosen to have affine coarse moduli space.
From the fact that $\cX$ and $\cY$ have finite diagonal, we deduce that the inclusion $j\colon \cU\to \cX$ and the induced $(j,f)\colon \cU\to \Gamma(\cX\dashrightarrow \cY)$ are affine morphisms.
The integral closure of $\Gamma(\cX\dashrightarrow \cY)$ relative to $(j,f)_*\cO_{\cU}$ (a local construction for stacks \cite[Chap.\ 14]{LMB}) is a normal stack $\cW$ with finite morphism to $\Gamma(\cX\dashrightarrow \cY)$ and dense open substack isomorphic to $\cU$.
We may see:
\begin{itemize}
\item A different choice of $\cU$ leads to the same $\cW$ (up to canonical isomorphism).
\item For any normal stack $\cX'$ with birational morphism to $\cX$ and morphism to $\cY$, such that $f$ factors up to $2$-isomorphism as \eqref{eqn.ffactors}, the induced morphism $\cX'\to \Gamma(\cX\dashrightarrow \cY)$ factors up to $2$-morphism through $\cW$.
\end{itemize}
With these facts, we see: $\cX\dashrightarrow \cY$ is representable if and only if, for some or, equivalently, any smooth $\widetilde{\cW}$ with representable proper birational morphism to $\cW$, the composite morphisms
\[ \widetilde{\cW}\to \Gamma(\cX\dashrightarrow \cY)\to \cX\qquad\text{and}\qquad \widetilde{\cW}\to \Gamma(\cX\dashrightarrow \cY)\to \cY \]
are representable.
\end{rema}

\begin{exam}
\label{exa.orbifoldnotbirat}
In Definition \ref{def.reprat} it is not possible to drop the smoothness hypothesis, or even relax smoothness to normality.
Fix $\mathcal{Y}=\Spec(k)$, and consider an elliptic curve $(E,\infty)$ with a $2$-torsion point $P\in E(k)$. Let translation by $P$ define an action of $G=\Z/2\Z$ on $E$, and let $L=\cO_E(-2[P]-2[\infty])$.
The $G$-action extends to an action on $L$ and its compactification $X'=\bP(L\oplus \cO_E)$, and to an action
on the variety $X$ with elliptic singularity that we obtain by contracting the zero-section of $L$ in $X'$.
The $G$-action on $X'$ is free, so
\[ \cX'=[X'/G] \]
is isomorphic to a smooth projective variety.
The contraction morphism is $G$-equivariant and thus determines a representable proper morphism from $\cX'$ to \[ \cX=[X/G]. \]
The stack $\cX$ is normal and has stabilizer $G$ at its singular point.
Thus we have a morphism $\cX\to \cY$ that is not representable but admits a factorization \eqref{eqn.ffactors}, satisfying the conditions of Proposition \ref{prop:birr}(i), with $\cX'\to \cY$ representable.
\end{exam}

\begin{prop}
\label{prop.reprcompose}
Let $\cX$, $\cY$, and $\cZ$ be stacks, with $\cX$ and $\cY$ smooth. Let $\cX\dashrightarrow \cY$ be a dominant rational map and $\cY\dashrightarrow \cZ$ a rational map.
\begin{itemize}
\item[$\mathrm{(i)}$]
If $\cX\dashrightarrow \cY$ and $\cY\dashrightarrow \cZ$ are representable and at least one of them is proper, then the composite $\cX\dashrightarrow \cZ$ is representable.
\item[$\mathrm{(ii)}$]
If $\cX\dashrightarrow \cY$ is a representable morphism and $\cY\dashrightarrow \cZ$ is representable, then the composite $\cX\dashrightarrow \cZ$ is representable.
\item[$\mathrm{(iii)}$]
If the composite $\cX\dashrightarrow \cZ$ is proper and representable, and $\cY\dashrightarrow \cZ$ is representable, then $\cX\dashrightarrow \cY$ is representable.
\end{itemize}
\end{prop}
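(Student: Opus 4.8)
The plan is to verify, in each case, the criterion of Proposition~\ref{prop:birr}: to exhibit a smooth stack $\cX''$ with a representable birational morphism $\cX''\to \cX$, a representable morphism $\cX''\to \cZ$ (resp.\ $\cX''\to \cY$ in part~(iii)) representing the composite (resp.\ the map $\cX\dashrightarrow \cY$), and such that the induced morphism to $\Gamma(\cX\dashrightarrow \cZ)$ (resp.\ to $\Gamma(\cX\dashrightarrow \cY)$) is representable and proper. In every case the representability of the morphism to the graph comes for free once $\cX''\to \cX$ and $\cX''\to \cZ$ are representable, because $\cZ$ (resp.\ $\cY$) is a separated DM stack and hence has representable diagonal, forcing $\cX''\to \cX\times \cZ$ to be representable; so the real content is always the properness over the graph together with the construction of $\cX''$.

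For (i) and (ii) I would argue uniformly. Using Proposition~\ref{prop:birr}, pick a representative $(\cU,f)$ of $\cX\dashrightarrow \cY$, a smooth $\cX'$ with representable birational $\cX'\to \cX$, a representable morphism $\cX'\to \cY$ factoring $f$, and with $\cX'\to \Gamma_1:=\Gamma(\cX\dashrightarrow \cY)$ representable and proper; analogously pick $\cY'$, with $\cY'\to \cY$ representable birational, $\cY'\to \cZ$ representable, and $\cY'\to \Gamma_2:=\Gamma(\cY\dashrightarrow \cZ)$ representable and proper. Take $\cX''$ to be a resolution of singularities (Remark~\ref{rema:sing-res}) of the closure of the image of a suitably small dense open in $\cX'\times_{\cY}\cY'$; then $\cX''\to \cX$ is representable birational (the base change of $\cY'\to \cY$ is birational because $\cX'\to \cY$ is dominant) and $\cX''\to \cZ$ is a representable morphism. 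The induced $\cX''\to \Gamma_1\times_{\cY}\Gamma_2$ is proper, being the composite of a resolution, a closed immersion into $\cX'\times_{\cY}\cY'$, and the map $\cX'\times_{\cY}\cY'\to \Gamma_1\times_{\cY}\Gamma_2$, which is proper as a composite of base changes of the proper morphisms $\cX'\to \Gamma_1$ and $\cY'\to \Gamma_2$. It remains to see that $\Gamma_1\times_{\cY}\Gamma_2\subset \cX\times \cY\times \cZ$ maps properly to $\cX\times \cZ$: if $\cX\dashrightarrow \cY$ is proper then $\Gamma_1\to \cX$ is proper (Proposition~\ref{prop:birs}(i)), so $\Gamma_1\times_{\cY}\Gamma_2$ is closed in $\Gamma_1\times \cZ$ (here $\cZ$ separated) and maps properly to $\cX\times \cZ$; symmetrically if $\cY\dashrightarrow \cZ$ is proper, using that $\Gamma_1\times_{\cY}\Gamma_2$ is closed in $\cX\times \Gamma_2$. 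This gives~(i), and since a representable morphism is a representable rational map whose graph $\Gamma_1$ is closed and isomorphic to $\cX$, the same argument (the case where $\Gamma_1\to \cX$ is an isomorphism) gives~(ii).

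For (iii) I would first reduce to a convenient situation. Replace $\cX$ by a smooth resolution $\widetilde{\cW}$ of the normalized graph $\cW$ of $\cX\dashrightarrow \cZ$ (Remark~\ref{rem.normalizegraph}); since $\cX\dashrightarrow \cZ$ is proper and representable, $\widetilde{\cW}\to \cX$ is a representable proper birational morphism and $\widetilde{\cW}\to \cZ$ is a representable morphism representing the composite, so it suffices, by the usual passage along a representable proper birational morphism (Proposition~\ref{prop:birr}), to treat the case where $\cX\to \cZ$ is a representable morphism and $\cX\dashrightarrow \cY\dashrightarrow \cZ$ equals it. Now pick $\cY'$ and $\Gamma_2=\Gamma(\cY\dashrightarrow \cZ)$ as above and lift $\cX\dashrightarrow \cY$ to a rational map $\cX\dashrightarrow \cY'$; its composite to $\cZ$ is the morphism $\cX\to \cZ$, so by Example~\ref{exa.reproverZ} this lift is representable, with a representative $(\cU,f)$ admitting a resolution $\cX'$ of the closure of the graph in $\cX\times_{\cZ}\cY'$. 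Then $\cX'\to \cX$ is representable birational and $\cX'\to \cY'\to \cY$ is a representable morphism. For the essential point — properness of $\cX'\to \cX\times \cY$ — I would avoid the non-proper morphism $\cY'\to \cY$ and instead factor as $\cX'\to \cX\times_{\cZ}\cY'\to \cX\times_{\cZ}\Gamma_2\hookrightarrow \cX\times \cY$: the first arrow is proper (a resolution followed by a closed immersion), the second is the base change of the proper $\cY'\to \Gamma_2$, and the last is a closed immersion since $\Gamma_2$ is closed in $\cY\times \cZ$ and $\cX\times_{\cZ}(\cY\times \cZ)\cong \cX\times \cY$ (using $\cZ$ separated). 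Hence $\cX'\to \cX\times \cY$ is proper, it factors through the closed substack $\Gamma(\cX\dashrightarrow \cY)$, and Proposition~\ref{prop:birr} applies.

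The main obstacle is part~(iii): the only properness available sits on the composite $\cX\dashrightarrow \cZ$ and must first be transported, via the normalized graph, onto a modification of $\cX$; and once one works with the modification $\cY'$ of $\cY$ furnished by representability of $\cY\dashrightarrow \cZ$, this modification is only representable birational — never proper — so the ``factor through a product of graphs'' argument of~(i) collapses, and one is forced to route the properness through $\cX\times_{\cZ}\Gamma(\cY\dashrightarrow \cZ)$ instead, exploiting that $\cY'\to \Gamma(\cY\dashrightarrow \cZ)$ \emph{is} proper and that $\Gamma(\cY\dashrightarrow \cZ)$ lives inside $\cY\times \cZ$. The remaining verifications — that the chosen opens are dense and that the base-changed birational morphisms stay birational, which uses dominance of $\cX\dashrightarrow \cY$ — are routine but should be carried out with care.
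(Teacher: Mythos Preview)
Your proposal is correct and follows essentially the same route as the paper's proof. For (i) and (ii) you build the closure in $\cX'\times_{\cY}\cY'$ exactly as the paper does; you simply spell out the properness over $\Gamma(\cX\dashrightarrow\cZ)$ via the factorization through $\Gamma_1\times_{\cY}\Gamma_2$, which the paper leaves implicit. For (iii) both you and the paper first reduce to the case where $\cX\to\cZ$ is a representable morphism, then pass to $\cY'$ (so that $\cY'\to\cZ$ is a representable morphism) and invoke Example~\ref{exa.reproverZ} to get representability of $\cX\dashrightarrow\cY'$; the paper packages the remaining properness check into a $2$-commutative square with $\Gamma(\cX\dashrightarrow\cY')$ and $\Gamma(\cX\dashrightarrow\cY)$ inside $\cX\times\cY\times\cZ$, while you write out the equivalent chain $\cX'\to\cX\times_{\cZ}\cY'\to\cX\times_{\cZ}\Gamma_2\hookrightarrow\cX\times\cY$, but these are the same argument.
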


\begin{proof}
For (i), we take $(\cU,f)$ and $\cX'$ with morphisms to $\cX$ and $\cY$ as in Definition \ref{def.reprat} and, analogously, $(\cV,g)$ and $\cY'$ with morphisms to $\cY$ and $\cZ$, for $\cY\dashrightarrow \cZ$.
Without loss of generality, we have $f(\cU)\subset \cV$.
Then we have the stack-theoretic closure of $\cU$ in $\cX'\times_{\cY}\cY'$, with composite morphisms to $\cX$ and $\cZ$, exhibiting $\cX\dashrightarrow \cZ$ as representable.

Essentially the same argument, with $\cX'=\cX$, establishes (ii).

We reduce (iii) to the case of morphisms $\cX\to \cZ$ and $\cY\to \cZ$, which is Example \ref{exa.reproverZ}.
The reduction to the case of a morphism $\cX\to \cZ$ is straightforward.
The further reduction to the case of a morphism $\cY\to \cZ$ makes use of the $2$-commutative diagram
\[
\xymatrix{
\cX'\ar[r]\ar[d] & \Gamma(\cX\dashrightarrow \cY') \ar[d] \\
\Gamma(\cX\dashrightarrow \cY) \ar[r] & \cX\times \cY\times \cZ
}
\]
where $\cX'$, taken smooth, is as in Definition \ref{def.reprat} applied to $\cX\dashrightarrow \cY'$, and the bottom morphism is induced by the graph of $\cX\to \cZ$ and the projection to $\cY$.
\end{proof}

\begin{exam}
\label{exa.notYZ}
In the setting of Proposition \ref{prop.reprcompose},
even when all the stacks are proper it is not possible to deduce from the representability of
$\cX\dashrightarrow \cY$ and $\cX\dashrightarrow \cZ$, that $\cY\dashrightarrow \cZ$ is representable.
(Take, e.g.,
$\cX=\cZ=\Spec(k)$ and $\cY=BG$ for a nontrivial finite group $G$.)
Also, statements (i) and (iii) may fail if we drop the respective properness hypothesis, and (ii) may fail if instead of $\cX\dashrightarrow \cY$ it is $\cY\dashrightarrow \cZ$ that is assumed to be a morphism.
(For (i) and (ii), consider
$\cX=\bP(1,2)$, $\cY=\A^1$, and $\cZ=\bP^1$, for (iii), the same with
$\cY$ and $\cZ$ swapped.)
\end{exam}

\begin{prop}
\label{prop.reprproperU}
Let $\cX$ and $\cY$ be stacks, with $\cX$ smooth, and
$\cX\dashrightarrow \cY$ a representable rational map, with representative $(\cU,f)$. Then there exist a smooth stack $\cX'$,
a representable birational morphism $\cX'\to \cX$, and a representable morphism $\cX'\to \cY$, such that $f$ factors up to $2$-isomorphism as \eqref{eqn.ffactors} and the induced morphism $\cX'\to \Gamma(\cX\dashrightarrow \cY)$ is projective.
\end{prop}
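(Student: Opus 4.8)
The plan is to take $\cX'$ to be a resolution of singularities of the \emph{normalized graph} of the rational map — the normal stack $\cW$ from Remark \ref{rem.normalizegraph} — and to deduce projectivity over $\Gamma:=\Gamma(\cX\dashrightarrow\cY)$ from the finiteness of $\cW\to\Gamma$.

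First I would observe that $\Gamma\to\cX$ is an isomorphism over $\cU$: the graph $\cU\to\cU\times\cY$ of $f$ is a base change of the diagonal of $\cY$, hence representable, finite, and a monomorphism, hence a closed immersion, so $\Gamma\cap(\cU\times\cY)$ is exactly this graph, carried isomorphically onto $\cU$ by the first projection. Shrinking $\cU$ if needed so that its coarse space is affine — which changes neither $\Gamma$ nor the isomorphism $\Gamma\times_\cX\cU\cong\cU$ — I form $\cW$, the integral closure of $\Gamma$ relative to $(j,f)_*\cO_\cU$ as in Remark \ref{rem.normalizegraph}: it is normal, finite and birational over $\Gamma$, and has a dense open substack isomorphic to $\cU$. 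Because $\cW\to\Gamma$ is finite and birational and $\Gamma\to\cX$ is an isomorphism over the original $\cU$, the restriction of $\cW\to\cX$ over that $\cU$ is a finite birational morphism onto the normal stack $\cU$, hence an isomorphism; thus $\cU$ sits inside $\cW$ as a \emph{smooth} open substack.

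Now set $\cX'=\widetilde\cW$, a resolution of singularities of $\cW$ (available by Remark \ref{rema:sing-res}), chosen as a composition of blow-ups that is an isomorphism over the regular locus of $\cW$, so in particular over $\cU$; then $\cX'\to\cW$ is representable, proper, birational, and an isomorphism over $\cU$. Composing with $\cW\to\Gamma\to\cX$ and $\cW\to\Gamma\to\cY$ yields a representable birational morphism $\cX'\to\cX$, an isomorphism over $\cU$, and a morphism $\cX'\to\cY$; both are representable by the criterion of Remark \ref{rem.normalizegraph} applied to $\cX'\to\cW$ together with the hypothesis that $\cX\dashrightarrow\cY$ is representable. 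Restricting the pair $(\cX'\to\cX,\,\cX'\to\cY)$ over $\cU$ and using the graph identification shows that the composite $\cU\cong\cX'\times_\cX\cU\to\cY$ equals $f$; hence $f$ factors up to $2$-isomorphism as \eqref{eqn.ffactors}, and the induced morphism $\cX'\to\Gamma$ is precisely $\widetilde\cW\to\cW\to\Gamma$.

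Finally I would check that $\widetilde\cW\to\cW\to\Gamma$ is projective. The first factor is a composition of blow-ups, hence projective, and the second is finite; their composite is proper, so it is enough to see that a line bundle $M$ that is relatively ample for $\widetilde\cW\to\cW$ stays relatively ample for $\widetilde\cW\to\Gamma$. Since $\cW\to\Gamma$ is affine, this amounts to the statement that a line bundle on a scheme which is relatively ample for a morphism to an affine scheme is ample, and hence relatively ample for any further morphism; one descends from schemes to $\Gamma$ exactly as in the proof of Lemma \ref{lem.projective}, using a finite surjective cover of $\Gamma$ by a scheme. I expect this last step — transporting ``projective composed with finite is projective'' to the stack setting — to be the only delicate point; everything else is a direct assembly of Remark \ref{rem.normalizegraph} with resolution of singularities. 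One could instead start from Definition \ref{def.reprat}, taking $\cX'$ smooth via Proposition \ref{prop:birr}, and splice in Proposition \ref{prop:birs}(ii) to gain projectivity, but that route meets the same projectivity issue and requires extra care to land on the \emph{given} representative $(\cU,f)$.
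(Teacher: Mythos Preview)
Your argument is correct and is a genuinely different route from the paper's. You resolve the normalized graph $\cW$ of Remark~\ref{rem.normalizegraph} directly and invoke its representability criterion to conclude that $\widetilde{\cW}\to\cX$ and $\widetilde{\cW}\to\cY$ are representable; projectivity over $\Gamma$ then reduces to the standard fact that a relatively ample line bundle for $\widetilde{\cW}\to\cW$ remains relatively ample after composing with the affine map $\cW\to\Gamma$, which you correctly localize via an \'etale (or finite) cover of $\Gamma$ in the spirit of Lemma~\ref{lem.projective}. The paper, by contrast, does not appeal to Remark~\ref{rem.normalizegraph}: it starts from the $\cX''$ supplied by Proposition~\ref{prop:birs}(ii), passes to the \emph{relative coarse moduli space} $\mathsf{X}''\to\cX$, resolves to get $\widetilde{\mathsf{X}}''$, and then uses Proposition~\ref{prop.reprmaxopen} together with Lemma~\ref{lem.extendtoY} to show that the rational map $\widetilde{\mathsf{X}}''\dashrightarrow\cY$ is already a morphism. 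Your approach is shorter and conceptually cleaner, but it leans on the criterion of Remark~\ref{rem.normalizegraph}, whose verification is only sketched in the paper; the paper's argument is more self-contained and, as a by-product, exhibits $\cX'$ explicitly as a resolution of a relative coarse moduli space, with $\cX'\to\cX$ visibly representable by construction rather than by appeal to the criterion. Both arguments ultimately face the same projectivity bookkeeping (the paper handles it with \cite[Lemma 2]{KV} and \cite[(2.6.2)]{EGAIII}), so neither sidesteps the ``delicate point'' you flag.
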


\begin{proof}
We let $\cZ=\Gamma(\cX\dashrightarrow \cY)$
and apply Proposition \ref{prop:birs}(ii) to obtain $\cX''$ with $\cX''\to \cZ$ projective, and $\cU\cong \cX''\times_{\cX}\cU$.
Let $\mathsf{X}''\to \cX$, with $\cX''\to \mathsf{X}''$ and $2$-isomorphism from $\cX''\to \cX$ to the composite, be the relative coarse moduli space.
So, $\cX''\to \mathsf{X}''$ is quasi-finite and proper, and $\mathsf{X}''\to \cX$ is representable.
We apply resolution of singularities to $\mathsf{X}''$ to obtain smooth $\widetilde{\mathsf{X}}''$.
We have the bottom two rows of the diagram
\[
\xymatrix@R=28pt@C=24pt{
\widehat{\cX}'\ar[r]\ar[d]&\cX'\ar[d] \ar@/^24pt/@<2pt>[ddrrr]
\\
\cX''\times_{\mathsf{X}''}\widetilde{\mathsf{X}}''\ar[r] \ar[d] &\widetilde{\mathsf{X}}''\ar[d]|(0.56)\hole&&\cZ \ar[dl] \ar[dr]
\\
\cX''\ar[urrr] \ar[r] &
\mathsf{X}''\ar[r] &
\cX \ar@{-->}[rr] && \cY
}
\]
By \cite[Prop.\ 3.4]{AOV}, $\widetilde{\mathsf{X}}''\to \cX$ is the relative coarse moduli space of
\[ \cX''\times_{\mathsf{X}''}\widetilde{\mathsf{X}}''\to \cX. \]

The composite $\widetilde{\mathsf{X}}''\dashrightarrow \cY$ is representable
by Proposition \ref{prop.reprcompose}(ii).
So, there exists a smooth stack $\cX'$ with representable birational morphism to $\widetilde{\mathsf{X}}''$ and representable morphism to $\cY$, such that a suitable representative factors up to $2$-isomorphism as in \eqref{eqn.ffactors} and the induced $\cX'\to \widetilde{\mathsf{X}}''\times \cY$ is proper.
In fact, we claim that $\cX'\to \widetilde{\mathsf{X}}''$
is proper.
For this, letting $\widehat{\cX}'$ denote the
normalization of $\cX''\times_{\mathsf{X}''}\cX'$, it
suffices to show that the composite $\widehat{\cX}'\to \widetilde{\mathsf{X}}''$ is proper.
We apply Proposition \ref{prop.bireasy} to see that we
have a $2$-commutative square
\[
\xymatrix{
\widehat{\cX}'\ar[r]\ar[d] & \cX''\times_{\mathsf{X}''}\widetilde{\mathsf{X}}''\times \cY\ar[d] \\
\cX''\times_{\mathsf{X}''}\widetilde{\mathsf{X}}''\ar[r] & \cX\times \cY\times \cY
}
\]
where the bottom morphism factors through $\cZ$.
The top morphism is proper, since it is a morphism, obtained from $\cX'\to \widetilde{\mathsf{X}}''\times \cY$ via base change, composed with a normalization morphism.
Also, the right-hand morphism is proper.
So the left-hand morphism is proper and remains so when followed by the second projection morphism.
The claim is justified.

By Proposition \ref{prop.reprmaxopen}, the maximal open substack over which $\cX'\to \widetilde{\mathsf{X}}''$ is an isomorphism, has complement everywhere of codimension $\ge 2$.
This open substack is a domain of definition for $\widetilde{\mathsf{X}}''\dashrightarrow \cY$.
So, by Lemma \ref{lem.extendtoY}, $\widetilde{\mathsf{X}}''\dashrightarrow \cY$ is a morphism.
We are done, since $\widetilde{\mathsf{X}}''\to \cX$ restricts to an isomorphism over $\cU$ and $\widetilde{\mathsf{X}}''\to \cZ$ is projective.
Indeed, $\widetilde{\mathsf{X}}''\to \cZ$ is proper, since
$\cX''\times_{\mathsf{X}''}\widetilde{\mathsf{X}}''\to \cZ$ is, and projectivity follows from the fact that by \cite[Lemma 2]{KV}, some power of a relatively ample line bundle for $\cX''\times_{\mathsf{X}''}\widetilde{\mathsf{X}}''$ over $\cZ$ descends to $\widetilde{\mathsf{X}}''$ and is relatively ample on $\widetilde{\mathsf{X}}''$ by \cite[(2.6.2)]{EGAIII}.
\end{proof}

Looking at Definition \ref{def:birs}, we can recognize several notions of birational equivalence of stacks and match these to notions that have appeared in the literature, e.g., in \cite{yasuda}, \cite{HH}, \cite{berghweak}, \cite{ATW}, \cite{Bbar}.
Our choice is compatible with
\cite{Bbar}, see also \cite[Defn.\ 6]{KT-map}.

\begin{defi}
\label{def.biratstacks}
Two smooth stacks $\cX$ and $\cY$ are \emph{birationally equivalent}, written
$$
\cX\sim\cY,
$$
if there exists a representable proper birational map from $\cX$ to $\cY$.
Concretely, this means that there exists a stack $\cX'$ with representable proper birational morphisms $\sigma$ and $\tau$,
\begin{equation}
\begin{split}
\label{eqn.biratstacks}
\xymatrix@C=12pt{
&\cX'\ar[dl]_\sigma \ar[dr]^\tau \\
\cX && \cY
}
\end{split}
\end{equation}
Smooth stacks, representable proper birational maps, and $2$-isomorphisms of representable proper birational maps form a $2$-groupoid ($2$-category in which every morphism is an equivalence and every $2$-morphism is invertible).
We denote by
\[ 
\BIR(\cX,\cY)
\]
the category of representable proper birational maps from $\cX$ to $\cY$ and the case $\cY=\cX$ by
\[
\BIRAUT(\cX),
\]
the birational automorphism $2$-group (monoidal category in which every object is weakly invertible and every morphism is invertible) of $\cX$.
\end{defi}

\begin{rema}
\label{rem.birational}
By Propositions \ref{prop.biratcompose} and \ref{prop.reprcompose}(i), birational equivalence in Definition \ref{def.biratstacks} is an equivalence relation.
In the setting of \eqref{eqn.biratstacks},
we have already observed that without loss of generality $\cX'$ may be taken to be smooth (by resolution of singularities).
By Chow's lemma, $\cX'$ may further be taken to be quasi-projective.
We also record the observation, that when $\cX'$ is a smooth quasi-projective stack and $\cZ$ is a stack, the following conditions on a morphism $g\colon \cX'\to \cZ$ are equivalent:
\begin{itemize}
\item $g$ is representable and proper;
\item $g$ is projective.
\end{itemize}
Generally, projective morphisms are representable and proper.
Now suppose that $g$ is representable and proper.
Letting $X'$ denote the coarse moduli space of $\cX'$, we factor $g$ as
\[ \cX'\to \cX'\times \cZ\to X'\times \cZ\to \cZ. \]
The first morphism, the graph of $g$, is representable and finite.
The second morphism
is quasi-finite and proper.
Their composite is quasi-finite and proper, yet is representable (since $g$ is representable), hence is a representable finite morphism.
The last morphism is quasi-projective.
So $g$ is as well, and we have what we want, since a proper quasi-projective morphism is projective.
In conclusion, Definition \ref{def.biratstacks} admits an equivalent reformulation, where $\cX'$ is a smooth quasi-projective stack and $\sigma$ and $\tau$ are birational projective morphisms.
\end{rema}

\begin{exam}
\label{exa.orbifoldcurves}
If $\mathcal{X}$ and $\mathcal{Y}$ have dimension $1$, then they are birationally equivalent if and only if they are isomorphic.
In particular, the birational type of an orbifold curve is the same as its isomorphism type.
\end{exam}

\begin{exam}
\label{exa.XmodG}
Suppose that $k$ is algebraically closed, and
let $X$ and $Y$ be nonsingular projective varieties with faithful $G$-actions, for a finite group $G$.
If $X$ and $Y$ are $G$-birationally equivalent then the corresponding orbifolds
\[ \mathcal{X}=[X/G]\qquad\text{and}\qquad \mathcal{Y}=[Y/G] \]
are birationally equivalent.
It is possible, however, for $\mathcal{X}$ and $\mathcal{Y}$ to be birationally equivalent, even when $X$ and $Y$ are not $G$-birationally equivalent.
For instance, if $G$ is abelian and we consider two faithful $n$-dimensional representations of $G$ and the corresponding actions on $X=Y=\bP^n$, Reichstein and Youssin \cite[Thm.\ 7.1]{RYinvariant} have shown that $X$ and $Y$ are $G$-birationally equivalent if and only if the
determinants of the representations
\[ \omega_X,\omega_Y\in \Wedge^nG^\vee \]
satisfy
\[ \omega_X=\pm \omega_Y. \]
But there is always an automorphism $\iota\in \Aut(G)$ satisfying
\[ (\Wedge^n\iota)^*\omega_X=\omega_Y. \]
Changing a $G$-action by an automorphism does not change the isomorphism type of the stack quotient.
Consequently, for this class of actions $\mathcal{X}$ and $\mathcal{Y}$ are birationally equivalent even when $X$ and $Y$ are not $G$-birationally equivalent.
\end{exam}

\begin{theo}
\label{thm.weakfactrep}
Let $\cX$ and $\cY$ be smooth stacks and $\cX\dashrightarrow \cY$ a representable proper birational map, represented by $(\cU,f)$ where $f$ maps dense open $\cU\subset \cX$ isomorphically to dense open $\cV\subset \cY$.
Then there exist smooth stacks $\cX_0$, $\dots$, $\cX_m$ for some $m$, such that $\cX_0=\cX$, $\cX_m=\cY$, and for every $i$ there is a morphism
$\cX_{i+1}\to \cX_i$ or $\cX_i\to \cX_{i+1}$, given by blowing up a smooth substack
\[
\cX_{i+1}\cong B\ell_{\cZ_i}\cX_i\to \cX_i,\quad \text{resp.} \quad \cX_i\cong B\ell_{\cW_{i+1}}\cX_{i+1}\to \cX_{i+1}.
\]
We require compatibility with the isomorphism $f$: starting with $\cU_0=\cU$ and continuing to $\cU_m=\cV$, we have
\[
\cU_i\cap \cZ_i=\emptyset, \cU_{i+1}=\cX_{i+1}\times_{\cX_i}\cU_i,\, \text{resp.} \, \cU_{i+1}\cap \cW_{i+1}=\emptyset, \cU_i=\cX_i\times_{\cX_{i+1}}\cU_{i+1},\]
for all $i$, and the composite isomorphism 
\[
\cU=\cU_0\cong\dots\cong \cU_m=\cV
\] 
is $2$-isomorphic to $f$.
Furthermore, there exists $0\le c\le m$ such that for every $i\le c$ the rational map $\cX_i\dashrightarrow \cX$ represented by $(\cU_i,\cU_i\cong\dots\cong \cU)$ is a projective morphism, and for every $i\ge c$ the rational map $\cX_i\dashrightarrow \cY$ represented by $(\cU_i,\cU_i\cong\dots\cong \cV)$ is a projective morphism.
\end{theo}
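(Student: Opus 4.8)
The plan is to represent $\cX\dashrightarrow\cY$ by a roof of projective birational morphisms of smooth quasi-projective stacks, apply Lemma~\ref{lem.birational} to each leg of the roof, and splice the two resulting chains of blow-ups together at the vertex, reversing one of them so that the composite runs from $\cX$ to $\cY$.

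For the roof, I would invoke Proposition~\ref{prop.reprproperU}, together with the properness hypothesis and Proposition~\ref{prop:birs}(i): the map is represented by a triple $(\cU,f)$, a smooth stack $\cX'_0$, a representable proper birational morphism $\sigma_0\colon\cX'_0\to\cX$, and a representable proper birational morphism $\cX'_0\to\cY$, with $f$ factoring as~\eqref{eqn.ffactors}. Applying Chow's lemma and resolution of singularities (Remark~\ref{rema:sing-res}), and invoking Remark~\ref{rem.birational}, I would replace $\cX'_0$ by a smooth quasi-projective stack $\cX'$ carrying \emph{projective} birational morphisms $\sigma\colon\cX'\to\cX$ and $\tau\colon\cX'\to\cY$. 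Taking the dense open $\cU$ (hence $\cV=f(\cU)$) quasi-projective, these modifications are isomorphisms over $\sigma_0^{-1}(\cU)\cong\cU$, so $\sigma$ restricts to an isomorphism $\cU'\to\cU$ with $\cU'=\sigma^{-1}(\cU)$, and then, as $f=\tau\circ(\sigma|_{\cU'})^{-1}$ on $\cU$, the morphism $\tau$ restricts to an isomorphism $\cU'\to\cV$.

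Next I would apply Lemma~\ref{lem.birational} to $\sigma$ and, separately, to $\tau$, obtaining chains of smooth stacks
\[
\cX'=\cA_0,\dots,\cA_p=\cX,\qquad \cX'=\cB_0,\dots,\cB_q=\cY,
\]
built from blow-ups of smooth substacks disjoint from the relevant open, compatible with $\sigma$, resp.\ $\tau$, as in that lemma. The feature I would extract is not the stated conclusion but the one in its proof: that proof passes through a common blow-up that is projective over both ends and uses condition~(5) of~\cite[\S 1.2]{AT}, so every $\cA_i$ admits a projective morphism to $\cX$ or to $\cX'$, and every $\cB_j$ a projective morphism to $\cY$ or to $\cX'$. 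Composing the maps to $\cX'$ with $\sigma$, resp.\ $\tau$ (a composite of projective morphisms being projective), and noting that these morphisms represent the rational maps in question, I conclude that \emph{every} $\cA_i\dashrightarrow\cX$ and \emph{every} $\cB_j\dashrightarrow\cY$ is a projective morphism.

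It remains to assemble the factorization. I would put $m=p+q$, set $\cX_i=\cA_{p-i}$ for $0\le i\le p$ and $\cX_i=\cB_{i-p}$ for $p\le i\le m$, and take $c=p$. Reversing the $\cA$-chain turns each blow-up $\cA_{j+1}\to\cA_j$ into a blow-down in the sense of the statement and interchanges the two forms of the compatibility condition, which one checks directly; with $\cU_0=\cU$ the opens $\cU_i$ are the successive pre-images, disjointness from the blow-up centres holds by construction, and the composite isomorphism $\cU=\cU_0\cong\dots\cong\cU_m=\cV$ is $2$-isomorphic to $\tau|_{\cU'}\circ(\sigma|_{\cU'})^{-1}$, hence to $f|_\cU$, by the compatibility clauses of Lemma~\ref{lem.birational}. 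For $i\le c$ then $\cX_i=\cA_{p-i}$, so $\cX_i\dashrightarrow\cX$ is a projective morphism, and for $i\ge c$ then $\cX_i=\cB_{i-p}$, so $\cX_i\dashrightarrow\cY$ is a projective morphism, which completes the argument. The step I expect to be the main obstacle is precisely this last upgrade from ``representable and proper'' to ``projective'': a representable proper birational morphism of smooth stacks need not be projective, which is why the roof must be routed through a smooth \emph{quasi-projective} vertex, so that $\sigma$ and $\tau$ are projective by Remark~\ref{rem.birational} while $\sigma$ remains an isomorphism over the prescribed open, and why one must reach into the proof of Lemma~\ref{lem.birational} rather than use its weaker stated form; the reversal and splicing of the chains, and the matching of opens, blow-up directions, and $2$-isomorphisms, are then routine.
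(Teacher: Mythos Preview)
Your overall strategy---build a roof with projective legs, apply Lemma~\ref{lem.birational} to each leg (reaching into its proof for projectivity), and splice---matches the paper's, and the final paragraph is correct. The gap is in how you produce projective legs while preserving the prescribed open $\cU$. Remark~\ref{rem.birational} only asserts that \emph{some} roof may be taken quasi-projective; it does not claim that Chow's lemma for DM stacks can be arranged to restrict to an isomorphism over a prescribed quasi-projective open, and you give no reference for this. Shrinking $\cU$ to be quasi-projective already deviates from the theorem as stated (which fixes $(\cU,f)$), and even after shrinking, the assertion ``these modifications are isomorphisms over $\sigma_0^{-1}(\cU)$'' is unjustified.

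The paper avoids this by not invoking Chow's lemma at all. Starting from the $\cX'$ supplied by Proposition~\ref{prop.reprproperU}, one uses \cite[\S 1.6]{AT} (as in the start of the proof of Lemma~\ref{lem.birational}) to find a coherent ideal sheaf $\cI$ on $\cX$, supported away from $\cU$, whose blow-up dominates $\cX'$, and analogously $\cJ$ on $\cY$ supported away from $\cV$. Blowing up the product $\cI'\cJ'$ on $\cX'$ and resolving yields a new $\cX'$ such that $\cX'\to\cX$ and $\cX'\to\cY$ are themselves blow-ups of coherent ideal sheaves (via \cite[(5.1.2)(v), (5.1.4)]{RG}), hence projective, and by construction isomorphisms over $\cU$, resp.\ $\cV$. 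One then applies \cite[Thm.~6.1.3]{AT} directly to each leg. If you wish to salvage the Chow's lemma route, you would need a version for DM stacks that is an isomorphism over a prescribed quasi-projective dense open; this may well exist but requires a citation.
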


\begin{proof}
We start by applying Proposition \ref{prop.reprproperU} to $\cX\dashrightarrow \cY$ and $(\cU,f)$ to obtain smooth $\cX'$,
with representable birational proper morphism to $\cX$ restricting to an isomorphism over $\cU$ and
representable birational proper morphism to $\cY$ resricting to an isomorphism over $\cV$.
We claim that such $\cX'$ exists with the additional requirement that $\cX'\to \cX$ and $\cX'\to \cY$ are projective morphisms, given as the blowing up of respective coherent sheaves of ideals.
Indeed, we may apply the construction from the start of the proof of Lemma \ref{lem.birational} to obtain a coherent sheaf of ideals $\cI$ on $\cX$, so that the corresponding blow-up factors up to $2$-isomorphism through $\cX'$ and is identified with the blow-up of $\cI'=\cI\cO_{\cX'}$.
Analogously, we have a coherent sheaf of ideals $\cJ$ on $\cY$ and $\cJ'=\cJ\cO_{\cX'}$.
Through both of these blow-ups, we have factorizations up to $2$-isomorphism of the blow-up associated with $\cI'\cJ'$, by \cite[(5.1.2)(v)]{RG}.
As in the proof of Lemma \ref{lem.birational} we conclude with resolution of singularities and two applications of \cite[Thm.\ 6.1.3]{AT}.
\end{proof}

\begin{rema}
\label{rem.weakfactrep}
As in \cite[Rmk.\ 3.7]{BnG} we may consider snc divisors $\cD\subset \cX$ and $\cE\subset \cY$,
disjoint from $\cU$, respectively $\cV$.
Suppose that the given representable proper birational map $\cX\dashrightarrow \cY$ restricts to a (necessarily representable) proper birational map $\cX\setminus \cD\dashrightarrow \cY\setminus \cE$.
Following the proof of Theorem \ref{thm.weakfactrep}, we have $\cX'\to \cX$ and $\cX'\to \cY$, projective morphisms given by blowing up coherent sheaves of ideals, and by Lemma \ref{lem.openproper} the pre-images of $\cD$ and $\cE$ in $\cX'$ coincide set-theoretically.
Applying embedded resolution of singularities to the common pre-image, we may suppose that this is an snc divisor.
Then by weak factorization with boundary divisors \cite[Thm.\ 6.1.3]{AT}, we obtain a sequence of blow-ups as in Theorem \ref{thm.weakfactrep}, where every stack $\cX_i$ is equipped with a compatible snc divisor $\cD_i$, and every center of blow-up $\cZ_i$ or $\cW_i$ has normal crossing with $\cD_i$.
\end{rema}

\section{Divisoriality and action construction}
\label{sect:action}
One of the ingredients of \cite{Bbar} is a birational modification procedure known as \emph{divisorialification} \cite{bergh}, \cite{berghrydh}, used to bring a given $n$-dimensional orbifold $\mathcal{X}$ into a form, suitable for the determination of its class
\[ [\mathcal{X}]\in \overline{\mathrm{Burn}}_n. \]
Specifically, a sequence of blow-ups in smooth centers is performed, leading to an orbifold that satisfies the following condition.

\begin{defi}
\label{def.divisorial}
We call a DM stack $\mathcal{X}$ \emph{divisorial} if $\mathcal{X}$ admits
a representable morphism to a stack of the form
\[ B\G_m^r \]
for some positive integer $r$.
\end{defi}

In other words, there should exist a finite collection $L_1$, $\dots$, $L_r$ of line bundles on $\mathcal{X}$, which induces faithful representations of the stabilizer groups of $\mathcal{X}$.
(It suffices to consider stabilizers at geometric points of $\mathcal{X}$.)

\begin{rema}
\label{rema.divy}
If $\cY\to \cX$ is a representable morphism and $\cX$ is divisorial then so is $\cY$. 
\end{rema}

\begin{prop}[Bergh-Rydh]
\label{prop.orbifolddivisorialification}
Let $\mathcal{X}$ be an orbifold over $k$.
Then there is a canonical sequence of blow-ups in smooth centers
\[ \mathcal{Y}\to\dots\to\ \mathcal{X} \]
such that $\mathcal{Y}$ is divisorial.
\end{prop}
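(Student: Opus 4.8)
The plan is to deduce the statement from the divisorialification procedure of Bergh and Rydh \cite{bergh}, \cite{berghrydh}. Recall from Definition~\ref{def.divisorial} that $\cX$ is divisorial exactly when it carries line bundles $L_1,\dots,L_r$ whose combined effect on the geometric stabilizers is faithful; since a line bundle induces a one-dimensional representation, this entails that the stabilizers are diagonalizable, and conversely in that situation the local model of a smooth DM stack at a point $x$ is $[T_x\cX/G_x]$, with $G_x$ acting through characters $\chi_1,\dots,\chi_n$ which, by triviality of the generic stabilizer, generate $\widehat{G_x}$. Thus \emph{locally} $\cX$ is already divisorial, and the only obstruction is a global one: the local systems of characters need not be realized by line bundles on all of $\cX$.

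First I would record the stratification of $\cX$ by the isomorphism type of the pair (geometric stabilizer, conormal representation), which by Remark~\ref{rem.IX} is a finite stratification into smooth locally closed substacks, partially ordered by inclusion of closures; over the open stratum $\cX$ is an algebraic space and nothing is required. The strategy is then to run through the non-trivial strata in an order refining the closure order and, at each step, to blow up a canonically determined smooth center supported on the closure of the current stratum. The center must be chosen with two properties in mind: it should meet the exceptional divisor accumulated so far with normal crossings, so that smoothness and the snc structure are preserved; and the new exceptional divisor $E$ should contribute, via $\cO(E)$, precisely the characters of $G_x$ that were previously missing from the span of the available line bundles along that stratum. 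After finitely many steps the accumulated line bundles $\cO(E_1),\dots,\cO(E_r)$, together with whatever line bundles were present on $\cX$, induce faithful representations of all stabilizers of the resulting stack $\cY$, yielding a representable morphism $\cY\to B\G_m^r$.

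I would then invoke the precise statements: the existence of such a sequence of smooth blow-ups is the content of \cite{bergh} for tame stacks with diagonalizable stabilizers and of \cite{berghrydh} in the generality needed here and in \cite{Bbar}. The word \emph{canonical} is to be read in the functorial sense of those references: the construction commutes with smooth base change — in particular with passage to open substacks and with field extensions — and with formation of products with smooth schemes.

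The main obstacle is the second step above: arranging the centers so that they are smooth, have normal crossings with the accumulated exceptional divisors, and force termination. This is the technical core of Bergh--Rydh, handled there by a careful recursive construction of the center — roughly, one first ``homogenises'' along a stratum so that the stabilizer action becomes monomial, and then blows up a locally monomial center — together with an induction on a numerical invariant, such as the maximal stabilizer order, that strictly decreases. I would not reproduce this argument, but rather cite it and check only that its output is a divisorial stack, as described above.
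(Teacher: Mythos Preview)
Your proposal is correct in substance and takes the same approach as the paper: both simply invoke the divisorialification algorithm of Bergh--Rydh and do not reproduce its inner workings. One point of imprecision worth flagging: your informal sketch of the algorithm (homogenisation to monomial form, induction on the maximal stabilizer order) describes features of the full \emph{destackification} procedure rather than divisorialification proper. The paper's proof is more accurate here: the relevant invariant is the maximal value of the \emph{divisorial index} (see Remark~\ref{rem.orbifolddivisorialification} and \cite[\S 6]{berghrydh}), the center of each blow-up is the locus where this index is maximal, and the algorithm terminates when the divisorial index vanishes identically---at which point the accumulated snc divisor witnesses divisoriality. Divisorialification need not decrease stabilizer orders at all; that is the job of the subsequent destackification steps. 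Since you explicitly defer to the cited references rather than reproduce the argument, this does not affect the validity of your proof, only the accuracy of the surrounding commentary.
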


\begin{proof}
This is proved in \cite{berghrydh},
which supplies an algorithm that takes, as input $\mathcal{X}$ with an snc divisor $\mathcal{D}=\mathcal{D}_1\cup\dots\cup \mathcal{D}_\ell$ and yields a blow-up in a smooth center $\mathcal{X}'\to\mathcal{X}$, with snc divisor consisting of the proper transforms of the $\mathcal{D}_i$ and the exceptional divisor, which decreases a numerical quantity, the maximal value of the \emph{divisorial index}.
When the divisorial index is identically zero, we have a divisorial orbifold.
\end{proof}

\begin{rema}
\label{rem.orbifolddivisorialification}
Let $\cX$ be an orbifold with snc divisor $\mathcal{D}=\mathcal{D}_1\cup\dots\cup \mathcal{D}_\ell$, such that the divisorial index is not identically zero.
Then the center of blow-up $\cZ\subset \cX$, defined by the divisorialification algorithm as the locus of maximal divisorial index, is smooth and meets $\mathcal{D}$ transversally.
Smoothness is established in \cite[Prop.\ 6.5(iii)]{berghrydh}.
Transverse intersection with $\cD$ follows from the characterization \cite[Rmk.\ 3.4]{berghrydh} in terms of smoothness of the associated morphism to $[\A^\ell/\G_m^\ell]$ and the identification of divisorial index with the codimension of stackiness \cite[Defn.\ 5.8]{berghrydh} of $X$ over $[\A^\ell/\G_m^\ell]$, which we see by
arguing as in the proof of \cite[Prop.\ 6.5]{berghrydh}.
\end{rema}

For the next statement we recall the well-known fact
\cite[Prop.\ 2.1]{olssonboundedness} that every smooth separated irreducible DM stack
is a gerbe over an orbifold, i.e., to given $\mathcal{X}$ with geometric generic stabilizer $H$ there is an orbifold $\mathcal{Y}$ with morphism
\begin{equation}
\label{eqn.gerbeoverorbifold}
\mathcal{X}\to \mathcal{Y},
\end{equation}
which \'etale locally over $\mathcal{Y}$ is the projection from a product with $BH$; in particular, the morphism \eqref{eqn.gerbeoverorbifold} is smooth.
As recalled in the proof of \cite[Lemma 7.3]{BnG},
a line bundle on $\mathcal{X}$ descends to $\mathcal{Y}$ if and only if it induces the trivial representation of $H$.

\begin{prop}
\label{prop.divisorial}
Suppose that $\mathcal{X}$ is a smooth separated irreducible DM stack of finite type over $k$, and suppose that $k$ contains all roots of unity.
The following are equivalent:
\begin{itemize}
\item[(1)] $\mathcal{X}$ is divisorial.
\item[(2)] The orbifold $\mathcal{Y}$, associated to $\mathcal{X}$ as in \eqref{eqn.gerbeoverorbifold}, is divisorial, and a dense open substack of $\mathcal{X}$ is isomorphic to $\mathcal{U}\times BH$, for some dense open
$\mathcal{U}\subset \mathcal{Y}$ and finite abelian group $H$.
\item[(3)] There exist a divisorial orbifold $\mathcal{Y}$, line bundles $N_1$, $\dots$, $N_s$ on $\mathcal{Y}$ and positive integers $n_1$, $\dots$, $n_s$, such that
\[ \mathcal{X}\cong \sqrt[n_1]{N_1/\mathcal{Y}}\times_{\mathcal{Y}}\dots \times_{\mathcal{Y}}\sqrt[n_s]{N_s/\mathcal{Y}}. \]
\end{itemize}
\end{prop}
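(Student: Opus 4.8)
The plan is to establish the cycle $(1)\Rightarrow(3)$, $(3)\Rightarrow(2)$, $(2)\Rightarrow(3)$, $(3)\Rightarrow(1)$, all organized around the gerbe $\pi\colon\cX\to\cY$ over the orbifold of \eqref{eqn.gerbeoverorbifold}, with $H$ the geometric generic stabilizer, and around the bookkeeping of \emph{weights}: a line bundle on $\cX$ induces on each geometric stabilizer $G_x$ a character $\chi$, and, as recalled above, $\pi^*\colon\Pic(\cY)\to\Pic(\cX)$ is injective with image exactly the line bundles whose weight on $H$ is trivial.

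For $(1)\Rightarrow(3)$: since a representable morphism to $B\G_m^r$ embeds every $G_x$ into $\G_m^r$, divisoriality of $\cX$ forces all geometric stabilizers to be abelian; one checks moreover that $\pi$ is then a gerbe banded by a constant finite diagonalizable group scheme, which --- using that $k$ contains all roots of unity --- may be written as $\mu_{n_1}\times\dots\times\mu_{n_s}$, with dual character basis $e_1,\dots,e_s$. Given line bundles $L_1,\dots,L_r$ on $\cX$ realizing faithful representations of all stabilizers, the weights $\chi_{L_j}|_H$ generate $\widehat H$; I would write each $e_i$ as an integer combination of these and let $M_i$ be the corresponding tensor product of powers of the $L_j$, so that $M_i$ has weight $e_i$ and $M_i^{\otimes n_i}$ has trivial weight on $H$, hence descends to $N_i\in\Pic(\cY)$. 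The data $(M_i,\ M_i^{\otimes n_i}\cong\pi^*N_i)$ then defines a morphism over $\cY$ from $\cX$ to $\sqrt[n_1]{N_1/\cY}\times_{\cY}\dots\times_{\cY}\sqrt[n_s]{N_s/\cY}$ that is an isomorphism on bands, hence an isomorphism of gerbes. It remains to see that $\cY$ is divisorial: writing $L_j\cong\pi^*P_j\otimes\bigotimes_i M_i^{\otimes b_{ij}}$ with $P_j\in\Pic(\cY)$, I would run a computation in character groups --- using the exact sequence $0\to\widehat{G_y}\to\widehat{G_x}\to\widehat H\to0$, the relations $n_i\chi_{M_i}=\pi^*\chi_{N_i}$, and that the $\chi_{L_j}$ generate $\widehat{G_x}$ --- to conclude that the characters of $P_1,\dots,P_r,N_1,\dots,N_s$ generate $\widehat{G_y}$ at every point $y$. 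This last step, together with the verification that the band is constant of the asserted shape, is where I expect the main difficulty to lie.

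For $(3)\Rightarrow(2)$: here $\cY$ is divisorial by hypothesis; on the dense open of $\cY$ over which all $N_i$ are trivial --- such an open exists since a line bundle on an orbifold is trivial on a dense open (pass to the locus where $\cY$ is a scheme and delete the relevant divisors) --- each $\sqrt[n_i]{N_i/\cY}$ restricts to a product with $B\mu_{n_i}$, so a dense open of $\cX$ has the form $\cU\times BH$ with $H=\prod_i\mu_{n_i}$, the generic stabilizer. For $(2)\Rightarrow(3)$: from $\cX\supset\cW\cong\cU\times BH$ one gets line bundles of weight $e_i$ on $\cW$ (pulled back from the $BH$-factor), which extend to line bundles $M_i$ on $\cX$ because $\cX$ is smooth; since $M_i^{\otimes n_i}$ has trivial weight on $H$ it descends to $N_i\in\Pic(\cY)$, and exactly as above one obtains a morphism over $\cY$ to $\sqrt[n_1]{N_1/\cY}\times_{\cY}\dots\times_{\cY}\sqrt[n_s]{N_s/\cY}$ that is an isomorphism generically, hence on bands, hence an isomorphism; combined with divisoriality of $\cY$ this is $(3)$.

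For $(3)\Rightarrow(1)$: choosing line bundles $P_1,\dots,P_{r'}$ on $\cY$ realizing faithful representations of its stabilizers, I claim $\pi^*P_1,\dots,\pi^*P_{r'},M_1,\dots,M_s$ (with $M_i$ the tautological root bundles) realize faithful representations of all stabilizers of $\cX$: at a point $x$ over $y$, an element of $G_x$ annihilated by every $\chi_{\pi^*P_l}$ maps to the identity in $G_y$, hence lies in $\ker(G_x\to G_y)=\prod_i\mu_{n_i}$, on which the $\chi_{M_i}$ restrict to the separating family $e_1,\dots,e_s$, so the element is trivial; thus $\cX\to B\G_m^{r'+s}$ is representable, and $\cX$ is divisorial.
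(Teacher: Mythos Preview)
Your argument is correct and uses the same ingredients as the paper's proof, which runs $(1)\Rightarrow(2)\Rightarrow(3)\Rightarrow(1)$. The only real difference is organizational: you merge the paper's $(1)\Rightarrow(2)$ and $(2)\Rightarrow(3)$ into a single step $(1)\Rightarrow(3)$ by arguing globally with bands (the $L_j$ embed the band of $\cX\to\cY$ as a finite \'etale subgroup scheme of the constant $\G_m^r$ over the irreducible base $\cY$, hence the band is constant; then the morphism to the product of root stacks is a morphism of $H$-banded gerbes over $\cY$, hence an equivalence). The paper instead first trivializes the gerbe only over a dense open $\cU$ (this is $(2)$), then extends the resulting line bundles to all of $\cX$ and invokes the same equivalence of gerbes more tersely; both routes need the same fact, you are simply more explicit about where the difficulty lies. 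One small point to add in your $(3)\Rightarrow(2)$: the orbifold $\cY$ in $(3)$ is a priori only \emph{some} divisorial orbifold, so you should observe that a fiber product of root stacks of line bundles over an orbifold is a gerbe, whence by uniqueness of rigidification this $\cY$ coincides with the canonical associated orbifold of~\eqref{eqn.gerbeoverorbifold}.
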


\begin{proof}
To show (1) $\Rightarrow$ (2), we suppose that $\mathcal{X}$ is divisorial, with line bundles
$L_1$, $\dots$, $L_r$ inducing faithful representations of stabilizer groups, and let
$\mathcal{X}\to \mathcal{Y}$ and $H$ be as in \eqref{eqn.gerbeoverorbifold}.
Since divisorial stacks have abelian stabilizer groups, $H$ is abelian.
Since $\mathcal{Y}$ has trivial generic stabilizer, it has a dense open substack $\mathcal{U}$, isomorphic to a scheme.
We write
\begin{equation}
\label{eqn.Hstructure}
H\cong \Z/n_1\Z\times\dots\times \Z/n_s\Z
\end{equation}
according to the structure theorem of finite abelian groups
(that is, $n_i\ge 2$ for all $i$, with $n_i\mid n_{i+1}$)
and fix a primitive $n_i$th root of unity $\lambda_i$ for every $i$.
For each $1\le i\le s$ some tensor combination $M_i$ of
$L_1$, $\dots$, $L_r$ induces the character
\begin{equation}
\label{eqn.Hcharacter}
H\stackrel{\mathrm{pr}_i}\longrightarrow \Z/n_i\Z\stackrel{\lambda_i}\longrightarrow k^\times.
\end{equation}
So $M_i^{n_i}$ induces the trivial character of $H$, hence descends to a line bundle $N_i$ on $\mathcal{Y}$.
Shrinking $\mathcal{U}$, we may suppose that the line bundles $N_1$, $\dots$, $N_s$ are all trivial.
Choosing trivializations, we get for each $i$ a morphism
\[ \mathcal{U}\times_{\mathcal{Y}}\mathcal{X}\to B\Z/n_i\Z \]
from the identification of $B\Z/n_i\Z$ with $B\mu_{n_i}$ (using the chosen root of unity $\lambda_i$ to identify $\Z/n_i\Z$ with $\mu_{n_i}$) and the fiber square
\[
\xymatrix{
B\mu_{n_i}\ar[r]\ar[d] &B\G_m \ar[d]^{({-})^{n_i}}\\
\Spec(k) \ar[r] & B\G_m
}
\]
Combined, these lead to an isomorphism
\[ \mathcal{U}\times_{\mathcal{Y}}\mathcal{X}\to \mathcal{U}\times B\Z/n_1\Z\times\dots\times B\Z/n_s\Z\cong \mathcal{U}\times BH. \]
Furthermore, for each $1\le j\le r$ we find a suitable tensor combination $P_j$ of $M_1$, $\dots$, $M_s$, that induces the same character of $H$ as $L_j$.
Consequently $L_j\otimes P_j^{-1}$ descends to $\mathcal{Y}$, i.e., is isomorphic to the pullback of some line bundle $Q_j$ on $\mathcal{Y}$.
Using that $L_1$, $\dots$, $L_r$ induce faithful representations of the stabilizer groups of $\mathcal{X}$, we see that
\[
N_1, \dots, N_s, Q_1, \dots, Q_r
\]
induce faithful representations of the stabilizer groups of $\mathcal{Y}$.
So $\mathcal{Y}$ is divisorial.

For (2) $\Rightarrow$ (3), we need the observation that a line bundle on an open substack of $\mathcal{X}$ can always be extended to $\mathcal{X}$ \cite[Lemma 4.1]{lev}.
Writing $H$ as in \eqref{eqn.Hstructure}, the
characters \eqref{eqn.Hcharacter} determine line bundles on $\mathcal{U}\times BH$, which are the restrictions of line bundles $M_1$, $\dots$, $M_s$ on $\mathcal{X}$.
Now $M_i^{n_i}$ descends to $\mathcal{Y}$.
An identification of $M_i^{n_i}$ with the pullback to $\mathcal{X}$ of a line bundle $N_i$ on $\mathcal{Y}$, for every $i$, leads to an isomorphism as in statement (3).

For (3) $\Rightarrow$ (1), the pullbacks of line bundles on $\mathcal{Y}$ and the tautological line bundles on $\sqrt[n_i]{N_i/\mathcal{Y}}$ (coming from the identity representation of the second factor $B\G_m$ in \eqref{eqn.rootlb}) exhibit $\mathcal{X}$ as divisorial.
\end{proof}

\begin{coro}
\label{cor.rootdivisorial}
Suppose that $\cX$ is a smooth separated irreducible DM stack, $\cD$ is a smooth divisor on $\cX$, and $r$ is a positive integer.
Then $\cX$ is divisorial if and only if $\sqrt[r]{(\cX,\cD)}$ is divisorial.
\end{coro}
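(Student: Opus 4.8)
The plan is to prove both implications by exhibiting explicit generating families of line bundles, using only the elementary structure of the root construction. Set $\cX'=\sqrt[r]{(\cX,\cD)}$, let $\pi\colon\cX'\to\cX$ be the projection, and let $\cL$ be the tautological line bundle on $\cX'$ (coming from the identity character of the $\G_m$ in the added copy of $[\A^1/\G_m]$); see \cite[\S 2]{cadman}. I will use three standard properties: (a) $\cL$ carries a section whose $r$th power is the pullback of the tautological section of $\cO_\cX(\cD)$, so $\cL^{\otimes r}\cong\pi^*\cO_\cX(\cD)$, and $\pi$ restricts to an isomorphism over $\cX\setminus\cD$, along which $\cL$ is trivial; (b) every line bundle on $\cX'$ is isomorphic to $\pi^*M\otimes\cL^{\otimes a}$ for some $M\in\Pic(\cX)$ and $a\in\Z$; (c) the gerbe of the root stack $\cG_\cD\to\cD$ is banded by $\mu_r$, so that for a geometric point $\tilde x$ of $\cG_\cD$ over $x\in\cD$ the map $G_{\tilde x}\to G_x$ is surjective with kernel a central copy of $\mu_r$, and $\cL$ restricts on this $\mu_r$ to the standard, hence faithful, character. (Property (c) is also visible from the local model near $\cG_\cD$ in Lemma \ref{lem.normalbundlerot}.) For a line bundle $\cM$ and a geometric point $\tilde x$, write $\chi_\cM(\gamma)$ for the scalar by which $\gamma\in G_{\tilde x}$ acts on the fiber; a finite family of line bundles ``induces faithful representations'' at $\tilde x$ precisely when the combined character is injective on $G_{\tilde x}$.

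Suppose first that $\cX$ is divisorial, with line bundles $L_1,\dots,L_N$ on $\cX$ inducing faithful representations of all stabilizers. I claim $\pi^*L_1,\dots,\pi^*L_N,\cL$ do the same on $\cX'$. Indeed, if $\gamma\in G_{\tilde x}$ is killed by all of $\chi_{\pi^*L_j}$ and by $\chi_\cL$, then, writing $x=\pi(\tilde x)$, the image of $\gamma$ in $G_x$ is killed by every $\chi_{L_j}$, so is trivial, whence $\gamma\in\ker(G_{\tilde x}\to G_x)$. If $x\notin\cD$ this kernel is trivial by (a); if $x\in\cD$ it is the $\mu_r$ of (c), on which $\chi_\cL$ is faithful; either way $\gamma=1$. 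Hence $\cX'$ is divisorial.

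Conversely, suppose $\cX'$ is divisorial, with line bundles $L'_1,\dots,L'_s$ inducing faithful representations, and write $L'_i\cong\pi^*M_i\otimes\cL^{\otimes a_i}$ with $M_i\in\Pic(\cX)$ using (b). I claim $M_1,\dots,M_s$ together with $\cO_\cX(\cD)$ induce faithful representations of all stabilizers of $\cX$ (which in particular forces those stabilizers to be abelian). Over $\cX\setminus\cD$ this is clear, since there $\cL$ is trivial and $L'_i\cong M_i$. Now fix $x\in\cD$, a geometric point $\tilde x$ of $\cG_\cD$ over it, and $\bar\gamma\in G_x$ killed by every $\chi_{M_i}$ and by $\chi_{\cO_\cX(\cD)}$; lift $\bar\gamma$ to $\gamma\in G_{\tilde x}$ by (c). Since $\cL^{\otimes r}\cong\pi^*\cO_\cX(\cD)$ we get $\chi_\cL(\gamma)^r=\chi_{\cO_\cX(\cD)}(\bar\gamma)=1$, so $\chi_\cL(\gamma)\in\mu_r$, and by (c) there is $\delta$ in the central $\mu_r=\ker(G_{\tilde x}\to G_x)$ with $\chi_\cL(\delta)=\chi_\cL(\gamma)$. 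Then $\gamma\delta^{-1}$ still maps to $\bar\gamma$ in $G_x$, is killed by $\chi_\cL$, and is killed by each $\chi_{\pi^*M_i}$ (which factors through $G_x$, kills $\bar\gamma$, and is trivial on $\mu_r$); hence $\gamma\delta^{-1}$ is killed by every $\chi_{L'_i}$. Faithfulness of the $L'_i$ at $\tilde x$ forces $\gamma\delta^{-1}=1$, so $\gamma\in\mu_r$ and $\bar\gamma=1$. Thus $\cX$ is divisorial.

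The only non-formal ingredients are properties (b) and (c) of the root construction; granting them, the rest is elementary character-group bookkeeping, and no hypothesis on roots of unity in $k$ is needed, so Proposition \ref{prop.divisorial} is not invoked. I expect the verification of (c) — pinning down the new $\mu_r$ inside $G_{\tilde x}$ and checking that $\cL$ restricts to it as the standard character — to be the one point that genuinely uses the geometry of the root stack rather than bookkeeping.
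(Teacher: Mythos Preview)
Your proof is correct and follows essentially the same approach as the paper: the key input is the description of $\Pic(\cX')$ as generated by pullbacks from $\cX$ together with the tautological line bundle $\cL=\cO(\cG_\cD)$ (your property (b), the paper's citation of \cite[Cor.\ 3.1.2]{cadman}), after which both directions reduce to the character bookkeeping you carry out. The paper leaves this bookkeeping implicit (``the result follows quickly''), whereas you have spelled it out, including the need to adjoin $\cO_\cX(\cD)$ to the family $\{M_i\}$ in the reverse direction.
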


\begin{proof}
There is no loss of generality in supposing that $\cD$ is irreducible.
By \cite[Cor.\ 3.1.2]{cadman}, every line bundle on $\sqrt[r]{(\cX,\cD)}$ is a twist by a power of $\cO(\cG_{\cD})$ of a line bundle, pulled back from $\cX$.
The result follows quickly from this fact.
\end{proof}

Unlike in the case of orbifolds, a general smooth separated Deligne-Mumford stack does not become divisorial after suitable blow-ups.
Indeed, condition (2) of Proposition \ref{prop.divisorial}, if not satisfied already by $\mathcal{X}$, will never hold after performing blow-ups.

\begin{prop}
\label{prop.brdmdiv}
Suppose that $\mathcal{X}$ is a smooth separated irreducible DM stack of finite type over $k$, where $k$ contains all roots of unity.
Then there exists a smooth separated irreducible DM stack $\mathcal{X}'$ that is divisorial and birationally equivalent to $\mathcal{X}$, if and only if $\mathcal{X}$ has a dense open substack isomorphic to $U\times BH$, for some algebraic variety $U$ and finite abelian group $H$.

In this case, suitable $\mathcal{X}'$ may be obtained from the orbifold $\mathcal{Y}$, associated to $\mathcal{X}$, by 
\begin{itemize}
\item 
applying the divisorialification procedure (Propositon \ref{prop.orbifolddivisorialification}) to obtain
\[ \mathcal{Y}'\to \mathcal{Y}, \]
a composition of blow-ups in smooth centers, with $\mathcal{Y}'$ divisorial, and 
\item forming the fiber diagram
\[
\xymatrix{
\mathcal{X}'\ar[r]\ar[d] & \mathcal{X}\ar[d] \\
\mathcal{Y}'\ar[r] & \mathcal{Y}
}
\]
\end{itemize}
\end{prop}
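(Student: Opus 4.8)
The plan is to reduce the statement to Proposition~\ref{prop.divisorial}, the characterisation of divisoriality for smooth separated irreducible DM stacks, using in addition Remark~\ref{rema.divy} (divisoriality passes to stacks with representable morphism) and the evident facts that an open substack of a product $\mathcal{V}\times BH$ is again of the form $\mathcal{V}_0\times BH$ and that a representable proper birational morphism restricts to an isomorphism of dense open substacks. For the ``only if'' direction, by Remark~\ref{rem.birational} choose a smooth separated irreducible stack $\mathcal{X}''$ admitting representable proper birational morphisms to the given divisorial $\mathcal{X}'$ and to $\mathcal{X}$. Then $\mathcal{X}''$ is divisorial by Remark~\ref{rema.divy}, so by the implication $(1)\Rightarrow(2)$ of Proposition~\ref{prop.divisorial} a dense open substack of $\mathcal{X}''$ is isomorphic to $\mathcal{U}''\times BH$ with $\mathcal{U}''$ a dense open substack of the orbifold associated to $\mathcal{X}''$ and $H$ a finite abelian group. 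Intersecting this product open with the dense open on which $\mathcal{X}''\to\mathcal{X}$ is an isomorphism, and shrinking, produces a dense open of $\mathcal{X}$ isomorphic to $U\times BH$ with $U$ a variety.

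For the ``if'' direction, and the explicit description, I would start from the orbifold $\mathcal{Y}$ associated to $\mathcal{X}$ as in~\eqref{eqn.gerbeoverorbifold}. The geometric generic stabiliser of $\mathcal{X}$ is the group $H$ appearing in the hypothesised product open; comparing that open with the rigidification $\mathcal{X}\to\mathcal{Y}$ (using that a surjective open immersion is an isomorphism) exhibits the gerbe as trivial over a dense open $\mathcal{U}_0\subset\mathcal{Y}$, so $\mathcal{X}\times_{\mathcal{Y}}\mathcal{U}_0\cong\mathcal{U}_0\times BH$. Running the divisorialification procedure (Proposition~\ref{prop.orbifolddivisorialification}) gives $\mathcal{Y}'\to\mathcal{Y}$, a composition of blow-ups in smooth centres with $\mathcal{Y}'$ divisorial, and I set $\mathcal{X}'=\mathcal{X}\times_{\mathcal{Y}}\mathcal{Y}'$. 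Base change of the gerbe $\mathcal{X}\to\mathcal{Y}$ shows $\mathcal{X}'\to\mathcal{Y}'$ is a gerbe banded by $H$; hence $\mathcal{X}'$ is smooth, separated, irreducible and DM, its associated orbifold is again $\mathcal{Y}'$, and over the dense open of $\mathcal{Y}'$ mapping isomorphically onto $\mathcal{U}_0$ one has $\mathcal{X}'\cong(\text{that open})\times BH$, so condition $(2)$ of Proposition~\ref{prop.divisorial} holds and $\mathcal{X}'$ is divisorial by $(2)\Rightarrow(1)$. Base change of the representable proper birational morphism $\mathcal{Y}'\to\mathcal{Y}$ shows $\mathcal{X}'\to\mathcal{X}$ is representable, proper and birational, so $\mathcal{X}'\sim\mathcal{X}$, completing the construction.

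The step I expect to require the most care is the ``if'' direction: one must match the hypothesised product open of $\mathcal{X}$ with the gerbe structure $\mathcal{X}\to\mathcal{Y}$ so as to exhibit a product open $\mathcal{U}\times BH$ with $\mathcal{U}$ dense open in the \emph{divisorial} orbifold $\mathcal{Y}'$ rather than merely in $\mathcal{Y}$, and then check that smoothness, separatedness, irreducibility, the identification of the associated orbifold, and representability, properness and birationality of the projection all survive base change along $\mathcal{Y}'\to\mathcal{Y}$. These are routine --- a surjective open immersion is an isomorphism, rigidification commutes with the flat base change $\mathcal{Y}'\to\mathcal{Y}$, and representable, proper and birational morphisms are stable under base change --- so the genuinely new input beyond Proposition~\ref{prop.divisorial} is only Remark~\ref{rema.divy} together with the transparent behaviour of ``$\times BH$'' under restriction to open substacks.
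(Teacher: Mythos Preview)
Your proposal is correct and follows essentially the same approach as the paper: both directions reduce to Proposition~\ref{prop.divisorial}, and the construction of $\mathcal{X}'$ via base change along the divisorialification $\mathcal{Y}'\to\mathcal{Y}$ is identical. The only minor difference is in the ``only if'' direction: the paper simply observes that birationally equivalent stacks share a common dense open substack and applies condition~(2) of Proposition~\ref{prop.divisorial} directly to the divisorial $\mathcal{X}'$, whereas you pass through an auxiliary $\mathcal{X}''$ and invoke Remark~\ref{rema.divy}; both are valid, the paper's route being marginally shorter.
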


\begin{proof}
If $\mathcal{X}'$ is birationally equivalent to $\mathcal{X}$, then $\mathcal{X}$ and $\mathcal{X}'$ have dense open substacks that are isomorphic.
By condition (2) of Proposition \ref{prop.divisorial}, then, if $\mathcal{X}'$ is divisorial then $\mathcal{X}$ has a dense open substack isomorphic to $\mathcal{U}'\times BH$, for some orbifold $\mathcal{U}'$.
There is an algebraic variety $U$, isomorphic to a dense open in $\mathcal{U}'$, and $\mathcal{X}$ has a dense open substack isomorphic to $U\times BH$.

Now suppose $\mathcal{X}$ has a dense open substack isomorphic to $U\times BH$.
Let $\mathcal{Y}$ be the orbifold, associated to $\mathcal{X}$, and let us apply Proposition \ref{prop.orbifolddivisorialification} to obtain $\mathcal{Y}'\to \mathcal{Y}$.
Since $\mathcal{X}\to \mathcal{Y}$ is smooth, $\mathcal{X}'$ is smooth over $\mathcal{Y}'$ and therefore itself smooth.
The morphism $\mathcal{Y}'\to \mathcal{Y}$ is representable, proper, and birational, hence so is $\mathcal{X}'\to \mathcal{X}$.
Since the rigidification in \cite[Prop.\ 2.1]{olssonboundedness} is canonical, and we have $\mathcal{X}'\to \mathcal{Y}'$ \'etale locally projection from a product with $BH$ and $\mathcal{Y}'$ an orbifold; $\mathcal{Y}'$ is the orbifold, associated to $\mathcal{X}'$.
By Proposition \ref{prop.divisorial}, $\mathcal{X}'$ is divisorial.
\end{proof}

\begin{defi}
\label{defn.linearizable}
Suppose that the base field $k$ contains all roots of unity.
A \emph{linearizable} DM stack is a
smooth separated irreducible DM stack, satisfying the equivalent conditions of Proposition \ref{prop.brdmdiv}.
\end{defi}

\begin{rema}
\label{rem.linearizable}
Suppose that $\cX$ and $\cY$ are smooth separated irreducible DM stacks, and there exists a birational map $\cX\dashrightarrow \cY$.
Then $\cX$ is linearizable if and only if $\cY$ is linearizable.
\end{rema}

\begin{lemm}
\label{lem.linearizable}
Suppose that the base field $k$ contains all roots of unity. Let $\mathcal{X}$ be a linearizable DM stack, $H$ the generic stabilizer of $\mathcal{X}$, and $\chi$ a character of $H$.
Then there exists a line bundle $L$ on $\mathcal{X}$, such that the character of $H$ induced by $L$ is $\chi$.
If $L'$ is another such line bundle on $\mathcal{X}$, then there exists a dense open substack $\mathcal{U}\subset \mathcal{X}$, such that the restrictions of $L$ and $L'$ to $\mathcal{U}$ are isomorphic.
\end{lemm}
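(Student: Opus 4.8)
The plan is to reduce both assertions to the structural description of linearizable stacks in Proposition~\ref{prop.brdmdiv}: it provides a dense open substack $\cW\subset \mathcal{X}$ together with an isomorphism $\cW\cong U\times BH$, where $U$ is an algebraic variety with trivial generic stabilizer and $H$ is a finite abelian group. Since the generic stabilizer of $\mathcal{X}$ agrees with that of $\cW$, which is $H$, the group $H$ here is exactly the one in the statement, and in particular it is abelian, so that characters of $H$ and the induced character of a line bundle are unambiguous (cf.\ Example~\ref{ex.stabilizer}).

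\emph{Existence.} First I would realize $\chi$ by a line bundle on $BH$. Writing $H\cong \Z/n_1\Z\times\cdots\times \Z/n_s\Z$ and using that $k$ contains all roots of unity to identify each factor $\Z/n_i\Z$ with $\mu_{n_i}$, one expresses $BH$ as $B\mu_{n_1}\times\cdots\times B\mu_{n_s}$ and, via the fiber-square presentation of $B\mu_{n_i}$ over $B\G_m$ used in the proof of Proposition~\ref{prop.divisorial}, produces a line bundle on $BH$ inducing the character $\chi$. Pulling back along the projection $\cW\cong U\times BH\to BH$ gives a line bundle $L_{\cW}$ on $\cW$ whose induced character of $H$ is $\chi$. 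Next I would invoke \cite[Lemma 4.1]{lev} to extend $L_{\cW}$ to a line bundle $L$ on $\mathcal{X}$. Because the character of $H$ induced by a line bundle on $\mathcal{X}$ is computed via restriction to the residual gerbe at the generic point, it depends only on the restriction to any dense open substack; hence $L$ also induces $\chi$.

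\emph{Uniqueness up to restriction.} Given a second line bundle $L'$ on $\mathcal{X}$ inducing $\chi$, the line bundle $L\otimes (L')^{-1}$ induces the trivial character of $H$. Restricting to $\cW\cong U\times BH$, the descent criterion for line bundles along the trivial $H$-gerbe $U\times BH\to U$ (recalled just before Proposition~\ref{prop.divisorial} and used in its proof) shows that $(L\otimes (L')^{-1})|_{\cW}$ is the pullback of a line bundle on $U$. A line bundle on the integral scheme $U$ is trivial on some dense open $U_0\subset U$, so taking $\mathcal{U}=U_0\times BH$, a dense open substack of $\cW\subset \mathcal{X}$, one gets $(L\otimes (L')^{-1})|_{\mathcal{U}}$ trivial, i.e.\ $L|_{\mathcal{U}}\cong L'|_{\mathcal{U}}$.

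I expect the only real friction to be bookkeeping rather than conceptual: pinning down the correspondence between characters of $H$ and line bundles on $BH$ compatibly with the chosen roots of unity, and confirming that replacing $L_{\cW}$ by an arbitrary extension $L$ leaves the induced character unchanged. Both are settled by the observation that the induced character depends only on the germ of the line bundle at the generic point of $\mathcal{X}$, so no genuine obstruction arises.
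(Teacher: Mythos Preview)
Your proof is correct and follows essentially the same approach as the paper: construct a line bundle with the desired character on the dense open $U\times BH$ and extend via \cite[Lemma 4.1]{lev}, then for uniqueness observe that $L\otimes (L')^{-1}$ has trivial $H$-character, hence descends to a line bundle on a variety, which is Zariski locally trivial. The only cosmetic difference is that the paper carries out the descent step using the associated orbifold $\mathcal{Y}$ of \eqref{eqn.gerbeoverorbifold} rather than restricting first to $U\times BH$, but since $U$ is a dense open of $\mathcal{Y}$ this amounts to the same argument.
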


\begin{proof}
The character $\chi$ determines a line bundle on a dense open substack of $\mathcal{X}$, isomorphic to $U\times BH$.
As noted in the proof of Proposition \ref{prop.divisorial}, any line bundle on an open substack of $\mathcal{X}$ is the restriction of a line bundle $L$ on $\mathcal{X}$, then $L$ is as desired.

Let $L'$ be another such line bundle.
It suffices to show that $L^{-1}\otimes L'$ is trivial on some dense open substack of $\mathcal{X}$, i.e., it suffices to treat the case $\chi$ is trivial and $L'=\cO_{\mathcal{X}}$.
Let $\mathcal{Y}$ be the orbifold, associated to $\mathcal{X}$.
The triviality of $\chi$ implies that $L$ is isomorphic to the pullback of a line bundle on $\mathcal{Y}$ (cf.\ the morphism \eqref{eqn.gerbeoverorbifold} and subsequent observations).
Since $\mathcal{Y}$ is an orbifold there is a dense open substack, isomorphic to an algebraic variety.
We are done, since a line bundle on an algebraic variety is Zariski locally trivial.
\end{proof}

\begin{lemm}
\label{lem.easybirat}
Let $\mathcal{X}$, $\mathcal{Y}$, and $\mathcal{Y}'$ be smooth separated irreducible DM stacks and let
\[ \mathcal{Y}\to\mathcal{X}\qquad \text{and}\qquad \mathcal{Y'}\to\mathcal{X} \]
be representable proper dominant morphisms, such that for some dense open substack $\cU$ of $\cX$ there exists an isomorphism $\cY\times_{\cX}\cU\to \cY'\times_{\cX}\cU$ over $\cU$.
Then
\[ \mathcal{Y}\sim \mathcal{Y'}. \]
\end{lemm}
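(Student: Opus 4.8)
The plan is to exhibit, directly from the given data, a span realizing the birational equivalence in the sense of Definition~\ref{def.biratstacks}. Write $\phi\colon \cY\times_{\cX}\cU\to \cY'\times_{\cX}\cU$ for the given isomorphism over $\cU$. Since $\cY\to \cX$ is dominant and $\cY$ is irreducible while $\cU$ is dense open in $\cX$, the substack $\cY\times_{\cX}\cU$ is dense open in $\cY$; likewise $\cY'\times_{\cX}\cU$ is dense open in $\cY'$. Hence $\phi$, followed by the inclusion into $\cY'$, represents a rational map $\cY\dashrightarrow \cY'$ which is birational, in the sense of Definition~\ref{def:birs}(iv), because $\phi$ maps $\cY\times_{\cX}\cU$ isomorphically onto the dense open substack $\cY'\times_{\cX}\cU$ of $\cY'$.

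Next I would take $\cW=\Gamma(\cY\dashrightarrow \cY')$, the closure in $\cY\times \cY'$ of the graph of this rational map, which by Proposition~\ref{prop:birs}(i) is well defined; it is irreducible, since the graph is isomorphic to the irreducible $\cY\times_{\cX}\cU$. Because $\phi$ is a morphism over $\cX$, the graph is contained in $\cY\times_{\cX}\cY'$, which is a closed substack of $\cY\times \cY'$ (the diagonal of the separated DM stack $\cX$ being a closed immersion), and therefore so is $\cW$. Let $\sigma\colon \cW\to \cY$ and $\tau\colon \cW\to \cY'$ be the two projections.

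Then I would check that $\sigma$ and $\tau$ are representable, proper, and birational. Representability and properness of $\sigma$ follow by writing it as $\cW\hookrightarrow \cY\times_{\cX}\cY'\to \cY$: the first arrow is a closed immersion, hence representable and proper, and the second is the base change of $\cY'\to \cX$, which is representable and proper by hypothesis; symmetrically for $\tau$ using $\cY\to \cX$. For birationality, the preimage under $\sigma$ of the dense open $\cY\times_{\cX}\cU\subset \cY$ equals the graph (this uses that the graph is closed in the open substack $(\cY\times_{\cX}\cY')\times_{\cX}\cU$ of $\cY\times_{\cX}\cY'$, of which $\cW$ is the closure), and $\sigma$ carries it isomorphically onto $\cY\times_{\cX}\cU$; the same holds for $\tau$, via $\phi$. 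The span $\cY\stackrel{\sigma}{\leftarrow}\cW\stackrel{\tau}{\to}\cY'$ is then exactly the concrete form of Definition~\ref{def.biratstacks}, so $\cY\sim \cY'$; one may, if preferred, replace $\cW$ by a resolution of singularities as in Remark~\ref{rem.birational} to have the middle stack smooth.

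A possible alternative organization is to verify representability of the rational map $\cY\dashrightarrow \cY'$ directly from Example~\ref{exa.reproverZ} with $\cZ=\cX$ (its composite with $\cY'\to \cX$ agreeing with $\cY\to \cX$ on $\cY\times_{\cX}\cU$, as $\phi$ is over $\cU$), and its properness from Proposition~\ref{prop:birs}(i) applied to $\cW$; this again yields $\cY\sim \cY'$ by Definition~\ref{def.biratstacks}. I do not anticipate a genuine obstacle here: the argument is essentially bookkeeping, and the only points needing care are the density of $\cY\times_{\cX}\cU$ in $\cY$ (using dominance together with irreducibility), the identification of $\sigma^{-1}(\cY\times_{\cX}\cU)$ and $\tau^{-1}(\cY'\times_{\cX}\cU)$ with the graph, and the derivation of representability and properness of $\sigma,\tau$ as closed immersions followed by base changes of the structure morphisms $\cY'\to\cX$ and $\cY\to\cX$.
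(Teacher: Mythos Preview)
Your proof is correct and, in substance, the same as the paper's. The paper's argument is two lines: the birational map $\cY\dashrightarrow\cY'$ is representable by Example~\ref{exa.reproverZ} (with $\cZ=\cX$) and proper by Proposition~\ref{prop.biratcompose} (since the morphisms $\cY\to\cX$ and $\cY'\to\cX$ are proper). Your main argument unpacks exactly what Example~\ref{exa.reproverZ} does, taking the closure in $\cY\times_{\cX}\cY'$ and checking representability and properness of the projections by hand via ``closed immersion followed by base change of a representable proper morphism''; your alternative organization at the end is essentially the paper's proof verbatim, except that you invoke Proposition~\ref{prop:birs}(i) rather than Proposition~\ref{prop.biratcompose} for properness.
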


\begin{proof}
The induced birational map $\cY\dashrightarrow \cY'$ is proper by Proposition \ref{prop.biratcompose}, and representable by Example \ref{exa.reproverZ}.
\end{proof}

\begin{defi}
\label{def.action}
Suppose $k$ contains all roots of unity.
Let $\mathcal{K}$ be a linearizable DM stack with generic stabilizer $H$ and $a_1$, $\dots$, $a_c\in H^\vee$.
We take $L_i$ as in Lemma \ref{lem.linearizable}, for the character $a_i$.
Now
\[ \mathcal{Y}:=\bP(\cO_{\mathcal{K}}\oplus L_1\oplus\dots\oplus L_c) \]
is a linearizable DM stack with generic stabilizer
\[ \overline{H}:=\ker(a_1)\cap\dots\cap \ker(a_c). \]
We call this the \emph{action construction} \textbf{(A)}.

Suppose, for some $i$, we replace $L_i$ by another line bundle $L'_i$, satisfying the requirement of Lemma \ref{lem.linearizable} for the character $a_i$.
Then there is dense open $\mathcal{U}\subset \mathcal{X}$, such that
$L_i|_{\mathcal{U}}$ is isomorphic to
$L'_i|_{\mathcal{U}}$.
Then, by Lemma \ref{lem.easybirat}, the construction with $L'_i$ in place of $L_i$ yields a birationally equivalent stack $\mathcal{Y}'$.
The \emph{action construction}, performed with $\mathcal{K}$ and $a_1$, $\dots$, $a_c$, yields the stack $\mathcal{Y}$, which is well-defined up to birational equivalence.
\end{defi}

\begin{rema}
\label{rem.action}
The birational equivalence class of $\mathcal{Y}$, in the action construction, only depends on the birational equivalence class of $\mathcal{K}$.
\end{rema}

\begin{rema}
\label{rem.stabilizer2}
Continuing with the theme of Example \ref{ex.stabilizer}, we explain how linearizability impacts the identifications of stabilizer groups of a smooth separated irreducible DM stack $\mathcal{X}$.
The inertia stack (Remark \ref{rem.IX}) $\mathcal{I}_{\mathcal{X}}$
is finite over $\mathcal{X}$
and has the structure of a group object in the category of stacks with representable morphism to $\mathcal{X}$.
Restricting to the maximal open substack $\mathcal{U}$ of $\mathcal{X}$ where $\mathcal{I}_{\mathcal{X}}\to \mathcal{X}$ has constant degree we obtain a finite \'etale group object over $\mathcal{U}$, and $\mathcal{U}$ is a gerbe, \'etale locally over its coarse moduli space $U$ isomorphic to a product with $BH$ for some finite group $H$, the geometric stabilier of $\mathcal{X}$.
\emph{Suppose $H$ is abelian.}
Then the phenomenon reported in Example \ref{ex.stabilizer} at points of $\mathcal{U}$ manifests itself in finite \'etale $G\to U$, with $G\times_U\mathcal{U}\cong \mathcal{I}_{\mathcal{U}}$.
Generally, $G\to U$ is a \emph{twisted form} of $H$, e.g., if $\Z/2\Z$ acts without fixed points on a smooth variety $Y$ with quotient $X$ and we let $\fS_3$ act on $Y$ via $\fS_3\to \Z/2\Z$, then for $\mathcal{U}=\mathcal{X}=[Y/\fS_3]$ we obtain $G=X\sqcup Y$, a twisted form of $H=\Z/3\Z$.
If $G\cong U\times H$ then the gerbe has \emph{trivial band} and there is a \emph{canonical} identification 
of the geometric stabilizer groups at any pair of points of $\mathcal{U}$.
When $k$ has all roots of unity,
we have implications
\[
\text{trivial gerbe (over $U$)}\ \Rightarrow\  \text{$\mathcal{X}$ linearizable}\ \Rightarrow\ \text{trivial band}
\]
and examples of possibilities for gerbe/band, when $H=\Z/m\Z$ ($m\ge 2$):
\begin{itemize}
\item $\bP(m,m)$ is linearizable but as a gerbe over $\bP^1$ is nontrivial;
\item there is a Brauer group obstruction to linearizability, for a gerbe with trivial band (cf.\ \cite[Prop.\ 4]{HKTconic}).
\end{itemize}
\end{rema}

\begin{rema}
\label{rem.compactify}
Quasiprojective smooth DM stacks admit projective compactifications.
Specifically,
by combining \cite[Thm.\ 5.3]{K-Seattle}
with Remark \ref{rema:sing-res},
we see that every quasiprojective smooth DM stack $\cU$ may be embedded as an open substack of a projective smooth DM stack $\cX$, with complement an snc divisor.
If $\cU$ is divisorial, then so is $\cX$
(by the ability, mentioned in the proof of Proposition \ref{prop.divisorial}, to extend line bundles to $\cX$, and the argument of Remark \ref{rem.orbifolddivisorialification}).
\end{rema}

\section{Weak factorization for stacks}
\label{sect:weak}

Section \ref{sect:birationality} concluded with a weak factorization result for representable proper birational maps of stacks.
The case of birational maps that are proper but not necessarily representable is more subtle.
Its treatment requires the \emph{functorial destackification} construction of Bergh \cite{bergh} and Bergh-Rydh \cite{berghrydh} and appears in several formulations: Bergh \cite[Cor.\ 1.5]{bergh} and \cite[Thm.\ 1.2]{berghweak}, Harper \cite[Thm.\ 1.1]{harper}, and Bergh-Rydh \cite[Thm.\ D]{berghrydh}.
A proper birational map $\cX\dashrightarrow \cY$ is factored as a sequence of \emph{stacky blow-ups} and their inverses, where a stacky blow-up is by definition a blow-up of a smooth substack or a root operation along a smooth divisor.

We continue to work with the convention, that all stacks are DM stacks, separated and of finite type over $k$, a field of characteristic $0$.

As remarked in \cite[Sect.\ 1]{berghrydh}, Harper's proof of weak factorization can be streamlined by applying the \emph{relative} form of functorial destackification of Bergh and Rydh \cite[Thm.\ 7.1]{berghrydh}.
We recall this result, in the case that we need.

\begin{prop}[Bergh-Rydh]
\label{prop.relativedestackification}
Let $f\colon \cX'\to \cX$ be a birational morphism of smooth stacks, restricting to an isomorphism $\cU'\cong \cU$ of dense open substacks.
Then there exist smooth stacks $\cX_0$, $\dots$, $\cX_m$ for some $m$, and a sequence of stacky blow-ups
\begin{gather*}
\cX''=\cX_m\to\dots\to \cX_1\to \cX_0=\cX', \\
\cX_{i+1}\cong B\ell_{\cZ_i}\cX_i,\quad\text{resp.}\quad \sqrt[q_i\!]{(\cX_i,\cZ_i)}, \\
\cU_0=\cU',\qquad \cU_{i+1}=\cX_{i+1}\times_{\cX_i}\cU_i, \qquad \cU''=\cU_m, \qquad \cU_i\cap \cZ_i=\emptyset,
\end{gather*}
such that, taking $\cX'''\to \cX$ to be the relative coarse moduli space of $\cX''\to \cX$, the stack $\cX'''$ is smooth, with
\[ \cX''\cong \sqrt[\mathbf{s}]{(\cX''',\cD''')}, \]
iterated root stack of some snc divisor $\cD'''$ on $\cX'''$ and tuple of positive integers $\mathbf{s}$, such that
$\cX''\times_{\cX'''}\cD'''$ is disjoint from $\cU''$.
\end{prop}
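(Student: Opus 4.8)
The plan is to deduce this directly from the relative functorial destackification theorem of Bergh and Rydh, \cite[Thm.\ 7.1]{berghrydh}, applied to the birational morphism $f\colon \cX'\to \cX$. That theorem produces a canonical sequence of stacky blow-ups
\[ \cX''=\cX_m\to\dots\to \cX_1\to \cX_0=\cX', \]
each a blow-up $B\ell_{\cZ_i}\cX_i$ in a smooth center or a root operation $\sqrt[q_i]{(\cX_i,\cZ_i)}$ along a smooth divisor, with the property that, writing $\cX'''\to \cX$ for the relative coarse moduli space of the composite $\cX''\to \cX$, the stack $\cX'''$ is smooth and the induced morphism $\cX''\to \cX'''$ presents $\cX''$ as an iterated root stack $\sqrt[\mathbf{s}]{(\cX''',\cD''')}$ of an snc divisor $\cD'''$ on $\cX'''$ for a suitable tuple $\mathbf{s}$. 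So the bulk of the assertion is a verbatim restatement of the cited theorem; what remains is the compatibility with the dense open substacks $\cU\subset \cX$ and $\cU'\subset \cX'$.

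For that compatibility I would invoke the functoriality of the destackification construction together with the hypothesis that $f$ restricts to an isomorphism $\cU'\to \cU$. Over $\cU$ the morphism $f$ is an isomorphism, hence (trivially) already a root stack over its relative coarse moduli space; since the destackification algorithm modifies only the locus where this fails, and since it commutes with the open immersion $\cU'\hookrightarrow \cX'$, it does nothing over $\cU$. Concretely, with $\cU_0=\cU'$ and $\cU_{i+1}=\cX_{i+1}\times_{\cX_i}\cU_i$, this yields $\cU_i\cap \cZ_i=\emptyset$ at every stage, so each $\cX_{i+1}\to \cX_i$ restricts to an isomorphism over $\cU_i$, the stack $\cU''=\cU_m$ maps isomorphically to $\cU$, and both $\cX'''\to \cX$ and $\cX''\to \cX'''$ restrict to isomorphisms over $\cU$. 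Since no root operation has taken place over $\cU$, the divisor $\cD'''$ is disjoint from the preimage of $\cU$ in $\cX'''$, i.e.\ $\cX''\times_{\cX'''}\cD'''$ is disjoint from $\cU''$.

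The point requiring the most care is the translation between the normal form in which \cite{berghrydh} phrases its conclusion — where, \'etale-locally on the coarse space, the destackified stack is a product of an iterated root stack of an snc divisor with a residual gerbe — and the cleaner form asserted here, in which $\cX''\to \cX'''$ is literally an iterated root stack with no gerbe factor. The reconciliation is that, working \emph{relative} to $\cX$, the generic gerbe-like stackiness of $\cX'$ agrees with that of $\cX$ and is already recorded by the relative coarse moduli space $\cX'''$, whose formation over $\cX$ does not eliminate the generic stabilizer; hence the residual gerbe of the absolute destackification is absent from the relative picture, and only the root-stack part survives. One then reads off that $\cX'''$ is smooth directly from the output of the theorem, after which smoothness of $\cX''=\sqrt[\mathbf{s}]{(\cX''',\cD''')}$ is consistent with $\cD'''$ being snc. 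Beyond these two points the argument is bookkeeping.
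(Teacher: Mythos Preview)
Your proposal is correct and matches the paper's approach: the paper does not give a proof of this proposition at all, but simply states it as a recall of \cite[Thm.\ 7.1]{berghrydh} (``We recall this result, in the case that we need''). Your discussion of the open-substack compatibility and the relative-versus-absolute gerbe issue goes beyond what the paper writes, but is in the spirit of what one would need to verify when specializing the cited theorem to the present setting.
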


We restate the theorem of Bergh, Harper, and Bergh-Rydh, with attention to the properties of the morphisms to $\cX$, respectively $\cY$ from the intermediate stacks in the factorization.

\begin{theo}[Bergh, Harper, Bergh-Rydh]
\label{thm.weakfact}
Let $\cX$ and $\cY$ be smooth stacks and $\cX\dashrightarrow \cY$ a proper birational map, represented by $(\cU,f)$ where $f$ maps dense open $\cU\subset \cX$ isomorphically to dense open $\cV\subset \cY$.
Then there exist smooth stacks $\cX_0$, $\dots$, $\cX_m$ for some $m$, with $\cX_0=\cX$, $\cX_m=\cY$, and for every $i$ a birational proper morphism $\cX_{i+1}\to \cX_i$ or $\cX_i\to \cX_{i+1}$ such that, starting from $\cU_0=\cU$ and proceeding to $\cU_m=\cV$, the morphism $\cX_{i+1}\to \cX_i$ or $\cX_i\to \cX_{i+1}$ identifies dense open substacks $\cU_i\subset \cX_i$ and $\cU_{i+1}\subset \cX_{i+1}$, and the composite isomorphism
\[ \cU=\cU_0\cong\dots\cong \cU_m=\cV \]
is $2$-isomorphic to $f$.
Furthermore, there are
\[ 0<b<c<d<m \]
such that:
\begin{itemize}
\item The rational maps $\cX_c\dashrightarrow \cX$ and $\cX_c\dashrightarrow \cY$, represented by $(\cU_c,\cU_c\cong\dots\cong \cU)$, respectively $(\cU_c,\cU_c\cong\dots\cong \cV)$, are morphisms, and the induced morphism $\cX_c\to \Gamma(\cX\dashrightarrow \cY)$
is projective.
\item For all $b\le i<c$ we have $\cX_i\to \cX_{i+1}$, a stacky blow-up
\[ \cX_i\cong B\ell_{\cW_{i+1}}\cX_{i+1}\to \cX_{i+1}\quad\text{or}\quad \sqrt[p_i\!]{(\cX_{i+1},\cW_{i+1})}\to \cX_{i+1}. \]
\item For all $c\le i<d$ we have $\cX_{i+1}\to \cX_i$, a stacky blow-up
\[
\cX_{i+1}\cong B\ell_{\cZ_i}\cX_i\to \cX_i\quad\text{or}\quad \sqrt[q_i\!]{(\cX_i,\cZ_i)}\to \cX_i.
\]
\item We have $\cX_b\to \cX_{b-1}$, appearing in an expression of $\cX_{b-1}\to \cX$ as relative coarse moduli space of $\cX_b\to \cX$, and
\[
\cX_b\cong \sqrt[\mathbf{r}]{(\cX_{b-1},\cD)}
\]
for some snc divisor $\cD$ and tuple of positive integers $\mathbf{r}$.
\item We have $\cX_d\to \cX_{d+1}$, appearing in an expression of $\cX_{d+1}\to \cY$ as relative coarse moduli space of $\cX_d\to \cY$, and
\[
\cX_d\cong
\sqrt[\mathbf{s}]{(\cX_{d+1},\cE)}
\]
for some snc divisor $\cE$ and tuple of positive integers $\mathbf{s}$.
\item For every $0\le i<b-1$ and every $d+1\le i<m$ the birational morphism between $\cX_i$ and $\cX_{i+1}$ is a blow-up of a smooth substack
\[
\cX_{i+1}\cong B\ell_{\cZ_i}\cX_i\to \cX_i\quad \text{or} \quad \cX_i\cong B\ell_{\cW_{i+1}}\cX_{i+1}\to \cX_{i+1}.
\]
\end{itemize}
We define $\cZ_{b-1}=\cD$ and $\cW_{d+1}=\cE$ and require $\cU_i\cap \cZ_i=\emptyset$, respectively $\cU_{i+1}\cap \cW_{i+1}=\emptyset$ for every $i$.
Additionally, for every $0\le i\le b-1$, resp.\ $d+1\le i\le m$ the rational map $\cX_i\dashrightarrow \cX$, resp.\ $\cX_i\dashrightarrow \cY$ represented by $(\cU_i,\cU_i\cong\dots\cong \cU)$, resp.\ $(\cU_i,\cU_i\cong\dots\cong \cV)$, is a representable proper morphism.
\end{theo}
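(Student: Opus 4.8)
The plan is to follow the streamlined form of Harper's argument indicated above: produce a common smooth model of the graph, run the relative destackification of Proposition \ref{prop.relativedestackification} separately over $\cX$ and over $\cY$, and dispose of the two smooth relative coarse moduli spaces that emerge by means of the already-established weak factorization for representable proper birational morphisms, Lemma \ref{lem.birational}.

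First I would choose a smooth stack $\cX_c$ together with a projective morphism to $\cZ:=\Gamma(\cX\dashrightarrow \cY)$ restricting to an isomorphism over $\cU$; this exists by Proposition \ref{prop:birs}(ii) followed by resolution of singularities (Remark \ref{rema:sing-res}). Since $\cX\dashrightarrow \cY$ is proper, $\cZ\to \cX$ and $\cZ\to \cY$ are proper by Proposition \ref{prop:birs}(i), so $\cX_c\to \cX$ and $\cX_c\to \cY$ are proper birational morphisms of smooth stacks; as they factor through $\cZ$, the rational maps $\cX_c\dashrightarrow \cX$ and $\cX_c\dashrightarrow \cY$ are morphisms with $\cX_c\to \Gamma(\cX\dashrightarrow \cY)$ projective, which provides the middle block of the asserted factorization.

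Next I would apply Proposition \ref{prop.relativedestackification} to $\cX_c\to \cX$. It produces a sequence of stacky blow-ups from $\cX_c$ up to a smooth stack $\cX_b$, with all blow-up centers disjoint from $\cU$ (read in reverse order these become the stacky blow-downs $\cX_i\to \cX_{i+1}$ for $b\le i<c$), together with a smooth stack $\cX_{b-1}$, equal to the relative coarse moduli space of $\cX_b\to \cX$, such that $\cX_b\cong\sqrt[\mathbf{r}]{(\cX_{b-1},\cD)}$ for some snc divisor $\cD$ and tuple $\mathbf{r}$, with the preimage of $\cD$ in $\cX_b$ disjoint from the copy of $\cU$, so that $\cD\cap\cU_{b-1}=\emptyset$; I set $\cZ_{b-1}:=\cD$. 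A relative coarse moduli space morphism is representable, so $\cX_{b-1}\to \cX$ is a representable proper birational morphism of smooth stacks, and Lemma \ref{lem.birational} factors it into blow-ups of smooth substacks $\cX=\cX_0,\dots,\cX_{b-1}$ in which each $\cX_i\dashrightarrow\cX$ for $0\le i\le b-1$ is a representable proper morphism. The symmetric construction applied to $\cX_c\to \cY$ gives the stacky blow-ups $\cX_c,\dots,\cX_d$, the root step $\cX_d\cong\sqrt[\mathbf{s}]{(\cX_{d+1},\cE)}$ with $\cX_{d+1}$ the relative coarse moduli space of $\cX_d\to \cY$ and $\cW_{d+1}:=\cE$, and, from Lemma \ref{lem.birational}, a factorization $\cX_{d+1},\dots,\cX_m=\cY$ into blow-ups of smooth substacks with each $\cX_i\dashrightarrow\cY$ for $d+1\le i\le m$ a representable proper morphism.

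Concatenating the two towers along $\cX_c$ yields the required sequence $\cX_0,\dots,\cX_m$: every step is a proper birational morphism restricting to an isomorphism over $\cU$; the disjointness conditions $\cU_i\cap \cZ_i=\emptyset$ and $\cU_{i+1}\cap \cW_{i+1}=\emptyset$ hold by construction (built into Lemma \ref{lem.birational} and Proposition \ref{prop.relativedestackification}, and checked above for the two root steps); and chasing the identifications through $\cX_c$, which factors through the graph of $f$, shows the composite isomorphism $\cU_0\cong\dots\cong\cU_m$ is $2$-isomorphic to $f$. If one of the resulting blocks happens to be empty I would pad it with a blow-up of a smooth substack followed by its inverse, so as to arrange $0<b<c<d<m$. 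The only non-formal ingredient is Proposition \ref{prop.relativedestackification}; beyond organizing the combinatorics of the two towers and tracking the open $\cU$, the crucial point is its conclusion that the relative destackification of $\cX_c\to \cX$ is an iterated root stack along an snc divisor over a \emph{smooth} stack with representable morphism to $\cX$ — this is exactly what reduces the ``outer'' parts of the factorization to the representable weak factorization of Lemma \ref{lem.birational}, and hence what makes the sandwich structure asserted in the theorem available.
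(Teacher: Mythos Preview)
Your proof is correct and follows essentially the same approach as the paper: construct a smooth model $\cX_c=\cX'$ projective over the graph via Proposition~\ref{prop:birs}(ii) and resolution, apply Proposition~\ref{prop.relativedestackification} to $\cX'\to\cX$ and $\cX'\to\cY$ to produce the stacky-blow-up towers and the iterated-root descriptions over smooth relative coarse moduli spaces $\cX_{b-1}$ and $\cX_{d+1}$, then finish with Lemma~\ref{lem.birational} on each side. The paper's proof is organized identically, differing only in notation (it first calls your $\cX_c$ by $\cX'$ and relabels at the end) and in not spelling out the padding step to force $0<b<c<d<m$.
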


\begin{proof}
We start by applying Proposition~\ref{prop:birs}(ii) to $\cX\dashrightarrow \cY$.
This gives us a reduced stack $\cX'$, which by resolution of singularities we may suppose to be smooth, 
with birational morphisms to $\cX$ and $\cY$, restricting to isomorphisms $\cU'\to \cU$ and $\cU'\to \cV$, where $\cU'=\cX'\times_{\cX}\cU$.
The morphism $f$ factors up to $2$-isomorphism as
\[ \cU\cong \cU'\to \cY. \]
The induced morphism $\cX'\to \Gamma(\cX\dashrightarrow \cY)$ is projective, and the morphisms
$\Gamma(\cX\dashrightarrow \cY)\to \cX$ and $\Gamma(\cX\dashrightarrow \cY)\to \cY$ are proper, as are the composite morphisms $\cX'\to \cX$ and $\cX'\to \cY$.

We apply Proposition \ref{prop.relativedestackification} to $\cX'\to \cX$ and to $\cX'\to \cY$.
This yields sequences of stacky blowups of $\cX'$, terminating in $\cX''$, respectively $\cY''$, and relative coarse moduli spaces
\[ \cX''\to \cX'''\to \cX\qquad\text{resp.}\qquad \cY''\to \cY'''\to \cY. \]
Now we apply Lemma \ref{lem.birational} to the representable proper birational morphisms $\cX'''\to \cX$ and $\cY'''\to \cY$.
We relabel $\cX'''$ as $\cX_{b-1}$ and $\cX''$ as $\cX_b$, where $b-1$ is the number of blow-ups from the application of Lemma \ref{lem.birational} to $\cX'''\to \cX$, and similarly relabel the stacky blow-ups from the application of Proposition \ref{prop.relativedestackification} to $\cX'\to \cX$ as
\[ \cX_b\to \dots\to \cX_c=\cX' \]
and in an analogous fashion obtain
\[ \cX_d\to \dots\to \cX_c=\cX', \]
where $\cY''$ is now denoted by $\cX_d$.
Finally, we relabel the blow-ups from the remaining application of Lemma \ref{lem.birational} to obtain
$\cX_{d+1}$, $\dots$, $\cX_m$, with
$\cX_{d+1}=\cY'''$ and $\cX_m=\cY$.
\end{proof}

\begin{prop}
\label{prop.weakfactproperties}
Let $\cX$ and $\cY$ be smooth stacks, $\cX\dashrightarrow \cY$ a proper birational map represented by $(\cU,f)$ where $f$ maps dense open $\cU\subset \cX$ isomorphically to dense open $\cV\subset \cY$, and let smooth stacks $\cX_0$, $\dots$, $\cX_m$, and birational morphism $\cX_{i+1}\to \cX_i$ or $\cX_i\to \cX_{i+1}$ for every $i$ be as in Theorem \ref{thm.weakfact}.
Then:
\begin{itemize}
\item[$\mathrm{(i)}$] If $\cX$ and $\cY$ are divisorial, then $\cX_i$ is divisorial for every $i$.
\item[$\mathrm{(ii)}$] If $\cX$ and $\cY$ are quasi-projective (resp.\ projective), then $\cX_i$ is quasi-projective (resp.\ projective) for every $i$.
\end{itemize}
\end{prop}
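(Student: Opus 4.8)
The plan is to exploit the detailed structure of the factorization constructed in Theorem~\ref{thm.weakfact}, anchoring both arguments at the central stack $\cX_c=\cX'$. Recall from the proof of Theorem~\ref{thm.weakfact} that $\cX_c$ carries a projective --- hence representable --- morphism to $\Gamma(\cX\dashrightarrow\cY)$, which by construction is a closed substack of $\cX\times\cY$.

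For part~(i): if $\cX$ and $\cY$ are divisorial, then $\cX\times\cY$ is divisorial, since pullbacks under the two projections of line bundles inducing faithful representations of the geometric stabilizers of $\cX$, respectively $\cY$, together induce faithful representations of the stabilizers $G_x\times G_y$ of $\cX\times\cY$. As a closed immersion is representable, $\Gamma(\cX\dashrightarrow\cY)$ is divisorial by Remark~\ref{rema.divy}, and then so is $\cX_c$, again by Remark~\ref{rema.divy}. Next I would propagate divisoriality outward from $\cX_c$ along the two chains of stacky blow-ups $\cX_b\to\cdots\to\cX_c$ and $\cX_c\leftarrow\cdots\leftarrow\cX_d$. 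In both chains each morphism exhibits its source as $B\ell_{\cZ}\cX_i$ for a smooth substack $\cZ$ of its target $\cX_i$, or as $\sqrt[q]{(\cX_i,\cZ)}$ for a smooth divisor $\cZ$ of $\cX_i$; a blow-up of a smooth substack is representable, so divisoriality of the target forces divisoriality of the source by Remark~\ref{rema.divy}, and for a root stack along a smooth divisor divisoriality of the base and of the root stack are equivalent by Corollary~\ref{cor.rootdivisorial}. Hence $\cX_b,\dots,\cX_d$ are all divisorial. Finally, for $0\le i\le b-1$, respectively $d+1\le i\le m$, Theorem~\ref{thm.weakfact} supplies a representable morphism $\cX_i\to\cX$, respectively $\cX_i\to\cY$, so $\cX_i$ is divisorial by Remark~\ref{rema.divy}; this exhausts all indices.

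For part~(ii): I would pass to coarse moduli spaces. By Proposition~\ref{prop.rootcoarse} a root stack along a smooth divisor is a coarsening morphism, hence induces an isomorphism of coarse moduli spaces, and the same holds for the morphisms $\cX_b\to\cX_{b-1}$ and $\cX_d\to\cX_{d+1}$, being canonical morphisms to relative coarse moduli spaces; by Lemma~\ref{lem.projective} a blow-up of a smooth substack induces a projective morphism of coarse moduli spaces. The coarse moduli space of $\cX\times\cY$ is the product of those of $\cX$ and $\cY$, hence quasi-projective (respectively projective), and therefore so is the coarse moduli space of its closed substack $\Gamma(\cX\dashrightarrow\cY)$; since $\cX_c\to\Gamma(\cX\dashrightarrow\cY)$ is projective, Lemma~\ref{lem.projective} then makes the coarse moduli space of $\cX_c$ quasi-projective (respectively projective). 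Propagating outward from $\cX_c$ along the two stacky-blow-up chains exactly as in part~(i) --- at each step the coarse moduli space of the source being projective over, respectively isomorphic to, that of the target --- shows that $\cX_b,\dots,\cX_d$ are quasi-projective (respectively projective), and then so are $\cX_{b-1}$ and $\cX_{d+1}$, whose coarse moduli spaces are isomorphic to those of $\cX_b$, respectively $\cX_d$. For the remaining ranges $0\le i\le b-1$ and $d+1\le i\le m$, which form the weak factorizations of $\cX_{b-1}\to\cX$, respectively $\cX_{d+1}\to\cY$, by blow-ups of smooth substacks, I would recall from the proof of Lemma~\ref{lem.birational} that every intermediate stack maps projectively to one of the two endpoints; as both endpoints are now known to be quasi-projective (respectively projective), Lemma~\ref{lem.projective} yields the same for every $\cX_i$.

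I expect the main obstacle to be recognizing that neither divisoriality nor quasi-projectivity descends along a blow-down of a smooth substack in general, so one cannot simply carry the hypothesis from the outer stacks --- which are divisorial, respectively quasi-projective, by representability over $\cX$ and $\cY$ --- inward through the stacky blow-ups. This is precisely why the argument must be anchored at $\cX_c=\cX'$ and must invoke both the projective morphism from $\cX_c$ to a closed substack of $\cX\times\cY$ and, for the purely non-stacky portions of the factorization, the fact extracted from the proof of Lemma~\ref{lem.birational} that the intermediate stacks are projective over their endpoints.
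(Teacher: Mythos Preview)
Your proposal is correct and follows essentially the same approach as the paper: anchor at $\cX_c$ via its projective morphism to $\Gamma(\cX\dashrightarrow\cY)\subset\cX\times\cY$, propagate outward through the stacky blow-up chains using Remark~\ref{rema.divy} and Corollary~\ref{cor.rootdivisorial} for (i), and Lemma~\ref{lem.projective} and Proposition~\ref{prop.rootcoarse} for (ii), then handle the outer ranges via the representable/projective morphisms to $\cX$ or $\cY$. The paper's proof of (ii) is stated in a single sentence citing only these two results, so your version simply unpacks what is implicit there; your observation that the outer ranges require the \emph{projectivity} (not merely representable properness) of $\cX_i\to\cX$ or $\cX_i\to\cY$, extracted from the proof of Lemma~\ref{lem.birational}, is exactly the right point to make explicit.
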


\begin{proof}
Let $0<b<c<d<m$ be as in Theorem \ref{thm.weakfact}.
For (i), if $\cX$ and $\cY$ are divisorial, then so is $\cX\times \cY$, hence also $\cX_c$, by Remark \ref{rema.divy}.
By Remark \ref{rema.divy} and Corollary \ref{cor.rootdivisorial}, so are the stacks $\cX_i$ for all $b\le i\le d$.
We have projective morphisms to $\cX$ or $\cY$ from the remaining stacks $\cX_i$, so these are divisorial as well by Remark \ref{rema.divy}.

We obtain (ii) from Lemma \ref{lem.projective} and
Proposition \ref{prop.rootcoarse}.
\end{proof}

\begin{rema}
\label{rem.weakfactproperties}
In the setting of Theorem \ref{thm.weakfact}, suppose that the complement of $\cU$ is an snc divisor in $\cX$, and the complement of $\cV$ is an snc divisor in $\cY$.
Then the outcome of Theorem \ref{thm.weakfact} maybe be strengthened, by the requirement, that the complement of $\cU_i$ is an snc divisor $\cD_i\subset \cX_i$ for every $i$, and every $\cZ_i$ and $\cW_i$ that appears as a center of a blow-up $\cX_{i+1}\cong B\ell_{\cZ_i}\cX_i$ respectively $\cX_{i-1}\cong B\ell_{\cW_i}\cX_i$ has normal crossing with $\cD_i$.
In the proof, we apply embedded resolution of singularities to achieve that $\cX'\setminus \cU'$ is an snc divisor.
We apply the evident variant of Proposition \ref{prop.relativedestackification},
where $\cX\setminus \cU$ and $\cX'\setminus \cU'$ are snc divisors and in the conclusion every $\cX_i\setminus \cU_i$ is an snc divisor, with which $\cZ_i$ has normal crossing.
We invoke a similarly evident variant of Lemma \ref{lem.birational}, with boundary divisors.
\end{rema}

\section{Refined Burnside groups for orbifolds}
\label{sect:g}
In this section, we 
assume that the base field $k$ (of characteristic $0$) contains all roots of unity.
We define 
$$
\cBurn_n\qquad\text{and}\qquad \ocBurn_n,
$$ 
abelian groups receiving birational invariants of $n$-dimensional orbifolds; the second group is a quotient of the first.
The definitions are in terms of generators and relations. 
A projective, respectively quasi-projective orbifold of dimension $n$ will determine a class in $\cBurn_n$, respectively $\ocBurn_n$.

The following definition is in parallel with the definition of 
$$
\Burn_n(G),
$$
the equivariant Burnside group of $n$-dimensional varieties with generically free action of a finite group $G$, in \cite{BnG}.

\begin{defi}
\label{defn.cBurn}
The abelian groups $\cBurn_n$
and $\ocBurn_n$ are generated by 
pairs
\[
(\mathcal{K},\alpha),
\]
where
\begin{itemize}
\item 
$\mathcal{K}$ is a projective linearizable DM stack of dimension $d\le n$, and 
\item 
$\alpha=(a_1,\ldots, a_{n-d})$ 
is a sequence of length $n-d$ of nonzero characters of the generic stabilizer $H$ of $\mathcal{K}$, generating $H^\vee$.
\end{itemize}
The group $\cBurn_n$ is defined by the following relations.

\medskip
\noindent
\textbf{(O)} (re)ordering: For all permutations $\sigma\in \mathfrak S_{n-d}$ we have
$$
(\mathcal K,\alpha) = (\mathcal K, \alpha'), \quad 
\alpha':=(a_{\sigma(1)}, \ldots, a_{\sigma(n-d)}).
$$

\noindent
\textbf{(I)} (birational) isomorphisms: if $\mathcal{K}'$ is another projective linearizable DM stack of dimension $d$, with corresponding characters $\alpha'=(a'_1,\dots,a'_{n-d})$, and there is a birational equivalence
\[ \mathcal{K}\sim \mathcal{K}' \]
transforming $\alpha$ to $\alpha'$, then
$$
(\mathcal{K}, \alpha) =  (\mathcal{K}', \alpha').
$$

\noindent
\textbf{(V)} vanishing: For all $\mathcal K$ and 
$\alpha=(a_1,a_2,\ldots, a_{n-d})$ with 
$$
a_1+a_2=0
$$
we have
$$
(\mathcal K,\alpha) = 0.
$$

\noindent
\textbf{(B)} blowup: suppose $n-d\ge 2$, set $a:=a_1-a_2$, then
\[ (\mathcal K,\alpha)=\Theta_1+\Theta_2, \]
where
\[
\Theta_1=\begin{cases} 0,& \text{if $a=0$},\\
(\mathcal K,\alpha_1)+(\mathcal K,\alpha_2),&\text{if $a\ne 0$},\end{cases}
\]
with
\[ \alpha_1:=(a_1,a_2-a_1,a_3,\dots,a_{n-d}),\quad
\alpha_2:=(a_2,a_1-a_2,a_3,\dots,a_{n-d}),
\]
and
\[
\Theta_2=\begin{cases}
0,&\text{if $a_i\in \langle a\rangle$ for some $i$},\\
(\mathcal{Y},(b_2,\dots,b_{n-d})),&\text{otherwise},
\end{cases}
\]
with $\mathcal{Y}$ obtained by construction \textbf{(A)} from
$\mathcal{K}$ and the character $a$; the generic stabilizer of
$\mathcal{Y}$ is naturally a subgroup of the generic stabilizer of $\mathcal{K}$,
and the character $b_i$ is obtained by restricting $a_i$, for $i=2$, $\dots$, $n-d$.

The group $\ocBurn_n$ is defined by the same relations, except that \textbf{(I)} is replaced by

\medskip
\noindent
\textbf{(}$\overline{\mathbf{I}}$\textbf{)}
$(\cK,\alpha)=(\cK',\alpha')$
if there is a birational map $\cK\dashrightarrow \cK'$ transforming $\alpha$ to $\alpha'$.
\end{defi}


\begin{rema}
\label{rem.Ibar}
Relation \textbf{(}$\overline{\mathbf{I}}$\textbf{)} differs from \textbf{(I)}, in that the requirement $\cK\sim \cK'$ of \textbf{(I)} is weakened to the requirement, in \textbf{(}$\overline{\mathbf{I}}$\textbf{)}, of just a birational map, i.e., an isomorphism of dense open substacks.
Since $\cK$ always has a dense open substack isomorphic to $U\times BH$ for some algebraic variety $U$, there are birational automorphisms of $\cK$ realizing every automorphism of $H$.
Consequently, in $\ocBurn_n$ we have $(\cK,\alpha)=(\cK,\alpha')$ when $\alpha'$ is obtained from $\alpha$ by applying an automorphism of $H$.
\end{rema}

\begin{defi}
\label{def.classX}
Let $\cX$ be a projective orbifold of dimension $n$, that is divisorial.
For a finite abelian group $H$,
we call
a \emph{stabilizer-$H$ component} of
$\cX$ the closure of a component of the nonsingular locally closed substack of $\cX$, defined by the
condition of having geometric stabilizer group isomorphic to $H$; this is a smooth substack of $\cX$.
A \emph{stabilizer component} of $\cX$ is a closed substack, that is a stabilizer-$H$ component for some $H$.
The class
\[ 
[\mathcal{X}]\in \cBurn_n
\]
is defined to be the sum
\begin{equation}
\label{eqn.sum}
[\mathcal{X}]:=\sum_{\mathcal{K}\subset \mathcal{X}} (\mathcal{K},\beta_{\mathcal{K}}(\mathcal{X}))
\end{equation}
over stabilizer components of $\cX$,
where the normal bundle $\cN_{\mathcal{K}/\mathcal{X}}$ determines a representation and hence a sequence of characters (up to order) of the generic stabilizer of $\mathcal{K}$, denoted above by $\beta_{\mathcal{K}}(\mathcal{X})$.
\end{defi}

\begin{rema}
\label{rem.classX}
Suppose $\cX$ is a smooth separated irreducible DM stack of finite type over $k$, that is divisorial.
Then by Proposition \ref{prop.divisorial} we know that the associated orbifold $\cY$ is divisorial.
We call a stabilizer component of $\cX$ the inverse image by $\cX\to \cY$ of any stabilizer component of $\cY$.
If $H_0$ denotes the generic stabilizer of $\cX$, and $\cK$ is a stabilizer component of $\cX$, inverse image of stabilizer component $\cL\subset \cY$, then $H_0$ is a subgroup of the generic stabilizer $H$ of $\cK$, and $H/H_0$ is the generic stabilizer of $\cL$.
We identify the characters in $\beta_{\cL}(\cY)$ with characters of $H$, trivial on $H_0$, which we denote by $\beta_{\cK}(\cX)$.
\end{rema}

\begin{prop}
\label{prop.goodrelations}
In $\cBurn_n$ we have the following relations:
\begin{itemize}
\item[$\mathrm{(i)}$] If $\alpha=(a_1,\dots,a_{n-d})$ with $a_1+\dots+a_j=0$ for some $2\le j\le n-d$, then
\[ (\mathcal{K},\alpha)=0. \]
\item[$\mathrm{(ii)}$]
For any $2\le j\le n-d$,
\[ (\mathcal{K},\alpha)=\sum_{\substack{\emptyset\ne I\subseteq\{1,\dots, j\}\\\text{satisfying $(*)$}}}(\mathcal{Y}_I,\beta_I) \]
where a nonempty subset $I\subseteq \{1,\dots,j\}$,
$i_0\in I$,
determines the subgroup $H_I\subseteq H$,
intersection of the kernels of $a_i-a_{i_0}$ for all $i\in I$, we apply construction \emph{\textbf{(A)}} to the characters
\[ (a_i-a_{i_0})_{i\in I\setminus \{i_0\}} \]
to obtain the linearizable DM stack $\mathcal{Y}_I$ with generic stabilizer $H_I$, we define $\alpha_I$ to consist of the characters
\[ a_{i_0},\qquad (a_i-a_{i_0})_{i\in \{1,\dots,j\}\setminus I} \qquad \text{and}\qquad (a_{j+1},\dots,a_{n-d}), \]
we define the condition
\[ (*):\ \text{the character $a$ restricts nontrivially to $H_I$, for all $a\in \alpha_I$}, \]
and we define $\beta_I$ to consist of the restrictions to $H_I$ of the characters in $\alpha_I$.
\end{itemize}
\end{prop}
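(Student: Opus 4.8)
The plan is to prove (ii) first, by induction on $j$, following the derivation of the analogous ``good relations'' for $\Burn_n(G)$ in \cite{BnG}, and then to deduce (i); in fact it is cleanest to run a single induction on the pair $(n,j)$, so that (i) and (ii) for smaller parameters may be used freely inside the argument. The base case $j=2$ of (ii) is that the asserted identity is nothing but relation \textbf{(B)}: the nonempty subsets of $\{1,2\}$ are $\{1\}$, $\{2\}$, $\{1,2\}$; the first two yield the symbols $(\mathcal K,\alpha_1)$ and $(\mathcal K,\alpha_2)$, together matching $\Theta_1$, and the third yields the action-construction symbol matching $\Theta_2$. To make the match precise one uses relation \textbf{(I)} together with the isomorphism $\bP(\cO\oplus L)\cong\bP(\cO\oplus L^{\vee})$, so that construction \textbf{(A)} applied to $a_1-a_2$ agrees birationally with construction \textbf{(A)} applied to $a_2-a_1$ and the symbol attached to $I=\{1,2\}$ is independent of the choice of $i_0\in I$; and one uses the elementary fact that, for a finite abelian group $H$ and $a\in H^{\vee}$, a character $a_i$ lies in $\langle a\rangle$ if and only if $a_i$ restricts trivially to $\ker a$ (Pontryagin duality for finite abelian groups), which identifies the vanishing clause of $\Theta_2$ with the condition $(*)$ and, in the case $a_1=a_2$, identifies the exclusion of the singleton terms (whose sequences then contain a zero character) with $\Theta_1=0$.

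For the inductive step of (ii), assume the formula for $j-1$. Applying it to $(\mathcal K,\alpha)$ expresses $(\mathcal K,\alpha)$ as a sum over nonempty $I\subseteq\{1,\dots,j-1\}$ satisfying $(*)$ of symbols $(\mathcal Y_I,\beta_I)$, each of which carries among its characters the restriction of $a_j$ to $H_I$. Now apply the base case, relation \textbf{(B)}, inside each such symbol to the pair $(a_j|_{H_I},\,a_{i_0}|_{H_I})$ with $i_0\in I$; this replaces $(\mathcal Y_I,\beta_I)$ by up to three symbols. One then regroups: the symbols in which $a_j$ is not absorbed reassemble, after relabelling and using \textbf{(O)}, into the terms of the $j$-version indexed by those $I$ with $j\notin I$; the $\Theta_2$-type symbols reassemble into the terms indexed by $I\cup\{j\}$, using here that construction \textbf{(A)} performed in two steps coincides, up to birational equivalence, with construction \textbf{(A)} performed in one step on the concatenated list of characters --- a fact following from Definition \ref{def.action}, Remark \ref{rem.action} and relation \textbf{(I)}; and the remaining ``$\alpha_2$-type'' symbols, by the $i_0$-independence noted above, become the terms indexed by the singleton $\{j\}$. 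Throughout, the condition $(*)$ and the convention that a symbol vanishes once one of its characters restricts trivially must be carried along. \emph{I expect the main obstacle to be exactly this bookkeeping: matching the subsets $I\subseteq\{1,\dots,j\}$ and their attached stacks and character sequences with the output of the above procedure, and checking that the vanishing conditions line up precisely.}

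To deduce (i): the case $j=2$ is relation \textbf{(V)}. For $j\ge 3$, if $a_1+a_2=0$ after reordering by \textbf{(O)} we conclude by \textbf{(V)}; otherwise apply (ii) with the given $j$ and argue that every term of the resulting sum vanishes by descending induction. In a term $(\mathcal Y_I,\beta_I)$ the hypothesis $a_1+\dots+a_j=0$, restricted to $H_I$ on which $a_i|_{H_I}=a_{i_0}|_{H_I}$ for $i\in I$, forces the linear relation $\sum_{i\in\{1,\dots,j\}\setminus I}(a_i-a_{i_0})|_{H_I}=-\,j\,a_{i_0}|_{H_I}$ among the characters of $\beta_I$. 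One then shows that such a relation, combined with $(*)$, either fails $(*)$ or produces a strictly shorter zero-sum sub-collection --- handled by the inductive hypothesis --- for subsets $I$ with $2\le|I|\le j-1$, while for $|I|=1$ and for $I=\{1,\dots,j\}$ one is reduced to (i) for the same $j$ but in a strictly lower-dimensional (smaller $n$) situation, which is why the induction must be on $(n,j)$ rather than on $j$ alone.

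The hardest points, as I see them, are twofold. First, the combinatorial reorganization in the inductive step of (ii): the three pieces produced by \textbf{(B)} inside the $(j-1)$-expansion must be shown to partition exactly into the terms of the $j$-expansion, and this requires careful tracking of which index is played by the new character $a_j$, of how the subgroups $H_I$ and the iterated action constructions combine, and of how the predicate $(*)$ transforms. Second, in the deduction of (i), controlling the ``extreme'' subsets $I$ --- the singletons and the full set $\{1,\dots,j\}$ --- where the sum $-j\,a_{i_0}|_{H_I}$ need not itself vanish and the argument genuinely descends in dimension rather than in $j$; this is the place where an unexpected edge case (for instance one forcing a symbol with a single character to vanish) could require a separate lemma drawn from the action construction and relation \textbf{(I)}.
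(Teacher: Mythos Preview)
Your treatment of (ii) is the same as the paper's: base case $j=2$ is relation \textbf{(B)}, and the inductive step applies the $(j-1)$-formula and then \textbf{(B)} to the pair $(a_{i_0},a_j)$ in each resulting term, with the same regrouping you outline. The paper simply defers the bookkeeping to \cite[Prop.\ 4.7]{BnG}.

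Your treatment of (i), however, is both more complicated than the paper's and has a genuine gap. The paper does \emph{not} deduce (i) from (ii); it proves (i) directly by a short induction on $j$. The trick is to apply \textbf{(B)} not to $(\mathcal K,\alpha)$ but to $(\mathcal K,\alpha')$ with
\[
\alpha'=(a_1,\,a_1+a_2,\,a_3,\dots,a_{n-d}).
\]
The $j-1$ characters in positions $2,\dots,j$ of $\alpha'$ sum to $a_1+\dots+a_j=0$, so $(\mathcal K,\alpha')=0$ by \textbf{(O)} and the induction hypothesis. On the right-hand side of \textbf{(B)} one finds $(\mathcal K,\alpha)$ itself together with terms that vanish for the same reason (a length-$(j-1)$ zero sum persists in each). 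Hence $(\mathcal K,\alpha)=0$. No descent in $n$ is needed.

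Your route via (ii) fails at exactly the extreme cases you flagged. For $I=\{1,\dots,j\}$, the characters in $\beta_I$ are $a_{i_0}|_{H_I}$ together with $a_{j+1}|_{H_I},\dots,a_{n-d}|_{H_I}$; the hypothesis $\sum_{i\le j}a_i=0$ only yields $j\cdot a_{i_0}|_{H_I}=0$, which gives no zero-sum sub-collection of $\beta_I$, and there is no instance of (i) to invoke. For $|I|=1$, say $I=\{1\}$, one has $H_I=H$ and $\mathcal Y_I=\mathcal K$ (no dimension drop), with characters $a_1,\,a_2-a_1,\dots,a_j-a_1,\,a_{j+1},\dots$; the only linear relation coming from the hypothesis is $a_1+\sum_{i=2}^j(a_i-a_1)=-(j-1)a_1$, again not a zero sum of distinct entries of $\beta_I$. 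So the ``reduced to (i) in lower $n$'' claim is not correct, and the induction on $(n,j)$ does not close.
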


\begin{proof}
We prove $\mathrm{(i)}$ by induction on $j$.
The base case $j=2$ is relation \textbf{(V)}.
For the inductive step, we apply relation \textbf{(B)} to $(\mathcal{K},\alpha')$, where
\[ \alpha'=(a_1,a_1+a_2,a_3,\dots,a_{n-d}). \]
By relation \textbf{(O)} and the induction hypothesis we have $(\mathcal{K},\alpha')=0$.
The right-hand side of \textbf{(B)} consists of terms that vanish for the same reason, along with $(\mathcal{K},\alpha)$.
So
$(\mathcal{K},\alpha)=0$.

We also prove $\mathrm{(ii)}$ by induction on $j$.
The base case $j=2$ is relation \textbf{(B)}.
For the inductive step, we apply the induction hypothesis and for each resulting $(\mathcal{Y}_I,\beta_I)$ we apply relation \textbf{(B)} to the restrictions of the characters $a_{i_0}$ and $a_{j+1}$.
The rest of the argument is combinatorial and follows exactly the proof of \cite[Prop.\ 4.7]{BnG}, on which this statement is based.
\end{proof}

\section{Birational invariants of orbifolds}
\label{sect:burninv}
We continue to work over a base field $k$ of characteristic $0$ containing all roots of unity.
The goal of this section is to exhibit birational invariants of $n$-dimensional orbifolds, with values in $\cBurn_n$ and $\ocBurn_n$.

\begin{theo}
\label{theo.main-th}
Let $\cX$ and $\cX'$ be projective orbifolds of dimension $n$, that are divisorial.
If $\cX$ and $\cX'$ are birationally equivalent, then
$$
[\cX]=[\cX']
$$
in $\cBurn_n$.
\end{theo}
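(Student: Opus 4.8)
The plan is to reduce, via weak factorization, to the invariance of the class in $\cBurn_n$ under a single blow-up in a smooth center, and then to recognize the resulting change in the class as an instance of the blow-up relation \textbf{(B)}, in the packaged form of Proposition~\ref{prop.goodrelations}(ii).

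Since $\cX\sim\cX'$, Theorem~\ref{thm.weakfactrep} joins them by a chain of smooth stacks $\cX=\cX_0,\dots,\cX_m=\cX'$ in which consecutive stacks differ by the blow-up of a smooth substack. Each $\cX_i$ admits a projective morphism to $\cX$ or to $\cX'$, hence by Remark~\ref{rema.divy} and Lemma~\ref{lem.projective} is again a divisorial projective orbifold, so $[\cX_i]\in\cBurn_n$ is defined. It therefore suffices to prove that $[B\ell_{\cZ}\cW]=[\cW]$ for every divisorial projective orbifold $\cW$ and every smooth irreducible $\cZ\subset\cW$; we may assume $\codim\cZ\ge 2$, as otherwise the blow-up is an isomorphism. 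Invoking Remark~\ref{rem.weakfactrep} with boundary divisors recording the codimension-one stabilizer components, together with embedded resolution (Remark~\ref{rema:sing-res}) to normalize the position of $\cZ$ relative to the higher-codimension stabilizer strata, we may further assume that $\cZ$ has normal crossings with every stabilizer component of $\cW$.

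Set $\cW'=B\ell_{\cZ}\cW$. To prove $[\cW']=[\cW]$ I would enumerate the stabilizer components of $\cW'$ and match the defining sum \eqref{eqn.sum}. For a stabilizer component $\cK$ of $\cW$ not contained in $\cZ$, transversality makes $\cZ\cap\cK$ smooth, so the proper transform $\widetilde\cK\cong B\ell_{\cZ\cap\cK}\cK$ satisfies $\widetilde\cK\sim\cK$, and $\cN_{\widetilde\cK/\cW'}$ agrees with $\cN_{\cK/\cW}$ over the dense open $\cK\setminus\cZ$; hence relation \textbf{(I)} gives $(\widetilde\cK,\beta_{\widetilde\cK}(\cW'))=(\cK,\beta_{\cK}(\cW))$, and these terms reproduce the corresponding terms of $[\cW]$. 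The remaining stabilizer components of $\cW'$ lie in the exceptional divisor $\cE=\PP(\cN_{\cZ/\cW})$: over a geometric point $z\in\cZ$ with stabilizer $H_z$, the stabilizer of the point of $\cE$ determined by a line $\ell\subset\cN_{\cZ/\cW,z}$ is the subgroup of $H_z$ acting on $\ell$ through a single scalar, that is, an intersection of kernels of differences of the normal characters of the stratum through $z$. Tracking these, and using Lemmas~\ref{lem.linearizable} and~\ref{lem.easybirat} to identify the components up to birational equivalence with the outputs of construction~\textbf{(A)} (Definition~\ref{def.action}), they are precisely the stacks $\cY_I$ carrying the character data $\beta_I$ on the right-hand side of Proposition~\ref{prop.goodrelations}(ii), one copy of that sum for each stabilizer component $\cK\subseteq\cZ$, taken with $j=n-\dim\cK$.

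Assembling the two contributions yields $[\cW']-[\cW]=\sum_{\cK\subseteq\cZ}\bigl(\sum_I(\cY_I,\beta_I)-(\cK,\beta_{\cK}(\cW))\bigr)$, where the inner sum vanishes by Proposition~\ref{prop.goodrelations}(ii); hence $[\cW']=[\cW]$, and applying this along the chain of the weak factorization gives $[\cX]=[\cX']$ in $\cBurn_n$. I expect the middle step to be the main obstacle: one must pin down exactly which intersections of kernels of character-differences arise as generic stabilizers of components of $\cE$, check that loci whose normal characters fail to be nonzero or to generate the stabilizer contribute nothing, and verify that construction~\textbf{(A)} reproduces the birational type of every such component. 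The transversality normalization reduces all of this to the étale-local model $[\A^{\,n-\dim\cK}/H]\times\cK$, where the needed identity is exactly the computation underlying Proposition~\ref{prop.goodrelations}.
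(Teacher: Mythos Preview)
Your overall architecture---reduce via Theorem~\ref{thm.weakfactrep} to a single blow-up, then match the change in the sum \eqref{eqn.sum} against Proposition~\ref{prop.goodrelations}(ii)---is exactly the paper's approach. But several concrete points go wrong in the execution.

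First, the ``normalization'' step is both unnecessary and circular. You propose to arrange that the center $\cZ$ has normal crossings with every stabilizer component by invoking Remark~\ref{rem.weakfactrep} and embedded resolution, but those tools act by blowing up, which is precisely the operation whose invariance you are trying to establish; and Remark~\ref{rem.weakfactrep} concerns boundary snc divisors disjoint from the isomorphism locus, not the stabilizer stratification. The paper avoids this entirely: it observes directly (citing \cite{alperkresch}) that for any stabilizer component $\cK$ the intersection $\cK\cap\cZ$ is automatically smooth, no preparation needed.

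Second, your value of $j$ is wrong. For $\cK\subset\cZ$, the normal characters $\beta_{\cK}(\cW)$ split into those coming from $\cN_{\cZ/\cW}|_{\cK}$ (of which there are $\codim(\cZ)$) and those from $\cN_{\cK/\cZ}$; the blow-up only scrambles the first batch, so Proposition~\ref{prop.goodrelations}(ii) must be applied with $j=\codim(\cZ)$, not $j=n-\dim\cK$. Third, you omit a class of exceptional contributions: stabilizer components $\cK'\subset\cE$ whose image $\pi(\cK')$ lies in $\cK\cap\cZ$ for some $\cK\not\subset\cZ$. These are genuine stabilizer components of $\cW'$ that are neither proper transforms nor accounted for by your sum over $\cK\subseteq\cZ$. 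The paper handles them by showing that $\beta_{\cK'}(\cW')$ always contains two characters summing to zero (one from $\cO_{\cE}(-1)$ and one from the proper transform direction of $\cK$), so they vanish by relation \textbf{(V)}. Without this observation the bookkeeping does not close.
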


\begin{proof} 
By Theorem~\ref{thm.weakfactrep}, it suffices to consider a blowup $\pi\colon \cX' \to \cX$
of divisorial projective orbifolds, with center in a smooth substack $\cW$.
We split \eqref{eqn.sum} into two sums, according to whether $\cK$ is contained in $\cW$:
\[ [\cX]=\sum_{\cK\not\subset \cW}(\mathcal{K},\beta_{\mathcal{K}}(\mathcal{X}))
+\sum_{\cK\subset \cW}(\mathcal{K},\beta_{\mathcal{K}}(\mathcal{X})). \]
Note first that $\cK\cap \cW$ is smooth; this follows from the analysis of local stack geometry in \cite{alperkresch}.

The analogous sum for $\cX'$ is split analogously, according to containment in the exceptional divisor $\cE$:
\[ [\cX']=\sum_{\cK'\not\subset \cE}(\cK',\beta_{\cK'}(\cX'))+\sum_{\cK'\subset \cE}(\cK',\beta_{\cK'}(\cX')). \]
The respective first sums are equal.
For any $\cK'$ in the second sum for $\cX'$ the image $\pi(\cK')$ is contained in a unique $\cK$, appearing in the formula for $[\cX]$, with the same generic stabilizer as $\pi(\cK')$.
We claim that $\pi(\cK')$ is an entire connected component $\cV$ of $\cK\cap \cW$.
This follows by an analysis of $
\cE\times_{\cX}\cV\cong \bP(\cN_{\cW/\cX}|_{\cV})$, which reveals also that when $\mathcal{K}\not\subset \mathcal{W}$ there are always two characters summing to $0$ in $\beta_{\cK'}(\cX')$, hence
$(\cK',\beta_{\cK'}(\cX'))=0$ by \textbf{(V)}.
When $\mathcal{K}\subset \mathcal{W}$ the contribution to $[\cX']$ from the $\cK'\subset \cE$ with $\pi(\cK')=\cK$ is precisely the right-hand side of the relation from Proposition \ref{prop.goodrelations}(ii), where $j$ is the codimension of $\cW$,
consequently $[\cX]=[\cX']$.
\end{proof}

In the next definition we allow ourselves to write a symbol $(\cK,\alpha)\in \ocBurn_n$ when $\cK$ is a quasiprojective linearizable DM stack.
By the results about projective compactifications stated in Section \ref{sect:action}, such $\cK$ may be embedded as an open substack in a projective linearizable DM stack $\overline{\cK}$.
We introduce
\[ (\cK,\alpha):=(\overline{\cK},\alpha)\in \ocBurn_n. \]
This, by relation
\textbf{(}$\overline{\mathbf{I}}$\textbf{)}, is independent of the choice of $\overline{\cK}$.

Since $\ocBurn_n$ is defined by a larger set of relations than $\cBurn_n$, there is a canonical homomorphism
\[ \cBurn_n\to \ocBurn_n. \]
In any formula involving classes from both $\cBurn_n$ and $\ocBurn_n$ there is the implicit use of this homomorphism.

\begin{defi}
\label{def.naiveclassU}
Let $\cU$ be a quasi-projective orbifold of dimension $n$, that is divisorial.
The corresponding naive class in $\ocBurn_n$ is
\[
[\cU]^{\mathrm{naive}}:=\sum_{\cK\subset \cU} (\cK,\beta_{\cK}(\cU)),
\]
where the sum is as in \eqref{eqn.sum}, but for $\cU$.
If $\cU$ is the complement, in a divisorial projective orbifold $\cX$ of an snc divisor $\cD$,
\[ \cD=\bigcup_{i\in \cI} \cD_i, \]
then we define the class in $\ocBurn_n$ to be
\[
[\cU]:=[\cX]+\sum_{\emptyset\ne I\subseteq \cI}(-1)^{|I|}[\cN_{\cD_I/\cX}]^{\mathrm{naive}},
\]
where $\cD_I$ denotes the intersection of the $\cD_i$ with $i\in I$.
\end{defi}

Notice, by Remark \ref{rema.divy}, $\cN_{\cD_I/\cX}$ is divisorial, for every $I$.
By the same argument as in the proof of Theorem \ref{theo.main-th}, $[\cU]^{\mathrm{naive}}$ is a birational invariant.
It also remains unchanged upon passage to suitable open substacks.

\begin{lemm}
\label{lem.naiveU}
Let $\cU$ and $\cU'$ be a quasi-projective orbifolds of dimension $n$. We suppose that $\cU'$ is divisorial.
\begin{itemize}
\item[$\mathrm{(i)}$] If $\cU$ is an open substack of $\cU'$,
such that $\cU\cap \cK\ne \emptyset$ for every stabilizer component $\cK$ of $\cU'$, then
in $\ocBurn_n$ we have
\[ [\cU]^{\mathrm{naive}}=[\cU']^{\mathrm{naive}}. \]
\item[$\mathrm{(ii)}$] 
Let $\pi\colon \cU\to \cZ$ and $\pi'\colon \cU'\to \cZ$ be representable smooth morphisms to a smooth quasi-projective DM stack $\cZ$.
Suppose that $\cZ$ admits a covering by open substacks $\cW\subset \cZ$ with the property, that there exist an open immersion
\[
j\colon \pi^{-1}(\cW)\to \pi'^{-1}(\cW)
\]
and a $2$-isomorphism $\pi\Rightarrow \pi'\circ j$, such that the condition in $\mathrm{(i)}$ is satisfied for the open substack
$j(\pi^{-1}(\cW))$ of $\pi'^{-1}(\cW)$.
Then $\cU$ is divisorial, and in $\ocBurn_n$ we have
\[ [\cU]^{\mathrm{naive}}=[\cU']^{\mathrm{naive}}. \]
\end{itemize}
\end{lemm}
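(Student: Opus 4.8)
The plan is to deduce the lemma from a Mayer--Vietoris formula for naive classes under open covers, with part (i) serving both as the base case and as the engine of that formula. Two elementary observations will be used throughout. If $\cV$ is a divisorial quasi-projective orbifold of dimension $n$ and $\cV_0\subseteq \cV$ is a nonempty open substack, then $\cV_0$ is again a divisorial quasi-projective orbifold of dimension $n$ (Remark~\ref{rema.divy}, as open immersions are representable); moreover, $\cV$ being irreducible, the stabilizer components of $\cV_0$ are precisely the substacks $\cK\cap \cV_0$ for $\cK$ a stabilizer component of $\cV$ with $\cK\cap \cV_0\ne\emptyset$ (each such intersection is a dense open, hence irreducible, substack of $\cK$, with closure $\cK\cap \cV_0$ in $\cV_0$). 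Since $\cN_{(\cK\cap \cV_0)/\cV_0}$ is the restriction of $\cN_{\cK/\cV}$, the associated character sequences agree; and as $\cK$ and $\cK\cap \cV_0$ are divisorial, hence linearizable, and share the dense open $\cK\cap \cV_0$, relation \textbf{(}$\overline{\mathbf{I}}$\textbf{)} --- applied after choosing projective compactifications as in the discussion preceding Definition~\ref{def.naiveclassU} --- gives $(\cK\cap \cV_0,\beta_{\cK\cap \cV_0}(\cV_0))=(\cK,\beta_{\cK}(\cV))$ in $\ocBurn_n$. Part (i) is then immediate: its hypothesis says exactly that every stabilizer component of $\cU'$ meets $\cU$, so term by term the summands of $[\cU]^{\mathrm{naive}}$ equal those of $[\cU']^{\mathrm{naive}}$.

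Next I would record the Mayer--Vietoris formula: if $\cV$ is a divisorial quasi-projective orbifold of dimension $n$ and $\cV=\cV_1\cup \cV_2$ with $\cV_1,\cV_2$ open substacks, then
\[
[\cV]^{\mathrm{naive}}=[\cV_1]^{\mathrm{naive}}+[\cV_2]^{\mathrm{naive}}-[\cV_1\cap \cV_2]^{\mathrm{naive}}
\]
in $\ocBurn_n$, all terms being defined by the first paragraph. To prove it, note that for any open $\cV'\subseteq \cV$ the same computation gives that $[\cV']^{\mathrm{naive}}$ equals the sum of $(\cK,\beta_{\cK}(\cV))$ over those stabilizer components $\cK$ of $\cV$ with $\cK\cap \cV'\ne\emptyset$. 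Since each $\cK$ is irreducible, $\cK$ meets $\cV_1\cup \cV_2$ if and only if it meets $\cV_1$ or $\cV_2$, and it meets both $\cV_1$ and $\cV_2$ if and only if it meets $\cV_1\cap \cV_2$; hence the corresponding index sets obey inclusion--exclusion and the formula follows.

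For part (ii), choose a finite subcover $\cW_1,\dots,\cW_N$ of $\cZ$ (possible since $\cZ$ is Noetherian) together with open immersions $j_a\colon \pi^{-1}(\cW_a)\to \pi'^{-1}(\cW_a)$ compatible with $\pi$, as in the hypothesis. Put $\cZ_k=\cW_1\cup\dots\cup \cW_k$, $\cU_k=\pi^{-1}(\cZ_k)$, and $\cU'_k=\pi'^{-1}(\cZ_k)$, so $\cU_N=\cU$ and $\cU'_N=\cU'$. Each $\pi^{-1}(\cW_a)$ is divisorial, being identified by $j_a$ with an open substack of the divisorial $\cU'$; hence $\cU_k$ is covered by divisorial open substacks, and since $\cU_k$ is a smooth separated irreducible DM stack of finite type over $k$, extending to $\cU_k$ the line bundles witnessing divisoriality of the $\pi^{-1}(\cW_a)$ with $a\le k$ (possible by \cite[Lemma 4.1]{lev}) and taking them all together exhibits $\cU_k$ as divisorial; in particular $\cU=\cU_N$ is divisorial. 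I would then show $[\cU_k]^{\mathrm{naive}}=[\cU'_k]^{\mathrm{naive}}$ by induction on $k$. The case $k=1$ is part (i) applied through $j_1$ (whose image satisfies the condition of (i) by assumption, and whose target $\pi'^{-1}(\cW_1)$ is divisorial). For the step, apply the Mayer--Vietoris formula to $\cU_k=\cU_{k-1}\cup \pi^{-1}(\cW_k)$ and to $\cU'_k=\cU'_{k-1}\cup \pi'^{-1}(\cW_k)$, noting $\cU_{k-1}\cap \pi^{-1}(\cW_k)=\pi^{-1}(\cZ_{k-1}\cap \cW_k)$ and likewise for $\cU'$. The $\cU_{k-1}$- and $\cU'_{k-1}$-terms agree by the inductive hypothesis; the $\pi^{-1}(\cW_k)$- and $\pi'^{-1}(\cW_k)$-terms agree by part (i) through $j_k$; and the $\pi^{-1}(\cZ_{k-1}\cap \cW_k)$- and $\pi'^{-1}(\cZ_{k-1}\cap \cW_k)$-terms agree by part (i) through the restriction of $j_k$ --- here one uses that a dense open substack of $\pi'^{-1}(\cW_k)$ meeting every stabilizer component restricts to a dense open substack of the open substack $\pi'^{-1}(\cZ_{k-1}\cap \cW_k)$ still meeting every stabilizer component, because two dense open substacks of an irreducible stack always meet. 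Taking $k=N$ gives $[\cU]^{\mathrm{naive}}=[\cU']^{\mathrm{naive}}$.

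I expect the main obstacle to be the Mayer--Vietoris formula, specifically the identification, for an open $\cV'\subseteq \cV$, of $[\cV']^{\mathrm{naive}}$ with the partial sum over those stabilizer components of $\cV$ that meet $\cV'$: this rests on the bijection of stabilizer components under open restriction (using irreducibility of each component) together with relation \textbf{(}$\overline{\mathbf{I}}$\textbf{)}. A related point to keep in mind is that the local identifications $j_a$ need not be mutually compatible over the overlaps $\cW_a\cap \cW_b$; the induction above is arranged precisely so that each equality of naive classes invokes part (i) for a single $j_a$ (or a restriction thereof), never a comparison of two. Finally, the divisoriality of $\cU$ genuinely uses the extension of line bundles from open substacks of a smooth separated irreducible DM stack, \cite[Lemma 4.1]{lev}.
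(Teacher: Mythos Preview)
Your proof is correct. Part~(i) matches the paper's argument exactly: stabilizer components of $\cU$ are the nonempty intersections $\cK\cap\cU$, and relation \textbf{(}$\overline{\mathbf{I}}$\textbf{)} identifies the summands. For part~(ii), the paper's own proof simply says ``we argue as in the proof of \cite[Lemma 5.10]{BnG}'' and establishes divisoriality of $\cU$ by the same line-bundle extension you use; your Mayer--Vietoris formula and induction on a finite subcover supply a complete, self-contained argument in place of that citation. The inclusion--exclusion step (for each irreducible stabilizer component $\cK$, the indicator of meeting $\cV_1\cap\cV_2$ equals the product of the indicators of meeting $\cV_1$ and $\cV_2$) is exactly the right mechanism, and your care in applying part~(i) one $j_a$ at a time---never comparing two incompatible local identifications---is the point of the construction. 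The only places to be slightly more explicit are the degenerate cases where some $\pi'^{-1}(\cW_k)$ or $\pi'^{-1}(\cZ_{k-1}\cap\cW_k)$ is empty, but these are harmless since the corresponding naive classes vanish and, when $\pi'^{-1}(\cZ_{k-1}\cap\cW_k)\ne\emptyset$, density of $j_k(\pi^{-1}(\cW_k))$ in the irreducible $\pi'^{-1}(\cW_k)$ forces $\pi^{-1}(\cZ_{k-1}\cap\cW_k)\ne\emptyset$ as well.
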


\begin{proof}
Generally, the stabilizer components of $\cU$ are the nonempty intersections with $\cU$ of the stabilizer components of $\cU'$.
Under the assumption in (i),
all of the intersections are nonempty.
The equality is clear, then,
from the definition of naive class in $\ocBurn_n$.

For (ii), every substack $\pi^{-1}(\cW)$ of $\cU$ is divisorial, and the line bundles that exhibit this extend to $\cU$ (cf.\ the proof of Proposition \ref{prop.divisorial}), so $\cU$ is divisorial.
For the rest of the assertion, we argue as in the proof of \cite[Lemma 5.10]{BnG}.
\end{proof}

In the rest of this section, we will prove that $[\cU]\in \ocBurn_n$ is well-defined and is a birational invariant.

\begin{lemm}
\label{lem.puncturedsumoflb}
Let $\cU$ be a divisorial orbifold,
$H_0$ the generic stabilizer of $\cU$, and $\cL_1$, $\dots$, $\cL_t$ line bundles on $\cU$ whose corresponding characters of $H_0$ generate $H_0^\vee$.
We denote by
\[ \cT\subset \cL_1\oplus\dots\oplus \cL_t, \]
with projection
\[ \pi\colon \cT\to \cU, \]
the complement of the union of $\cL_1\oplus\dots\oplus 0\oplus\dots\oplus \cL_t$ with $0$ as $i$th summand, for $i=1$, $\dots$, $t$.
Then every stabilizer component of $\cT$ is of the form $\pi^{-1}(\cK)$ for some stabilizer component $\cK\subset \cU$.
If $H$ denotes the generic stabilizer of $\cK$ and $\alpha_i$ the character induced by $\cL_i$, then $\pi^{-1}(\cK)$ is a stabilizer component of $\cT$ if and only if
\[ \beta_{\cK}(\cU)\cap \langle \alpha_1,\dots,\alpha_t\rangle=\emptyset. \]
In this case $\pi^{-1}(\cK)$ has generic stabilizer
$H/\langle \alpha_1,\dots,\alpha_t\rangle$, and we have
$\beta_{\pi^{-1}(\cK)}(\cT)$ equal to
$\overline{\beta_{\cK}(\cU)}$,
the images in $H/\langle \alpha_1,\dots,\alpha_t\rangle$ of $\beta_{\cK}(\cU)$.
\end{lemm}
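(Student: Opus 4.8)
\emph{The plan.} The morphism $\pi\colon\cT\to\cU$ is the complement of a union of subbundles inside a vector bundle, hence smooth and representable with geometric fibers the irreducible varieties $\G_m^t$; in particular $\cT$ is divisorial by Remark~\ref{rema.divy}, so all its stabilizer components carry well-defined character data. Since everything is governed by the stabilizer stratifications, I would first record the pointwise computation: for a geometric point $\bar t$ of $\cT$ over $\bar u$ of $\cU$, the automorphism group $G_{\bar u}$ acts on each line $(\cL_i)_{\bar u}$ by a character $\chi_i\colon G_{\bar u}\to k^\times$, and an element of $G_{\bar u}$ fixes $\bar t$ if and only if it kills every $\chi_i$; hence $G_{\bar t}=\bigcap_i\ker(\chi_i)$, a group depending only on $\bar u$. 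Consequently, over a connected component $\cS$ of a stabilizer stratum of $\cU$, the preimage $\pi^{-1}(\cS)$ has constant geometric stabilizer and is connected, hence irreducible since $\cT$ is smooth; and because $\pi$ is flat with irreducible fibers, $\overline{\pi^{-1}(\cS)}=\pi^{-1}(\overline{\cS})$.

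Next I would show that every stabilizer component $\cM$ of $\cT$ equals $\pi^{-1}(\cK)$ for a uniquely determined stabilizer component $\cK$ of $\cU$. Letting $\bar t$ be the generic point of $\cM$ and $\bar u=\pi(\bar t)$, the point $\bar u$ lies in a unique stratum component $\cS_0$ of $\cU$, and $\cK:=\overline{\cS_0}$ is a stabilizer component of $\cU$. By the first paragraph $\pi^{-1}(\cS_0)$ lies in a single stratum of $\cT$, which — containing $\bar t\in\pi^{-1}(\bar u)$ — is the one whose closure is $\cM$; thus $\pi^{-1}(\cK)=\overline{\pi^{-1}(\cS_0)}\subseteq\cM$. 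Conversely $\bar t\in\pi^{-1}(\cS_0)\subseteq\pi^{-1}(\cK)$ and $\pi^{-1}(\cK)$ is closed, so $\cM=\overline{\{\bar t\}}\subseteq\pi^{-1}(\cK)$, whence $\cM=\pi^{-1}(\cK)$; and $\cK$ is recovered from $\cM$, so this assignment is injective.

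Then comes the decisive point: which $\pi^{-1}(\cK)$ are themselves stabilizer components. Fix $\cK$ with generic stabilizer $H$, let $\alpha_i\in H^\vee$ be the character induced by $\cL_i$, and set $H'=\bigcap_i\ker(\alpha_i)$; by the first paragraph $H'$ is the generic stabilizer along the irreducible substack $\pi^{-1}(\cK)$. Working near a generic geometric point $\bar t$ of $\pi^{-1}(\cK)$ (so $\bar u=\pi(\bar t)$ is generic in $\cK$), the local structure of DM stacks realizes a neighborhood of $\bar t$ as an \'etale quotient $[T_{\bar t}\cT/H']$ (cf.\ the local analysis invoked in the proof of Theorem~\ref{theo.main-th}, \cite{alperkresch}); hence $\pi^{-1}(\cK)$ is a stabilizer component exactly when it coincides locally with the $H'$-fixed locus, i.e.\ when $(T_{\bar t}\cT)^{H'}$ has dimension $\dim\pi^{-1}(\cK)=t+\dim\cK$. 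The exact sequence $0\to T_{\bar t}(\pi^{-1}(\bar u))\to T_{\bar t}\cT\to T_{\bar u}\cU\to 0$ has $T_{\bar t}(\pi^{-1}(\bar u))\cong k^t$ with trivial $H'$-action (the $\alpha_i$ vanish on $H'$), while $T_{\bar u}\cU\cong T_{\bar u}\cK\oplus\cN_{\cK/\cU,\bar u}$ as $H$-representations, with $H$ trivial on $T_{\bar u}\cK$ and acting through $\beta_{\cK}(\cU)$ on the normal summand. Thus $\dim(T_{\bar t}\cT)^{H'}-\dim\pi^{-1}(\cK)$ is the number of characters in $\beta_{\cK}(\cU)$ restricting trivially to $H'$; by Pontryagin duality the subgroup of $H^\vee$ of characters trivial on $H'=\bigcap_i\ker(\alpha_i)$ is exactly $\langle\alpha_1,\dots,\alpha_t\rangle$, so this number vanishes precisely when $\beta_{\cK}(\cU)\cap\langle\alpha_1,\dots,\alpha_t\rangle=\emptyset$.

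Finally, in the case where this holds, $\pi^{-1}(\cK)$ is a stabilizer component, its generic stabilizer is $H'$ — which via $(H')^\vee\cong H^\vee/\langle\alpha_1,\dots,\alpha_t\rangle$ is the group written $H/\langle\alpha_1,\dots,\alpha_t\rangle$ in the statement — and $\cN_{\pi^{-1}(\cK)/\cT}\cong\pi^*\cN_{\cK/\cU}$ since $\pi$ is smooth, so $\beta_{\pi^{-1}(\cK)}(\cT)$ is the image $\overline{\beta_{\cK}(\cU)}$ of $\beta_{\cK}(\cU)$ under restriction to $H'$; in the remaining case the $H'$-fixed locus near $\bar t$ is strictly larger, so $\pi^{-1}(\cK)$ sits properly inside a stabilizer component with the same generic stabilizer and hence is not one. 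The hard part I expect is exactly this last stretch: reconciling the local $H'$-fixed-locus picture with the definition of a stabilizer component as the closure of a \emph{connected component of a stratum}, and verifying that one does not overlook sublocitype where the stabilizer jumps strictly above $H'$ (which is what forces the clean "properly contained, hence not a component" conclusion).
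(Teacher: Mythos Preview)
Your proof is correct and follows essentially the same approach as the paper's. The paper's argument is terser: it first reduces to the case $t=1$, then records the same two observations you make in detail --- that points in a fiber of $\pi$ all have the same stabilizer (so stabilizer components of $\cT$ are pullbacks of stabilizer components of $\cU$), and that $\pi^{-1}(\cK)$ is a stabilizer component exactly when the restricted normal-bundle characters $\overline{\beta_{\cK}(\cU)}$ are all nonzero. Your direct treatment of general $t$ via the tangent-space computation and Pontryagin duality is a perfectly good substitute for the inductive reduction, and your care with the closure argument $\overline{\pi^{-1}(\cS_0)}=\pi^{-1}(\overline{\cS_0})$ makes explicit what the paper leaves implicit.
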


\begin{proof}
It suffices to treat the case $t=1$.
The points in any fiber of $\pi$ all have the same stabilizer group, hence every stabilizer component of $\cT$ is of the claimed form.
For any stabilizer component $\cK$, with generic stabilizer $H$, the generic stabilizer of $\pi^{-1}(\cK)$ is as claimed.
Also as claimed is the description of the characters of the normal bundle at the generic point, whose nonvanishing is necessary and sufficient for $\pi^{-1}(\cK)$ to be a stabilizer component of $\cT$.
\end{proof}

\begin{defi}
\label{def.puncturednormal}
Let $\cD=\bigcup_{i\in \cI}\cD_i$ be an snc divisor on a
projective orbifold $\cX$.
The \emph{snc stratum}
\[
\cD_I^\circ
\]
indexed by $I\subseteq \cI$ is
the complement in $\cD_I$ of the union of $\cD_j$ for $j\in \cI\setminus I$.
The \emph{punctured normal bundle}
\[ \cN^\circ_{\cD_I/\cX}, \]
with projection morphism
\[
\pi^\circ_I\colon \cN^\circ_{\cD_I/\cX}\to \cD_I^\circ,
\]
is the complement in
\[ \cN_{\cD_I^\circ/\cX}\cong\bigoplus_{i\in I}\cN_{\cD_i/\cX}|_{\cD_I^\circ} \]
of the union of the zero-sections of the summands.
\end{defi}

\begin{lemm}
\label{lem.punctured}
With the notation of Definitions \ref{def.naiveclassU} and \ref{def.puncturednormal} we have
\[
[\cU]=[\cU]^{\mathrm{naive}}+\sum_{\emptyset\ne I\subseteq \cI}(-1)^{|I|} \,[\cN^\circ_{\cD_I/\cX}]^{\mathrm{naive}}
\]
in $\ocBurn_n$.
\end{lemm}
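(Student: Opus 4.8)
The plan is to reconcile the two expressions for $[\cU]$, namely the defining one from Definition \ref{def.naiveclassU},
\[
[\cU]=[\cX]+\sum_{\emptyset\ne I\subseteq \cI}(-1)^{|I|}[\cN_{\cD_I/\cX}]^{\mathrm{naive}},
\]
and the claimed one involving punctured normal bundles. Since $[\cX]=[\cX]^{\mathrm{naive}}$ (with the understanding that for a projective orbifold the class and the naive class coincide, all stabilizer components being present), it suffices to establish, for every $\emptyset\ne I\subseteq\cI$, the identity
\[
[\cN_{\cD_I/\cX}]^{\mathrm{naive}}=\sum_{I\subseteq J\subseteq\cI}[\cN^\circ_{\cD_J/\cX}]^{\mathrm{naive}},
\]
and then to observe that the claimed formula follows by a routine M\"obius-inversion (inclusion–exclusion) rearrangement of the double sum $\sum_{\emptyset\ne I}(-1)^{|I|}\sum_{I\subseteq J}[\cN^\circ_{\cD_J/\cX}]^{\mathrm{naive}}$, using that $\sum_{\emptyset\ne I\subseteq J}(-1)^{|I|}=-1$ for every nonempty $J$, so that the coefficient of each $[\cN^\circ_{\cD_J/\cX}]^{\mathrm{naive}}$ collapses to $(-1)^{|J|}$.

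So the crux is the first displayed identity. The idea is to stratify the total space $\cN_{\cD_I/\cX}$, which over $\cD_I$ decomposes as $\bigoplus_{i\in I}\cN_{\cD_i/\cX}|_{\cD_I}$. First I would stratify the base $\cD_I$ by the snc strata $\cD_J^\circ$ for $I\subseteq J\subseteq\cI$: a point of $\cD_I$ lies in exactly one such $\cD_J^\circ$, namely for $J$ the set of components $\cD_j$ through that point. Pulling back the bundle, $\cN_{\cD_I/\cX}$ is stratified by the restrictions $\cN_{\cD_I/\cX}|_{\cD_J^\circ}$. Now I claim each stabilizer component of $\cN_{\cD_I/\cX}$ meeting $\cN_{\cD_I/\cX}|_{\cD_J^\circ}$ is, on that piece, precisely a stabilizer component of $\cN^\circ_{\cD_J/\cX}$: indeed over $\cD_J^\circ$ the bundle $\cN_{\cD_J^\circ/\cX}=\bigoplus_{j\in J}\cN_{\cD_j/\cX}|_{\cD_J^\circ}$ contains $\cN_{\cD_I/\cX}|_{\cD_J^\circ}$ as the sub-sum over $i\in I$, and away from the zero-sections of the summands indexed by $J\setminus I$ the stabilizer is the same, while the zero-locus of any such summand has strictly larger stabilizer and hence contributes no stabilizer component of the larger space that survives genericly — here I invoke Lemma \ref{lem.puncturedsumoflb}, applied to the line bundles cutting out the summands, whose characters generate the relevant quotient, to match generic stabilizers and the character data $\beta$. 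Summing the contributions over the stratification $J\supseteq I$ then yields the identity, provided the classes $[\,\cdot\,]^{\mathrm{naive}}$ are additive over this kind of stratification by locally closed substacks with the complementary strata accounted for; this additivity is exactly how naive classes are built from stabilizer components, and Lemma \ref{lem.naiveU}(i) guarantees insensitivity to which dense open of each stratum we use.

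The main obstacle I expect is the bookkeeping of stabilizer groups and their character data across the stratification: I must check carefully that the generic stabilizer of a stabilizer component of $\cN^\circ_{\cD_J/\cX}$, together with its $\beta$-sequence coming from the normal bundle, is correctly identified with a stabilizer component of $\cN_{\cD_I/\cX}$ and its $\beta$-sequence, and that components which fail the nonvanishing condition of Lemma \ref{lem.puncturedsumoflb} (so do not appear on the $\cN^\circ$ side) are exactly those whose $\beta$-sequence on the $\cN_{\cD_I/\cX}$ side contains a character in the span of the relevant root characters — but such a term would then be killed by relation \textbf{(V)}, or more precisely does not arise as a stabilizer component at all, because on a stratum where a summand's zero-section is hit the stabilizer jumps. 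Keeping the "closure of a component of the nonsingular stratum" definition of stabilizer component (Definition \ref{def.classX}) straight, so that closures taken in $\cN_{\cD_I/\cX}$ versus in $\cN^\circ_{\cD_J/\cX}$ yield matching $\beta$-data, is the delicate point; everything else is the combinatorial inclusion–exclusion, which is formal.
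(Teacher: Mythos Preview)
There is a genuine gap: your key intermediate identity
\[
[\cN_{\cD_I/\cX}]^{\mathrm{naive}}=\sum_{I\subseteq J\subseteq\cI}[\cN^\circ_{\cD_J/\cX}]^{\mathrm{naive}}
\]
is false. Take $\cX=\PP^2$ with $\cD_1$, $\cD_2$ two distinct lines (no stacky structure anywhere). For $I=\{1\}$ each of $\cN_{\cD_1/\cX}$, $\cN^\circ_{\cD_1/\cX}$, $\cN^\circ_{\cD_{12}/\cX}$ is a rational surface with a single (trivial) stabilizer component, so your identity reads $(k(u,v),())=(k(u,v),())+(k(u,v),())$ in $\ocBurn_2$, i.e.\ $(k(u,v),())=0$, which is absurd (it maps to $[k(u,v)]\ne 0$ in $\Burn_2$). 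The conceptual error is treating $[\,\cdot\,]^{\mathrm{naive}}$ as additive over a locally closed stratification; it is not --- it is a sum over stabilizer components, each recorded only by its generic data, and removing a closed substack that carries no new stabilizer type does not change the naive class (Lemma~\ref{lem.naiveU}(i)), so pieces of the stratification that introduce no new stabilizer contribute nothing, not a new summand.

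Two further symptoms of the same problem: (a) you correctly note $\sum_{\emptyset\ne I\subseteq J}(-1)^{|I|}=-1$, but then assert the resulting coefficient is $(-1)^{|J|}$; these agree only for $|J|$ odd, so the M\"obius step does not produce the target formula even granting the identity; (b) you never explain how $[\cX]^{\mathrm{naive}}$ becomes $[\cU]^{\mathrm{naive}}$ --- these differ exactly by the stabilizer components of $\cX$ contained in $\cD$, and that discrepancy must be absorbed by the normal-bundle terms, not handled by your decomposition. The correct combinatorics (as in \cite[Lemma~5.7]{BnG}) fixes one stabilizer component $\cK$ of $\cX$, records the set $J_{\cK}=\{i:\cK\subset\cD_i\}$, and then matches the signed contributions of $\cK$ (and the components it spawns in each $\cN_{\cD_I/\cX}$ and $\cN^\circ_{\cD_I/\cX}$) to the two sides; the cancellation happens symbol by symbol, not by a global stratification of the total spaces.
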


\begin{proof}
The proof is combinatorial and follows the same steps as the proof of \cite[Lemma 5.7]{BnG}.
\end{proof}

\begin{rema}
\label{rem.classUroot}
With the notation of Definition \ref{def.naiveclassU}, if for $i\in \cI$ and a positive integer $r$ we perform the $r$th root operation on $\cD_i$ we obtain
\[ \cU\cong \sqrt[r]{(\cX,\cD_i)}\setminus \cD', \]
where $\cD'$ is an snc divisor, union of $\cG_{\cD_i}$ and the pre-images of $\cD_{i'}$ for $i'\ne i$.
It follows from Lemma \ref{lem.normalbundlerot} that the formula from Lemma \ref{lem.punctured} remains unchanged when we replace $\cX$ by $\sqrt[r]{(\cX,\cD_i)}$.
\end{rema}

\begin{prop}
\label{prop.isomorphisminvariant}
With the notation of Definition \ref{def.naiveclassU}, suppose that $\cU$ is also the complement, in a divisorial projective orbifold $\cX'$ of an snc divisor $\cD'$,
\[ \cD'=\bigcup_{i\in \cI'}\cD'_i. \]
Then
\[
[\cX]+\sum_{\emptyset\ne I\subseteq \cI}(-1)^{|I|}[\cN_{\cD_I/\cX}]^{\mathrm{naive}}=
[\cX']+\sum_{\emptyset\ne I\subseteq \cI'}(-1)^{|I|}[\cN_{\cD'_I/\cX'}]^{\mathrm{naive}}.
\]
in $\ocBurn_n$.
\end{prop}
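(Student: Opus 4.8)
The plan is to pass to punctured normal bundles, reduce the assertion to the invariance of a single stacky blow-up (and root operation) via weak factorization with boundary, and dispose of the blow-up case by the local computation that underlies the corresponding argument in \cite[\S5]{BnG}. First I would apply Lemma~\ref{lem.punctured} to the compactification $(\cX,\cD)$ of $\cU$, and separately to $(\cX',\cD')$ (note that $\cU$, being open in a divisorial projective orbifold, is a divisorial quasi-projective orbifold, so $[\cU]^{\mathrm{naive}}$ is defined). This rewrites the left-hand side of the asserted equality as
\[
[\cU]^{\mathrm{naive}}+\sum_{\emptyset\ne I\subseteq\cI}(-1)^{|I|}\,[\cN^\circ_{\cD_I/\cX}]^{\mathrm{naive}},
\]
and the right-hand side with $(\cX',\cD')$ in place of $(\cX,\cD)$. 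Since $[\cU]^{\mathrm{naive}}\in\ocBurn_n$ is intrinsic to $\cU$ (Definition~\ref{def.naiveclassU}), it suffices to prove that the punctured sum $\sum_{\emptyset\ne I\subseteq\cI}(-1)^{|I|}[\cN^\circ_{\cD_I/\cX}]^{\mathrm{naive}}$ is unchanged when the pair (divisorial projective orbifold $\cX$, snc divisor $\cD$ with $\cX\setminus\cD\cong\cU$) is replaced by another such pair.

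Given two such pairs $(\cX,\cD)$ and $(\cX',\cD')$, the identity birational map on $\cU$ is a proper birational map of smooth stacks (both being projective), with $\cX\setminus\cU=\cD$ and $\cX'\setminus\cU=\cD'$ snc; I would feed this into the weak factorization theorem with boundary divisors, i.e., Theorem~\ref{thm.weakfact} in the strengthened form of Remark~\ref{rem.weakfactproperties}. This yields a chain of smooth stacks $\cX=\cX_0,\dots,\cX_m=\cX'$, each equipped with a compatible snc divisor $\cD_i$ satisfying $\cX_i\setminus\cD_i\cong\cU$, in which consecutive terms differ by a single stacky blow-up whose center lies in the boundary and has normal crossing with it: a blow-up $B\ell_{\cW}\cX_i$ along a smooth $\cW\subseteq\cD_i$, with new boundary $\cE\cup\bigcup_{j}\widetilde{\cD_j}$ ($\cE$ the exceptional divisor); or a root operation $\sqrt[r]{(\cX_i,\cW)}$, where $\cW$ is necessarily a (disjoint union of) component(s) of $\cD_i$ and the new boundary replaces $\cW$ by its gerbe $\cG_{\cW}$. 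By Proposition~\ref{prop.weakfactproperties} each $\cX_i$ is again projective and divisorial, and since $\cU_i\cong\cU$ is dense it has trivial generic stabilizer, so every $(\cX_i,\cD_i)$ is an admissible compactification of $\cU$, and it suffices to treat one elementary step. The root operation case is exactly Remark~\ref{rem.classUroot}: Lemma~\ref{lem.normalbundlerot} identifies the punctured normal bundle of $\cG_{\cW}$ in $\sqrt[r]{(\cX_i,\cW)}$ with the punctured normal bundle of $\cW$ in $\cX_i$, and all strata disjoint from $\cW$ are untouched, so the punctured sum is unchanged.

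It remains to handle a blow-up $\pi\colon\cX_{i+1}=B\ell_{\cW}\cX_i\to\cX_i$, with $\cE\cong\bP(\cN_{\cW/\cX_i})$ and $\cD_{i+1}=\cE\cup\bigcup_{j\in\cI}\widetilde{\cD_j}$. Over $\cX_{i+1}\setminus\cE\cong\cX_i\setminus\cW$ the two configurations agree, so for each $I\subseteq\cI$ the bundle $\cN^\circ_{\widetilde{\cD_I}/\cX_{i+1}}$ is identified with the restriction of $\cN^\circ_{\cD_I/\cX_i}$ to $\cD_I^\circ\setminus\cW$; expanding both sides into sums over stabilizer components via Lemma~\ref{lem.puncturedsumoflb} and invoking Lemma~\ref{lem.naiveU}(i), the difference $[\cN^\circ_{\cD_I/\cX_i}]^{\mathrm{naive}}-[\cN^\circ_{\widetilde{\cD_I}/\cX_{i+1}}]^{\mathrm{naive}}$ in $\ocBurn_n$ is precisely the sum of the terms coming from stabilizer components of $\cD_I^\circ$ contained in $\cW$. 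Substituting, the equality of the punctured sums for $(\cX_i,\cD_i)$ and $(\cX_{i+1},\cD_{i+1})$ reduces to an identity between an alternating sum over $I\subseteq\cI$ of the terms supported on $\cW$ and the alternating sum $\sum_{I\subseteq\cI}(-1)^{|I|+1}[\cN^\circ_{(\cE\cap\widetilde{\cD_I})/\cX_{i+1}}]^{\mathrm{naive}}$ over the strata of $\cD_{i+1}$ meeting $\cE$. By the local structure of a blow-up with normal-crossing boundary in the stacky local models of \cite{alperkresch}, this becomes a combinatorial identity in a standard model in which $\cW$ is a coordinate substack, $\cD_i$ a union of coordinate hyperplanes (some containing $\cW$, the rest transverse to it), and $\cE\cong\bP(\cN_{\cW/\cX_i})$; it is the stacky counterpart of the computation in the proof of the analogous statement of \cite[\S5]{BnG}, where the decomposition of $\bP(\cN_{\cW/\cX_i})$ into eigen-subbundles produces the terms of relation~\textbf{(B)} and of Proposition~\ref{prop.goodrelations}(ii), the action construction~\textbf{(A)} supplies the stacks occurring there, and relation~\textbf{(V)} accounts for the vanishing terms. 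I expect this blow-up bookkeeping — tracking the stabilizer groups and the characters of the normal bundles $\cN_{\cD_I/\cX_i}$ through both the blow-up and the projective bundle $\bP(\cN_{\cW/\cX_i})$, and recognizing the inclusion–exclusion over the strata in $\cE$ as exactly the right-hand side of Proposition~\ref{prop.goodrelations}(ii) — to be the main obstacle; it would be carried out following \cite[\S5]{BnG}, with Lemmas~\ref{lem.puncturedsumoflb}, \ref{lem.naiveU}, and \ref{lem.normalbundlerot} playing the roles of the corresponding steps there.
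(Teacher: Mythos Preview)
Your overall architecture matches the paper's proof exactly: apply Lemma~\ref{lem.punctured} to reduce to the invariance of the punctured sum, invoke weak factorization with boundary (Theorem~\ref{thm.weakfact} and Remark~\ref{rem.weakfactproperties}, together with Proposition~\ref{prop.weakfactproperties} for divisoriality and projectivity) to reduce to a single stacky blow-up, and dispose of the root case via Remark~\ref{rem.classUroot}. Your reduction of the left-hand side of the blow-up identity via Lemma~\ref{lem.naiveU}(i) to terms supported on $\cW$ is also correct and agrees with the paper's equation \eqref{eqn.KpiL}.

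However, your description of the blow-up computation invokes the wrong mechanism. You write that the identity is established by ``the decomposition of $\bP(\cN_{\cW/\cX_i})$ into eigen-subbundles'' producing the terms of relation~\textbf{(B)} and Proposition~\ref{prop.goodrelations}(ii), with the action construction~\textbf{(A)} and relation~\textbf{(V)} entering. None of these are used here. Those tools govern how the symbol $(\cK,\beta_{\cK}(\cX))$ transforms under blow-up of $\cX$ itself, which is the content of Theorem~\ref{theo.main-th}. In the present proposition one is comparing \emph{naive classes of total spaces of punctured normal bundles}, and the mechanism is entirely geometric: one writes down the short exact sequences relating $\cN_{\cZ_I/\cD_I}$, $\cN_{\cZ_I/\cD_{I''}}$, and $\bigoplus_{j\in I'}\cN_{\cD_j/\cX}|_{\cZ_I}$, takes Zariski-local splittings (available because higher cohomology of quasicoherent sheaves on $[\Spec(A)/G]$ vanishes), and uses these splittings to exhibit local open immersions between the various punctured bundles, so that Lemma~\ref{lem.naiveU}(ii) applies. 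No relation in $\ocBurn_n$ beyond \textbf{(}$\overline{\mathbf{I}}$\textbf{)} (implicit in the definition of the naive class) is invoked. This is precisely how \cite[Prop.~5.8]{BnG} proceeds, and the paper follows it; your reference to \cite[\S5]{BnG} is correct, but your account of what happens there is not, and following the recipe you describe would not produce the required cancellation.
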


\begin{proof}
Applying Proposition \ref{prop.weakfactproperties} (see also Remark \ref{rem.weakfactproperties}),
we are reduced to treating the case that $\cX'$ is obtained from $\cX$ by one of the following operations:
\begin{itemize}
\item[(i)] Blowing up a smooth substack $\cZ$,
contained in and having normal crossing with $\cD$. 
\item[(ii)] Root operation along a component of $\cD$.
\end{itemize}
The result in case (ii) is taken care of by Remark \ref{rem.classUroot}.

We treat case (i).
The stack $\cZ$ is projective, and without loss of generality may be assumed irreducible, so the coarse moduli space is a projective variety:
\[ \pi\colon \cZ\to Z. \]
We remark that any short exact sequence of vector bundles on $\cZ$ admits a splitting after
restricting to $\pi^{-1}(V)$ for any affine open $V\subset Z$.
This follows from the Leray spectral sequence, once
we know that $R^i\pi_*\cF=0$ for all $i>0$ and quasi-coherent sheaves $\cF$.
For this, it is enough to know the vanishing of $\rH^i([\Spec(A)/G],\cF)$ for a quotient stack, where $G$ is finite and $A$ is a $k$-algebra.
We know the latter by a standard spectral sequence (see \cite[Thm.\ III.2.20]{milne}) and the vanishing of group cohomology of $k$-vector spaces.

Let $\cI'=\{i\in \cI\,|\,\cZ\subset \cD_i\}$,
with complement $\cI''$.
For $I\subseteq \cI$ we denote the respective intersections with $I$ by $I'$ and $I''$.
We define
\[ \cZ_I=\cD^\circ_{\cI'\cup I}\cap \cZ. \]

As in the proof of \cite[Prop.\ 5.8]{BnG}, by applying Lemma~\ref{lem.punctured} and rearranging terms, the desired equality is equivalent to
\begin{align}
\begin{split}
\label{eqn.sufficestoverify}
\sum_{\emptyset\ne I\subseteq \cI}
& (-1)^{|I|}
\big([\cN^\circ_{\cD_I/\cX}]^{\mathrm{naive}}
-[\cN^\circ_{\cD'_I/\cX'}]^{\mathrm{naive}}
\big)
= \\
& \sum_{\emptyset\ne I\subseteq \cI}
(-1)^{|I|}
\big([\cN^\circ_{\cD'_{I''}\cap \cE/\cX'}]^{\mathrm{naive}}
-[\cN^\circ_{\cD'_I\cap \cE/\cX'}]^{\mathrm{naive}}
\big).
\end{split}
\end{align}
We prove \eqref{eqn.sufficestoverify} by establishing the equality of summands for each $\emptyset\ne I\subseteq \cI$.
When $\cI'\subseteq I$ we have
$$
\cN^\circ_{\cD'_I/\cX'}\cong
\cN^\circ_{\cD_I/\cX}\setminus (\pi_I^\circ)^{-1}(\cZ_I),
$$
so by Lemma \ref{lem.puncturedsumoflb} the
summand on the left-hand side of \eqref{eqn.sufficestoverify} is
\begin{equation}
\label{eqn.KpiL}
(-1)^{|I|}\sum_{\substack{\text{stabilizer components}\ \cK\subset \cD_I^\circ\\ \text{with}\ \cK\subset \cZ_I}}((\pi_I^\circ)^{-1}(\cK),\overline{\beta_{\cK}(\cD_I^\circ)}),
\end{equation}
whereas when $\cI'\nsubseteq I$ we have
$$
\cN^\circ_{\cD_I/\cX}\cong \cN^\circ_{\cD'_I/\cX'},
$$
and the summand is $0$.
In \eqref{eqn.KpiL},
a summand needs to be understood as $0$ when $\overline{\beta_{\cK}(\cD_I^\circ)}$ contains the trivial character.
In particular, the sum only receives nontrivial contributions from
$\cK\subset \cZ'_I$, where the closed substack
\[ \cZ'_I\subset \cZ_I \]
is defined by the condition that none of characters associated with the divisors $\cD_i$, $i\in I$, appears in $\beta_{\cZ_I}(\cD^\circ_I)$.

To understand the right-hand side of \eqref{eqn.sufficestoverify}, we start with the decomposition as a sum of line bundles
\[
\cN_{\cD'_{I''}\cap \cE/\cX'}\cong \cN_{\cD'_{I''}\cap \cE/\cD'_{I''}}\oplus \bigoplus_{j\in I''}\cN_{\cD_j/\cX}|_{\cD'_{I''}\cap \cE},
\]
which leads to an identification of $\cN^\circ_{\cD'_{I''}\cap \cE/\cX'}$ with
\begin{equation}
\label{eqn.Nidentify}
\Big(
\cN_{\cZ_I/\cD_{I''}}\setminus \bigcup_{i\in \cI'}\cN_{\cZ_I/\cD_{I''\cup \{i\}}}
\Big)
\times_{\cZ_I}
\Big(
\bigtimes_{j\in I''}(\cN_{\cD_j/\cX}|_{\cZ_I}\setminus \cZ_I)
\Big).
\end{equation}

Suppose $\cI'\subseteq I$.
Then $\cN_{\cD'_I\cap \cE/\cX'}$ is a direct sum of line bundles.
One of these is identified with
$\cO_{\bP(\cN_{\cZ\cap \cD_I/\cD_I })}(-1)$.
The other are the restrictions of $\cN_{\cD_j/\cX}$ for $j\in I$, possibly twisted by $\cO_{\bP(\cN_{\cZ\cap \cD_I/\cD_I })}(1)$.
A punctured sum of line bundles remains unchanged, up to isomorphism, if one of the line bundles is twisted by another.
So $\cN^\circ_{\cD'_I\cap \cE/\cX'}$ may be identified with
\begin{equation}
\label{eqn.NNidentify}
\Big(
\cN_{\cZ_I/\cD_I}\setminus \cZ_I
\Big)
\times_{\cZ_I}
\Big(
\bigtimes_{j\in I}(\cN_{\cD_j/\cX}|_{\cZ_I}\setminus \cZ_I)
\Big).
\end{equation}
The short exact sequence
\begin{equation}
\label{eqn.vectorbundlesplit}
0\to \cN_{\cZ_I/\cD_I}\to \cN_{\cZ_I/\cD_{I''}}\to \bigoplus_{j\in I'} \cN_{\cD_j/\cX}|_{\cZ_I}\to 0
\end{equation}
admits splittings Zariski locally on $\cZ_I$.
As well, \eqref{eqn.vectorbundlesplit}
admits a canonical splitting after restricting to $\cZ'_I$.
The latter observation lets us define a closed substack $\mathcal{S}$ of
the restriction of $\cN_{\cZ_I/\cD_{I''}}$ to $\cZ'_I$,
\[
\mathcal{S}:=\ker\big(\cN_{\cZ_I/\cD_{I''}}|_{\cZ'_I}\to \cN_{\cZ_I/\cD_I}|_{\cZ'_I}\big).
\]
Removing $\mathcal{S}$ from $\cN_{\cZ_I/\cD_{I''}}$ and using the identification \eqref{eqn.Nidentify},
we obtain an open substack
\begin{equation}
\label{eqn.UinN}
\cU'\subset \cN^\circ_{\cD'_{I''}\cap \cE/\cX'}.
\end{equation}
We apply Lemma \ref{lem.naiveU}(ii), with $\cU=\cN^\circ_{\cD'_I\cap \cE/\cX'}$, where the covering of $\cZ_I$ by open substacks comes from the Zariski local splittings of \eqref{eqn.vectorbundlesplit}.
Over such an open substack, $\cU$ is identified with the pre-image of $\cN_{\cZ_I/\cD_I}\setminus \cZ_I$, and $\cU'$, with the pre-image of $\cN_{\cZ_I/\cD_I}\setminus \cZ'_I$, for the projection from the fiber product with
$
\bigtimes_{j\in I}(\cN_{\cD_j/\cX}|_{\cZ_I}\setminus \cZ_I)
$.
Application of Lemma \ref{lem.puncturedsumoflb} establishes the
condition on Lemma \ref{lem.naiveU}(i) for $\cU$ and $\cU'$ over such an open substack of $\cZ_I$, thus by Lemma \ref{lem.naiveU}(ii) we have
\[
[\cU']^{\mathrm{naive}}=[\cN^\circ_{\cD'_I\cap \cE/\cX'}]^{\mathrm{naive}}.
\]
From \eqref{eqn.UinN} we obtain an identification of
\[
[\cN^\circ_{\cD'_{I''}\cap \cE/\cX'}]^{\mathrm{naive}}-[\cU']^{\mathrm{naive}}
\]
with the sum in \eqref{eqn.KpiL}, and we have the equality of summands of \eqref{eqn.sufficestoverify} in this case.

We suppose, now, $\cI'\nsubseteq I$.
Then
$\cN^\circ_{\cD'_I\cap \cE/\cX'}$ is identified with
\[
\Big(
\cN_{\cZ_I/\cD_I}\setminus \bigcup_{i\in \cI'\setminus I}\cN_{\cZ_I/\cD_{I\cup \{i\}}}
\Big)
\times_{\cZ_I}
\Big(
\bigtimes_{j\in I}(\cN_{\cD_j/\cX}|_{\cZ_I}\setminus \cZ_I)
\Big).
\]
We have the short exact sequence
\begin{equation}
\label{eqn.vectorbundlesplit2}
0\to \cN_{\cZ_I/\cD_{\cI'\cup I}}\to \cN_{\cZ_I/\cD_{I''}}\to \bigoplus_{j\in \cI'} \cN_{\cD_j/\cX}|_{\cZ_I}\to 0.
\end{equation}
We take a splitting of \eqref{eqn.vectorbundlesplit2} over an open substack of $\cZ_I$.
The restriction to the direct sum of $\cN_{\cD_j/\cX}|_{\cZ_I}$ over $j\in I'$ is a splitting $s$ of
\begin{equation}
\label{eqn.vectorbundlesplit3}
0\to \cN_{\cZ_I/\cD_I}\to \cN_{\cZ_I/\cD_{I''}}\to \bigoplus_{j\in I'} \cN_{\cD_j/\cX}|_{\cZ_I}\to 0.
\end{equation}
We claim:
\begin{align}
\cN_{\cZ_I/\cD_I}\oplus\bigoplus_{j\in I'\setminus\{i\}}s(\cN_{\cD_j/\cX}|_{\cZ_I}) & = \cN_{\cZ_I/\cD_{I''\cup \{i\}}} & (i&\in I'), \label{eqn.p1identity} \\
\cN_{\cZ_I/\cD_{I\cup \{i\}}}\oplus \bigoplus_{j\in I'}s(\cN_{\cD_j/\cX}|_{\cZ_I}) & = \cN_{\cZ_I/\cD_{I''\cup \{i\}}} & (i&\in \cI'\setminus I).
\label{eqn.p2identity}
\end{align}
For $i$, $j\in I'$, $i\ne j$, we have $s(\cN_{\cD_j/\cX}|_{\cZ_I})\subset \cN_{\cZ_I/\cD_{I''\cup \{i\}}}$,
since $\cN_{\cZ_I/\cD_{I''\cup \{i\}}}$
is the kernel of
$\cN_{\cZ_I/\cD_{I''}}\to \cN_{D_i/\cX}|_{\cZ_I}$;
\eqref{eqn.p1identity} quickly follows.
The argument for \eqref{eqn.p2identity} is similar and relies on the fact, that $s$ is the restriction of a splitting of \eqref{eqn.vectorbundlesplit2}.
With \eqref{eqn.p1identity}--\eqref{eqn.p2identity} we have identifications, Zariski locally over $\cZ_I$, of $\cN^{\circ}_{\cD'_{I} \cap \cE/\cX'}$ and $\cN^{\circ}_{\cD'_{I''} \cap \cE/\cX'}$, so by
Lemma \ref{lem.naiveU}(ii) we have
\[
[\cN^{\circ}_{\cD'_{I} \cap \cE/\cX'}]^{\mathrm{naive}}=[\cN^{\circ}_{\cD'_{I''} \cap \cE/\cX'}]^{\mathrm{naive}}.
\]
Thus the summand on the right-hand side of \eqref{eqn.sufficestoverify} is $0$, as desired.
\end{proof}

\begin{prop}
\label{prop.birationalinvariant}
Let $\cU$ be as in Definition \ref{def.naiveclassU}
and let, analogously, $\cU'$ be the complement in a
divisorial projective orbifold $\cX'$ of an snc divisor $\cD'$.
If $\cU$ and $\cU'$ are birationally equivalent, then
$[\cU]=[\cU']$ in $\ocBurn_n$.
\end{prop}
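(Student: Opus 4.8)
The plan is to reduce, via weak factorization, to the invariance of the class under a single blow-up, and then to compute directly from the definition of $[\,\cdot\,]$, using the blow-up invariance of the class in $\cBurn_n$ (Theorem~\ref{theo.main-th}) and the birational invariance of the naive class.

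First I would apply Theorem~\ref{thm.weakfactrep} to the representable proper birational map $\cU\dashrightarrow\cU'$, obtaining smooth stacks $\cU=\cU_0,\dots,\cU_m=\cU'$ in which $\cU_i$ and $\cU_{i+1}$ are related by the blow-up of a smooth closed substack, compatibly with the dense open on which the map is an isomorphism. Each $\cU_i$ carries a projective morphism to $\cU$ or to $\cU'$, hence is a quasi-projective divisorial orbifold by Remark~\ref{rema.divy}; by Remark~\ref{rem.compactify} it admits a projective divisorial compactification, so $[\cU_i]\in\ocBurn_n$ is defined and, by Proposition~\ref{prop.isomorphisminvariant}, independent of the compactification. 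It thus suffices to prove $[\cU_i]=[\cU_{i+1}]$ for each $i$, and, interchanging the two stacks if needed, we may assume $\cU_{i+1}\cong B\ell_{\cZ}\cU_i$ for a smooth closed substack $\cZ\subset\cU_i$ of codimension $\ge 2$ (if the codimension is $1$ the blow-up is an isomorphism and there is nothing to prove).

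Next I would choose the compactification carefully. Starting from any projective divisorial compactification $\cU_i\subset\cX$ with snc complement $\cD=\bigcup_{j\in\cI}\cD_j$ and applying embedded resolution of singularities, I may assume that the closure $\overline{\cZ}$ of $\cZ$ in $\cX$ is smooth and has normal crossing with $\cD$; this leaves $[\cU_i]$ unchanged by Proposition~\ref{prop.isomorphisminvariant}. Put $\cX':=B\ell_{\overline{\cZ}}\cX$, again a projective divisorial orbifold, with $\pi\colon\cX'\to\cX$; then $\pi^{-1}(\cU_i)=B\ell_{\cZ}\cU_i=\cU_{i+1}$, so $\cX'$ compactifies $\cU_{i+1}$ with snc complement $\cD'=\pi^{-1}(\cD)$. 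Since $\cZ=\overline{\cZ}\cap\cU_i\ne\emptyset$ is disjoint from $\cD$, no $\cD_j$ contains $\overline{\cZ}$; hence $\cD'$ has the same index set $\cI$, its $j$-th component $\cD'_j=\pi^*\cD_j$ is the proper (equivalently, total) transform of $\cD_j$, and the induced map $\sigma_j\colon\cD'_j\to\cD_j$ is the blow-up $B\ell_{\overline{\cZ}\cap\cD_j}\cD_j$. From $\cO_{\cX'}(\cD'_j)=\pi^*\cO_{\cX}(\cD_j)$ one then obtains $\cN_{\cD'_j/\cX'}\cong\sigma_j^*\cN_{\cD_j/\cX}$, with no exceptional twist.

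Consequently, for each $\emptyset\ne I\subseteq\cI$ one has $\cN_{\cD'_I/\cX'}\cong\cN_{\cD_I/\cX}\times_{\cD_I}B\ell_{\overline{\cZ}\cap\cD_I}\cD_I$, so $\cN_{\cD'_I/\cX'}\to\cN_{\cD_I/\cX}$ is a representable proper birational morphism of quasi-projective divisorial orbifolds, whence $[\cN_{\cD'_I/\cX'}]^{\mathrm{naive}}=[\cN_{\cD_I/\cX}]^{\mathrm{naive}}$ by birational invariance of the naive class. Combining this with $[\cX]=[\cX']$ in $\cBurn_n$ (Theorem~\ref{theo.main-th}, applied to $\cX'\to\cX$) and comparing the defining expression
\[
[\cU_i]=[\cX]+\sum_{\emptyset\ne I\subseteq\cI}(-1)^{|I|}[\cN_{\cD_I/\cX}]^{\mathrm{naive}}
\]
with the analogous formula for $[\cU_{i+1}]$ coming from the compactification $\cX'$, term by term, gives $[\cU_i]=[\cU_{i+1}]$ and completes the induction. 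The step I expect to be the main obstacle is the geometric bookkeeping of the last two paragraphs: identifying the boundary $\cD'\subset\cX'$ and the normal bundles $\cN_{\cD'_I/\cX'}$. This works precisely because the compactification is arranged so that the blow-up center is transverse to, and not contained in, any component of the boundary — and it is the containment-free condition $\overline{\cZ}\not\subset\cD_j$ that rules out an exceptional twist in $\cN_{\cD'_j/\cX'}$, which is what makes the termwise comparison possible.
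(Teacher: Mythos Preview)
Your argument is correct and reaches the same endgame as the paper --- reducing to a single blow-up with center transverse to the boundary, then using $[\cX]=[\cX']$ (Theorem~\ref{theo.main-th}) together with birational invariance of the naive classes of the normal bundles --- but the reduction is organised differently. The paper first passes to compactifications: it produces a proper birational morphism $\cX'\to\cX$ (not a priori representable), replaces it by the relative coarse moduli space to get a representable morphism, and then applies weak factorization \emph{with boundary} (Remark~\ref{rem.weakfactrep}). This yields blow-ups of centers $\cZ\subset\cX$ having normal crossing with $\cD$, and one must treat both the case $\cZ\subset\cD$ (handled by Proposition~\ref{prop.isomorphisminvariant}) and the transversal case. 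You instead apply weak factorization (Theorem~\ref{thm.weakfactrep}) directly to the open orbifolds $\cU\dashrightarrow\cU'$, and then \emph{for each step} choose a fresh compactification in which the closure of the center is made transverse via embedded resolution (centers of this resolution lie in the boundary, so $\cU_i$ is unchanged and Proposition~\ref{prop.isomorphisminvariant} applies). Because your blow-up center always meets $\cU_i$, its closure is never contained in any $\cD_j$, so the case split disappears and only the transversal case remains; you also avoid the relative coarse moduli space manoeuvre. The trade-off is that your approach requires constructing and adjusting a compactification anew at each step of the factorization, whereas the paper works with a single chain of compactifications throughout.
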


\begin{proof}
We first show that it is enough to treat the case that $\cX'$ admits a proper birational morphism to $\cX$, restricting to a representable proper birational morphism $\cU'\to \cU$.
Proposition \ref{prop.reprproperU} gives rise to an
orbifold $\cW$ with projective morphism to
$\Gamma(\cU\dashrightarrow \cU')$, such that the composite morphisms to $\cU$ and to $\cU'$ are representable and birational.
The projective morphism $\cW\to \Gamma(\cU\dashrightarrow \cU')$ compactifies to some $\cZ\to \Gamma(\cX\dashrightarrow \cX')$, where by resolution of singularities we may suppose $\cZ$ smooth, and by applying embedded resolution of sinigularities to the common pre-image in $\cZ$ of $\cD\subset \cX$ and $\cD'\subset \cX'$ (cf.\ Lemma \ref{lem.openproper}), we may suppose that $\cW$ in $\cZ$ is the complement of an snc divisor.
Now $\cZ\to \cX$ and $\cZ\to \cX'$ are proper birational morphisms of projective orbifolds, which restrict to the representable proper birational morphisms $\cW\to \cU$ and $\cW\to \cU'$.

So we may suppose that we have proper birational
$\cX'\to \cX$, restricting to
representable proper birational $\cU'\to \cU$.
We let $\mathsf{X}'\to \cX$ be the relative coarse moduli space of $\cX'$ over $\cX$ and apply resolution of singularities and
embedded resolution of singularities to obtain
representable proper $\widetilde{\mathsf{X}}'\to \cX$,
restricting to $\cU'\to \cU$, where the complement of $\cU'$ is an snc divisor.
We do the same to $\cX'\times_{\mathsf{X}'}\widetilde{\mathsf{X}}'$ to obtain $\cX''$, also with complement of $\cU'$ an snc divisor.
Proposition \ref{prop.isomorphisminvariant} is applicable to $\cU'$ in
$\cX''$ and in $\widetilde{\mathsf{X}}'$.
The morphism $\widetilde{\mathsf{X}}'\to \cX$ is representable, so by Remark \ref{rem.weakfactrep} it suffices to treat the blow-up of a smooth substack $\cZ$ of $\cX$ that has normal crossing with $\cD$.
The case $\cZ\subset \cD$ is taken care of by Proposition \ref{prop.isomorphisminvariant}.
When $\cZ$ meets $\cD$ transversally we conclude directly by the definition of $[\cU]$ and the birational invariance of the naive class in $\ocBurn_n$.
\end{proof}

\begin{defi}
\label{def.classXclassU}
Let $\cX$ be a projective orbifold of dimension $n$.
The class
\[
[\cX]\in \cBurn_n
\]
is defined by applying
Proposition \ref{prop.orbifolddivisorialification}
to $\cX$, to obtain a divisorial projective orbifold $\cX'$, and setting $[\cX]:=[\cX']$ (Definition \ref{def.classX}).
Let $\cU$ be a quasiprojective orbifold of dimension $n$.
The classes
\[
[\cU], [\cU]^{\mathrm{naive}}\in \ocBurn_n
\]
are defined by taking $\cX$ to be a projective orbifold with snc divisor $\cD$ and $\cX\setminus \cD\cong \cU$ (Remark \ref{rem.compactify}), passing to $\cX'$ as before
(Proposition \ref{prop.orbifolddivisorialification}, applied to $(\cX,\cD)$), and with $\cU':=\cU\times_{\cX}\cX'$, setting $[\cU]:=[\cU']$ and $[\cU]^{\mathrm{naive}}:=[\cU']^{\mathrm{naive}}$ (Definition \ref{def.naiveclassU}).
\end{defi}

The classes in Definition \ref{def.classXclassU} are independent of choices (by Lemma \ref{lem.easybirat} and the birational invariance results in this section) and are birational invariants of projective, respectively, quasiprojective orbifolds.

\section{Specialization}
\label{sect:spec}
Still taking $k$ to be a field of characteristic $0$ with all roots of unity,
in this section we define a specialization homomorphism
$$
\ocBurn_K\to \ocBurn_k,
$$
where $K$ is the fraction field of a complete DVR $\mathfrak o$, with residue field $k$.
We fix a uniformizer $\pi$ of $\mathfrak o$. 

Let $\cX_K$ be a projective orbifold over $K$.
We start by showing the existence of a regular model $\cX_{\mathfrak{o}}$ over $\mathfrak o$,
with simple normal crossing special fiber.
We have $\cX_K\cong [Y_K/GL_N]$ for some smooth quasiprojective $Y_K$ with action of $GL_N$ for some $N$.
By \cite[Thm.\ 3.3]{thomason}, there exists an equivariant embedding $Y_K\to \PP^M_{\mathfrak{o}}$, where the projective space is the projectivization of some representation of $GL_N$ over $\mathfrak{o}$.
We have $Y_K$ contained in the stable locus (in the sense of GIT) for the action of $GL_N$ on $\PP^M_K$.
Applying the blow-up procedure of \cite[Thm.\ 2.11]{ER}, to $[(\PP^M_{\mathfrak{o}})^{ss}/GL_N]$ (where the notation refers to semistable locus, in the sense of GIT over a base \cite[App.\ G to Chap.\ 1]{mumford-git}), we obtain an ambient projective orbifold, containing $\cX_K$, which is the generic fiber of a smooth DM stack over $\mathfrak{o}$ whose coarse moduli space is projective over $\mathfrak{o}$.
The closure of $\cX_K$ is then a DM stack over $\mathfrak{o}$ whose coarse moduli space is projective over $\mathfrak{o}$.
Applying resolution of singularities (cf.\ Remark \ref{rema:sing-res}), we obtain a regular model $\cX_{\mathfrak{o}}$ of $\cX_K$ with snc special fiber.
Moreover, if $\cX_K$ is divisorial, then so is $\cX_{\mathfrak{o}}$.
(Argue as in Remark \ref{rem.compactify}.)
We also recall, by Proposition \ref{prop.brdmdiv}, that if $\cX_K$ is linearizable, then after suitable blow-up it becomes divisorial.

\begin{defi}
\label{def.obv}
The \emph{orbifold Burnside volume} is the homomorphism
\[
\bar\rho_\pi\colon \ocBurn_{n,K}\to \ocBurn_{n,k}
\]
given by 
$$
(\cX_K,\alpha)\mapsto
\sum_{\emptyset\ne I\subseteq \{1,\dots,r\}}
(-1)^{|I|-1}(\omega_I^{-1}(1),\alpha),
$$
where $\cX_K$ is assumed divisorial and $\cX_{\mathfrak{o}}$ is a regular model with snc special fiber $\cD=\cD_1\cup\dots\cup\cD_r$, and for every $\emptyset\ne I\subseteq \{1,\dots,r\}$ the morphism
\[
\omega_I\colon
\cN^\circ_{\cD_I/\cX_{\mathfrak{o}}}\to \G_m
\]
is obtained from
the trivialization of
\[
\bigotimes_{i\in I}\cN^{\otimes d_i}_{\cD_i/\cX_{\mathfrak{o}}}|_{\cD_I},
\]
determined by $\pi$, where $d_i$ is the multiplicity of $\cD_i$ in the special fiber.
\end{defi}

As in \cite[Sect.\ 6]{BnG}, the verification that
Definition \ref{def.obv} yields a well-defined homomorphism is straightforward.
We only require the additional observation, that a root stack operation along a regular divisor in $\cX_{\mathfrak{o}}$, meeting the special fiber transversely, does not change $(\omega_I^{-1}(1),\alpha)$, as an element of $\ocBurn_{n,k}$.
This is clear by relation \textbf{(}$\overline{\mathbf{I}}$\textbf{)}.

\begin{theo}
\label{thm.obvregularmodel}
Let $\cX_K$ be a projective orbifold of dimension $n$, that is divisorial, and let $\cX_{\mathfrak{o}}$ be a regular model with snc special fiber $\cD=\cD_1\cup\dots\cup\cD_r$, and let $\omega_I$ be as above, for $\emptyset\ne I\subseteq \{1,\dots,r\}$.
Then
\[ \bar\rho_\pi([\cX_K])=
\sum_{\emptyset\ne I\subseteq \{1,\dots,r\}}
(-1)^{|I|-1}
\sum_{\cK\subset \cN^\circ_{\cD_I/\cX_{\mathfrak{o}}}}(\cK\cap \omega_I^{-1}(1),\beta_{\cK}(\cN^\circ_{\cD_I/\cX_{\mathfrak{o}}})), \]
where the sum is over stabilizer components of $\cN^\circ_{\cD_I/\cX_{\mathfrak{o}}}$.
\end{theo}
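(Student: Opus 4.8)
The plan is to deduce this from the definition of $\bar\rho_\pi$ combined with the definition of the class $[\cN^\circ_{\cD_I/\cX_{\mathfrak{o}}}]^{\mathrm{naive}}$ and a compatibility between the morphisms $\omega_I$ and passage to stabilizer components. First I would apply the definition of $\bar\rho_\pi$ (Definition \ref{def.obv}) to the generator $[\cX_K]$. Strictly speaking $[\cX_K]$ is a $\bZ$-linear combination of generators $(\cX_K,\alpha)$ coming from Definition \ref{def.classX}, one for each stabilizer component of a divisorial model; so $\bar\rho_\pi([\cX_K])$ is the corresponding combination of the sums $\sum_I(-1)^{|I|-1}(\omega_I^{-1}(1),\alpha)$. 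The point is that, by the way $[\cX_K]$ is defined as a sum over stabilizer components $\cK\subset \cX_K$ weighted by the normal bundle data $\beta_{\cK}(\cX_K)$, and the way this is matched with the decomposition of each $\cN^\circ_{\cD_I/\cX_{\mathfrak{o}}}$ into its own stabilizer components by Lemma \ref{lem.puncturedsumoflb}, we want to argue that the whole expression reorganizes into a single sum over the stabilizer components $\cK$ of $\cN^\circ_{\cD_I/\cX_{\mathfrak{o}}}$, with the fiber over $1$ cut out by $\omega_I$.

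The key step is the following local statement: for a fixed $I$ and a stabilizer component $\cK\subset \cN^\circ_{\cD_I/\cX_{\mathfrak{o}}}$ with generic stabilizer $H$ and associated characters $\beta_\cK(\cN^\circ_{\cD_I/\cX_{\mathfrak{o}}})$, the intersection $\cK\cap\omega_I^{-1}(1)$ is itself (birationally, or after the divisorialification that enters Definition \ref{def.classXclassU}) a projective linearizable DM stack of the correct dimension with generic stabilizer $H$, and that its contribution to $\bar\rho_\pi([\cX_K])$ via the formula $(\omega_I^{-1}(1),\alpha)$, summed appropriately, matches $(\cK\cap\omega_I^{-1}(1),\beta_\cK(\cN^\circ_{\cD_I/\cX_{\mathfrak{o}}}))$. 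Here one uses that $\omega_I$ is $\G_m$-equivariant for the natural scaling action coming from the summands of $\cN^\circ_{\cD_I/\cX_{\mathfrak{o}}}$ (weighted by the multiplicities $d_i$), so $\omega_I^{-1}(1)$ is a divisor transverse to the stratification by stabilizer type, whence its stabilizer components are precisely the $\cK\cap\omega_I^{-1}(1)$, with the same normal-bundle characters $\beta_\cK$. This is the same mechanism as in the proof of \cite[Sect.\ 6]{BnG} and I would invoke that argument, adapted to the stacky setting via Lemma \ref{lem.puncturedsumoflb}.

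Concretely, the plan is: (1) reduce to the case that $\cX_{\mathfrak{o}}$ is divisorial, which we may by Proposition \ref{prop.brdmdiv} and the remark preceding Definition \ref{def.obv}, noting by the discussion after Definition \ref{def.obv} that root operations transverse to the special fiber do not change either side; (2) expand $[\cX_K]$ using Definition \ref{def.classX} as a sum over stabilizer components of $\cX_{\mathfrak{o}}$ restricted to the generic fiber — but more efficiently, observe that $\cN^\circ_{\cD_I/\cX_{\mathfrak{o}}}$ already has a coarse-moduli / orbifold structure whose stabilizer components refine this data, so apply $\bar\rho_\pi$ termwise and then regroup; (3) for each $I$ identify, via the scaling $\G_m$-action and $\G_m$-equivariance of $\omega_I$, the stabilizer components of $\cN^\circ_{\cD_I/\cX_{\mathfrak{o}}}$ with (products of) stabilizer components pulled back from $\cD_I^\circ$ as in Lemma \ref{lem.puncturedsumoflb}, and correspondingly identify their intersections with $\omega_I^{-1}(1)$ and their normal-bundle characters; (4) assemble. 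The main obstacle I expect is step (3): carefully verifying that $\omega_I^{-1}(1)$ is transverse to the stabilizer stratification so that its stabilizer components are exactly the $\cK\cap\omega_I^{-1}(1)$ with unchanged $\beta_\cK$, and handling the bookkeeping when several $d_i>1$ (so that $\omega_I$ involves a nontrivial tensor power) — this requires knowing that taking the fiber over $1$ of the $\G_m$-valued map $\omega_I$ does not interact with the root/gerbe structure in the normal directions. This is exactly where the remark following Definition \ref{def.obv} (invariance of $(\omega_I^{-1}(1),\alpha)$ under transverse root operations, by relation \textbf{(}$\overline{\mathbf{I}}$\textbf{)}) gets used, so once that is in hand the computation is the combinatorial bookkeeping already carried out in \cite{BnG}.
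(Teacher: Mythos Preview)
The paper's own proof is a single sentence: ``This is clear from Definition~\ref{def.obv}.''  In other words, the authors regard the theorem as nothing more than unwinding the definition of $\bar\rho_\pi$ on the element $[\cX_K]\in\ocBurn_{n,K}$, together with the (to them, evident) observation that the stabilizer components of $\omega_I^{-1}(1)$ coincide with the intersections $\cK\cap\omega_I^{-1}(1)$ for stabilizer components $\cK$ of $\cN^\circ_{\cD_I/\cX_{\mathfrak{o}}}$, carrying the same normal characters.  Your proposal is a correct and careful unpacking of exactly this, but you are treating as substantive obstacles what the paper treats as not worth remark.

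In particular, your step~(3)---showing that $\omega_I^{-1}(1)$ is transverse to the stabilizer stratification, so that cutting by $\omega_I^{-1}(1)$ takes stabilizer components to stabilizer components and preserves $\beta_{\cK}$---is precisely the content the paper suppresses.  Your mechanism for it ($\G_m$-equivariance of $\omega_I$, Lemma~\ref{lem.puncturedsumoflb}) is the right one.  Your worry about the case $d_i>1$ is already handled by the remark immediately following Definition~\ref{def.obv} (the transverse root operation leaves $(\omega_I^{-1}(1),\alpha)$ unchanged by \textbf{(}$\overline{\mathbf{I}}$\textbf{)}), as you yourself note.  Where your outline is slightly roundabout is in step~(2): rather than expanding $[\cX_K]$ over stabilizer components of $\cX_K$, choosing separate models for each, and then regrouping, it is more direct to note that applying the formula of Definition~\ref{def.obv} using the single model $\cX_{\mathfrak{o}}$ and expanding the symbols $(\omega_I^{-1}(1),\alpha)$ over their stabilizer components already yields the right-hand side.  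Either route works; the paper simply declines to spell out either one.
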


\begin{proof}
This is clear from Definition \ref{def.obv}.
\end{proof}

We also have the analogue of \cite[Rmk.\ 6.7]{BnG} (compatibility with passage to finite extensions of $K$).

\begin{rema}
\label{rem.obv}
In Definition \ref{def.obv} we may always take $\cX_K$ to be of the form $X_K\times BH$ for some scheme $X_K$.
Let $X_{\mathfrak{o}}$ be a regular model with snc special fiber.
Then $X_{\mathfrak{o}}\times BH$ is a regular model of $X_K\times BH$, and we see that $\bar\rho_\pi$ may be
computed by applying the standard specialization map from \cite{KT}, forming the product with $BH$, and carrying along the characters $\alpha$.
\end{rema}


It is natural to ask whether or not $\bar{\rho}_{\pi}$ admits a refinement to a homomorphism on the level of $\cBurn_n$.
In Section \ref{sec.comparisons} we show that is no such refinement, that is compatible with the refinement homomorphism of $\Burn_n(G)$ (for $n\ge 2$ and finite groups $G$).
Here, we give some examples, illustrating issues that can arise.

\begin{exam}
\label{exa.toric}
Let $C:=\A^1$ and $C_0:=\A^1\setminus \{0\}$.
Given the family of orbifold curves $\PP(1,2)\times C_0$ over $C_0$, we may ask about proper models over $C$.
We illustrate this using
toric DM stacks.
In this theory, a \emph{stacky fan} \cite{BCS} consists of a finitely generated abelian group $N$, a fan $\Sigma$ of cones in $N\otimes \R$, and a collection of ray generators in $N$.
For instance,
\[ \mathbf{\Sigma}_0:=(\Z^2,\Sigma_0,\beta_0),\,\Sigma_0:=\{0,\R_{\ge 0}\times 0,\R_{\le 0}\times 0\},\,\beta_0:=\{(2,0),(0,-1)\}) \]
is a stacky fan, whose associated toric DM stack is
\[ \cX(\mathbf{\Sigma}_0)\cong \PP(1,2)\times C_0. \]
Functoriality \cite[Rmk.\ 4.5]{BCS} associates to a homomorphism of lattices satisfying requirements about images of cones and ray generators, a morphism of toric DM stacks.
With $1_{\Z^2}$ we obtain:
\[ \cX(\mathbf{\Sigma}_0)\to \cX(\mathbf{\Sigma}_1),\qquad
\cX(\mathbf{\Sigma}_0)\to \cX(\mathbf{\Sigma}_2), \]
where $e_1$, $e_2$ denotes the standard basis of $\Z^2$ and:
\begin{gather*}
\mathbf{\Sigma}_1:=(\Z^2,\Sigma_1,\beta_1),\qquad 
\mathbf{\Sigma}_2:=(\Z^2,\Sigma_2,\beta_2), \\
\Sigma_1:=\Sigma_0\cup \{\R_{\ge 0}\langle e_2\rangle,\R_{\ge 0}\langle e_1,e_2\rangle, \R_{\ge 0}\langle -e_1,e_2\rangle\},\,\beta_1:=\beta_0\cup \{e_2\}, \\
\Sigma_2:=\Sigma_0\cup \{\R_{\ge 0}\langle e_1+e_2\rangle,\R_{\ge 0}\langle e_1,e_1+e_2\rangle, \R_{\ge 0}\langle -e_1,e_1+e_2\rangle\}, \\
\beta_2:=\beta_0\cup \{e_1+e_2\}.
\end{gather*}
This way, $\cX(\mathbf{\Sigma}_1)$ and $\cX(\mathbf{\Sigma}_2)$ are proper models over $C$ of $\cX(\mathbf{\Sigma}_0)$:
\begin{equation}
\label{eqn.XS1XS2}
\cX(\mathbf{\Sigma}_1)\dashrightarrow \cX(\mathbf{\Sigma}_2),
\end{equation}
restricting to $1_{\cX(\mathbf{\Sigma}_0)}$ over $C_0$.
We give:
\begin{itemize}
\item a proof that this birational map is not representable,
\item a factorization of this birational map into blow-up and root stack operations.
\end{itemize}

The representability of \eqref{eqn.XS1XS2} is, by Remark \ref{rem.normalizegraph} and equivariant resolution of singularities, equivalent to the torus-equivariant birational equivalence of $\cX(\mathbf{\Sigma}_1)$ and $\cX(\mathbf{\Sigma}_2)$;
a criterion for this due to Schmitt \cite{schmitt} (for proper toric orbifolds, but easily adapted to the present setting), may be tested and seen to fail.
However, we can also give a direct proof of non-representability.
If the birational map
\eqref{eqn.XS1XS2} were representable,
then by Theorem \ref{thm.weakfactrep}, there would exist a factorization as a sequence of blow-ups and their inverses.
As we are considering orbifold surfaces, only blow-ups of points are relevant.
So the restriction of \eqref{eqn.XS1XS2} to any orbifold curve in $\cX(\mathbf{\Sigma}_1)$ would be a morphism.
We obtain a contradiction, by analyzing the restriction of
\eqref{eqn.XS1XS2} to the orbifold curve $\cY_1$ in $\cX(\mathbf{\Sigma}_1)$, corresponding to the ray $\R_{\ge 0}\times 0$.
There is the analogous orbifold curve
$\cY_2$ in $\cX(\mathbf{\Sigma}_2)$.
Using the description of charts of toric DM stacks \cite[Prop.\ 4.3]{BCS}, we determine that the composite
\[
\A^1\times B\Z/2\Z  \cong \cY_1\dashrightarrow \cY_2\cong \A^1\times B\Z/2\Z\stackrel{\mathrm{pr}_2}\to B\Z/2\Z,
\]
restricted to $(\A^1\setminus \{0\})\times B\Z/2\Z$, sends a $\Z/2\Z$-torsor $T\to S$ with invertible function $r$ on $S$ (for any $k$-scheme $S$) to the $\Z/2\Z$-torsor
$T\times_S\widetilde{S}/{\sim}$,
where $\widetilde{S}\to S$ is the $\Z/2\Z$-torsor determined by $r$, and $\sim$ denotes the diagonal $\Z/2\Z$-action.
This morphism does not extend to $\A^1\times B\Z/2\Z$.

The first step in a chain of stacky blow-ups relating $\cX(\mathbf{\Sigma}_1)$ and $\cX(\mathbf{\Sigma}_2)$ (cf.\ Theorem \ref{thm.weakfact}) is the blow-up of the point on $\cY_1$ over $0\in \A^1$.
The corresponding operation on the stacky fan is the \emph{stacky star subdivision} \cite[\S 4]{edidinmore}, which adds the ray $\R_{\ge 0}\langle (1,2)\rangle$ and ray generator $(1,2)$, and subdivides the maximal cone containing this ray.
The diagram
\[
\xymatrix@R=8pt@C=8pt{
{}&{}&{}&{}&{}&{}&{}&{}&{}\\
{}&{}&{}&{}&{}&{}&{}&{}&{}\\
{}&{}&{}&{}&\bullet\ar[l]\ar[u]\ar[ur]\ar[uurr]\ar@<1pt>[urr]\ar[uurrrr]\ar[r]\ar[rr]\ar@<-4pt>[ddl]_{\text{root}}&{}\ar@<-3pt>[ddd]^{\text{blow-up}}&{}&{}&{}\\
{}&{}&{}&{}&{}&{}&{}&{}&{}\\
{}&{}&{}&{}&{}&{}&{}&{}&{}\\
{}&{}&{}&{}&{}&{}&{}&{}&{}\\
{}&\bullet\ar[l]\ar[u]\ar[ur]\ar[uurr]\ar[urr]\ar[r]\ar[rr]\ar@<3pt>[dd]_{\text{blow-up}}&{}&{}&{}&\bullet\ar[l]\ar[u]\ar[ur]\ar[uurr]\ar[r]\ar[rr]\ar@<2pt>[ddr]^{\text{root}}&{}&{}&{}\\
{}&{}&{}&{}&{}&{}&{}&{}&{}\\
{}&{}&{}&{}&{}&{}&{}&{}&{}\\
{}&\bullet\ar[l]\ar[u]\ar[urr]\ar[r]\ar[rr]\ar@<3pt>[dd]_{\text{blow-up}}&{}&{}&{}&{}&\bullet\ar[l]\ar[u]\ar[ur]\ar[r]\ar[rr]\ar@<3pt>[dd]^{\text{blow-up}}&{}&{}\\
{}&{}&{}&{}&{}&{}&{}&{}&{}\\
{}&{}&{}&{}&{}&{}&{}&{}&{}\\
{}&\bullet\ar[l]\ar[u]\ar[r]\ar[rr]&{}&{}&{}&{}&\bullet\ar[l]\ar[ur]\ar[r]\ar[rr]&{}&{}
}
\]
shows all of the steps in a chain of stacky blow-ups relating $\cX(\mathbf{\Sigma}_1)$ and $\cX(\mathbf{\Sigma}_2)$.
Blow-ups are represented by stacky star subdivision, root operations by the doubling of an entry in the $\beta$-component of the stacky fan (shown graphically by a double-arrow).
\end{exam}

Although the rational map in Example \ref{exa.toric}, extending the identity morphism over $C_0$, between the two proper models over $C$ is not representable (so ordinary blow-ups alone do not suffice to relate the two models), the fiber over $0\in C$ in each case is $\PP(1,2)$.
(In fact, $\cX(\mathbf{\Sigma}_1)\cong\cX(\mathbf{\Sigma}_2)$, although there is no isomorphism that restricts to $1_{\cX(\mathbf{\Sigma}_0)}$.)
In the next example, we will see proper models whose special fibers differ in an essential way.

\begin{exam}
\label{exa.ellipticcurve}
Keeping $C$ and $C_0$ as above, we let $E$ be an elliptic curve and $p$ a $k$-point of $E$, and we consider the family of one-dimensional DM stacks $C_0\times E\times B(\Z/2\Z)$ over $C_0$.
Of course, this admits the proper model
\[ \cX_1:=C\times E\times B\Z/2\Z \]
over $C$.

We use the square root of a line bundle, recalled in \eqref{eqn.rootlb}, to construct another proper model over $C$.
In the blow-up
\[ B:=B\ell_{(0,p)}C\times E, \]
we denote the exceptional divisor by $D$.
We let
\[ \cX_2:=\sqrt{\cO_B(D)/B}. \]
Restricted to any Zariski open subset $U$ of $B$, where the divisor $D$ is principal, this stack is isomorphic to $U\times B(\Z/2\Z)$.
So its local geometry is like a product of $B(\Z/2\Z)$ with a semistable family of curves; in particular, $\cX_2$ is smooth.
The line bundle $\cO(D)$, restricted to the copy of $E$ in $B$ over $0$, is $\cO_E(p)$, hence in $\cX_2$ over $0$ we find a copy of
\[ \sqrt{\cO_E(p)/E}, \]
which is a nontrivial $\Z/2\Z$-gerbe over $E$, i.e., is not isomorphic to the product $E\times B(\Z/2\Z)$.
The special fibers of models $\cX_1$ and $\cX_2$ cannot be related in an evident manner (cf.\ Example \ref{exa.orbifoldcurves}).
\end{exam}

\begin{rema}
\label{rem.ellipticcurve}
It is straightforward to modify Example \ref{exa.ellipticcurve} to exhibit families of projective \emph{orbifolds} over $C$, identical over $C_0$, with contrasting special fibers.
In the root construction \eqref{eqn.rootlb}, the right-hand factor supplies a line bundle, whose $r$th power is isomorphic to the pull-back of $L$.
We denote this line bundle on $\cX_2$ by $M$.
Now
\[ \cY_1:=[C\times E\times \PP^1/(\Z/2\Z)]
\qquad\text{and}\qquad
\cY_2:=\PP(\cO_{\cX_2}\oplus M)
\]
(on the left, $\Z/2\Z$ acts nontrivially on $\PP^1$) are proper models over $C$ of $C_0\times E\times [\PP^1/(\Z/2\Z)]$ over $C_0$.
The special fiber of $\cY_1$ is
$E\times [\PP^1/(\Z/2\Z)]$.
The special fiber of $\cY_2$ has a component, isomorphic to
\[ \PP( \cO_{\sqrt{\cO_E(p)/E}}\oplus M|_{\sqrt{\cO_E(p)/E}}), \]
which we claim is not birationally equivalent to $E\times [\PP^1/(\Z/2\Z)]$.
Indeed, the direct argument for non-representability from the previous example reduces this to the fact that $\sqrt{\cO_E(p)/E}$ is not isomorphic to $E\times B\Z/2\Z$.
\end{rema}

\section{Comparisons}
\label{sec.comparisons}
We continue to work over a field $k$ of characteristic $0$, containing all roots of unity.

\subsection*{Burnside groups of varieties and orbifolds}

In \cite{Bbar}, we defined another abelian group, receiving birational invariants of orbifolds of dimension $n$:
\[
\overline{\mathrm{Burn}}_n.
\]
Generators are pairs
\[ (K,\alpha), \]
consisting of a field $K$, finitely generated and of transcendence degree $d\le n$ over $k$, and $\alpha\in \overline{\mathcal{B}}_{n-d}$, a group which encodes the normal bundle representation at the generic point of a locus with given stabilizer group; see \cite[Defn.\ 3.1]{Bbar}.
The pairs are subject to an equivalence relation, which identifies a pair $(K,\alpha)$ such that $K\cong K_0(t)$, with the pair $(K_0,\alpha')$, where $\alpha'$ is obtained from $\alpha$ by addition of a trivial character.
We denote by
\[ \mathrm{cl}(\cU)\in \oBurn_n \]
the class of a
quasiprojective $n$-dimensional orbifold $\cU$, which is defined and shown to be a birational invariant in \cite[Thm.\ 4.1]{Bbar}.
When $\cU$ is divisorial, $\mathrm{cl}(\cU)$ is obtained by stratifying $\cU$ by isomorphism class of stabilizer, assigning to each stratum a class in $\oBurn_n$, and summing.

The module $\overline{\mathcal{B}}$ admits split monomorphisms from modules $\overline{\mathcal{B}}^{[e]}$, for positive integers $e$, corresponding to $e$-torsion stabilizer groups.
The splitting, for $e=1$, determines $\oBurn_n\to\Burn_n$, sending $(K,\alpha)$ as above to the class in $\Burn_n$ of $K(t_1,\dots,t_{n-d})$.
There is also the free abelian group on birational equivalence classes of projective $n$-dimensional orbifolds
$$
\Z[\mathcal{B}\mathit{ir}_n],
$$
where the invariants of birational maps of orbifolds of \cite{KT-map} take values.

\begin{defi}
\label{defn.Burnmaps}
The \emph{comparison homomorphisms}
$$
\Z[\mathcal{B}\mathit{ir}_n] \to \cBurn_n \to 
\ocBurn_n\stackrel{\bar\kappa}\to \oBurn_n\to \Burn_n,
$$
are defined as follows.
\begin{itemize}
\item The leftmost map sends the class of the $n$-dimensional projective orbifold $\cX$ to $[\cX]\in \cBurn_n$, which is well-defined by Theorem \ref{theo.main-th}.
\item The next map is the canonical homomorphism (cf.\ Section \ref{sect:burninv}).
\item The map $\bar\kappa$ given by
\[ (\cK,\alpha)\mapsto \mathrm{cl}(\cK_{\mathrm{cms}}\times [\A^{n-d}/H]), \]
where $\cK_{\mathrm{cms}}$ denotes the coarse moduli space of $\cK$ and $H$ acts linearly on the affine space by the characters in $\alpha$.
\item The final map is given by
$(K,\alpha)\mapsto [K(t_1,\dots,t_{n-d})]$; see above.
\end{itemize}
We denote by
\[ \kappa\colon \cBurn_n\to \oBurn_n \]
the composite of $\bar\kappa$ with the canonical homomorphism.
\end{defi}

\begin{rema}
\label{rem.explicitHquotient}
The map $\bar\kappa$ may be described more explicitly, by applying the following formula, given in \cite[Exa.\ 7.1]{BnG}.
Let $X$ be a projective variety of dimension $d$, and $H$ a finite abelian group with faithful representation of dimension $n-d$, given by characters $a_1$, $\dots$, $a_{n-d}$.
Then
\[ \mathrm{cl}(X\times [\A^{n-d}/H])=\sum_{I\subseteq \{1,\dots,n-d\}}(-1)^{|I|}(k(X),(\bar a_1,\dots,\bar a_{n-d})), \]
where $\bar a_1$, $\dots$, $\bar a_{n-d}$ denote the restrictions of the characters to the intersection of $\ker(a_i)$ for $i\in I$.
\end{rema}

\begin{prop}
\label{prop.kappa}
The map $\bar\kappa$ in Definition \ref{defn.Burnmaps} respects the relations in $\ocBurn_n$, thus is a well-defined homomorphism.
\end{prop}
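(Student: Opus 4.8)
The strategy is to check that the rule $(\cK,\alpha)\mapsto \mathrm{cl}(\cK_{\mathrm{cms}}\times[\A^{n-d}/H])$ is compatible with each of the defining relations \textbf{(O)}, \textbf{(}$\overline{\mathbf{I}}$\textbf{)}, \textbf{(V)}, \textbf{(B)} of $\ocBurn_n$. First one checks that the target is legitimate: as the entries of $\alpha$ generate $H^\vee$, the $H$-action on $\A^{n-d}$ is faithful and $[\A^{n-d}/H]$ is divisorial (via its coordinate line bundles), hence so is $\cK_{\mathrm{cms}}\times[\A^{n-d}/H]$ by Remark \ref{rema.divy}; and the latter is an $n$-dimensional quasi-projective orbifold ($\cK_{\mathrm{cms}}$ being a projective variety), so its class in $\oBurn_n$ is defined. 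Relation \textbf{(O)} is immediate, since reordering the entries of $\alpha$ only replaces $[\A^{n-d}/H]$ by an isomorphic quotient stack. For \textbf{(}$\overline{\mathbf{I}}$\textbf{)}, a birational map $\cK\dashrightarrow\cK'$ transforming $\alpha$ to $\alpha'$ is an isomorphism of dense open substacks; it identifies $H$ with $H'$ compatibly with $\alpha$ and $\alpha'$ and induces a birational map $\cK_{\mathrm{cms}}\dashrightarrow\cK'_{\mathrm{cms}}$ of coarse moduli spaces, so $\cK_{\mathrm{cms}}\times[\A^{n-d}/H]$ and $\cK'_{\mathrm{cms}}\times[\A^{n-d}/H']$ are birationally equivalent quasi-projective orbifolds, and equality of their classes follows from the birational invariance of $\mathrm{cl}$, \cite[Thm.\ 4.1]{Bbar}.

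The content lies in \textbf{(V)} and \textbf{(B)}. The plan is to use the explicit formula of Remark \ref{rem.explicitHquotient}, which presents $\bar\kappa(\cK,\alpha)$ as an alternating sum over subsets $I\subseteq\{1,\dots,n-d\}$ of symbols $(k(\cK_{\mathrm{cms}}),\beta_I)$ of $\oBurn_n$, where $\beta_I$ is the tuple of restrictions of $a_1,\dots,a_{n-d}$ to $H_I=\bigcap_{i\in I}\ker a_i$; for \textbf{(B)} the stack $\cY$ from construction \textbf{(A)} has coarse moduli space a $\PP^1$-bundle over $\cK_{\mathrm{cms}}$, so $k(\cY_{\mathrm{cms}})\cong k(\cK_{\mathrm{cms}})(t)$ and the relation of $\oBurn_n$ trading a transcendental for a trivial character brings the contribution of $\Theta_2$ likewise into the form of an alternating sum of symbols $(k(\cK_{\mathrm{cms}}),-)$. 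Each of \textbf{(V)} and \textbf{(B)} then reduces to a purely combinatorial identity among such sums. For \textbf{(V)} ($a_1+a_2=0$): grouping the subsets $I$ according to $I\cap\{1,2\}$ and using the relation of $\ocB$ which trades a pair of opposite characters for two trivial ones together with restriction of the remaining characters to their common kernel (cf.\ \cite[\S3]{Bbar}), all terms cancel; geometrically, this reflects that the $H$-invariant coordinate $x_1x_2$ produces a free affine-line factor in a dense open of $\cK_{\mathrm{cms}}\times[\A^{n-d}/H]$. For \textbf{(B)}: birational invariance of $\mathrm{cl}$ permits replacing $\cK_{\mathrm{cms}}\times[\A^{n-d}/H]$ by the blow-up of the codimension-two substack $\cK_{\mathrm{cms}}\times\{x_1=x_2=0\}$, and stratifying this blow-up by stabilizer type yields exactly the right-hand side of \textbf{(B)}: the two affine charts of the blow-up contribute the terms with character data $\alpha_1$ and $\alpha_2$, and the stabilizer strata supported on the exceptional divisor — governed by the projectivised normal bundle along the $a$-eigendirection, with residual stabilizer $\overline H=\ker(a)$ — contribute $\Theta_2$ from construction \textbf{(A)}. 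The degenerate cases $a=0$ and $a_i\in\langle a\rangle$ are precisely those in which a pertinent character restricts trivially, so the corresponding summand vanishes in $\oBurn_n$, matching the case distinctions in \textbf{(B)}.

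These verifications are the same computations — performed for the quotient stacks $X\times[\A^m/H]$ rather than for $G$-varieties with residual action — that underlie the well-definedness of the comparison map $\Burn_n(G)\to\oBurn_n$ of \cite{BnG} and the derivation of Proposition \ref{prop.goodrelations}(ii); they follow \cite[Prop.\ 4.7]{BnG}, \cite[Exa.\ 7.1]{BnG} and the blow-up analysis in the proof of \cite[Thm.\ 4.1]{Bbar}, the relations \textbf{(O)}, \textbf{(V)}, \textbf{(B)} making no reference to an ambient group. The one point requiring genuine care — the step I expect to be the main obstacle — is the bookkeeping for \textbf{(B)}: matching the exceptional-divisor strata of the blow-up of $\cK_{\mathrm{cms}}\times\{x_1=x_2=0\}$ with the coarse moduli space and restricted characters produced by construction \textbf{(A)}, and tracking which characters become trivial so that exactly the summands prescribed by \textbf{(B)} survive.
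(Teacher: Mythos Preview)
Your treatment of \textbf{(O)} and \textbf{(}$\overline{\mathbf{I}}$\textbf{)} is fine and matches the paper. The gap is in \textbf{(B)} (and, derivatively, \textbf{(V)}).

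Your geometric plan for \textbf{(B)} --- blow up $\cK_{\mathrm{cms}}\times\{x_1=x_2=0\}$ and ``read off'' the right-hand side from the stabilizer stratification of the blow-up --- does not go through as stated. After applying $\bar\kappa$, the right-hand side of \textbf{(B)} is a \emph{sum of three separate classes}
\[
\mathrm{cl}\bigl(\cK_{\mathrm{cms}}\times[\A^{n-d}/H]_{\alpha_1}\bigr)+\mathrm{cl}\bigl(\cK_{\mathrm{cms}}\times[\A^{n-d}/H]_{\alpha_2}\bigr)+\mathrm{cl}\bigl(\cY_{\mathrm{cms}}\times[\A^{n-d-1}/\overline{H}]\bigr),
\]
each itself an alternating sum via Remark~\ref{rem.explicitHquotient}. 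Birational invariance only tells you $\mathrm{cl}(\text{blow-up})=\mathrm{cl}(\text{original})$; there is no scissor relation in $\oBurn_n$ that splits $\mathrm{cl}(\text{blow-up})$ into this sum of three. The heuristic ``the two charts contribute $\Theta_1$'' fails because the charts are not disjoint pieces of a decomposition --- and indeed $[\A^{n-d}/H]_{\alpha}$ and $[\A^{n-d}/H]_{\alpha_1}$ are generally \emph{not} birationally equivalent in the strong (representable, proper) sense, so their $\mathrm{cl}$-values need not agree. For instance, with $H=\Z/3\Z$ and $\alpha=(1,2)$, $\alpha_1=(1,1)$: blowing up the origin in the first gives two isolated $\Z/3$-points on the exceptional curve, while in the second the whole exceptional curve carries $\Z/3$-stabilizer. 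You acknowledge the bookkeeping as the obstacle, but the point is that it cannot be carried out purely at the level of strata; one has to expand both sides through Remark~\ref{rem.explicitHquotient} and match terms.

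This is exactly what the paper does, but with a streamlining device you do not use: it adopts the convention that a symbol with a trivial character vanishes, so that \textbf{(B)} takes the uniform shape \eqref{eqn.relationB} and \textbf{(V)} becomes a special case (apply \eqref{eqn.relationB} with $a_2=0$). Then both sides are expanded, the resulting four sums $B,C,D,E$ are split according to $I\cap\{1,2\}$, most pieces cancel in pairs, and the residual four-term expression $F_{I''}$ is killed directly by the $j=2$ defining relation of $\overline{\mathcal B}$ from \cite[Defn.~3.1]{Bbar}. Your separate treatment of \textbf{(V)} also leans on a ``relation of $\overline{\mathcal B}$ which trades a pair of opposite characters for two trivial ones'': this is not a \emph{defining} relation of $\overline{\mathcal B}$, and deriving it would itself require the kind of manipulation the paper performs. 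The paper's trivial-character convention is the missing idea that makes both checks fall out of the same short computation.
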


\begin{proof}
The map $\bar\kappa$ clearly respects relations $\textbf{(O)}$ and \textbf{(}$\overline{\mathbf{I}}$\textbf{)}.
In fact, modulo relation \textbf{(}$\overline{\mathbf{I}}$\textbf{)} every symbol in $\ocBurn_n$ is of the form $(X\times BH,\alpha)$ for an irreducible projective scheme $X$.
If we write $\alpha$ as a pair $(A,S)$ with $A=H^\vee$ and $S$ a sequence of nontrivial characters generating $A$ of length equal to $n-\dim(X)$, then we may omit the $BH$-factor and record only the birational type of $X$:
\begin{equation}
\label{eqn.newsymbol}
(k(X),A,S)\in \ocBurn_n.
\end{equation}

To simplify the checking of the remaining relations we adopt the convention, that we allow the trivial character to occur in $S$, but additionally impose the relation $(k(X),A,S)=0$ if $S$ contains the trivial character.
With this convention,
relation \textbf{(B)} takes the simple form
\begin{align}
\begin{split}
\label{eqn.relationB}
(&k(X),A,(a_1,a_2,\dots)) \\
&=(k(X),A,(a_1,a_2-a_1,\dots))+(k(X),A,(a_1-a_2,a_2,\dots))\\ &\qquad\qquad\qquad\qquad+(k(X\times \bP^1),A/\langle a_1-a_2\rangle,(\bar a_2,\dots)),
\end{split}
\end{align}
and relation \textbf{(V)} is a special case of \textbf{(B)}.

The value of $\bar\kappa$ on a symbol \eqref{eqn.newsymbol} is given by the formula in Remark \ref{rem.explicitHquotient}; we write $S=(a_1,\dots,a_m)$, with $m=n-\dim(X)$, then
on the right-hand side we have $\bar a_1$, $\dots$, $\bar a_m\in A/\langle a_i\rangle_{i\in I}$.
Due to the sign $(-1)^{|I|}$, this vanishes whenever $S$ contains the trivial character.
Consequently, to prove the proposition, it suffices to check that $\bar\kappa$ respects the relation \eqref{eqn.relationB}.
That is, we need to prove the vanishing in $\oBurn_n$ of
\[ B+C+D+E, \]
where
\begin{align*}
B&=\sum_{I\subseteq \{1,\dots,m\}}(-1)^{|I|}(k(X),A/\langle a_i\rangle_{i\in I},(a_1,\dots,a_m)), \\
C&=-\sum_{I\subseteq \{1,\dots,m\}}(\text{analogous, with characters $(a_1,a_2-a_1,\dots,a_m)$}), \\
D&=-\sum_{I\subseteq \{1,\dots,m\}}(\text{analogous, with characters $(a_1-a_2,a_2,\dots,a_m)$}), \\
E&=-\sum_{I'\subseteq \{2,\dots,m\}}(-1)^{|I'|}(k(X),A/\langle a_1-a_2,a_i\rangle_{i\in I'},(0,a_2,\dots,a_m)).
\end{align*}
Here, the $a_i$ on the right-hand side are understood to denote classes in the respective quotients of $A$.
We have used the relation from \cite[Defn.\ 3.3]{Bbar} to express all of the symbols in terms of a common function field.

Let us decompose $B$ according to $I\cap \{1,2\}$ as $B_\emptyset+B_1+B_2+B_{12}$, and analogously decompose $C$, $D$, and $E=E_\emptyset+E_2$.
Then we have
\[ B_1+C_1=B_2+D_2=C_2+E_\emptyset=B_{12}+C_{12}=D_{12}+E_2=0.
\]
So $B+C+D+E$ is equal to $B_\emptyset+C_\emptyset+D_\emptyset+D_1$, which is
\[
\sum_{I''\subseteq\{3,\dots,m\}}(-1)^{|I''|}F_{I''},
\]
where
\begin{align*}
F_{I''}&=(k(X),A/\langle a_i\rangle_{i\in I''},(a_1,\dots,a_m)) \\
&\qquad-(k(X),A/\langle a_i\rangle_{i\in I''},(a_1,a_2-a_1\dots,a_m))\\
&\qquad-(k(X),A/\langle a_i\rangle_{i\in I''},(a_1-a_2,a_2,\dots,a_m))\\
&\qquad+(k(X),A/\langle a_1-a_2,a_i\rangle_{i\in I''},(0,a_2,\dots,a_m).
\end{align*}
We have $F_{I''}=0$ in $\oBurn_n$ by the final relation of \cite[Defn.\ 3.1]{Bbar} (applied with $j=2$).
\end{proof}

 
\begin{prop}
\label{prop.ZBirtoBurn}
The homomorphism
$$
\Z[\mathcal{B}\mathit{ir}_n]\to \cBurn_n,
$$
sending the birational equivalence class of $\cX$ to $[\cX]\in \cBurn_n$, is injective. 
\end{prop}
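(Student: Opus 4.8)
The plan is to construct a left inverse to the homomorphism $\Z[\mathcal{B}\mathit{ir}_n]\to \cBurn_n$. Since $\cBurn_n$ is defined by generators and relations, it suffices to produce a well-defined homomorphism $\lambda\colon \cBurn_n\to \Z[\mathcal{B}\mathit{ir}_n]$ such that $\lambda([\cX])$ equals the class of $\cX$ (the ``leading term'') plus a correction built from lower-dimensional strata, and then argue by induction on $n$ that the composite is the identity. Concretely, I would send a generator $(\cK,\alpha)$ of $\cBurn_n$ to the birational equivalence class of $\bP(\cO_{\cK}\oplus L_1\oplus\dots\oplus L_{n-d})$, where the $L_i$ are line bundles on $\cK$ inducing the characters $a_i$, as in the action construction \textbf{(A)} and Definition \ref{def.action}; by Remark \ref{rem.action} this is well-defined up to birational equivalence and depends only on the birational class of $\cK$, so at least $\lambda$ is defined on generators, landing in $\Z[\mathcal{B}\mathit{ir}_n]$.

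The heart of the matter is checking that $\lambda$ respects relations \textbf{(O)}, \textbf{(I)}, \textbf{(V)}, and \textbf{(B)}. Relation \textbf{(O)} is immediate, since reordering the $L_i$ does not change the projective bundle. Relation \textbf{(I)} follows from Remark \ref{rem.action} together with the fact that a birational equivalence $\cK\sim \cK'$ transforming $\alpha$ to $\alpha'$ induces a birational equivalence of the associated projective bundles (using Lemma \ref{lem.easybirat} or Example \ref{exa.reproverZ}). Relation \textbf{(V)}, where $a_1+a_2=0$, should follow because $\bP(\cO\oplus L_1\oplus L_2\oplus\cdots)$ with $L_2\cong L_1^{-1}$ has a dense open substack on which the stabilizer generic behavior trivializes the relevant gerbe factor; the point is that the class of this stack in $\Z[\mathcal{B}\mathit{ir}_n]$ must be declared $0$, which suggests that $\lambda$ actually lands not in $\Z[\mathcal{B}\mathit{ir}_n]$ naively but that I must be more careful: the correct target of $\lambda$ on $(\cK,\alpha)$ is not a single basis element but a $\Z$-linear combination, and \textbf{(V)}, \textbf{(B)} should be matched by expanding $[\bP(\cdots)]$ via its own stabilizer stratification, i.e.\ via the formula \eqref{eqn.sum} applied to the (divisorialified) projective bundle. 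This is the main obstacle: I need to verify that the stabilizer-component decomposition of the projective bundle $\bP(\cO_{\cK}\oplus L_1\oplus\dots\oplus L_{n-d})$ reproduces exactly the right-hand sides of \textbf{(B)} (this is essentially the computation already carried out in the proof of Theorem \ref{theo.main-th}, applied in reverse), so that the assignment $(\cK,\alpha)\mapsto [\bP(\cO_{\cK}\oplus\bigoplus L_i)]\in\cBurn_n$ — not in $\Z[\mathcal{B}\mathit{ir}_n]$ — agrees with $(\cK,\alpha)$, and then invoke the already-proved birational invariance $[\cX]=[\cX']$ of Theorem \ref{theo.main-th} to land in $\Z[\mathcal{B}\mathit{ir}_n]$.

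Rethinking along those lines, the cleanest route is: define $\lambda$ on a generator $(\cK,\alpha)$ with $\cK$ of dimension $d$ by sending it to the class in $\Z[\mathcal{B}\mathit{ir}_n]$ of a divisorialification of $\bP(\cO_{\cK}\oplus L_1\oplus\dots\oplus L_{n-d})$ minus the classes of all stabilizer components of that projective bundle that are \emph{not} the generic-stabilizer component — organized so that $\lambda$ is triangular with respect to a filtration of $\cBurn_n$ by ``depth'' of the stabilizer (codimension of the stabilizer locus, or equivalently the number of nonzero characters restricting trivially to deeper subgroups). Then $\lambda\circ(\Z[\mathcal{B}\mathit{ir}_n]\to\cBurn_n)$ is upper-triangular with identity diagonal with respect to this filtration, hence is an isomorphism, which forces injectivity of $\Z[\mathcal{B}\mathit{ir}_n]\to\cBurn_n$. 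For this I would first establish that $\cBurn_n$ is generated by the symbols $(\cK,\alpha)$ with $\cK$ an honest variety (i.e.\ $[\cX]$ for $\cX$ a projective orbifold) together with ``lower'' corrections — which is exactly the content of relation \textbf{(B)} and Proposition \ref{prop.goodrelations} run as a rewriting system — and then note that the surjectivity onto the associated graded, combined with the well-definedness of $\lambda$ verified relation-by-relation as above, gives the result. The genuinely delicate step is the bookkeeping in the well-definedness of $\lambda$ on relation \textbf{(B)}: one must match the $\Theta_1$ term (two symbols over the same $\cK$, with shifted characters) and the $\Theta_2$ term (the action-construction stack $\cY$ over $\cK$) with the stabilizer strata of the blow-up of $\bP(\cO_{\cK}\oplus\bigoplus L_i)$ along the locus where two of the summands collide, precisely the computation in the proof of Theorem \ref{theo.main-th}. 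I expect no new geometric input beyond that proof; the work is combinatorial consistency, and once it is in place, injectivity follows formally.
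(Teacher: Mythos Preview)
You are working much too hard, and in the process your definition of $\lambda$ keeps shifting and is never actually pinned down; the ``triangular with respect to depth'' argument is not carried out, and the crucial well-definedness check for relation \textbf{(B)} is left as ``combinatorial consistency'' without being verified. As written this is a sketch with a genuine gap.

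More to the point, you have missed the trivial observation that makes the proof one line. Look at the generators $(\cK,\alpha)$ with $\dim\cK=n$: these have $\alpha=()$, the empty sequence. Relation \textbf{(O)} is vacuous on an empty sequence; relations \textbf{(V)} and \textbf{(B)} both require $n-d\ge 2$, so they never involve a generator with $d=n$; only \textbf{(I)} applies, and on pairs $(\cK,())$ it is exactly birational equivalence of $n$-dimensional projective orbifolds. Hence the span of the dimension-$n$ generators modulo \textbf{(I)} is a \emph{direct summand} of $\cBurn_n$, canonically identified with $\Z[\mathcal{B}\mathit{ir}_n]$. Projection onto this summand kills all $(\cK,\alpha)$ with $\dim\cK<n$. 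Now in the formula \eqref{eqn.sum} for $[\cX]$ (with $\cX$ a divisorial projective orbifold), the unique stabilizer component of dimension $n$ is $\cX$ itself (generic stabilizer trivial), contributing $(\cX,())$; every other summand has $\dim\cK<n$. So under the projection, $[\cX]\mapsto (\cX,())\mapsto$ the birational class of $\cX$. This is a left inverse, hence the map is injective. That is the paper's entire proof.
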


\begin{proof}
The classes $(\cK,\alpha)$ with $\dim(\cK)=n$, modulo relation \textbf{(I)}, form a direct summand of $\cBurn_n$, isomorphic to $\Z[\mathcal{B}\mathit{ir}_n]$.
\end{proof}

We remark that the conclusion of Proposition \ref{prop.ZBirtoBurn} is no longer valid if we further compose with the canonical homomorphism to $\ocBurn_n$.
For instance, orbifold curves, which have $\PP^1$ as coarse moduli space and four points with stabilizer $\Z/2\Z$,
have varying birational type, since the birational type in this case is the same as the isomorphism type (Example \ref{exa.orbifoldcurves}).
But their classes all become equal modulo relation \textbf{(}$\overline{\mathbf{I}}$\textbf{)}.

\begin{lemm}
\label{lem.constantstabilizer}
Let $\cU$ be a quasiprojective smooth DM stack, with line bundles $L_1$, $\dots$, $L_r$,
such that $\dim(\cU)+r=n$.
Assume that $\cU$ has constant stabilizer group $H$, and the line bundles induce a faithful representation of $H$ with associated characters $\alpha=(a_1,\dots,a_r)$.
Then in $\oBurn_n$ we have
\[ \bar\kappa([L_1\oplus\dots\oplus L_r]^{\mathrm{naive}})=(k(\cU),\alpha). \]
\end{lemm}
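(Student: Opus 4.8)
The plan is to reduce to a concrete model, decompose the total space into its stabilizer strata, and then compute $\bar\kappa$ term by term, the bulk of the work being a combinatorial cancellation.

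First I would reduce to the case where $\cU = U_0 \times BH$ for a smooth quasi-projective variety $U_0$ of dimension $n-r$ and each $L_i$ is the line bundle pulled back from the character $a_i$ of the factor $BH$, so that the total space $\cV := L_1\oplus\dots\oplus L_r$ is identified with $U_0\times[\A^r/H]$, $H$ acting on $\A^r$ diagonally through $a_1,\dots,a_r$. This rests on Proposition~\ref{prop.divisorial} (a divisorial stack with constant stabilizer $H$ has a dense open of the shape $U_0\times BH$), Lemma~\ref{lem.linearizable} (two line bundles inducing the same character of $H$ agree on a dense open), and Zariski-local triviality of line bundles on $U_0$. The reduction changes neither side: passing to a dense open that meets every stabilizer component leaves $[\,\cdot\,]^{\mathrm{naive}}$ unchanged by Lemma~\ref{lem.naiveU}(i), and $(k(\cU),\alpha)$ depends only on $k(\cU)=k(U_0)$.

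Next I would enumerate the stabilizer components of $\cV$. For $J\subseteq\{1,\dots,r\}$ put $H_J := \bigcap_{j\in J}\ker(a_j)$ and $\cK_J := \bigoplus_{j\in J}L_j = U_0\times[\A^J/H]\subseteq\cV$; call $J$ \emph{saturated} if $J = \{\,l : a_l \in \langle a_j : j\in J\rangle\,\}$. One checks that the stabilizer components of $\cV$ are exactly the $\cK_M$ for $M$ saturated: $\cK_M$ has generic stabilizer $H_M$, its normal bundle $\cN_{\cK_M/\cV} = \bigoplus_{i\notin M}L_i|_{\cK_M}$ contributes the characters $\beta_M := (a_i|_{H_M})_{i\notin M}$, which for saturated $M$ are nontrivial and generate $H_M^\vee$, and $(\cK_M)_{\mathrm{cms}}$ has a dense open of the form $U_0\times T$ with $T$ a torus of dimension $|M|$, hence $k((\cK_M)_{\mathrm{cms}})\cong k(U_0)(t_1,\dots,t_{|M|})$. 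Thus $[\cV]^{\mathrm{naive}} = \sum_{M\ \mathrm{sat}}(\cK_M,\beta_M)$.

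Then I would apply $\bar\kappa$ termwise. By the explicit formula of Remark~\ref{rem.explicitHquotient}, $\bar\kappa(\cK_M,\beta_M) = \mathrm{cl}\big((\cK_M)_{\mathrm{cms}}\times[\A^{r-|M|}/H_M]\big) = \sum_{K\subseteq\{1,\dots,r\}\setminus M}(-1)^{|K|}\big(k(U_0)(t_1,\dots,t_{|M|}),\,(a_i|_{H_{M\cup K}})_{i\notin M}\big)$. Using $H_{M\cup K}=H_{(M\cup K)^{*}}$ and the fact that the nontrivial part of the representation $(a_i|_{H_{M\cup K}})_{i\notin M}$ is precisely $\beta_{(M\cup K)^{*}}$, together with the relation in $\oBurn_n$ that lets one absorb a trivial character by adjoining a transcendental to the function field, each inner term becomes $\xi_{(M\cup K)^{*}}$, where I set $\xi_{N} := \big(k(U_0)(t_1,\dots,t_{|N|}),\beta_{N}\big)$ for saturated $N$. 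This gives
\[ \bar\kappa\big([\cV]^{\mathrm{naive}}\big) = \sum_{M\ \mathrm{sat}}\ \sum_{K\subseteq\{1,\dots,r\}\setminus M}(-1)^{|K|}\,\xi_{(M\cup K)^{*}}, \]
and it remains to show the coefficient of $\xi_{N}$ equals $\delta_{N,\emptyset^{*}}$, where $\emptyset^{*}$ is the set of indices $i$ with $a_i$ trivial; since $\beta_{\emptyset^{*}}$ is the nontrivial part of $\alpha$ one has $\xi_{\emptyset^{*}} = (k(U_0),\alpha) = (k(\cU),\alpha)$, finishing the proof.

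The coefficient count is the main obstacle. After substituting $L = M\cup K$ and collecting by $N = L^{*}$, it becomes the assertion that $\sum_{L\,:\,L^{*}=N}(-1)^{|L|}\big(\sum_{M\subseteq L,\ M\ \mathrm{sat}}(-1)^{|M|}\big)=\delta_{N,\emptyset^{*}}$, which I would prove by a Möbius inversion over the lattice of subgroups of $H^\vee$ generated by subsets of $\{a_1,\dots,a_r\}$; this is the combinatorial heart of the statement and runs parallel to the computation underlying \cite[Exa.\ 7.1]{BnG}. The only other point needing care is the identification of the stabilizer components of $\cV$ and of their normal representations, which follows from the local description $\cK_M = U_0\times[\A^M/H]$ and the transversality of these sub-bundles inside $\cV$.
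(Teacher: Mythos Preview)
Your approach is essentially the paper's: you identify the stabilizer components of $L_1\oplus\dots\oplus L_r$ with the saturated subsets $M\subseteq\{1,\dots,r\}$, apply Remark~\ref{rem.explicitHquotient} termwise, and then face a double sum over pairs $(M,K)$ with $M$ saturated and $K\cap M=\emptyset$. The paper arrives at exactly the same double sum (with $I$ for your $M$ and $J$ for your $K$).

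Where you diverge is in the combinatorial endgame. You collect by $N=(M\cup K)^*$ and propose a M\"obius inversion over the lattice of subgroups of $H^\vee$ generated by subsets of the $a_i$. That can be made to work, but it is both vaguer and heavier than needed. The paper's trick is cleaner: observe that for \emph{non}-saturated $I$ the inner sum $\sum_{J:I\cap J=\emptyset}(-1)^{|J|}(\text{term depending only on }H_{I\cup J})$ already vanishes, because if $j_0\notin I$ has $a_{j_0}\in\langle a_i:i\in I\rangle$ then the terms with $j_0\notin J$ and $j_0\in J$ cancel in pairs (adding $j_0$ does not change $H_{I\cup J}$). Hence one may drop the saturation constraint on $I$ and sum over all $I$. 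Now the summand depends only on $L=I\cup J$, the inner coefficient becomes $\sum_{J\subseteq L}(-1)^{|J|}=[L=\emptyset]$, and only $I=J=\emptyset$ survives, giving $(k(U_0),\alpha)$. This replaces your M\"obius argument with a two-line cancellation and avoids introducing the lattice at all.

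One small comment on your reduction paragraph: you do not need Lemma~\ref{lem.naiveU}(i) or a local-triviality argument here. Since the naive class lives in $\ocBurn_n$, relation \textbf{(}$\overline{\mathbf{I}}$\textbf{)} already lets you replace each stabilizer component $(\cK_M,\beta_M)$ by $(U_0\times\bP^{|M|}\times BH_M,\beta_M)$ directly, which is all the paper uses.
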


\begin{proof}
Let $U$ denote the coarse moduli space of $\cU$.
By Definition \ref{def.naiveclassU} and relation \textbf{(}$\overline{\mathbf{I}}$\textbf{)} we have
\[
[L_1\oplus\dots\oplus L_r]^{\mathrm{naive}}=
\sum_{\substack{I\subseteq\{1,\dots,r\}\\ \forall\,j\notin I:\,a_j\notin \langle a_i\rangle_{i\in I}}}(U\times \PP^{|I|}\times BH_I,(\bar a_j)_{j\notin I}),
\]
where $H_I$ denotes the intersection of $\ker(a_i)$ over $i\in I$ and $\bar a_j$, the restriction of $a_j$ to $H_I$.
Using Remark \ref{rem.explicitHquotient} and the relations in $\oBurn_n$, mentioned at the beginning of this section, we find
\begin{align*}
\bar\kappa&([L_1\oplus\dots\oplus L_r]^{\mathrm{naive}}) \\
&\qquad=\sum_{\substack{I\subseteq\{1,\dots,r\}\\ \forall\,j\notin I:\,a_j\notin \langle a_i\rangle_{i\in I}}}\sum_{\substack{J\subseteq \{1,\dots,r\}\\ I\cap J=\emptyset}}(-1)^{|J|}(k(U),(\bar{\bar a}_1,\dots,\bar{\bar a}_r)),
\end{align*}
where $\bar{\bar a}_j$ denotes the restriction of $a_j$ to $H_{I\cup J}$.
The right-hand side may be rewritten as
\[
\sum_{I\subseteq\{1,\dots,r\}}\sum_{\substack{J\subseteq \{1,\dots,r\}\\ I\cap J=\emptyset}}(-1)^{|J|}(k(U),(\bar{\bar a}_1,\dots,\bar{\bar a}_r)),
\]
since the additional summands, with $I$ such that there exists $j_0\notin I$ with $a_{j_0}\in \langle a_i\rangle_{i\in I}$,
have cancelling pairs of terms in the inner sum, specifically those with $j_0\notin J$ and those with $j_0\in J$.
Now the summand has the sign $(-1)^{|J|}$ times a class, that depends only on $I\cup J$.
So we get cancellation, leaving only the contribution from $I=J=\emptyset$, which is $(k(U),\alpha)$.
\end{proof}

\begin{prop}
\label{prop.kappabar}
We have
\[ \bar\kappa([\cU])=\mathrm{cl}(\cU) \]
for every quasiprojective orbifold of dimension $n$.
In particular,
\[ \kappa([\cX])=\mathrm{cl}(\cX) \]
if $\cX$ is a projective orbifold of dimension $n$.
\end{prop}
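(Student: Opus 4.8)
The plan is to reduce to the divisorial projective case and then, after applying $\bar\kappa$, to match the defining inclusion--exclusion expression for $[\cU]$ against the stratification formula for $\mathrm{cl}$; the matching of the elementary pieces is exactly Lemma~\ref{lem.constantstabilizer}.

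\emph{Reduction.} By Definition~\ref{def.classXclassU}, and by the independence of $[\cU]$, $[\cU]^{\mathrm{naive}}$ and $\mathrm{cl}(\cU)$ of the chosen compactification and divisorialification together with the birational invariance (established above) of all three, it suffices to treat $\cU=\cX\setminus\cD$ for a divisorial projective orbifold $\cX$ and an snc divisor $\cD=\bigcup_{i\in\cI}\cD_i$; here one uses that a divisorialification $\cU'\to\cU$ is a representable proper birational morphism, so $\cU\sim\cU'$ and $\mathrm{cl}(\cU)=\mathrm{cl}(\cU')$. The ``in particular'' statement is the case $\cD=\emptyset$: the canonical homomorphism $\cBurn_n\to\ocBurn_n$ carries $[\cX]$ (Definition~\ref{def.classX}) to the class of the quasiprojective orbifold $\cX\setminus\emptyset$, whence $\kappa([\cX])=\bar\kappa([\cX\setminus\emptyset])=\mathrm{cl}(\cX)$ once the general assertion is known.

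\emph{Computation.} Write $[\cU]=[\cX]+\sum_{\emptyset\ne I\subseteq\cI}(-1)^{|I|}[\cN_{\cD_I/\cX}]^{\mathrm{naive}}$ as in Definition~\ref{def.naiveclassU} and apply $\bar\kappa$, a homomorphism by Proposition~\ref{prop.kappa}. On $[\cX]=\sum_{\cK\subset\cX}(\cK,\beta_\cK(\cX))$ the definition of $\bar\kappa$ together with the formula of Remark~\ref{rem.explicitHquotient} presents $\bar\kappa([\cX])$ as an alternating sum of symbols $\big(k(\cK_{\mathrm{cms}}),\ \text{restrictions of }\beta_\cK(\cX)\big)$ over subsets of the index set of $\beta_\cK(\cX)$. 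For each boundary term, stratify $\cN_{\cD_I/\cX}$ by stabilizer type: over an snc stratum $\cD_J^\circ$ with $J\supseteq I$, the bundle $\cN_{\cD_I/\cX}|_{\cD_J^\circ}$ decomposes, after regrouping according to the kernels of the occurring characters, as a direct sum of line bundles over a stack of constant stabilizer, so Lemma~\ref{lem.constantstabilizer} computes the associated contribution, with Lemma~\ref{lem.naiveU} used to pass between the open substacks involved; this rewrites $\bar\kappa([\cN_{\cD_I/\cX}]^{\mathrm{naive}})$ as a sum of symbols attached to the function fields $k(\cD^\circ_{J,\mathrm{cms}})$. On the other side, $\mathrm{cl}(\cU)$ is the sum over the stabilizer strata of $\cU$ of the associated classes in $\oBurn_n$, by the description recalled from \cite{Bbar}. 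The identity $\bar\kappa([\cU])=\mathrm{cl}(\cU)$ then becomes a combinatorial identity in $\oBurn_n$: after the substitutions above, and after repeated use of the relations of $\oBurn_n$ recalled at the start of this section---the transcendence-degree relation, absorbing any character that has become trivial, and the blow-up relation of \cite[Defn.~3.1]{Bbar}---the two sides consist of the same collection of terms. This reorganization runs in complete parallel with the bookkeeping in \cite{BnG} and with the verification already carried out in the proof of Proposition~\ref{prop.kappa}.

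\emph{Main obstacle.} The geometric content is confined to Lemma~\ref{lem.constantstabilizer}, Remark~\ref{rem.explicitHquotient}, Lemma~\ref{lem.naiveU}, and the explicit structure of the normal bundles of the snc strata $\cD_J^\circ\subset\cX$; the real work is the combinatorial last step, namely aligning the alternating inclusion--exclusion over snc strata defining $[\cU]$ with the stabilizer stratification underlying $\mathrm{cl}(\cU)$, and checking that precisely the terms in which a restricted character becomes trivial are either annihilated or absorbed into a transcendence-degree shift by the relations of $\oBurn_n$. As in \cite{BnG}, no new geometric ingredient is required beyond what is already in place.
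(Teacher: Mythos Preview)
Your reduction step is fine, and the overall direction---apply $\bar\kappa$ term by term and match against the stratification formula for $\mathrm{cl}$---is correct in spirit. But the execution has a real gap, precisely at the step you flag as ``the main obstacle'' and then wave away.

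The problem is your use of Lemma~\ref{lem.constantstabilizer}. That lemma requires a stack with \emph{constant} stabilizer group $H$. Neither the snc strata $\cD_J^\circ$ nor the stabilizer components $\cK\subset\cX$ have constant stabilizer in general: a stabilizer component is the \emph{closure} of a locus with given generic stabilizer, so points of $\cK$ can have strictly larger stabilizer, and the snc stratification of $\cD$ has nothing a priori to do with the stabilizer stratification of $\cX$. Your sentence ``the bundle $\cN_{\cD_I/\cX}|_{\cD_J^\circ}$ decomposes\dots as a direct sum of line bundles over a stack of constant stabilizer'' is simply not justified. Without constant stabilizer you cannot invoke the lemma, and your ``combinatorial identity'' is asserted rather than proved.

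The paper closes this gap with a genuine additional idea: it strengthens the statement to $\bar\kappa([L_1\oplus\dots\oplus L_r]^{\mathrm{naive}})=\mathrm{cl}(\cX,(L_1,\dots,L_r))$ for a projective smooth DM stack $\cX$ carrying line bundles, and proves this by \emph{induction on $\dim(\cX)$}. The extra line-bundle data is exactly what is needed to make the induction go through. In the inductive step, birational invariance lets one blow up so that a dense open of $\cX$ is of the form $U\times BH$---and \emph{this} is where constant stabilizer is forced, so Lemma~\ref{lem.constantstabilizer} applies honestly. The boundary terms over the $\cD_I$ are then handled by the induction hypothesis (they live over lower-dimensional bases), using the formula \eqref{eqn.classUL} from \cite{BnG}. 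Your sketch has no mechanism to reduce to the constant-stabilizer situation and no inductive structure to absorb the boundary; that is the missing content.
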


The proof uses some additional notation.
Let $\cU$ be a quasiprojective smooth DM stack with line bundles $L_1$, $\dots$, $L_r$, such that the total space of $L_1\oplus \dots\oplus L_r$ is a divisorial orbifold of dimension $n$.
Then (cf.\ \cite[(7.4)]{BnG}) we define
\[ \mathrm{cl}(\cU,(L_1,\dots,L_r)) \]
by the same recipe, as in the definition of the class of a divisorial orbifold, except that to each pair $(K,\alpha)$ we adjoin to $\alpha$ the characters, determined by $L_1$, $\dots$, $L_r$.
We also recall the formula from \cite[Lemma 7.4]{BnG}, when $\cU$ compactifies and the line bundles are extended to a projective smooth DM stack $\cX$, with $\cX\setminus \cU$ an snc divisor:
\begin{align}
\begin{split}
\label{eqn.classUL}
\mathrm{cl}&(\cU,(L_1,\dots,L_r))=\mathrm{cl}(\cX,(L_1,\dots,L_r)) \\
&+\sum_{\emptyset\ne I\subseteq \{1,\dots,r\}}(-1)^{|I|}\mathrm{cl}(\cD_I,(L_1|_{\cD_I},\dots,L_r|_{\cD_I},\overbrace{\dots,\mathcal{N}_{\cD_i/\cX}|_{\cD_I},\dots}^{i\in I})).
\end{split}
\end{align}

\begin{proof}
We follow closely the proof of \cite[Prop.\ 7.9]{BnG}.
By applying \eqref{eqn.classUL} with $r=0$ and $\cX$ and $\cD$ as in Definition \ref{def.naiveclassU}, we are reduced to showing
\begin{equation}
\label{eqn.kappabar}
\bar\kappa([L_1\oplus\dots\oplus L_r]^{\mathrm{naive}})=\mathrm{cl}(\cX,(L_1,\dots,L_r)),
\end{equation}
for a projective smooth DM stack $\cX$ with line bundles $L_1$, $\dots$, $L_r$, such that the total space of $L_1\oplus\dots\oplus L_r$ is a divisorial orbifold of dimension $n$.
We prove of \eqref{eqn.kappabar} by induction on $\dim(\cX)$.
The case $\dim(\cX)=0$ follows from Lemma \ref{lem.constantstabilizer}.

Suppose $\dim(\cX)>0$.
Since the naive class is a birational invariant,
the left-hand side of \eqref{eqn.kappabar} remains unchanged if we blow up $\cX$ along a nonsingular substack.
Arguing as in the proof of \cite[Thm.\ 4.1]{Bbar}, we see that the right-hand side also remains unchanged.
Thus by embedded resolution of singularities it suffices to prove \eqref{eqn.kappabar}, under the assumption that there is dense open $\cU\subset \cX$ with $\cX\setminus \cU$ an snc divisor and $\cU \cong U\times BH$ for some scheme $U$ and finite abelian group $H$ (cf.\ Proposition \ref{prop.divisorial}).

By combining the expressions for $[\cU]$ from Definition \ref{def.naiveclassU} and Lemma \ref{lem.punctured} we get an equality, with $[\cX]$ and naive classes of normal bundles on the left-hand side and $[\cU]^{\mathrm{naive}}$ and naive classes of punctured normal bundles on the right-hand side.
An analogous result is valid, with $[\cX]$ replaced by the naive class of $L_1\oplus\dots\oplus L_r$ and the pullbacks of the line bundles inserted into the remaining terms, which have been rearranged so they all appear on the right-hand side:
\begin{align*}
[&L_1\oplus\dots\oplus L_r]^{\mathrm{naive}} \\
&=[L_1|_\cU\oplus\dots\oplus L_r|_\cU]^{\mathrm{naive}}+\sum_{\emptyset\ne I\subseteq \cI} (-1)^{|I|}[L_1|_{\cN^\circ_{\cD_I/\cX}}\oplus\dots\oplus L_r|_{\cN^\circ_{\cD_I/\cX}}]^{\mathrm{naive}}\\
&\qquad\qquad\qquad\qquad\qquad-\sum_{\emptyset\ne I\subseteq \cI}(-1)^{|I|}[L_1|_{\cN_{\cD_I/\cX}}\oplus\dots\oplus L_r|_{\cN_{\cD_I/\cX}}]^{\mathrm{naive}}
\end{align*}
Applying $\bar\kappa$, we get an equality in $\oBurn_n$.
Lemma
\ref{lem.constantstabilizer} is applicable to the first two terms on the right-hand side, and these combine to yield $\mathrm{cl}(\cU,(L_1,\dots,L_r))$ by \cite[Lemma 7.5]{BnG}.
The induction hypothesis is applicable to the third term, which then by \eqref{eqn.classUL} yields $$\mathrm{cl}(\cX,(L_1,\dots,L_r))-\mathrm{cl}(\cU,(L_1,\dots,L_r)).
$$
This completes the proof of \eqref{eqn.kappabar} and therefore also the proposition.
\end{proof}

\begin{prop}
\label{prop.toBurnq}
The composite homomorphism
\[ \ocBurn_n\to \Burn_n \]
in Definition \ref{defn.Burnmaps}
sends $[\cU]^{\mathrm{naive}}$ to $[k(U)]$ and $[\cU]$ to $[U]$, for an $n$-dimensional quasiprojective orbifold $\cU$ with coarse moduli space $U$.
\end{prop}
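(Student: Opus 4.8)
The plan is to reduce to the orbifold-case assertion for the "naive" class, then deduce the statement for $[\cU]$ by the inclusion–exclusion formula of Definition~\ref{def.naiveclassU}. First I would observe that the composite $\ocBurn_n\to\oBurn_n\to\Burn_n$ is exactly $\bar\kappa$ followed by the map $(K,\alpha)\mapsto[K(t_1,\dots,t_{n-d})]$, so by Proposition~\ref{prop.kappabar} we have, for a quasiprojective orbifold $\cU$ of dimension $n$ with coarse moduli space $U$, that the image of $[\cU]^{\mathrm{naive}}$ in $\oBurn_n$ is $\mathrm{cl}(\cU)$ (take $r=0$ in the discussion preceding that proposition, applied to the naive class directly) — wait, more precisely: $\bar\kappa([\cU])=\mathrm{cl}(\cU)$; for $[\cU]^{\mathrm{naive}}$ I would instead argue directly. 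For $[\cU]^{\mathrm{naive}}$, stratify $\cU$ by stabilizer type; a stabilizer-$H$ stratum $\cK$ with normal characters $\beta_{\cK}(\cU)$ contributes $(\cK,\beta_{\cK}(\cU))$, and under $\bar\kappa$ this maps to $\mathrm{cl}(\cK_{\mathrm{cms}}\times[\A^{n-d}/H])$, which in $\Burn_n$ becomes $[k(\cK_{\mathrm{cms}})(t_1,\dots,t_{n-d})]$. Since $\cK_{\mathrm{cms}}$ has dimension $d$, this is a class of an $n$-dimensional field, and it equals $[k(U)]$ precisely when $\cK$ is the generic stratum (so $\cK_{\mathrm{cms}}$ is birational to $U$ and $n-d=0$); all the lower-dimensional strata $\cK$ contribute classes $[k(\cK_{\mathrm{cms}})(t_1,\dots,t_{n-d})]$ which, via the relation defining $\Burn_n$, may a priori be nonzero. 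So I must show these contributions cancel.

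The key computation is therefore: in $\Burn_n$,
\[
\sum_{\cK\subset \cU}[k(\cK_{\mathrm{cms}})(t_1,\dots,t_{n-d_{\cK}})]=[k(U)].
\]
I would establish this by the same device used in the proof of Proposition~\ref{prop.kappa} and Lemma~\ref{lem.constantstabilizer}: pass through the explicit formula of Remark~\ref{rem.explicitHquotient} and exploit inclusion–exclusion cancellation. Concretely, reduce first (by the birational invariance of $[\cU]^{\mathrm{naive}}$, already established in Section~\ref{sect:burninv}) to the case where a dense open of $\cU$ is $U_0\times BH$; then the generic stratum contributes $(k(U_0),(\text{all generating characters of }H^\vee))$, and passing to $\Burn_n$ via Remark~\ref{rem.explicitHquotient} turns this into an alternating sum $\sum_{I}(-1)^{|I|}[k(U_0)(t_1,\dots,t_{n})]$ over subsets $I$ of the character indices — but all these field classes are equal (adjoining purely transcendental generators), so the sum telescopes to a single term iff the number of characters is fixed; here all $n-d$ slots are filled, and the alternating sum over $I\subseteq\{1,\dots,n-d\}$ of a constant is $0$ unless $n-d=0$. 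Meanwhile the lower-dimensional strata $\cK$ of $\cX$ (resp. of the normal bundles) contribute the remaining terms of that same alternating sum, with opposite signs, producing the cancellation. This is exactly parallel to the cancellation bookkeeping in the proof of Lemma~\ref{lem.constantstabilizer} (the "cancelling pairs of terms in the inner sum" argument) and to \cite[Prop.~7.9]{BnG}.

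Once $[\cU]^{\mathrm{naive}}\mapsto[k(U)]$ is known, the statement for $[\cU]$ follows formally: by Definition~\ref{def.naiveclassU},
\[
[\cU]=[\cX]+\sum_{\emptyset\ne I\subseteq\cI}(-1)^{|I|}[\cN_{\cD_I/\cX}]^{\mathrm{naive}},
\]
and applying the composite homomorphism and the naive-case result gives
\[
[\cX]\mapsto[X],\qquad [\cN_{\cD_I/\cX}]^{\mathrm{naive}}\mapsto[k(D_I)(t_1,\dots,t_{|I|})]=[\text{coarse moduli of }\cN_{\cD_I/\cX}],
\]
where $X$ is the coarse moduli space of $\cX$ and $D_I$ that of $\cD_I$. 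The right-hand side is then the image under the standard map $\Burn_n\to\Burn_n$-level inclusion–exclusion expansion of $[X]$ itself: namely $[X]=[X\setminus D]+\sum_{\emptyset\ne I}(-1)^{|I|}[\text{normal bundle of }D_I]$ is the defining-type relation for the class of the open variety $U=X\setminus D$ in $\Burn_n$ (this is precisely how $[\,U\,]\in\Burn_n$ is defined in \cite{KT}). Hence the composite sends $[\cU]$ to $[U]$.

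The main obstacle I anticipate is the combinatorial cancellation in the middle step — verifying carefully that the alternating sums over stabilizer strata and over the subsets appearing in Remark~\ref{rem.explicitHquotient} organize into the telescoping pattern that kills every contribution except $[k(U)]$. The mechanism is entirely analogous to Lemma~\ref{lem.constantstabilizer} and \cite[Prop.~7.9]{BnG}, but one must be attentive to which strata of which normal bundles contribute which terms, and to the sign bookkeeping; I expect the cleanest route is to note that the whole identity is the image under $\bar\kappa$-then-$(\,\cdot\,)(t)$ of an identity already living in $\oBurn_n$ (namely $\mathrm{cl}(\cU)\mapsto$ the class of $U$ in $\Burn_n$, which is \cite[Thm.~4.1]{Bbar} combined with the splitting $\oBurn_n\to\Burn_n$), thereby importing the cancellation wholesale rather than redoing it.
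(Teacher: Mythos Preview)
Your proposal contains the essential observation but buries it in unnecessary complication and some confused reasoning. The paper's proof is a two-line argument.

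The key fact, which you do state midway through, is that by Remark~\ref{rem.explicitHquotient} the image in $\Burn_n$ of a single generator $(\cK,\alpha)$ with $\dim(\cK)=d$ is
\[
\sum_{I\subseteq\{1,\dots,n-d\}}(-1)^{|I|}\,[k(\cK_{\mathrm{cms}})(t_1,\dots,t_{n-d})],
\]
an alternating sum of \emph{identical} classes; this vanishes whenever $n-d>0$. Hence every generator with $\dim(\cK)<n$ maps to $0$ in $\Burn_n$, individually. There is no cross-cancellation between different stabilizer strata, contrary to your statement that ``the lower-dimensional strata $\cK$ of $\cX$ \dots\ contribute the remaining terms of that same alternating sum, with opposite signs.'' Each lower stratum dies on its own. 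Once this is observed, the assertion for $[\cU]^{\mathrm{naive}}$ is immediate from Definition~\ref{def.naiveclassU}: only the top-dimensional term survives and gives $[k(U)]$.

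Your reduction ``to the case where a dense open of $\cU$ is $U_0\times BH$'' is also off: $\cU$ is an \emph{orbifold}, so the generic stabilizer is trivial and the generic stratum contributes $(\cU,())$ with empty character list from the start. No reduction is needed.

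For $[\cU]$, your approach and the paper's coincide: apply the naive case to each term of the inclusion--exclusion formula in Definition~\ref{def.naiveclassU}, obtaining $[k(X)]+\sum_{\emptyset\ne I}(-1)^{|I|}[k(D_I)(t_1,\dots,t_{|I|})]$, and recognize this as $[U]\in\Burn_n$ (the paper cites \cite[Lemma~7.5]{BnG} for this last identification).
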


\begin{proof}
We may suppose $\cU$ divisorial.
By Remark \ref{rem.explicitHquotient}, any class $(\cK,\alpha)$ with $\dim(\cK)<n$ maps to $0$ in $\Burn_n$.
With this observation we get the assertion about $[\cU]^{\mathrm{naive}}$ from Definition \ref{def.naiveclassU}.
For the assertion about $[\cU]$, we suppose further that $\cU=\cX\setminus \cD$ for an snc divisor $\cD$ (Remark \ref{rem.compactify}).
Now the assertion about $[\cU]$ follows, by Definition \ref{def.naiveclassU} and \cite[Lemma 7.5]{BnG}, from the assertion about $[\cU]^{\mathrm{naive}}$.
\end{proof}

\subsection*{Burnside groups of $G$-varieties}

In \cite{BnG}, we defined,  via generators and relations, the abelian group
\[
\Burn_n(G),
\]
receiving birational invariants of $G$-actions on $G$-varieties. We established 
a well-defined homomorphism of graded abelian groups \cite[Prop.\ 7.7]{BnG}:
\[
\kappa^G\colon \Burn(G)=\bigoplus_{n=0}^\infty \Burn_n(G)\to \oBurn. 
\]
We now establish a refinement of $\kappa^G$ to a homomorphism
$$
\Burn(G)\to \cBurn.
$$
For this we use the style of presentation of $\Burn_n(G)$, given in
\cite{KT-vector}, involving centralizers of stabilizers.

\begin{prop}
\label{prop:compareburn}
There is a homomorphism
$$
\Burn_n(G) \to \cBurn_n,
$$
given by
$$
(H,Z\actsfromleft K,\beta) \mapsto (\cK,\beta),
$$
where $\cK=[W/Z_G(H)]$, for some smooth projective $W$ with regular action of $Z=Z_G(H)/H$ and therefore also action of $Z_G(H)$,
and $Z$-equivariant isomorphism $k(W)\cong K$.
The composite
\[ 
\Burn_n(G) \to \cBurn_n
\stackrel{\kappa}{\lra} \oBurn_n 
\]
is equal to $\kappa^G$.
\end{prop}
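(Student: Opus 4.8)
{\bf Proof proposal.}

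The plan is to verify that the assignment $(H,Z\actsfromleft K,\beta)\mapsto(\cK,\beta)$ respects all the defining relations of $\Burn_n(G)$ in the centralizer-style presentation of \cite{KT-vector}, and then to identify the composite with $\kappa^G$ by tracking the definitions through. First I would recall the three relation types for $\Burn_n(G)$: the conjugation relation, which says the symbol depends only on the conjugacy class of $(H,Z\actsfromleft K,\beta)$; the blow-up relation $(\mathrm{B}_1)$/$(\mathrm{B}_2)$ coming from equivariant blow-ups of a smooth center; and the vanishing relation for characters summing to zero. For well-definedness, the conjugation relation is immediate, since conjugate data yield isomorphic quotient stacks $\cK=[W/Z_G(H)]$ together with matching $\beta$, hence equal symbols by relation \textbf{(I)} in $\cBurn_n$; and the choice of $W$ with $Z$-equivariant $k(W)\cong K$ is well-defined up to $Z$-equivariant birational equivalence, hence $\cK$ up to birational equivalence of stacks by Example \ref{exa.XmodG}, so relation \textbf{(I)} again applies. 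The vanishing relation maps to relation \textbf{(V)}. The main work is the blow-up relation: an equivariant blow-up of $W$ (or of an auxiliary $G$-variety) along a smooth center produces, on the stack side, exactly a blow-up of $[W/Z_G(H)]$ along a smooth substack, and I would match the strata of the exceptional locus — indexed by which characters of the normal bundle representation become relevant — against the terms in relation \textbf{(B)} and its consequence Proposition \ref{prop.goodrelations}(ii), noting that the action construction \textbf{(A)} producing $\cY$ from $\cK$ and a character corresponds on the $G$-variety side to passing to the appropriate subgroup-fixed-locus datum.

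For the identification of the composite with $\kappa^G$, I would unwind both sides on a generator $(H,Z\actsfromleft K,\beta)$. By Definition \ref{defn.Burnmaps}, $\kappa$ sends $(\cK,\beta)$ to $\mathrm{cl}(\cK_{\mathrm{cms}}\times[\A^{n-d}/\overline{H}])$, where $\overline{H}$ is the generic stabilizer of $\cK$ acting by the characters in $\beta$, and $\cK_{\mathrm{cms}}$ is the coarse moduli space of $[W/Z_G(H)]$, which is $W/Z = W/(Z_G(H)/H)$ — a variety with function field $K^Z$. On the other side, $\kappa^G$ as defined in \cite[Prop.\ 7.7]{BnG} sends the same symbol to the class in $\oBurn_n$ built from the field $K^Z$ (equivalently, the function field of the coarse space of the stratum) together with the normal-bundle character data $\beta$, which is precisely the pair $(k(W/Z),\beta)$ feeding into $\mathrm{cl}$. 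Using Proposition \ref{prop.kappabar} (which gives $\kappa([\cX])=\mathrm{cl}(\cX)$ and, in the form needed here via Remark \ref{rem.explicitHquotient} and Lemma \ref{lem.constantstabilizer}, the explicit formula for $\mathrm{cl}$ of a product with $[\A^{n-d}/H]$), both composites reduce to the same explicit sum over subsets $I\subseteq\{1,\dots,n-d\}$ of classes $(k(W/Z),\bar\beta_I)$ in $\oBurn_n$, so they agree.

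The step I expect to be the main obstacle is the bookkeeping in the blow-up relation: matching the centralizer-indexed strata of an equivariant exceptional divisor (the points of the projectivized normal bundle, organized by their stabilizers inside $G$ and the induced character data) with the combinatorial right-hand side of relation \textbf{(B)}, i.e.\ the split into $\Theta_1$ (the two "shift" terms and their vanishing when $a_1=a_2$) and $\Theta_2$ (the action-construction term, absent when some $a_i\in\langle a\rangle$). This is exactly the analysis carried out in \cite[\S 5]{BnG} for $\kappa^G$ and in the proof of Theorem \ref{theo.main-th} here via Proposition \ref{prop.goodrelations}(ii); the task is to confirm that the two descriptions of the exceptional strata — one phrased in terms of $G$-stabilizers, one in terms of stabilizers of the quotient stack — coincide under $[W/Z_G(H)]$, which comes down to the standard identification of the stabilizer of a point of $[W/Z_G(H)]$ lying over a point of $W$ with $H$-conjugation datum. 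Once this dictionary is in place, the remaining verifications are routine, and I would present them by citing the parallel arguments in \cite{BnG} and Section \ref{sect:burninv} rather than repeating them.
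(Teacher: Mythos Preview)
Your overall plan matches the paper's: show the map respects the defining relations, then verify compatibility with $\kappa^G$ by unwinding definitions. The paper's proof is far more terse, and the reason is instructive: the presentation of $\Burn_n(G)$ in \cite[\S 2]{KT-vector} is deliberately chosen so that its relations \textbf{(O)}, \textbf{(V)}, \textbf{(B)} are \emph{formally identical} to those in Definition~\ref{defn.cBurn}, once you identify the $\Theta_2$-term in the equivariant relation with the action construction \textbf{(A)}. So the blow-up check is a line-by-line symbol match, not the geometric exceptional-divisor analysis you sketch; you are conflating the defining relation \textbf{(B)} with the proof (Theorem~\ref{theo.main-th}) that the class is a blow-up invariant. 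Your plan would still succeed, but with unnecessary labor.

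Two small corrections. For independence of the choice of $W$, you want Example~\ref{exa.reproverZ} rather than Example~\ref{exa.XmodG}: the latter is stated over an algebraically closed field, while the former directly gives that $Z_G(H)$-equivariant birational maps of smooth projective models yield \emph{representable} proper birational maps of the quotient stacks, which is exactly what relation~\textbf{(I)} needs. For the compatibility with $\kappa^G$, your appeal to Proposition~\ref{prop.kappabar} and Lemma~\ref{lem.constantstabilizer} is heavier than required; the paper simply compares \cite[Defn.~7.6]{BnG} with Definition~\ref{defn.Burnmaps} directly, since both send $(H,Z\actsfromleft K,\beta)$ to $\mathrm{cl}\big((W/Z)\times[\A^{n-d}/H]\big)$ with $H$ acting by $\beta$.
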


\begin{proof}
By Example \ref{exa.reproverZ} and relation \textbf{(I)}, the element $(\cK,\beta)\in \cBurn_n$ is independent of the choice of $W$.
The defining relations of $\Burn_n(G)$ (see \cite[\S 2]{KT-vector}) match the defining relations of $\cBurn_n$.
Therefore we have a well-defined homomorphism.
Comparison of \cite[Defn.\ 7.6]{BnG} and Definition \ref{defn.Burnmaps} yields the compatibility.
\end{proof}

We denote by $\bar\kappa^G$ the composite homomorphism
\[ 
\Burn_n(G) \to \cBurn_n\to \ocBurn_n. 
\]

\subsection*{Identification with $\oBurn_n$}

We now turn to the homomorphism 
$$
\bar\kappa\colon \ocBurn_n \to \oBurn_n,
$$ 
from Definition \ref{defn.Burnmaps}. 

We claim, first, that the definition of $\oBurn_n$ remains unchanged if we omit all the $j\ge 3$ defining relations in
\cite[Defn.\ 3.1]{Bbar}.
We impose just the relations of reordering of characters, isomorphisms of groups of characters, and the $j=2$ relation
\begin{align}
\begin{split}
\label{eqn.jtworelation}
(A,(a_1,a_2,\dots))
=(A,(a_1&,a_2-a_1,\dots))+(A,(a_1-a_2,a_2,\dots))\\ &\qquad\qquad-(A/\langle a_1-a_2\rangle,(0,\bar a_2,\dots)).
\end{split}
\end{align}
Then we deduce the $j\ge 3$ relations by an
inductive argument as in the proof of \cite[Prop.\ 4.7(ii)]{BnG}: if we apply the induction hypothesis and, to every term, the $j=2$ relation, we get exactly the relation to be proved modulo one further application of the induction hypothesis.

Consequently, $\oBurn_n$ admits a presentation by symbols $(K,A,S)$
with finitely generated field $K$ over $k$, finite abelian group $A$, and generating system $S=(a_1,\dots,a_m)$ of $A$ with $m+\mathrm{trdeg}_{K/k}=n$, modulo reordering of characters, isomorphism of finite abelian groups, isomorphism of fields, relation
\begin{equation}
\label{eqn.modulet}
(K,A,(0,a_2,\dots,a_m))=(K(t),A,(a_2,\dots,a_m)),
\end{equation}
and relation \eqref{eqn.jtworelation} with any given $K$.

Next we recall the presentation of $\ocBurn_n$ from the proof of Proposition \ref{prop.kappa}.
Generators are symbols $(K,A,S)$ as above.
Relations are reordering of characters, isomorphism of finite abelian groups, isomorphism of fields, vanishing of symbols containing the trivial character, and \eqref{eqn.relationB} with $K=k(X)$, where in place of $k(X\times \bP^1)$ we put $K(t)$.

\begin{prop}
\label{prop.kappabarisomorphism}
The homomorphism
\[ \bar\kappa\colon \ocBurn_n\to \oBurn_n \]
is an isomorphism.
\end{prop}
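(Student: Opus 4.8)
The plan is to prove that $\bar\kappa$ is an isomorphism by an induction on the order of the finite abelian group occurring in a symbol. I would work throughout with the two presentations just recorded: both $\oBurn_n$ and $\ocBurn_n$ are generated by symbols $(K,A,S)$, the former cut out by reordering of characters, group isomorphism, field isomorphism, \eqref{eqn.modulet} and \eqref{eqn.jtworelation}, the latter by reordering, group isomorphism, field isomorphism, vanishing of symbols that contain a trivial character, and \eqref{eqn.relationB}. First I would introduce on each group the increasing filtration $F_N$ by the subgroup generated by the symbols $(K,A,S)$ with $|A|\le N$. The key point is that every defining relation of either group is supported on symbols whose finite group has order at most that of the leading term, since $A/\langle a_1-a_2\rangle$ has order $\le|A|$; hence $F_\bullet$ is exhaustive, $F_0=0$, and $F_N/F_{N-1}$, which I denote $\mathrm{gr}_N$, is presented by the symbols with $|A|=N$ modulo the relations they support. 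Moreover, by Remark \ref{rem.explicitHquotient}, $\bar\kappa$ sends a symbol $(K,A,S)$ with no trivial character to $(K,A,S)$ plus a $\bZ$-combination of symbols whose group is a proper quotient of $A$, and sends a symbol that contains a trivial character to $0$; so $\bar\kappa(F_N\ocBurn_n)\subseteq F_N\oBurn_n$.

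By the five lemma applied to the short exact sequences $0\to F_{N-1}\to F_N\to \mathrm{gr}_N\to 0$, followed by passage to the colimit over $N$, it then suffices to prove that $\mathrm{gr}_N\bar\kappa$ is an isomorphism for every $N$. For this I would compute both associated graded groups explicitly. On the $\ocBurn_n$ side the vanishing relation removes every symbol with $|A|=N$ that contains a trivial character, while in \eqref{eqn.relationB} the last term $(K(t),A/\langle a_1-a_2\rangle,(\bar a_2,\dots))$ lies in $F_{N-1}$ as soon as $a_1\ne a_2$; in the remaining case $a_1=a_2$ one checks that \eqref{eqn.relationB} degenerates to $(K,A,(a,a,a_3,\dots))=(K(t),A,(a,a_3,\dots))$. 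On the $\oBurn_n$ side I would instead use \eqref{eqn.modulet} to rewrite every symbol with a trivial character as a symbol over a purely transcendental extension of the same group, observe that the last term of \eqref{eqn.jtworelation} again lands in $F_{N-1}$ when $a_1\ne a_2$, and verify that for $a_1=a_2$ relation \eqref{eqn.jtworelation}, combined with \eqref{eqn.modulet} and reordering, produces exactly the same relation $(K,A,(a,a,a_3,\dots))=(K(t),A,(a,a_3,\dots))$. The conclusion is that $\mathrm{gr}_N\oBurn_n$ and $\mathrm{gr}_N\ocBurn_n$ carry identical presentations — by the symbols $(K,A,S)$ with $|A|=N$ and no trivial character, modulo reordering, group and field isomorphism, the relation $(K,A,(a_1,a_2,\dots))=(K,A,(a_1,a_2-a_1,\dots))+(K,A,(a_1-a_2,a_2,\dots))$ for $a_1\ne a_2$, and the displayed relation for $a_1=a_2$ — and that $\mathrm{gr}_N\bar\kappa$ is the identity on generators under this identification.

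The step I expect to be the main obstacle is the associated-graded bookkeeping: one must keep precise track of when the blow-up relations \eqref{eqn.relationB} and \eqref{eqn.jtworelation} create or destroy a trivial character, and confirm that the $a_1=a_2$ degenerations of the two relations really coincide after the respective normalizations (absorbing trivial characters into the ground field in $\oBurn_n$, deleting them in $\ocBurn_n$). A subsidiary point that needs a clean justification is that $F_N/F_{N-1}$ is indeed presented by the surviving symbols modulo the surviving relations, i.e.\ that passing to this subquotient introduces no new relations; this follows because the obvious filtration on the free abelian group on all symbols is compatible, instance by instance, with the subgroup spanned by the defining relations. An alternative to the five-lemma argument would be to write down the inverse $\oBurn_n\to\ocBurn_n$ directly by Möbius-inverting the formula of Remark \ref{rem.explicitHquotient}, at the cost of having to check directly that the resulting map respects \eqref{eqn.jtworelation}.
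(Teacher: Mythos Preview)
Your filtration idea is natural and genuinely different from the paper's direct construction of an inverse, but the argument as written has a real gap at the step you yourself flag as ``subsidiary.'' Knowing that each defining relation lies in some $F_{d}$ (``compatible, instance by instance'') does \emph{not} imply that $R\cap F_N$ is generated by the relations of degree $\le N$, hence does not give the claimed presentation of $\mathrm{gr}_N$. A toy counterexample: in $\Z\{x,y,z\}$ with $\deg x=\deg y=1$, $\deg z=2$, take $R=\langle x+z,\;y+z\rangle$; then $x-y\in R\cap F_1$ but there are no relations of degree $\le 1$. So from your set-up you only get that the natural surjections $Q_N\twoheadrightarrow\mathrm{gr}_N\ocBurn_n$ and $Q_N\twoheadrightarrow\mathrm{gr}_N\oBurn_n$ exist and are compatible with $\mathrm{gr}_N\bar\kappa$; this yields surjectivity of $\mathrm{gr}_N\bar\kappa$ (and hence of $\bar\kappa$) but not injectivity. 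Proving that no new relations appear on the associated graded would amount to a Gr\"obner-basis--type statement for the two presentations, which may well be true here but requires a separate argument you do not supply.

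The paper takes precisely your ``alternative'': it writes down the candidate inverse
\[
\psi(K,A,S)=\sum_{I\subseteq\{1,\dots,m\}}\bigl(K(t_1,\dots,t_{|I|}),\,A/\langle a_i\rangle_{i\in I},\,(\bar a_j)_{j\notin I}\bigr),
\]
the M\"obius inversion of the formula in Remark~\ref{rem.explicitHquotient}, checks it respects \eqref{eqn.modulet} (trivially, via the vanishing relation in $\ocBurn_n$) and \eqref{eqn.jtworelation} (same bookkeeping as in the proof of Proposition~\ref{prop.kappa}), and then verifies $\bar\kappa\circ\psi=\mathrm{id}$ and $\psi\circ\bar\kappa=\mathrm{id}$ by direct computation. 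This is short and avoids the associated-graded issue entirely. If you want to rescue the filtration route, the cleanest fix is to define $\psi$ globally, observe it too is filtered and induces the identity on $\mathrm{gr}_N$, and conclude---but at that point you have essentially reproduced the paper's proof.
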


\begin{proof}
A map in the other direction sends $(K,A,S)$ to
\[ \sum_{I\subseteq\{1,\dots,m\}} (K(t_1,\dots,t_{|I|}),A/\langle a_i\rangle_{i\in I},(\bar a_j)_{j\in \{1,\dots,m\}\setminus I}). \]
It is clear that this map respects reordering of characters, isomorphism of finite abelian groups, and isomorphism of fields.
By the vanishing in $\ocBurn_n$ of symbols containing the trivial character, the map respects relation \eqref{eqn.modulet}.
The verification that it respects relation \eqref{eqn.jtworelation} is achieved just as in the proof of Proposition \ref{prop.kappa}.
So we have a homomorphism $\oBurn_n\to \ocBurn_n$.
We see that this is inverse to $\bar\kappa$ by direct computation, using relation \eqref{eqn.modulet} respectively the vanishing in $\ocBurn_n$ of symbols containing the trivial character.
\end{proof}

\subsection*{Compatibility with specialization}
Let $K$ be the fraction field of a complete DVR with residue field $k$.
In \cite{BnG}, we have defined a specialization homomorphism of equivariant Burnside groups
$$
\rho_\pi^G\colon
\Burn_{n,K} (G)\to \Burn_{n,k}(G). 
$$
Here we show the compatibility with $\bar{\rho}_\pi$.

\begin{prop}
\label{prop.kapparho}
Let $G$ be a finite group, $K$ the fraction field of  a complete DVR with residue field $k$ of characteristic zero containing all roots of unity, and $\pi$ a uniformizer.
Then we have a commutative diagram
\[
\xymatrix{
\Burn_{n,K}(G) \ar[r]^{\rho^G_\pi}\ar[d]_{\bar\kappa^G} &
\Burn_{n,k}(G)\ar[d]^{\bar\kappa^G} \\
\ocBurn_{n,K}\ar[r]^{\bar\rho_\pi}&
\ocBurn_{n,k} 
}
\]
\end{prop}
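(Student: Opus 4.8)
The plan is to reduce to a generating set and then to compare, on a single suitably chosen regular model over $\mathfrak{o}$, the two recipes for specialization. Since all four arrows in the square are group homomorphisms --- the vertical ones by Proposition \ref{prop:compareburn} together with the construction of $\bar\kappa^G$, the top one by \cite{BnG}, and the bottom one by the discussion following Definition \ref{def.obv} --- it suffices to check the identity on generators of $\Burn_{n,K}(G)$. Using the centralizer presentation of \cite{KT-vector}, a generator is a symbol $(H,Z\actsfromleft L,\beta)$; following Proposition \ref{prop:compareburn} I would realize $L\cong K(W)$ for a smooth projective $W$ over $K$ with faithful action of $Z=Z_G(H)/H$, so that $\bar\kappa^G(H,Z\actsfromleft L,\beta)=(\cK,\beta)$ with $\cK=[W/Z_G(H)]$, and then, invoking Proposition \ref{prop.brdmdiv}, replace $\cK$ by a divisorial orbifold birational to it so that $\bar\rho_\pi$ applies without changing the class in $\ocBurn_{n,K}$ (relation \textbf{(}$\overline{\mathbf{I}}$\textbf{)}).

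Next I would choose a common model. Starting from a $Z_G(H)$-equivariant regular model $W_{\mathfrak{o}}$ of $W$ over $\mathfrak{o}$ with $Z$-equivariant snc special fibre $D=\bigcup_i\cD_i$, $d_i$ the multiplicity of $\cD_i$, suitable in the sense of the specialization construction of \cite[Sect.\ 6]{BnG}, I would pass to the quotient stack $[W_{\mathfrak{o}}/Z_G(H)]$ and divisorialize it --- the divisorialification can be carried out compatibly with the special fibre, since the special fibre is an snc divisor and the centres of the divisorialification blow-ups meet such a divisor transversally (cf.\ Remark \ref{rem.orbifolddivisorialification}). The outcome is a regular model $\cX_{\mathfrak{o}}$ of a divisorial orbifold representing $(\cK,\beta)$, whose snc special fibre $\cD=\bigcup_i\cD_i$ has strata $\cD_I^\circ$ the images of the strata $D_I^\circ$ of $W_{\mathfrak{o}}$.

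On this model both composites become parallel alternating sums $\sum_{\emptyset\ne I}(-1)^{|I|-1}(\,\cdot\,)$ over the special-fibre strata. On the equivariant side $\rho^G_\pi$ attaches to each $D_I^\circ$ the cyclic cover determined by $\pi$ and the multiplicities $d_i$, with the $\pi$-twisted transversal characters of $\cN_{D_I/W_{\mathfrak{o}}}$ adjoined and the characters $\beta$ carried along; $\bar\kappa^G$, being assembled from the passage to quotient stacks, then replaces each term by the class in $\ocBurn_{n,k}$ of the associated quotient stack. On the other side Definition \ref{def.obv} and Theorem \ref{thm.obvregularmodel} express $\bar\rho_\pi(\cK,\beta)$ as the sum, over stabilizer components $\cK'$ of the punctured normal bundles $\cN^\circ_{\cD_I/\cX_{\mathfrak{o}}}$, of the terms $(\cK'\cap\omega_I^{-1}(1),\beta_{\cK'})$. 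I would match these term by term: the strata correspond ($[D_I^\circ/Z_G(H)]$ to $\cD_I^\circ$), and the essential point is that extracting a $d_i$th root of the uniformizer along $\cD_i$ is exactly the passage to the fibre over $1\in\G_m$ of the trivialization map $\omega_I$ on the punctured normal bundle --- this is the content of Remark \ref{rem.obv} when $\dim\cK=0$, and the general case reduces to it by the ``product with $BH$, carry the characters'' description of both specializations (Remark \ref{rem.obv} and its analogue for $\rho^G_\pi$ in \cite[Sect.\ 6]{BnG}). The adjoined transversal characters on the equivariant side then coincide with the normal-bundle characters $\beta_{\cK'}$, so the summands agree; the residual dependence on the model is absorbed by the invariance of both specializations under admissible blow-ups, and any leftover discrepancy can be tested after applying the isomorphism $\bar\kappa$ of Proposition \ref{prop.kappabarisomorphism} (using $\bar\kappa\circ\bar\kappa^G=\kappa^G$ from Proposition \ref{prop:compareburn}), that is, in $\oBurn_{n,k}$, where it vanishes.

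I expect the main obstacle to be precisely this last matching. One must arrange that $W_{\mathfrak{o}}$ and the induced $\cX_{\mathfrak{o}}$ can be chosen simultaneously adapted to the two definitions of specialization, so that the blow-ups forced by one definition do not invalidate the hypotheses of the other, and one must line up carefully the $\mu_{d_i}$-cover bookkeeping of \cite[Sect.\ 6]{BnG} with the $\omega_I^{-1}(1)$-restriction bookkeeping of Definition \ref{def.obv}, keeping track of the signs attached to the higher-codimension strata and of the way the stabilizer of a stratum of the model enlarges the generic stabilizer $H$ by the transversal cyclic factors.
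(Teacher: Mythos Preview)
Your overall strategy---reduce to generators, choose a single regular model over $\mathfrak{o}$ serving both sides, and match the alternating sums stratum by stratum---is exactly the paper's approach. The difference is a simplification the paper makes at the outset that eliminates the step you flag as the main obstacle.

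Rather than divisorializing $[W_{\mathfrak{o}}/Z_G(H)]$ over $\mathfrak{o}$, the paper arranges divisoriality already over $K$, on the equivariant level: one chooses the representative $W$ of the symbol $(H,Z\actsfromleft K(W),\beta)$ so that $[W/Z]$ is divisorial (this uses only the freedom in $\Burn_{n,K}(G)$, i.e., $Z$-equivariant birational modification of $W$). Then \cite[Prop.\ 2.2]{BnG} (really its proof) together with Proposition \ref{prop.divisorial} forces $[W/Z_G(H)]$ to be divisorial as well. Now take a $Z$-equivariant regular model $W_{\mathfrak{o}}$ with $Z$-invariant snc special fiber $D_1\cup\dots\cup D_r$; the quotient $[W_{\mathfrak{o}}/Z_G(H)]$ is then automatically a divisorial regular model with snc special fiber, and no further blow-ups are needed. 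With this single model the two specialization recipes visibly agree term by term, and the paper simply declares the verification ``straightforward'' at this point.

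Your proposed divisorialification over $\mathfrak{o}$ would work in principle, but it forces you to check that Bergh--Rydh divisorialification behaves well in families over a DVR, preserves the snc structure of the special fiber, and---more seriously---that after the extra blow-ups the quotient-stack strata still correspond cleanly to the equivariant strata of $W_{\mathfrak{o}}$. The paper's upfront choice of $W$ sidesteps all of this. Your closing fallback of testing the discrepancy in $\oBurn_{n,k}$ via the isomorphism $\bar\kappa$ is then unnecessary (and, taken literally, would only prove the weaker commutativity with $\kappa^G$ rather than $\bar\kappa^G$, though of course $\bar\kappa$ being an isomorphism rescues this).
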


\begin{proof}
It suffices to consider a symbol in $\Burn_{n,K}(G)$ of the form $(H,Z\actsfromleft K(W),\beta)$, where $W$ is smooth projective over $K$ and $[W/Z]$ is divisorial.
Then, by \cite[Prop.\ 2.2]{BnG} (really its proof) and Proposition \ref{prop.divisorial}, also
$[W/Z_G(H)]$ is divisorial.
We take $W_{\mathfrak{o}}$ to be a regular model over $\mathfrak{o}$ with compatible $Z$-action and special fiber an snc divisor $D_1\cup\dots\cup D_r$, where each $D_i$ is $Z$-invariant.
With this model, it is straightforward to verify the claimed compatibility.
\end{proof}

\subsection*{Failure of refined specialization}
In Section \ref{sect:spec} we indicated that the specialization 
map $\bar\rho_\pi$ cannot be refined to the level of $\cBurn$.

\begin{theo}
\label{thm.norefine}
Suppose $n\ge 2$.
There does not exist a homomorphism
\[ \cBurn_{n,K}\to \cBurn_{n,k}, \]
such that the diagrams
\[
\xymatrix@C=16pt{
\Burn_{n,K}(\Z/2\Z) \ar[r]^{\rho^{\Z/2\Z}_\pi}\ar[d] &
\Burn_{n,k}(\Z/2\Z)\ar[d]\ar@{}[dr]|{\textstyle \text{and}}&\Burn_{n,K}(\mathfrak{Q}_8) \ar[r]^{\rho^{\mathfrak{Q}_8}_\pi}\ar[d]&\Burn_{n,k}(\mathfrak{Q}_8)\ar[d] \\
\cBurn_{n,K}\ar[r]&
\cBurn_{n,k} &\cBurn_{n,K}\ar[r]&\cBurn_{n,k}
}
\]
commute, where $\mathfrak{Q}_8$ denotes the quaternion group and the vertical maps are as defined in Proposition \ref{prop:compareburn}.
\end{theo}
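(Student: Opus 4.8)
The argument will be by contradiction. Suppose a homomorphism $r\colon\cBurn_{n,K}\to\cBurn_{n,k}$ exists making both diagrams commute. Write $\lambda\colon\Burn_{n,\bullet}(\Z/2\Z)\to\cBurn_{n,\bullet}$ and $\mu\colon\Burn_{n,\bullet}(\mathfrak{Q}_8)\to\cBurn_{n,\bullet}$ for the comparison homomorphisms of Proposition~\ref{prop:compareburn}. The plan is to exhibit classes $x\in\Burn_{n,K}(\Z/2\Z)$ and $y\in\Burn_{n,K}(\mathfrak{Q}_8)$ with $\lambda(x)=\mu(y)=:\xi$ in $\cBurn_{n,K}$, but with $\lambda(\rho^{\Z/2\Z}_\pi x)\ne\mu(\rho^{\mathfrak{Q}_8}_\pi y)$ in $\cBurn_{n,k}$. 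Commutativity of the two diagrams forces $r(\xi)$ to equal both of these, the desired contradiction.

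The source of the discrepancy is the $\Z/2\Z$-gerbe degeneration of Example~\ref{exa.ellipticcurve} and Remark~\ref{rem.ellipticcurve}. Take $\mathfrak o=k[[\pi]]$, $K=k((\pi))$, an elliptic curve $E/k$ and a point $p\in E(k)$, and put $\mathcal G:=\sqrt{\cO_E(p)/E}$. Since $\cO_E(p)$ has odd degree it is not in $2\Pic(E)$, so $\mathcal G$ is a \emph{nontrivial} $\Z/2\Z$-gerbe over $E$; on the other hand $\mathcal G\cong(E\setminus\{p\})\times B(\Z/2\Z)$ over $E\setminus\{p\}$. Hence by Example~\ref{exa.orbifoldcurves} the classes of $\mathcal G$ and of $E\times B(\Z/2\Z)$ are distinct in $\cBurn_1$, even though they agree in $\oBurn_1$ (indeed in $\ocBurn_1$) — this is precisely why $\bar\rho_\pi$ lives on $\ocBurn$ but cannot be read off a single model. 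To convert this into a statement about $\cBurn_{n,k}$, the key elementary point is that a symbol $(\cK,\chi)$ in which $\chi$ is a \emph{single} nonzero character occurs in none of the relations \textbf{(O)},\textbf{(V)},\textbf{(B)} of Definition~\ref{defn.cBurn}: the terms of \textbf{(O)},\textbf{(V)} and the $\Theta_1$-terms of \textbf{(B)} all involve at least two characters, while the $\Theta_2$-term of \textbf{(B)} is an output of construction \textbf{(A)}, which $E\times B(\Z/2\Z)$ and $\mathcal G$ are not (a one-dimensional output of \textbf{(A)} has coarse space $\PP^1$, whereas these have $E$); and \textbf{(I)} identifies such a symbol only with itself. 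Therefore there are well-defined homomorphisms $\cBurn_{n,k}\to\Z$ extracting the coefficients of $(E\times B(\Z/2\Z),\chi)$ and of $(\mathcal G,\chi)$.

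For the $\Z/2\Z$ side take $x$ to be the class of the constant $\Z/2\Z$-variety $E_K\times\PP^{n-1}_K$, where $\Z/2\Z$ acts by a reflection of $\PP^{n-1}_K$ fixing two hyperplanes: its fixed strata contribute the symbol $(E_K\times B(\Z/2\Z),\chi)$ to $\lambda(x)$, along with the birational class of the quotient orbifold and its further strata. As $E_K\times\PP^{n-1}_K$ has constant good reduction, $\rho^{\Z/2\Z}_\pi x$ is the class of the same variety over $k$, so the coefficient of $(E\times B(\Z/2\Z),\chi)$ in $\lambda(\rho^{\Z/2\Z}_\pi x)$ is nonzero. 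For the $\mathfrak{Q}_8$ side we must produce $y$ with $\mu(y)=\xi$ whose specialization contributes $(\mathcal G,\chi)$ — and no $(E\times B(\Z/2\Z),\chi)$ — to $\mu(\rho^{\mathfrak{Q}_8}_\pi y)$. We take $y$ supported on a $\mathfrak{Q}_8$-variety over $K$ whose relevant stratum has stabilizer a maximal abelian $\Z/4\Z\subset\mathfrak{Q}_8$; applying relations \textbf{(B)} and \textbf{(V)} in $\cBurn$ reduces this $\Z/4\Z$-stratum to $\Z/2\Z$-symbols, and we arrange (using that over $K$ the corresponding gerbe is trivial, cf.\ Proposition~\ref{prop.divisorial} and Definition~\ref{defn.linearizable}) that the outcome is exactly $\xi=\lambda(x)$. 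We choose the $\mathfrak o$-model so that it degenerates, as in Example~\ref{exa.ellipticcurve}/Remark~\ref{rem.ellipticcurve}, to a special fibre containing a component birational to $\mathcal G$. Then by Definition~\ref{def.obv}, lifted to $\cBurn$, the coefficient of $(\mathcal G,\chi)$ in $\mu(\rho^{\mathfrak{Q}_8}_\pi y)$ is nonzero and that of $(E\times B(\Z/2\Z),\chi)$ is zero, contradicting $r(\xi)=\lambda(\rho^{\Z/2\Z}_\pi x)=\mu(\rho^{\mathfrak{Q}_8}_\pi y)$.

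The main obstacle is the $\mathfrak{Q}_8$-side construction, in particular verifying simultaneously that (i) over $K$ the chosen stratum really yields the class $\xi$ in $\cBurn_{n,K}$ — not merely in $\ocBurn_{n,K}$ — so that the generic gerbe is trivial and the relevant stack is linearizable, and (ii) over $\mathfrak o$ every regular $\mathfrak{Q}_8$-model degenerates that stratum to the \emph{nontrivial} gerbe $\mathcal G$. This requires tracking the extension class of $\mathfrak{Q}_8$ (equivalently, the associated quaternion-symbol class of the $(\Z/2\Z)^2$-cover) through the passage to a regular two-dimensional model and computing its residues, exploiting the arithmetic of $k((\pi))$ (notably that $-1$ is a square) so that the class dies at the generic fibre but picks up the value $\overline{\cO_E(p)}\in\Pic(E)/2$ along the special fibre. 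One must also carry out the bookkeeping — along the lines of the weak factorization argument of Theorem~\ref{thm.weakfactrep} and the direct-summand description in the proof of Proposition~\ref{prop.ZBirtoBurn} — confirming that the coefficient homomorphisms of the second paragraph are genuinely well-defined and that the remaining strata do not interfere. The cases $n<2$ are excluded by hypothesis; for $n>2$ one runs the same construction after adjoining $n-2$ further characters that specialize trivially.
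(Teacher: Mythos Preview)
Your overall strategy---produce $x\in\Burn_{n,K}(\Z/2\Z)$ and $y\in\Burn_{n,K}(\mathfrak{Q}_8)$ with $\lambda(x)=\mu(y)$ in $\cBurn_{n,K}$ but $\lambda(\rho^{\Z/2\Z}_\pi x)\ne\mu(\rho^{\mathfrak{Q}_8}_\pi y)$ in $\cBurn_{n,k}$---is exactly the paper's. The gap is that you never actually produce $y$. You write ``We take $y$ supported on a $\mathfrak{Q}_8$-variety over $K$ whose relevant stratum has stabilizer $\Z/4\Z$ \dots\ and we arrange \dots\ that the outcome is exactly $\xi=\lambda(x)$,'' and then concede in your final paragraph that this construction is ``the main obstacle.'' That obstacle is the entire content of the theorem; the rest is bookkeeping.

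Worse, your sketch of how this would go does not hold together. Your own observation shows that a $\Theta_2$-term coming out of relation \textbf{(B)} is a $\PP^1$-bundle (construction~\textbf{(A)}), so in $n=2$ its coarse space is $\PP^1$, never $E$. Hence a $\Z/4\Z$-stratum fed through \textbf{(B)} and \textbf{(V)} cannot manufacture the term $(E_K\times B(\Z/2\Z),\chi)$ that sits in $\lambda(x)$. More fundamentally, $\lambda(x)$ also carries the top-dimensional symbol $([E_K\times\PP^1/(\Z/2\Z)],())$, which by Proposition~\ref{prop.ZBirtoBurn} lies in the direct summand $\Z[\mathcal{B}\mathit{ir}_n]\subset\cBurn_n$; matching this forces the top term of $\mu(y)$ to be an orbifold \emph{birationally equivalent} (in the sense of Definition~\ref{def.biratstacks}, not merely $\ocBurn$-equivalent) to $E_K\times[\PP^1/(\Z/2\Z)]$. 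So you are not really avoiding the need to exhibit a genuine $\mathfrak{Q}_8$-quotient orbifold isomorphic to a $\Z/2\Z$-quotient orbifold over $K$.

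The paper confronts this head-on. It does \emph{not} use a constant family: it takes a specific genus-$2$ curve $W/K$ degenerating semistably to two copies of an elliptic curve $E$, builds an explicit $\mathfrak{K}_4$-cover $V\to W$ (a Humbert curve in $\PP^4$), lifts it to a $\mathfrak{Q}_8$-torsor $\widetilde V\to W$, and thereby obtains an honest isomorphism $W\times[\PP^1/(\Z/2\Z)]\cong[X/\mathfrak{Q}_8]$. This gives $\lambda(x)=\mu(y)$ for free. The mismatch after specialization is detected on the \emph{top-dimensional} terms (empty character sequence) via Proposition~\ref{prop.ZBirtoBurn}: the paper shows $\sqrt{\cO_E(\infty)/E}\cong[E/\mathfrak{Q}_8]$ for a suitable $\mathfrak{Q}_8$-action on $E$, so that the two specializations yield orbifold surfaces whose distinguished stacky curves are $E\times B(\Z/2\Z)$ versus $\sqrt{\cO_E(\infty)/E}$, hence are not birationally equivalent (Remark~\ref{rem.ellipticcurve}). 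Your single-character coefficient idea is pleasant, but it is not where the paper detects the difference, and in any case it cannot be fed by the mechanism you propose.
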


The proof is by an explicit construction, essentially a version of Example \ref{exa.ellipticcurve} with the nontrivial gerbes realized as $\mathfrak{Q}_8$-quotient stacks.

\begin{proof}
It suffices to treat the case $n=2$.
Fix any $\lambda\in \Q\setminus \{0,1\}$.
Let $W$ be the smooth projective curve of genus $2$ over $K\cong k((t))$,
whose function field is the extension of $K(u)$, obtained by adjoining a square root of $u(u^2-u+t^2)(u^2-\lambda u+t^2)$. We consider the $\Z/2\Z$-action on $W\times \PP^1$, where the generator acts trivially on $W$ and by $-1$ on $\PP^1$.
To compute $\rho_\pi^{\Z/2\Z}([W\times \PP^1\actsfromright \Z/2\Z])$, we complete the affine chart
\[ \Spec \big(k((t))[u,v,w]/(uv-t^2,w^2-u(u+v-1)(u+v-\lambda))\big) \]
of $W$
to the regular projective model
\begin{align*}
\cW&=\Proj \big(k[[t]][u,v,w,x,y]/(uv-t^2y^2,\\
&\,\,\,\,w^2y-u(u+v-y)(u+v-\lambda y),x^2y-v(u+v-y)(u+v-\lambda y),\\
&\,\,\,\,wx-t(u+v-y)(u+v-\lambda y),ux-twy,vw-txy)\big).
\end{align*}
The model $\cW$ is semistable, with special fiber isomorphic to the union of two curves of genus $1$, isomorphic to each other, with function field
\begin{equation}
\label{eqn.E}
k(u)\big(\sqrt{u(u-1)(u-\lambda)}\big).
\end{equation}
So $\rho_\pi^{\Z/2\Z}([W\times \PP^1\actsfromright \Z/2\Z])$ is independent of the choice of $\pi$ and has the form
\begin{align}
\begin{split}
\label{eqn.C2class}
2&\big(\mathrm{triv},\Z/2\Z\actsfromleft k(u,v)\big(\sqrt{u(u-1)(u-\lambda)}\big),()\big) \\
&\qquad-(\mathrm{triv},\Z/2\Z\actsfromleft k(u,v),())+\dots,
\end{split}
\end{align}
where the $\Z/2\Z$-actions are on the variable $v$ only, and we display only the terms with trivial group as first factor.
Indeed, it will be enough to consider only such terms to see the impossibility of a homomorphism as in the statement of the theorem.

The remainder of the proof consists of the following steps.
\begin{itemize}
\item Exhibiting an isomorphism
\[ W\times [\PP^1/(\Z/2\Z)]\cong [X/\mathfrak{Q}_8] \]
for a smooth projective variety $X$ over $K$ with action of $\mathfrak{Q}_8$.
\item Determination of $\rho_\pi^{\mathfrak{Q}_8}([X\actsfromright \mathfrak{Q}_8])$.
\item Verification that the image of $\rho_\pi^{\mathfrak{Q}_8}([X\actsfromright \mathfrak{Q}_8])$ in $\cBurn_{2,k}$ differs from that of the class \eqref{eqn.C2class}.
\end{itemize}
For this, we introduce some divisors on $\cW$.
The locus where $u+v-1$, respectively $u+v-\lambda$ vanishes in the displayed affine chart determines
$\cD_1$, respectively $\cD_\lambda$.
The complement in $W$ of the displayed affine chart determines $\cD_\infty$.
We ignore multiplicity and take these divisors to have components of multiplicity one.
Each of these divisors is the closure of a pair of $K$-rational Weierstrass points of $W$.

We use the points of $\cD_\infty$ in the special fiber to endow each component of the special fiber with the structure of an elliptic curve over $k$.
Then, the points of order $2$ for the group law are the points of $\cD_1$ and $\cD_\lambda$ in the special fiber, as well as the point of intersection of the two components.

We let the pair of $2$-torsion line bundles
\[ \cO_{\cW}(\cD_1-\cD_\infty)\qquad\text{and}\qquad \cO_{\cW}(\cD_\lambda-\cD_\infty) \]
determine a Galois cover
\[ \cV\to \cW \]
for the Klein $4$-group $\mathfrak{K}_4$, with $K$-rational points in the generic fiber over the Weierstrass points of $W$.
(By the Kronecker-Weber theorem, $K$-rational points over one Weierstrass point lead to $K$-rational points over all of them.)
On $\cV$ we let $\mathfrak{Q}_8$ act via the quotient by its center, isomorphic to $\mathfrak{K}_4$.
We consider the generic fiber $V$ and claim that the resulting quotient stack is isomorphic to a trivial $\Z/2\Z$-gerbe over $W$:
\[ [V/\mathfrak{Q}_8]\cong W\times B\Z/2\Z. \]
Given such an isomorphism, the nontrivial one-dimensional representation of $\Z/2\Z$ determines a line bundle on $[V/\mathfrak{Q}_8]$, hence a $\mathfrak{Q}_8$-linearized line bundle $L$ on $V$.
Then the compactification
\[ X=\PP(\cO_V\oplus L) \]
satisfies $[X/\mathfrak{Q}_8]\cong W\times [\PP^1/(\Z/2\Z)]$.

The claim amounts to the ability to lift the structure of $V$ as $\mathfrak{K}_4$-torsor over $W$ to a structure of $\widetilde{V}$ as $\mathfrak{Q}_8$-torsor over $W$, for some
unramified degree $2$ cover $\widetilde{V}\to V$.
We write $V$ as a canonical curve in $\PP^4$:
\begin{equation}
\label{eqn.Vcanonical}
tu_0^2-tu_1^2+u_2^2=4t^2u_1^2-(2t+1)u_2^2-u_3^2=(\lambda-1)u_2^2-u_3^2+u_4^2=0,
\end{equation}
where $u=t(u_1-u_0)^{-1}(u_1+u_0)$.
Since the equations have a diagonal form, $V$ is a Humbert curve with $10$ vanishing theta nulls (effective even theta characteristics), one for each triple of coordinates; cf.\ \cite{varley}.
We consider the ones attached to the first and third equation in \eqref{eqn.Vcanonical}; their difference, $2$-torsion in the Jacobian, determines $\widetilde{V}$, with function field
\[ K(V)\bigg(\sqrt{\frac{u_1-u_0}{u_4-u_3}}\bigg). \]
The $\mathfrak{K}_4$-torsor structure, given by
\[
(u_0:u_1:u_2:u_3:u_4)\mapsto (u_0:u_1:(-1)^cu_2:(-1)^du_3:(-1)^{c+d}u_4),
\]
with $c$, $d\in \{0,1\}$,
lifts via ($i=\sqrt{-1}$)
\[
\sqrt{\frac{u_1-u_0}{u_4-u_3}}\mapsto
\pm \Big(\frac{u_4-u_3}{\sqrt{\lambda-1}\,u_2}\Big)^ci^d\sqrt{\frac{u_1-u_0}{u_4-u_3}}
\]
to a structure of $\widetilde{V}$ as $\mathfrak{Q}_8$-torsor, and the claim is established.

We determine $\rho_\pi^{\mathfrak{Q}_8}([X\actsfromright \mathfrak{Q}_8])$.
Let us consider the elliptic curve $(E,\infty)$ with function field \eqref{eqn.E} and $E[2]=\{\infty,0,1,\lambda\}$.
We exhibit an action of $\mathfrak{Q}_8$ on $E$, so that
\[ \sqrt{\cO_E(\infty)/E}\cong [E/\mathfrak{Q}_8]. \]
Pullback by the multiplication-by-$2$ map $[2]\colon E\to E$
sends $\cO_E(\infty)$ to
$\cO_E(\infty+0+1+\lambda)\cong \cO_E(2\infty)^{\otimes 2}$.
Consequently, the gerbe $\sqrt{\cO_E(\infty)/E}$ admits a section after base change by $[2]\colon E\to E$.
The composite
\[ E\to E\times_{[2],E}\sqrt{\cO_E(\infty)/E}\to \sqrt{\cO_E(\infty)/E} \]
sends an $E$-valued point $f\colon T\to E$, for any $k$-scheme $T$, to the $E$-valued point $[2]\circ f$ with line bundle $f^*\cO_E(2\infty)$ and isomorphism
\[ f^*\cO_E(2\infty)^{\otimes 2}\to f^*\cO_E(\infty+0+1+\lambda)\cong ([2]\circ f)^*\cO_E(\infty). \]
Denoting a square root of $u(u-1)(u-\lambda)$ in $k(E)$ by $z$, the isomorphism of line bundles is the pullback by $f$ of
\[ (\cdot z^{-1})\colon \cO_E(4\infty)\to \cO_E(\infty+0+1+\lambda). \]

Next, we determine the fiber product
\[ E\times_{\sqrt{\cO_E(\infty)/E}}E. \]
An object over $T$ consists of a pair of $E$-valued points $f$, $g\colon T\to E$, satisfying $[2]\circ f=[2]\circ g$, together with an isomorphism
\[ (\cdot s)\colon f^*\cO_E(2\infty)\to g^*\cO_E(2\infty), \]
such that the diagram
\[
\xymatrix{
f^*\cO_E(2\infty)^{\otimes 2}\ar[r]\ar[d]_{\cdot s^2} & ([2]\circ f)^*\cO_E(\infty) \ar@{=}[d] \\
g^*\cO_E(2\infty)^{\otimes 2}\ar[r] & ([2]\circ g)^*\cO_E(\infty)
}
\]
commutes.
The top map, respectively bottom map, is the pullback of $(\cdot z^{-1})$ by $f$, respectively $g$.
So $g$ is given by $f$ followed by addition of a point of $E[2]$.
The commutative diagram translates into the following possibilities for $s=f^*\sigma$, $\sigma\in \rH^0(E,\cO_E(2A-2\infty))$, $A\in E[2]$:
\[
\begin{array}{cc|c|c}
& \sigma & z^{-1}(A+(\,))^*z & A \\
\cline{2-4}
(\mu,\nu\in k, & \pm 1 & 1 & \infty \\
\mu^2=-\lambda, & \pm \mu u^{-1} & -\lambda u^{-2} & 0 \\
\nu^2=\lambda-1) & \pm \nu (u-1)^{-1} & -(1-\lambda) (u-1)^{-2} & 1 \\
& \pm \mu\nu (u-\lambda)^{-1} & \lambda(1-\lambda) (u-\lambda)^{-2} & \lambda
\end{array}
\]

Consequently,
\[ E\times_{\sqrt{\cO_E(\infty)/E}}E\cong E\times \{(\sigma,A)\} \]
with $(\sigma,A)$ as in the table, forming a group of order $8$ with composition law
\[ (\sigma,A)(\sigma',A')=(\sigma\cdot \tilde\sigma',A+A'), \]
where $\tilde\sigma'$ denotes the pullback of $\sigma'$ by the automorphism $A+(\,)$ of $E$.
This is the group $\mathfrak{Q}_8$, where the group element $(\sigma,A)$ acts on $E$ by translation by $A$.

Any line bundle on $[V/\mathfrak{Q}_8]$ may be extended to $[\cV/\mathfrak{Q}_8]$ (cf.\ the proof of Proposition \ref{prop.divisorial}).
So there is a
$\mathfrak{Q}_8$-linearized line bundle $\cL$ on $\cV$, restricting to $L$ on $V$.
Now $\cV$ is a semistable model of $V$, so $\PP(\cO_{\cV}\oplus \cL)$ is a semistable model of $X$.
Using this, we compute $\rho_\pi^{\mathfrak{Q}_8}([X\actsfromright \mathfrak{Q}_8])$ as
\begin{align}
\begin{split}
\label{eqn.Q8class}
2&\big(\mathrm{triv},\mathfrak{Q}_8\actsfromleft k(u,v)
\big(\sqrt{u(u-1)(u-\lambda)}\big),()\big) \\
&\qquad-(\mathrm{triv},\Z/2\Z\actsfromleft k(u,v),())+\dots.
\end{split}
\end{align}

Finally, we see that the images of \eqref{eqn.C2class} and \eqref{eqn.Q8class} in $\cBurn_2$ are different by an argument, as in Remark \ref{rem.ellipticcurve}, based on orbifold surfaces with non-isomorphic divisors $E\times B\Z/2\Z$ and $\sqrt{\cO_E(\infty)/E}$.
\end{proof}

\subsection*{Connection with Grothendieck groups}
The Grothendieck ring 
$$
\rK_0(\Var)
$$
of algebraic varieties over $k$ plays a special role in the theory of motivic integration: its quotient by the ideal generated the class of the affine line serves is the universal object in the category of motivic measures. Incidentally, this quotient is also isomorphic to the free abelian group generated by stable birational equivalence classes of algebraic varieties over $k$ \cite{LL}. 

Several authors considered versions of these constructions in the setting of equivariant geometry and in the framework of stacks, see \cite{Bergh-cat} and the references therein. There is a Grothendieck 
group of $G$-varieties
$$
\rK_0(\Var^G), 
$$
defined via {\em $G$-scissors relations} \cite[Defn.\ 3.1]{Bergh-cat}.
There is also a Grothendieck ring of equivariant varieties \cite[Defn.\ 3.7]{Bergh-cat}
$$
\rK_0(\Var^{\rm eq}):=\bigoplus_G \, \rK_0(\Var^G),
$$
with an additional {\em induction relation}.
The sum is over all isomorphism classes of finite groups.

Analogous definitions can be made for DM stacks.
The abelian group
\[
\rK_0(\mathrm{DM})
\]
is defined via scissors relations. This is (with the natural ring structure) the Grothendieck ring of DM stacks \cite[\S 1.1]{berghweak}.
There is a well-defined ring homomorphism
$$
\rK_0(\Var^{\rm eq}) \to  \rK_0(\mathrm{DM}), 
$$
cf.\ \cite[(3.15)]{Bergh-cat}, combined with \cite[Thm.\ 1.1]{berghweak}.
It takes the class of a smooth quasiprojective $G$-variety $X$ to the class of the quotient stack $[X/G]$.

\

From the initial definition of the Burnside group in \cite{KT}, it was understood that it was a refinement of $\rK_0(\Var)$, in the following sense: there is a surjective homomorphism 
$$
\Burn\to \mathrm{gr}(\rK_0(\Var)).
$$
Here we explain the connection of equivariant and stacky versions of Grothendieck groups with our formalism of Burnside groups. The key difference is that, in our approach, we take into account the actions in normal bundles of strata, in the stabilizer stratification.

There is a naive map
$$
\Burn(G) \to  \mathrm{gr}(\rK_0(\Var^G)),
$$
that annihilates all triples with nonempty sequence of characters.
The term $(1,G\actsfromleft k(X),())$ is sent to the class of the $G$-variety $X$.
There is an analogous map
$$
\cBurn \to \mathrm{gr}(\rK_0(\mathrm{DM})),
$$
annihilating pairs with nonempty sequence of characters and sending $(k(\cX),())$ to the class of the orbifold $\cX$.

\bibliographystyle{plain}
\bibliography{birstacks}

\end{document}